\numberwithin{equation}{section}
\theoremstyle{plain}
\newtheorem{Theorem}[equation]{Theorem}
\newtheorem{thmalph}{Theorem}
\newtheorem{Lemma}[equation]{Lemma}
\newtheorem{Proposition}[equation]{Proposition}
\newtheorem{Corollary}[equation]{Corollary}
\newcommand{\EatDot}[1]{}
\theoremstyle{definition}
\newtheorem{Definition}[equation]{Definition}
\newtheorem{Remark}[equation]{Remark}
\newcommand{\C}{\mathbb{C}}
\newcommand{\R}{\mathbb{R}}
\newcommand{\Z}{\mathbb{Z}}
\newcommand{\res}{\mathrm{res}}
\newcommand{\id}{\mathrm{id}}
\newcommand{\Hom}{\mathrm{Hom}}
\newcommand{\End}{\mathrm{End}}
\newcommand{\hgt}{\mathrm{ht}}
\newcommand{\GL}{\mathrm{GL}}
\newcommand{\SL}{\mathrm{SL}}
\DeclareMathOperator{\rad}{rad}
\DeclareMathOperator{\soc}{soc}
\DeclareMathOperator{\head}{head}
\DeclareMathOperator{\ind}{ind}
\title[On complete reducibility of tensor products]{On complete reducibility of tensor products of simple modules over simple algebraic groups}
\author{Jonathan Gruber}
\date{\today}
\address{\'Ecole Polytechnique Federale de Lausanne, 1015 Lausanne, Switzerland}
\email{jonathan.gruber@epfl.ch}
\begin{document}

\renewcommand{\theequation}{\thesection.\arabic{equation}}
\newtagform{bold}[\textbf]{(}{)}

\begin{abstract}
	Let $G$ be a simply connected simple algebraic group over an algebraically closed field $k$ of characteristic $p>0$. The category of rational $G$-modules is not semisimple. We consider the question of when the tensor product of two simple $G$-modules $L(\lambda)$ and~$L(\mu)$ is completely reducible. Using some technical results about weakly maximal vectors (i.e. maximal vectors for the action of the Frobenius kernel $G_1$ of $G$) in tensor products, we obtain a reduction to the case where the highest weights $\lambda$ and~$\mu$ are $p$-restricted. In this case, we also prove that $L(\lambda)\otimes L(\mu)$ is completely reducible as a $G$-module if and only if $L(\lambda)\otimes L(\mu)$ is completely reducible as a $G_1$-module.
\end{abstract}

\maketitle

\section{Introduction}

Let $G$ be a simply connected simple algebraic group over an algebraically closed field $k$ of positive characteristic $p$. The simple $G$-modules are parametrized by the set $X^+$ of dominant weights of~$G$ (with respect to a fixed maximal torus and Borel subgroup) and for $\lambda\in X^+$, we write $L(\lambda)$ for the unique simple $G$-module of highest weight $\lambda$. One of the most powerful tools in examining the simple modules $L(\lambda)$ is Steinberg's tensor product theorem: Given $\lambda\in X^+$, there is a unique $p$-adic decomposition $\lambda=\lambda_0+p\lambda_1$, where $\lambda_0$ is a $p$-restricted weight and $\lambda_1\in X^+$. Then the simple module $L(\lambda)$ has a tensor product decomposition
\[ L(\lambda) \cong L(\lambda_0) \otimes L(\lambda_1)^{[1]} , \]
where $L(\lambda_1)^{[1]}$ denotes the Frobenius twist of the simple module $L(\lambda_1)$. Furthermore, the simple $G$-module $L(\lambda_0)$ remains simple upon restriction to the first Frobenius kernel $G_1$ of $G$ by a result of C.~W.~Curtis; see \cite{Curtis}. This allows one to reduce many questions about simple $G$-modules to questions about simple $G$-modules with $p$-restricted highest weight, or to questions about simple $G_1$-modules.

Given weights $\lambda,\mu\in X^+$ with $p$-adic decomposition $\lambda=\lambda_0+p\lambda_1$ and $\mu=\mu_0+p\mu_1$, respectively, the tensor product $L(\lambda)\otimes L(\mu)$ admits a decomposition
\[ L(\lambda)\otimes L(\mu) \cong L(\lambda_0) \otimes L(\lambda_1)^{[1]} \otimes L(\mu_0) \otimes L(\mu_1)^{[1]} \cong \big( L(\lambda_0) \otimes L(\mu_0) \big) \otimes \big( L(\lambda_1)\otimes L(\mu_1) \big)^{[1]} . \]
Thus, a lot of structural information about $L(\lambda)\otimes L(\mu)$ can be obtained by understanding the structure of $L(\lambda_0)\otimes L(\mu_0)$ and $L(\lambda_1)\otimes L(\mu_1)$. One of our main results, see Theorem \ref{thm:introC} below, is an illustration of this principle.

Our first main result is the following:
\begin{thmalph} \label{thm:introA}
	Let  $\lambda,\mu\in X^+$ be $p$-restricted. If $L(\lambda)\otimes L(\mu)$ is completely reducible then all composition factors of $L(\lambda)\otimes L(\mu)$ are $p$-restricted.
\end{thmalph}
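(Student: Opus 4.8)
The plan is to pass to the first Frobenius kernel $G_1$ of $G$, there invoke the structural results on weakly maximal vectors, and then transport the information back to $G$.

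\emph{Step 1 (reduction to $G_1$).} Suppose $M := L(\lambda)\otimes L(\mu)$ is completely reducible as a $G$-module, and let $L(\nu)$ be any composition factor, with $p$-adic decomposition $\nu=\nu_0+p\nu_1$. By Steinberg's tensor product theorem $L(\nu)\cong L(\nu_0)\otimes L(\nu_1)^{[1]}$, and by Curtis's theorem $L(\nu_0)$ stays simple on restriction to $G_1$; since $G_1$ acts trivially on $L(\nu_1)^{[1]}$, we obtain $L(\nu)|_{G_1}\cong L(\nu_0)^{\oplus\dim L(\nu_1)}$, a semisimple $G_1$-module. As $M$ is a direct sum of such composition factors, $M$ is semisimple as a $G_1$-module; in particular $M=\soc_{G_1}(M)$.

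\emph{Step 2 (the key input).} Here I would use the analysis of weakly maximal vectors in tensor products. Since $\lambda$ and $\mu$ are $p$-restricted, these results should give that every weakly maximal vector of $L(\lambda)\otimes L(\mu)$ lying in the $G_1$-socle --- equivalently, generating a simple $G_1$-submodule --- has $p$-restricted weight. Combined with Step 1, every weakly maximal vector of $M$ then has $p$-restricted weight. This is the step I expect to be the main obstacle; the other two steps are formal translations between the module categories of $G$ and $G_1$, the one point of care being the Frobenius untwisting below.

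\emph{Step 3 (back to $G$).} Since $M$ is semisimple over $G_1$, its decomposition into $G_1$-isotypic components $M=\bigoplus_{\sigma}M[\sigma]$ --- the sum over $p$-restricted dominant weights $\sigma$, with $M[\sigma]$ the $L(\sigma)$-isotypic part --- is a decomposition of $G$-modules, each $M[\sigma]$ being $G$-stable because $G$ is connected and normalizes $G_1$; and evaluation gives a $G$-equivariant isomorphism $M[\sigma]\cong L(\sigma)\otimes\Hom_{G_1}(L(\sigma),M)$. As $G_1$ acts trivially on $\Hom_{G_1}(L(\sigma),M)$, there is a rational $G$-module $\mathcal{N}_\sigma$ with $\Hom_{G_1}(L(\sigma),M)\cong\mathcal{N}_\sigma^{[1]}$. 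Now $L(\sigma)$, being simple over $G_1$, has a one-dimensional space of weakly maximal vectors, spanned by a highest weight vector $v_\sigma$; consequently the weakly maximal vectors of $M[\sigma]$ form the subspace $v_\sigma\otimes\mathcal{N}_\sigma^{[1]}$ and have weights $\sigma+p\tau$ with $\tau$ ranging over the weights of $\mathcal{N}_\sigma$. By Step 2 each such $\sigma+p\tau$ is $p$-restricted, and since $\sigma$ is already $p$-restricted this forces $\tau=0$. Hence $\mathcal{N}_\sigma$ has $0$ as its only weight, so it is a trivial $G$-module and $M[\sigma]\cong L(\sigma)^{\oplus\dim\mathcal{N}_\sigma}$. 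Therefore every composition factor of $M$ is of the form $L(\sigma)$ with $\sigma$ $p$-restricted, which is the assertion.
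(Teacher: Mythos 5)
Your Steps 1 and 3 are correct, and your overall architecture (pass to $G_1$, control weakly maximal vectors, return to $G$) is exactly the paper's. But Step 2 is not a lemma you can wave at: it \emph{is} the theorem. The assertion that every weakly maximal vector of $L(\lambda)\otimes L(\mu)$ generating a simple $G_1$-submodule has $p$-restricted weight is precisely (the contrapositive of) Corollary \ref{cor:nonpresnonsimpleG1}, and proving it is where all the work lies: one must show how to raise a weakly maximal vector of weight $\delta\neq\lambda+\mu$ to one of weight $\delta+\beta$ via $(1\otimes X_\beta)$ (Propositions \ref{prop:weaklymaximalvector}, \ref{prop:nothighestweighthigherweakmaxvec}), and then exhibit $X_{-\alpha,r}\cdot v\neq 0$ for the critical $r$ to violate the weight structure of a simple $G_1$-module (Lemma \ref{lem:weakmaxvecnonsimpleG1}, Propositions \ref{prop:weakmaxvecnotG2}, \ref{prop:nonpresnonsimpleG1}). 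Moreover, that corollary is only valid for $\Phi$ not of type $\mathrm{G}_2$ and $p>2$ when $\Phi$ is not simply laced; the remaining cases ($\mathrm{G}_2$; and $\mathrm{B}_n$, $\mathrm{C}_n$, $\mathrm{F}_4$ at $p=2$, $\mathrm{G}_2$ at $p=3$) occupy Sections \ref{sec:G2}--\ref{sec:smallprimes} and require Steinberg's refined tensor product theorem, the type $\mathrm{A}_n$, $p=2$ classification, and some explicit computations. So "these results should give" conceals both the main construction and a nontrivial case analysis; as written, the proof has a genuine gap.

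A secondary remark: Step 3 is correct but much more elaborate than necessary. Since $M$ is completely reducible over $G$, each composition factor $L(\nu)$ is a direct summand and is therefore generated by a \emph{maximal} vector of weight $\nu$ in $M$; a maximal vector is in particular weakly maximal, so Step 2 applied to it gives $\nu\in X_1'\cap X^+=X_1$ at once, with no need for the isotypic decomposition, the untwisting of $\Hom_{G_1}(L(\sigma),M)$, or the one-dimensionality of the space of weakly maximal vectors in $L(\sigma)$. (Those facts are all true, but invoking them forces you to need Step 2 for genuinely weakly maximal --- not maximal --- vectors of the form $v_\sigma\otimes n$, which is a slightly stronger input than the direct route requires.)
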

Additionally, we obtain a theorem relating complete reducibility of $G$-modules and $G_1$-modules:
\begin{thmalph} \label{thm:introB}
	Let  $\lambda,\mu\in X^+$ be $p$-restricted. Then $L(\lambda)\otimes L(\mu)$ is completely reducible as a $G$-module if and only if $L(\lambda)\otimes L(\mu)$ is completely reducible as a $G_1$-module.
\end{thmalph}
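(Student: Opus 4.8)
The plan is to treat the two implications separately, the forward one being essentially formal and the converse carrying all the content. The implication ``completely reducible as a $G$-module $\Rightarrow$ completely reducible as a $G_1$-module'' needs no restriction on $\lambda,\mu$ at all: by Steinberg's tensor product theorem and Curtis's theorem, every simple $G$-module $L(\sigma)$ restricts to $L(\sigma_0)^{\oplus\dim L(\sigma_1)}$, which is a semisimple $G_1$-module, so every completely reducible $G$-module is semisimple over $G_1$. For the converse I would convert $G_1$-semisimplicity of $M:=L(\lambda)\otimes L(\mu)$ into an explicit decomposition of $M$ as a $G$-module.

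First I would record the multiplicity-module decomposition. Assume $M|_{G_1}$ is semisimple. For each $p$-restricted weight $\nu$ the $G$-module $L(\nu)$ restricts to the simple $G_1$-module $L(\nu)$ (Curtis), and one checks that $\Hom_{G_1}(L(\nu),M)$ is a $G$-module on which $G_1$ acts trivially (because $G_1$ is normal in $G$ and $L(\nu)$ is $G_1$-simple), hence is of the form $W_\nu^{[1]}$ for a $G$-module $W_\nu$. Since $M$ is $G_1$-semisimple, the evaluation maps $L(\nu)\otimes\Hom_{G_1}(L(\nu),M)\to M$ assemble into an isomorphism $\bigoplus_\nu L(\nu)\otimes W_\nu^{[1]}\xrightarrow{\ \sim\ }M$ of $G_1$-modules which is visibly $G$-equivariant, hence an isomorphism of $G$-modules; here $\nu$ runs over the finitely many $p$-restricted weights with $W_\nu\neq 0$.

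Next I would reduce complete reducibility of $M$ to that of the $W_\nu$. The key point is that for any $G$-module $W$ and any $p$-restricted $\nu$, the module $L(\nu)\otimes W^{[1]}$ is completely reducible if and only if $W$ is. If $W=\bigoplus_j L(\tau_j)$ then $L(\nu)\otimes W^{[1]}=\bigoplus_j L(\nu+p\tau_j)$ is semisimple by Steinberg's tensor product theorem; conversely, a non-split length-two subquotient $E$ of $W$ produces a subquotient $L(\nu)\otimes E^{[1]}$ of $L(\nu)\otimes W^{[1]}$ which is again non-split, as one sees by applying the multiplicity-module decomposition to $L(\nu)\otimes E^{[1]}$ (whose restriction to $G_1$ is $L(\nu)^{\oplus\dim E}$ and whose unique multiplicity module is $E$). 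Applied to $M\cong\bigoplus_\nu L(\nu)\otimes W_\nu^{[1]}$, this shows that $M$ is completely reducible over $G$ if and only if every $W_\nu$ is.

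The remaining step, which I expect to be the crux, is to show that $G_1$-semisimplicity of $M$ already forces each $W_\nu$ to be completely reducible; by Steinberg's tensor product theorem again (the composition factors of $L(\nu)\otimes W_\nu^{[1]}$ are exactly the $L(\sigma)$ with $\sigma_0=\nu$, with $L(\sigma_1)$ the corresponding factor of $W_\nu$) this is equivalent to showing that \emph{every $G$-composition factor of $M$ is $p$-restricted}: if so, then $\sigma_1=0$ throughout, $W_\nu\cong k^{\oplus\dim W_\nu}$, and $M\cong\bigoplus_\nu L(\nu)^{\oplus\dim W_\nu}$ is semisimple. This last statement is a strengthening of Theorem~\ref{thm:introA} in which the $G$-semisimplicity hypothesis is replaced by the weaker $G_1$-semisimplicity hypothesis, and I would attack it with the technical results on weakly maximal vectors: a non-$p$-restricted composition factor $L(\sigma)$ gives rise, under the identification above, to a weakly maximal vector in $M$ of weight $\sigma$ with $\sigma_1\neq 0$, and the goal is to show, by an induction (for instance on $\langle\lambda+\mu,\beta^\vee\rangle$ for the highest root $\beta$) fed by the technical description of weakly maximal vectors in tensor products, that such a vector must lie in a non-split $G_1$-extension inside $M$, contradicting the hypothesis. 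The main obstacle is precisely this: ruling out a $G_1$-splitting of the non-$p$-restricted part, which is where the fine structure of $L(\lambda)\otimes L(\mu)$, rather than that of an arbitrary $G$-module with semisimple $G_1$-restriction, has to be used.
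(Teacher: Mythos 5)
Your overall architecture is sound and its formal skeleton differs genuinely from the paper's. The forward implication is indeed immediate from Steinberg and Curtis. The Clifford-theoretic decomposition $M\cong\bigoplus_\nu L(\nu)\otimes W_\nu^{[1]}$ with $W_\nu^{[1]}=\Hom_{G_1}(L(\nu),M)$ is valid for a $G_1$-semisimple $M$ (it uses that $L(\nu)\vert_{G_1}$ is absolutely simple), and your equivalence ``$L(\nu)\otimes W^{[1]}$ completely reducible iff $W$ is'' is essentially Lemma \ref{lem:Donkin}. This replaces the paper's route through the Brundan--Kleshchev Hom-criterion (Proposition \ref{prop:GandG1allMaxVecPRes} combined with Corollary \ref{cor:maxvecnonsimpleG1} and Lemma \ref{lem:weylgeneratedG1}), and it has the merit of making visible that Theorems \ref{thm:introA} and \ref{thm:introB} both reduce to the single statement that $G_1$-semisimplicity forces all $G$-composition factors of $L(\lambda)\otimes L(\mu)$ to be $p$-restricted. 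The passage from a non-$p$-restricted composition factor to a weakly maximal vector of non-$p$-restricted weight is also unproblematic, though you should spell it out: if some $W_\nu$ is non-trivial, pick a maximal weight $\tau\neq 0$ of $W_\nu$ and a weight vector $w$ there; then $v^+\otimes w$, with $v^+$ a highest weight vector of $L(\nu)$, is weakly maximal of weight $\nu+p\tau\notin X_1^\prime$.

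The gap is that the step carrying all of the content --- that a weakly maximal vector of non-$p$-restricted weight in $L(\lambda)\otimes L(\mu)$ is incompatible with $G_1$-semisimplicity --- is left as a ``goal'' with only a one-line sketch of an induction. This is exactly Corollary \ref{cor:nonpresnonsimpleG1}, proved via Propositions \ref{prop:weaklymaximalvector}, \ref{prop:nothighestweighthigherweakmaxvec}, \ref{prop:weakmaxvecnotG2} and \ref{prop:nonpresnonsimpleG1} (the relevant organising device is not an induction on $\langle\lambda+\mu,\beta^\vee\rangle$ but a maximality argument on the weight of a weakly maximal vector in the region $\{\chi:\langle\chi,\alpha^\vee\rangle\geq p\}$). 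More seriously, no uniform argument of this kind exists: the proof requires $\Phi$ not of type $\mathrm{G}_2$ and $p>2$ when $\Phi$ is not simply laced, since it rests on $-\langle\beta,\alpha^\vee\rangle<p$ and the root-string Lemma \ref{lem:rootnegativecoeff}. In the excluded cases the paper argues quite differently: for $\mathrm{C}_n$, $p=2$ via good filtrations (Proposition \ref{prop:Cnp2nonCRG1}); for $\mathrm{B}_n$, $\mathrm{F}_4$ at $p=2$ and $\mathrm{G}_2$ at $p=3$ via Steinberg's refined tensor product theorem and a classification in type $\mathrm{A}_{n-1}$; for $\mathrm{G}_2$ at $p\in\{2,5,7\}$ partly via explicit computation. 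As written, your proposal therefore establishes the theorem only for the types and primes covered by Corollary \ref{cor:nonpresnonsimpleG1}, and the remaining cases need separate arguments that your single induction cannot supply.
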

Combining Theorems \ref{thm:introA} and \ref{thm:introB}, we obtain the following reduction theorem:
\begin{thmalph} \label{thm:introC}
	Let $\lambda,\mu\in X^+$ and write $\lambda=\lambda_0+\cdots+p^m\lambda_m$ and $\mu=\mu_0+\cdots+p^m\mu_m$ with $\lambda_0,\ldots,\lambda_m$ and $\mu_0,\ldots,\mu_m$ all $p$-restricted. Then $L(\lambda)\otimes L(\mu)$ is completely reducible if and only if $L(\lambda_i)\otimes L(\mu_i)$ is completely reducible for all $i$.
\end{thmalph}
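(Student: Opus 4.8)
The plan is to reduce Theorem~\ref{thm:introC} to the two-digit case and then induct on $m$. The key technical step is the following lemma: if $\lambda_1,\mu_1\in X^+$ are $p$-restricted, if $M:=L(\lambda_1)\otimes L(\mu_1)$, and if $N$ is any finite-dimensional rational $G$-module, then $M\otimes N^{[1]}$ is completely reducible if and only if $M$ and $N$ are both completely reducible. Granting the lemma, Theorem~\ref{thm:introC} follows by induction on $m$: writing $\lambda'$ and $\mu'$ for the weights obtained from $\lambda$ and $\mu$ by deleting the lowest $p$-adic digit, Steinberg's tensor product theorem gives
\[ L(\lambda)\otimes L(\mu)\;\cong\;\big(L(\lambda_1)\otimes L(\mu_1)\big)\otimes\big(L(\lambda')\otimes L(\mu')\big)^{[1]}, \]
and the lemma (applied with $N=L(\lambda')\otimes L(\mu')$) reduces complete reducibility of $L(\lambda)\otimes L(\mu)$ to complete reducibility of $L(\lambda_1)\otimes L(\mu_1)$ and of $L(\lambda')\otimes L(\mu')$; the latter is settled by the inductive hypothesis, since $\lambda'$ and $\mu'$ have $m-1$ $p$-adic digits.

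For the \emph{if} direction of the lemma, suppose $M$ and $N$ are completely reducible. By Theorem~\ref{thm:introA} every composition factor of $M$ has $p$-restricted highest weight, so $M\cong\bigoplus_j L(\nu_j)$ with all $\nu_j$ $p$-restricted, and $N\cong\bigoplus_k L(\sigma_k)$. Then Steinberg's tensor product theorem gives $L(\nu_j)\otimes L(\sigma_k)^{[1]}\cong L(\nu_j+p\sigma_k)$ for all $j,k$, so $M\otimes N^{[1]}\cong\bigoplus_{j,k}L(\nu_j+p\sigma_k)$ is completely reducible.

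For the \emph{only if} direction, suppose $V:=M\otimes N^{[1]}$ is completely reducible. First note that every completely reducible $G$-module is semisimple on restriction to $G_1$: indeed, by Steinberg's tensor product theorem and Curtis's theorem a simple $G$-module $L(\rho)$ with $p$-adic decomposition $\rho=\rho_0+p\rho_1$ restricts to $G_1$ as a direct sum of copies of the simple $G_1$-module $L(\rho_0)$. Since $N^{[1]}$ is trivial as a $G_1$-module, $V|_{G_1}\cong M|_{G_1}^{\oplus\dim N}$ is $G_1$-semisimple, hence so is $M$; by Theorem~\ref{thm:introB}, $M$ is completely reducible as a $G$-module, and then Theorem~\ref{thm:introA} yields $M\cong\bigoplus_j L(\nu_j)$ with all $\nu_j$ $p$-restricted. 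It remains to deduce that $N$ is completely reducible. The summand $L(\nu_1)\otimes N^{[1]}$ of $V$ is completely reducible, say $L(\nu_1)\otimes N^{[1]}\cong\bigoplus_i L(\rho^{(i)})$. Now apply the functor $\Hom_{G_1}(L(\nu_1),-)$: since $N^{[1]}$ is $G_1$-trivial and $\End_{G_1}(L(\nu_1))=k$ by Curtis's theorem and Schur's lemma, the tensor identity gives $\Hom_{G_1}(L(\nu_1),L(\nu_1)\otimes N^{[1]})\cong N^{[1]}$ as $G$-modules; on the other hand, for each $i$ one computes $\Hom_{G_1}(L(\nu_1),L(\rho^{(i)}))\cong L(\rho^{(i)}_1)^{[1]}$ if $\rho^{(i)}_0=\nu_1$ and $0$ otherwise, again via Curtis's theorem and the tensor identity. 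Hence $N^{[1]}\cong\bigoplus_i\Hom_{G_1}(L(\nu_1),L(\rho^{(i)}))$ is completely reducible, and therefore so is $N$, since $N$ is completely reducible if and only if $N^{[1]}$ is.

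I expect the only genuine difficulty to lie in the \emph{only if} direction of the lemma: the passage from $G_1$-semisimplicity of $M$ to $G$-complete reducibility of $M$, which is precisely where Theorem~\ref{thm:introB} enters, together with the bookkeeping needed to make the identifications in the $\Hom_{G_1}(L(\nu_1),-)$ argument $G$-equivariant (the tensor identity for $G_1$-trivial modules and Schur's lemma over $G_1$). The remaining steps are formal consequences of Steinberg's tensor product theorem together with Theorems~\ref{thm:introA} and~\ref{thm:introB}.
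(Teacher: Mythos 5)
Your proposal is correct and follows essentially the same route as the paper: reduce to the two-digit case, induct on the number of digits, use Theorem \ref{thm:introA} and Steinberg's theorem for the ``if'' direction, and pass through $G_1$-semisimplicity and Theorem \ref{thm:introB} to handle the bottom digit in the ``only if'' direction. The only cosmetic difference is in recovering complete reducibility of $N$: the paper isolates this as Lemma \ref{lem:Donkin} (indecomposability of $L(\delta)\otimes M^{[1]}$ for $\delta$ $p$-restricted and $M$ indecomposable) and argues via indecomposable summands, whereas you compute $N^{[1]}\cong \Hom_{G_1}\big(L(\nu_1),L(\nu_1)\otimes N^{[1]}\big)$ directly against the decomposition of $V$ into simples --- but this is exactly the isomorphism on which the paper's lemma rests.
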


The question of complete reducibility of tensor products of $G$-modules has previously been considered by J. Brundan and A. Kleshchev in \cite{BrundanKleshchevSocle} and \cite{BrundanKleshchevCR}, and by J.-P. Serre in \cite{Serre}. Some of their results will be recalled in Sections \ref{sec:someresults} and \ref{sec:Anp2} below.

We prove our results using some new techniques for weakly maximal vectors (that is, maximal vectors for the action of $G_1$) in tensor products of $G$-modules. More precisely, we give criteria under which weakly maximal vectors of non-$p$-restricted weights generate non-simple $G_1$-submodules (see Propositions \ref{prop:weakmaxvecnotG2} and \ref{prop:nonpreslevel0}) and we show how to construct explicitly a weakly maximal vector of weight~$\delta^\prime >\delta$, given a weakly maximal vector of weight~$\delta$ (see Propositions \ref{prop:weaklymaximalvector} and \ref{prop:nothighestweighthigherweakmaxvec}). In the proofs of Theorems \ref{thm:introA} and \ref{thm:introB}, we will use these results to construct weakly maximal vectors that generate non-simple $G_1$-submodules of $L(\lambda)\otimes L(\mu)$, thus showing that $L(\lambda)\otimes L(\mu)$ is not completely reducible as a $G_1$-module.

The paper is organized as follows: In Section \ref{sec:preliminaries}, we summarize the basic definitions and recall some important results. Section \ref{sec:weaklymaximalvectors} is concerned with the results on weakly maximal vectors in tensor products of $G$-modules that will be required to prove Theorems \ref{thm:introA} and \ref{thm:introB}. In section \ref{sec:someresults}, we cite results about complete reducibility from the literature and derive some consequences. The results we are using are due to H. H. Andersen, J. Brundan, A. Kleshchev, J.-P. Serre and I. Suprunenko. Some of the results in Section \ref{sec:weaklymaximalvectors} are only valid for groups of type different from $\mathrm{G}_2$ and for primes that are not too small with respect to the root system. Therefore, the proofs of Theorems \ref{thm:introA} and \ref{thm:introB} are split up over several sections. In Section \ref{sec:CRprestricted}, we will consider the case where $G$ is of type different from $\mathrm{G}_2$ and $p>2$ if $G$ is of type $\mathrm{B}_n$, $\mathrm{C}_n$ or $\mathrm{F}_4$. In Section \ref{sec:G2}, we give proofs of the theorems for~$G$ of type $\mathrm{G}_2$ when $p\neq 3$. Finally, if $G$ is of type $\mathrm{B}_n$, $\mathrm{C}_n$ or $\mathrm{F}_4$ and $p=2$ or $G$ is of type $\mathrm{G}_2$ and $p=3$ then the simple $G$-modules of $p$-restricted weight admit a refined tensor product decomposition corresponding to the decomposition of the root system of $G$ into short roots and long roots. We make use of this in Section \ref{sec:smallprimes} in order to prove Theorems \ref{thm:introA} and \ref{thm:introB} in the remaining cases. Our treatment of groups of type $\mathrm{B}_n$ in characteristic $p=2$ relies on a detailed study of tensor products of simple modules for the Levi subgroup of type $\mathrm{A}_{n-1}$. These results are given in Section \ref{sec:Anp2}, along with a complete classification of the pairs of $2$-restricted weights $\lambda$ and $\mu$ such that $L(\lambda)\otimes L(\mu)$ is completely reducible for $G$ of type~$\mathrm{A}_n$ when $p=2$. In the final Section \ref{sec:reductiontheorem}, we give the proof of Theorem \ref{thm:introC}.

\subsection*{Acknowledgements}
I thank Stephen Donkin for extremely fruitful discussions and guidance and for suggesting the proofs of Lemmas \ref{lem:DonkinNonCR} and \ref{lem:Donkin}. I would also like to thank my advisor Donna Testerman for her suggestions and careful reading of the manuscript. This work was supported by the Swiss National Science Foundation, grant number FNS 200020\textunderscore 175571.

\section{Preliminaries} \label{sec:preliminaries}

In this section, we give the basic definitions and cite some important results from the literature.

\subsection{Notation}

Our notational conventions are essentially the same as in \cite{Jantzen}, except that we write $\nabla(\lambda)$ for the induced module and $\Delta(\lambda)$ for the Weyl module of highest weight~$\lambda$. The following basic notations will be used throughout:

We fix $k$ to be an algebraically closed field of characteristic $p>0$ and $G$ to be a simply connected simple algebraic group scheme over $k$, defined and split over the finite field $\mathbb{F}_p$. The assumption of $G$ being simple and simply connected is for convenience and our main results generalize to connected reductive groups over $k$. Let $T$ be a split maximal torus in $G$ and denote by $X=X(T)$ the character group of~$T$. Let $\Phi\subseteq X$ be the root system of $G$ with respect to $T$, with a fixed choice of base $\Delta=\{\alpha_1,\ldots,\alpha_n\}$. Unless otherwise specified, we adopt the standard labeling of simple roots as given in \cite{Bourbaki}. We write $\Phi^+$ for the positive system defined by $\Delta$ and $\Phi^-=-\Phi^+$.
Let $W$ be the Weyl group of $\Phi$ and let $\langle\cdot\,,\cdot\rangle$ be a $W$-invariant inner product on the real space $X\otimes_\Z\R$, normalized so that $\langle\alpha,\alpha\rangle=2$ for all short roots~$\alpha\in\Phi$. The coroot of $\alpha\in\Phi$ is defined by $\alpha^\vee=2\alpha/\langle\alpha,\alpha\rangle$. Let
\[ X^+=\{ \lambda\in X \mid \langle \lambda , \alpha^\vee \rangle \geq 0 \text{ for all }\alpha\in\Delta \} \]
be the set of dominant weights, define
\[ X_1^\prime=\{ \lambda\in X \mid \langle \lambda,\alpha^\vee \rangle < p \text{ for all }\alpha\in\Delta \} \]
and set $X_1\coloneqq X_1^\prime \cap X^+$, the set of $p$-restricted (dominant) weights. Let $\omega_1,\ldots,\omega_n\in X^+$ be the fundamental dominant weights with respect to $\Delta$, that is $\langle\omega_i,\alpha_j^\vee\rangle=\delta_{ij}$,  and let $\rho=\omega_1+\cdots+\omega_n\in X^+$. There is a partial order on $X$ defined by $\lambda\geq\mu$ if and only if $\lambda-\mu$ is a non-negative integer linear combination of positive roots. Denote by $\alpha_0\in\Phi^+$ the highest root with respect to this partial order.

Denote by $F\colon G\to G$ a Frobenius endomorphism and let $G_1=\ker(F)$ be the first Frobenius kernel of $G$. For a rational $G$-module $M$, we denote by $M^{[1]}$ the Frobenius twist of $G$. If $M$ is finite-dimensional, we denote by $M^*$ the dual module of $M$ and by $M^\tau$ the contravariant dual of $M$ (see Section II.2.12 in \cite{Jantzen}). For $\mu\in X$, we denote by $M_\mu$ the $\mu$-weight space of $M$ and call its non-zero elements weight vectors of weight~$\mu$. We write~$B$ for the Borel subgroup of $G$ corresponding to $\Phi^-$ and define $\nabla(\lambda)=\ind_B^G(\lambda)$ for $\lambda\in X^+$. Finally, we write $\Delta(\lambda)=\nabla(\lambda)^\tau$ for the Weyl module of highest weight $\lambda\in X^+$ and $L(\lambda)=\soc_G \nabla(\lambda) =\head_G \Delta(\lambda)$ for the simple module of highest weight $\lambda$.

For $I\subseteq \{1,\ldots,n\}$, we denote by $L_I$ the derived subgroup of the Levi subgroup of $G$ corresponding to the simple roots $\{\alpha_i\mid i\in I\}$, a simply connected semisimple algebraic group. For $1\leq i\leq j\leq n$, we write $[i,j]$ for the set $\{i,i+1,\ldots, j\}$.

\subsection{The hyperalgebra and its infinitesimal subalgebra}

Instead of working with the group schemes $G$ and $G_1$ directly, we will be using the hyperalgebra of~$G$ and its infinitesimal subalgebra, which will enable us to carry out explicit constructions of weakly maximal vectors in tensor products in Section \ref{sec:weaklymaximalvectors}.
Let $\mathfrak{g}$ be the complex simple Lie algebra with root system $\Phi$, let
	\[\{ X_\alpha, H_i \mid \alpha\in\Phi,i=1,\ldots,n \}\]
be a Chevalley basis of $\mathfrak{g}$ and denote by $U(\mathfrak{g})$ the universal enveloping algebra of $\mathfrak{g}$.

\begin{Definition}
\begin{enumerate}
	\item The \emph{Kostant $\Z$-form} $U_\Z(\mathfrak{g})$ of $U(\mathfrak{g})$ is the $\Z$-subalgebra of $U(\mathfrak{g})$ generated by the \emph{divided powers} $X_{\alpha,r}=X_\alpha^r/r!$ and $\binom{H_i}{m}=\frac{H_i(H_i-1)\cdots(H_i-m+1)}{m!}$ for $\alpha\in\Phi$, $i=1,\ldots,n$ and $r,m\geq 0$.
	\item The \emph{hyperalgebra} of $G$ is the $k$-algebra $U_k(\mathfrak{g})=U_\Z(\mathfrak{g})\otimes_\Z k$.
\end{enumerate}
\end{Definition}

In the following, we will write $X_{\alpha,r}$ and $\binom{H_i}{m}$ instead of $X_{\alpha,r}\otimes 1_k$ and $\binom{H_i}{m}\otimes 1_k$ for the images of the divided powers in $U_k(\mathfrak{g})$, and we abbreviate $X_{\alpha,1}$ by $X_\alpha$.
\medskip

Recall that $U(\mathfrak{g})$ is a Hopf $\C$-algebra with comultiplication, counit and antipode given by 
\[x\xmapsto{\Delta}x\otimes 1+ 1\otimes x,\qquad x\xmapsto{\varepsilon}0\qquad\text{and}\qquad x\xmapsto{\sigma}-x ,\]
respectively, for elements $x\in\mathfrak{g}$. These maps restrict to $U_\Z(\mathfrak{g})$ and therefore make~$U_k(\mathfrak{g})$ into a Hopf $k$-algebra whose structure maps we also denote by $\Delta$, $\varepsilon$ and $\sigma$.

	As shown in Section II.1.12 in \cite{Jantzen}, $U_k(\mathfrak{g})$ is isomorphic to the \emph{distribution algebra} of $G$ (recall that we assume $G$ to be simple and simply connected), so every rational $G$-module is in a natural way a locally finite $U_k(\mathfrak{g})$-module. Every locally finite $U_k(\mathfrak{g})$-module can be equipped with the structure of a rational $G$-module and this induces an equivalence of categories between $\{\text{rational }G\text{-modules}\}$ and $\{ \text{locally finite }U_k(\mathfrak{g})\text{-modules} \}$. For a rational $G$-module $V$ and $\lambda\in X$, we have $X_{\alpha,r}\cdot V_\lambda\subseteq V_{\lambda+r\alpha}$ for all $\alpha\in\Phi$ and $r\geq 0$. See Sections II.1.19 and II.1.20 in \cite{Jantzen} for more details.

\begin{Remark} \label{rem:hyperalgebratensorproduct}
	For rational $G$-modules $V$ and $W$, the tensor product $V\otimes W$ becomes a rational $G$-module via $g\cdot (v\otimes w)=(gv)\otimes (gw)$ for $v\in V$, $w\in W$ and $g\in G$. The corresponding $U_k(\mathfrak{g})$-module structure on $V\otimes W$ is obtained by pulling back the natural action of $U_k(\mathfrak{g})\otimes U_k(\mathfrak{g})$ along the comultiplication map $\Delta$. In particular, we have
	\[ X_\alpha \cdot (v\otimes w) = \Delta(X_\alpha)\cdot (v\otimes w) = (X_\alpha\cdot v)\otimes w + v\otimes (X_\alpha\cdot w) \]
	for all $v\in V$, $w\in W$ and $\alpha\in\Phi$.
\end{Remark}

\begin{Definition}
	The first \emph{infinitesimal subalgebra} $u_k(\mathfrak{g})$ of $U_k(\mathfrak{g})$ is the subalgebra generated by $X_\alpha$ and~$\binom{H_i}{1}$ for $\alpha\in\Phi$ and $i=1,\ldots,n$.
\end{Definition}

The infinitesimal subalgebra $u_k(\mathfrak{g})$ is isomorphic to the restricted universal enveloping algebra of the Lie algebra of $G$. Every $G_1$-module is in a natural way a $u_k(\mathfrak{g})$-module and every $u_k(\mathfrak{g})$-module is in a natural way a $G_1$-module, which yields an equivalence of categories between $\{ G_1\text{-modules} \}$ and $\{ u_k(\mathfrak{g})\text{-modules} \}$. See Sections I.8.4, I.8.6, I.9.6 and II.3.3 in~\cite{Jantzen} for more details.


\section{Weakly maximal vectors in tensor products} \label{sec:weaklymaximalvectors}

Let us begin with the definition of a weakly maximal vector.

\begin{Definition}
	Let $V$ be a $G$-module. A \emph{weakly maximal vector} is a non-zero weight vector $v\in V$ such that $X_\alpha \cdot v=0$ for all $\alpha\in\Phi^+$.
\end{Definition}

Weakly maximal vectors have previously been considered by J. Brundan and A. Kleshchev in the proof of Theorem 3.3 in \cite{BrundanKleshchevSocle}, where they were called weakly primitive vectors. Our Lemma \ref{lem:weaklymaximaltensor} below was inspired by a computation in the aforementioned proof.

In this section, we prove some results about weakly maximal vectors in tensor products of $G$-modules that will be crucial for the proofs of the main results in Section \ref{sec:CRprestricted}. We first describe a way to construct explicitly from a weakly maximal vector of weight $\delta$ another weakly maximal vector of weight $\delta+\beta$ for some $\beta\in\Phi^+$, under some mild assumptions on $\delta$. Then we give criteria under which a weakly maximal vector of non-$p$-restricted weight generates a non-simple $G_1$-submodule.

\begin{Remark} \label{rem:WeakMaxVecGeneratesSimple}
	Suppose that $V$ is a rational $G$-module and that $v\in V$ is a weakly maximal vector of weight $\delta\in X$. If $\delta=\delta_0+p\delta_1$ with $\delta_0\in X_1$ and $\delta_1\in X$ (not necessarily dominant) then $v$ generates a $G_1$-submodule $U=u_k(\mathfrak{g})\cdot v$ of $V$ with $\head_{G_1}(U)\cong L(\delta_0)\vert_{G_1}$. If $V$ is completely reducible as a $G_1$-module then it follows that every weakly maximal vector $v\in V$ generates a simple $G_1$-submodule. Producing weakly maximal vectors that generate non-simple $G_1$-submodules will be our main tool for establishing non-complete reducibility, see for instance Propositions \ref{prop:nonpresnonsimpleG1} and \ref{prop:nonpreslevel0} below. 
\end{Remark}

\begin{Proposition} \label{prop:weaklymaximalvector}
	Let $V$ and $W$ be rational $G$-modules and let $v$ be a weakly maximal vector in~$V\otimes W$. Suppose that there exists $\alpha\in\Phi^+$ such that $\left( 1 \otimes X_\alpha \right)\cdot v\neq 0$ and let $\beta\in\Phi^+$ be maximal with the property that $\left( 1\otimes X_\beta \right)\cdot v\neq 0$. Then $\left( 1\otimes X_\beta \right)\cdot v$ is a weakly maximal vector.
\end{Proposition}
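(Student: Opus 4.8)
The plan is to verify the weakly-maximal condition directly for $w \coloneqq (1 \otimes X_\beta)\cdot v$. First note that $w \neq 0$ by the very choice of $\beta$, and that $w$ is a weight vector: writing $\delta$ for the weight of $v$, the operator $1 \otimes X_\beta$ maps the $\delta$-weight space of $V\otimes W$ into the $(\delta + \beta)$-weight space, so $w$ has weight $\delta + \beta$. It then remains to show $X_\gamma \cdot w = 0$ for every $\gamma \in \Phi^+$.

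Fix $\gamma \in \Phi^+$. By Remark~\ref{rem:hyperalgebratensorproduct}, the element $X_\gamma$ acts on $V\otimes W$ as $X_\gamma \otimes 1 + 1 \otimes X_\gamma$, so I would rewrite
\[ X_\gamma \cdot w \;=\; \big[\, X_\gamma,\, 1\otimes X_\beta \,\big]\cdot v \;+\; (1\otimes X_\beta)\cdot(X_\gamma \cdot v). \]
The second summand vanishes because $v$ is weakly maximal. Since $X_\gamma \otimes 1$ and $1 \otimes X_\beta$ act on different tensor factors and hence commute, the commutator in the first summand equals $\big[\, 1\otimes X_\gamma,\, 1\otimes X_\beta \,\big] = 1 \otimes [X_\gamma, X_\beta]$, where $[X_\gamma, X_\beta] = X_\gamma X_\beta - X_\beta X_\gamma$ is taken in $U_k(\mathfrak{g})$. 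Since $X_\gamma$ and $X_\beta$ are divided powers of order one, this commutator is just the image of the corresponding Lie bracket in $\mathfrak{g}$: it is $0$ when $\gamma + \beta \notin \Phi$ (note $\gamma + \beta \neq 0$, as both roots are positive), and it equals $N_{\gamma,\beta}\, X_{\gamma+\beta}$ for some integer structure constant $N_{\gamma,\beta}$ when $\gamma + \beta \in \Phi$, in which case $\gamma + \beta \in \Phi^+$.

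Hence $X_\gamma \cdot w$ is always a scalar multiple of $(1\otimes X_{\gamma+\beta})\cdot v$ (read as $0$ when $\gamma + \beta$ is not a root). In the remaining case $\gamma + \beta \in \Phi^+$, we have $\gamma + \beta > \beta$ in the partial order on $X$, because $\gamma$ is a positive root; so the maximality of $\beta$ among the positive roots $\delta$ with $(1\otimes X_\delta)\cdot v \neq 0$ forces $(1\otimes X_{\gamma+\beta})\cdot v = 0$. Either way $X_\gamma \cdot w = 0$, which completes the argument. The computation is short, and I do not expect a real obstacle; the only point requiring a little care is the claim that the hyperalgebra commutator $[X_\gamma, X_\beta]$ carries no "higher" correction terms, which holds precisely because both arguments are undivided root vectors, so one may quote the elementary description of Chevalley structure constants — and one should also confirm that the maximality of $\beta$ is meant with respect to the dominance order and that $\gamma + \beta$ genuinely lies strictly above $\beta$, which is immediate since $\gamma$ is itself a positive root.
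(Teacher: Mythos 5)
Your argument is correct and is essentially the paper's own proof: both rest on the identity $X_\gamma X_\beta = X_\beta X_\gamma + c\,X_{\gamma+\beta}$ (with $X_{\gamma+\beta}=0$ if $\gamma+\beta\notin\Phi$), the vanishing of $(1\otimes X_\beta)\cdot(X_\gamma\cdot v)$ by weak maximality of $v$, and the vanishing of $(1\otimes X_{\gamma+\beta})\cdot v$ by maximality of $\beta$. Your added checks that $w\neq 0$, that $w$ is a weight vector, and that $\gamma+\beta>\beta$ in the dominance order are correct but were left implicit in the paper.
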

\begin{proof}
	For $\gamma\in\Phi^+$, there exists $c\in k$ such that $X_\gamma X_\beta=X_\beta X_\gamma + c \cdot X_{\beta+\gamma}$, where we use the convention that $X_{\beta+\gamma}=0$ if $\beta+\gamma\notin\Phi$. It follows that
	\begin{align*}
	\Delta\left(X_\gamma\right)\cdot \left( 1 \otimes X_\beta \right) & = \left( X_\gamma\otimes 1 + 1\otimes X_\gamma \right) \cdot \left( 1 \otimes X_\beta \right) \\
	& = X_\gamma\otimes X_\beta + 1\otimes X_\gamma X_\beta \\
	& = X_\gamma\otimes X_\beta + 1\otimes X_\beta X_\gamma + c \cdot 1\otimes X_{\beta+\gamma} \\
	& = (1\otimes X_\beta) \cdot (X_\gamma\otimes 1 + 1\otimes X_\gamma) + c\cdot 1\otimes X_{\gamma+\beta} \\
	& = (1\otimes X_\beta) \cdot \Delta(X_\gamma) + c\cdot 1\otimes X_{\gamma+\beta} .
	\end{align*}
	Now $\left( 1 \otimes X_{\beta+\gamma} \right)\cdot v=0$ by maximality of $\beta$, and $\Delta(X_\gamma)\cdot v=0$ as $v$ is a weakly maximal vector. We conclude that $\Delta(X_\gamma)\cdot \left( 1 \otimes X_\beta \right)\cdot v=0$, so $\left( 1\otimes X_\beta \right)\cdot v$ is a weakly maximal vector.
\end{proof}

\begin{Lemma} \label{lem:weaklymaximaltensor}
	Let $\lambda,\mu\in X_1$ and let $v\in L(\lambda)\otimes L(\mu)$ be a weakly maximal vector of weight $\delta$. Suppose that
	\[ v=\sum_{\gamma+\vartheta=\delta} v_{(\gamma,\vartheta)}, \]
	with $v_{(\gamma,\vartheta)}\in L(\lambda)_\gamma\otimes L(\mu)_\vartheta$. Then $v_{(\lambda,\delta-\lambda)}\neq 0$ and $v_{(\delta-\mu,\mu)}\neq 0$.
\end{Lemma}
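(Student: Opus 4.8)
The plan is to exploit the weak maximality of $v$ to force the "corner" components to be nonzero, using the fact that $L(\lambda)$ and $L(\mu)$ are $p$-restricted and hence remain simple and generated by their highest weight vectors under the action of the positive part of $u_k(\mathfrak{g})$ (via $X_\alpha$, $\alpha \in \Phi^+$). I will prove $v_{(\lambda, \delta-\lambda)} \neq 0$; the statement $v_{(\delta-\mu,\mu)} \neq 0$ follows by the symmetric argument (swapping the roles of the two tensor factors, i.e. applying the same reasoning with $1 \otimes X_\alpha$ replaced by $X_\alpha \otimes 1$).

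First I would argue by contradiction: suppose $v_{(\lambda, \delta-\lambda)} = 0$, so that every component $v_{(\gamma,\vartheta)}$ of $v$ has $\gamma < \lambda$ (strictly), where $\gamma \leq \lambda$ always holds since $\lambda$ is the highest weight of $L(\lambda)$. I would then consider the subspace $N \subseteq L(\lambda)$ spanned by all weight spaces $L(\lambda)_\gamma$ with $\gamma \neq \lambda$; equivalently, $N$ is the sum of all weight spaces below $\lambda$. The key point is that $N$ is \emph{not} a $u_k(\mathfrak g)$-submodule — indeed, since $L(\lambda)|_{G_1}$ is simple (Curtis' theorem, as $\lambda \in X_1$), the only submodules are $0$ and $L(\lambda)$, and $N$ is a proper nonzero subspace. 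More usefully, since $L(\lambda)$ is generated as a $u_k(\mathfrak g)$-module by any nonzero element — in particular by any vector outside $N$, and dually $L(\lambda) = U^- \cdot L(\lambda)_\lambda$ — one sees that applying raising operators $X_\alpha$ ($\alpha \in \Phi^+$) repeatedly to any nonzero vector in $L(\lambda)$ eventually reaches $L(\lambda)_\lambda$: the submodule generated by the positive part together with the torus is all of $L(\lambda)$, so the highest weight space must be hit. Thus there is a product $Y = X_{\alpha_{i_1}} \cdots X_{\alpha_{i_r}}$ of positive simple root vectors such that $Y \cdot L(\lambda)_\gamma \not\subseteq N$ for an appropriate $\gamma$.

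The technical heart of the argument is then to pick out the "top layer" of $v$ in the $L(\lambda)$-grading. Let $\gamma_0$ be maximal among the weights $\gamma$ with $v_{(\gamma, \delta-\gamma)} \neq 0$; by assumption $\gamma_0 < \lambda$, so there is some $\alpha \in \Phi^+$ with $\gamma_0 + \alpha \leq \lambda$ and $X_\alpha$ acting nontrivially on the relevant part of $L(\lambda)$. Applying $\Delta(X_\alpha) = X_\alpha \otimes 1 + 1 \otimes X_\alpha$ to $v$ gives $0$ by weak maximality. Projecting this identity onto the weight space $L(\lambda)_{\gamma_0 + \alpha} \otimes L(\mu)_{\delta - \gamma_0}$ (the "new top layer"), the $1 \otimes X_\alpha$ term contributes nothing because it lowers no $L(\lambda)$-weight and the only source would be $v_{(\gamma_0+\alpha, \delta-\gamma_0-\alpha)}$ which vanishes by maximality of $\gamma_0$ — wait, more carefully, one projects onto $L(\lambda)_{\gamma_0+\alpha}\otimes L(\mu)_{\delta-\gamma_0}$ and finds that only $(X_\alpha \otimes 1) \cdot v_{(\gamma_0, \delta-\gamma_0)}$ can land there, forcing $(X_\alpha \otimes 1)\cdot v_{(\gamma_0, \delta-\gamma_0)} = 0$. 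Since this must hold for \emph{all} $\alpha \in \Phi^+$, the first tensor factors appearing in $v_{(\gamma_0, \delta - \gamma_0)}$ are all killed by every $X_\alpha$, i.e. they span a space of weakly maximal vectors in $L(\lambda)$ of weight $\gamma_0 \neq \lambda$. But a $p$-restricted simple module $L(\lambda)$ has no weakly maximal vector of weight other than $\lambda$: the weakly maximal vectors are exactly the $G_1$-highest weight vectors, and since $L(\lambda)|_{G_1}$ is simple with $G_1$-highest weight $\lambda$ (reduced mod $p$, which is still $\lambda$ as $\lambda$ is restricted), the space of weakly maximal vectors is one-dimensional of weight $\lambda$. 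This contradiction establishes $v_{(\lambda, \delta-\lambda)} \neq 0$.

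I expect the main obstacle to be making the "projection onto the top layer" step fully rigorous: one needs to track carefully how $\Delta(X_\alpha) \cdot v = 0$ decomposes across the weight spaces $L(\lambda)_\gamma \otimes L(\mu)_\vartheta$, and argue that the component of $v$ supported on the maximal $L(\lambda)$-weight $\gamma_0$ behaves like a weakly maximal vector of $L(\lambda)$ after projecting away the lower layers — the subtlety is that $1 \otimes X_\alpha$ does not preserve the $L(\lambda)$-grading of the second factor but does preserve that of the first, whereas $X_\alpha \otimes 1$ raises the first-factor weight, so the bookkeeping of which terms can contribute to a fixed bidegree must be done with care. Once that is in place, invoking Curtis' theorem (uniqueness of the $G_1$-highest weight vector in a restricted simple) closes the argument cleanly, and the second assertion is obtained by the mirror-image computation with the two factors interchanged.
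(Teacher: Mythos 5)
Your proposal is correct and follows essentially the same route as the paper's proof: isolate the maximal $L(\lambda)$-weight $\gamma_0$ occurring in $v$, project $X_\alpha\cdot v=0$ onto $L(\lambda)_{\gamma_0+\alpha}\otimes L(\mu)_{\delta-\gamma_0}$ to see that only $(X_\alpha\otimes 1)\cdot v_{(\gamma_0,\delta-\gamma_0)}$ contributes, and deduce (after writing $v_{(\gamma_0,\delta-\gamma_0)}=\sum v_i\otimes w_i$ with the $w_i$ linearly independent) that some $v_i$ is a weakly maximal vector of $L(\lambda)$, forcing $\gamma_0=\lambda$ by simplicity of $L(\lambda)\vert_{G_1}$. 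The only cosmetic difference is that you argue by contradiction while the paper directly shows $\gamma_0=\lambda$.
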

\begin{proof}
	By symmetry, it suffices to show that $v_{(\lambda,\delta-\lambda)}\neq 0$. Let $\gamma_0$ be maximal with $v_{(\gamma_0,\delta-\gamma_0)}\neq 0$ and write
	\[ v_{(\gamma_0,\delta-\gamma_0)} = v_1\otimes w_1 + \cdots + v_s\otimes w_s \]
	with $v_1,\ldots,v_s \in L(\lambda)_{\gamma_0}$ all non-zero and $w_1,\ldots,w_s \in L(\mu)_{\delta-\gamma_0}$ linearly independent. We will show that $\gamma_0=\lambda$.
	
	For $\nu,\eta\in X$, denote by $p_{(\nu,\eta)}$ the projection from $L(\lambda)\otimes L(\mu)=\bigoplus_{\gamma,\vartheta} L(\lambda)_\gamma\otimes L(\mu)_\vartheta$ onto the summand $L(\lambda)_\nu\otimes L(\mu)_\eta$. For $\alpha\in\Phi^+$, we have $X_\alpha\cdot v=0$ and therefore
	\[ 0 = p_{(\gamma_0+\alpha,\delta-\gamma_0)}\left( X_\alpha\cdot v \right)=\sum_{\gamma+\vartheta=\delta} p_{(\gamma_0+\alpha,\delta-\gamma_0)}\left( X_\alpha\cdot v_{(\gamma,\vartheta)} \right) . \]
	Now if	$p_{(\gamma_0+\alpha,\delta-\gamma_0)}\left( X_\alpha\cdot v_{(\gamma,\delta-\gamma)} \right)\neq 0$ for some $\gamma\in X$ then $\gamma_0+\alpha\in\{\gamma,\gamma+\alpha\}$ as
	\[ X_{\alpha}\cdot v_{(\gamma,\delta-\gamma)}\in L(\lambda)_\gamma\otimes L(\mu)_{\delta-\gamma+\alpha} \oplus L(\lambda)_{\gamma+\alpha} \otimes L(\mu)_{\delta-\gamma} \]
	by Remark \ref{rem:hyperalgebratensorproduct}, so $\gamma\geq\gamma_0$ and $\gamma=\gamma_0$ by maximality of $\gamma_0$.
	We conclude that
	\[ 0 = p_{(\gamma_0+\alpha,\delta-\gamma_0)}\left( X_\alpha\cdot v \right) = p_{(\gamma_0+\alpha,\delta-\gamma_0)}\left( X_\alpha\cdot v_{(\gamma_0,\delta-\gamma_0)} \right) = (X_\alpha\cdot v_1)\otimes w_1 + \cdots + (X_\alpha\cdot v_s)\otimes w_s \]
	and linear independence of $w_1,\ldots,w_s$ implies that $X_\alpha\cdot v_1=0$. Hence $v_1$ is a weakly maximal vector in $L(\lambda)$ of weight $\gamma_0$ and we conclude that $\gamma_0=\lambda$.
\end{proof}

\begin{Proposition} \label{prop:nothighestweighthigherweakmaxvec}
	Let $\lambda,\mu\in X_1$ and let $v\in L(\lambda)\otimes L(\mu)$ be a weakly maximal vector of weight $\delta\neq\lambda+\mu$. Then there exists $\beta\in\Phi^+$ such that $(1\otimes X_\beta)\cdot v$ is a weakly maximal vector in $L(\lambda)\otimes L(\mu)$.
\end{Proposition}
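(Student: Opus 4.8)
The plan is to reduce everything to Proposition~\ref{prop:weaklymaximalvector}. That proposition guarantees that once we know $(1\otimes X_\alpha)\cdot v\neq 0$ for \emph{some} $\alpha\in\Phi^+$, we may pick $\beta\in\Phi^+$ maximal with $(1\otimes X_\beta)\cdot v\neq 0$ and conclude that $(1\otimes X_\beta)\cdot v$ is a weakly maximal vector. So the entire task is to prove that $(1\otimes X_\alpha)\cdot v\neq 0$ for at least one $\alpha\in\Phi^+$; this is where the hypothesis $\delta\neq\lambda+\mu$ must be used.

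I would prove this by contradiction. Assume $(1\otimes X_\alpha)\cdot v=0$ for every $\alpha\in\Phi^+$. Since $v$ is weakly maximal, $0=X_\alpha\cdot v=(X_\alpha\otimes 1)\cdot v+(1\otimes X_\alpha)\cdot v$, so also $(X_\alpha\otimes 1)\cdot v=0$ for every $\alpha\in\Phi^+$; thus $v$ is annihilated by both the left and the right copy of each $X_\alpha$. Over a field we have $\ker(1\otimes X_\alpha)=L(\lambda)\otimes\ker(X_\alpha|_{L(\mu)})$, so intersecting over $\alpha\in\Phi^+$ gives $v\in L(\lambda)\otimes M$, where $M\subseteq L(\mu)$ is the $T$-stable subspace of all vectors killed by every $X_\alpha$, $\alpha\in\Phi^+$. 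Every non-zero weight vector in $M$ is a weakly maximal vector of $L(\mu)$, hence of weight $\mu$ because $\mu\in X_1$ --- exactly the fact used at the end of the proof of Lemma~\ref{lem:weaklymaximaltensor} --- so $M=L(\mu)_\mu$ and $v\in L(\lambda)\otimes L(\mu)_\mu$. The symmetric argument with $(X_\alpha\otimes 1)$ gives $v\in L(\lambda)_\lambda\otimes L(\mu)$, and therefore $v\in L(\lambda)_\lambda\otimes L(\mu)_\mu$. As both these weight spaces are one-dimensional, $v$ is a non-zero scalar multiple of the tensor product of highest weight vectors, whence $\delta=\lambda+\mu$, contradicting the hypothesis. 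Hence some $(1\otimes X_\alpha)\cdot v$ is non-zero and we are done by Proposition~\ref{prop:weaklymaximalvector}.

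Alternatively, instead of the tensor-kernel identity one can argue componentwise in the spirit of Lemma~\ref{lem:weaklymaximaltensor}: writing $v=\sum_{\gamma+\vartheta=\delta}v_{(\gamma,\vartheta)}$, the vanishing $(1\otimes X_\alpha)\cdot v=0$ forces $v_{(\gamma,\vartheta)}=0$ unless $\vartheta=\mu$, and $(X_\alpha\otimes 1)\cdot v=0$ forces $v_{(\gamma,\vartheta)}=0$ unless $\gamma=\lambda$; combined with Lemma~\ref{lem:weaklymaximaltensor} (which in particular gives $v_{(\lambda,\delta-\lambda)}\neq 0$) this again yields $\delta=\lambda+\mu$.

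I do not expect a serious obstacle: the argument is short once one has the idea of arguing by contradiction and observing that the failure of the conclusion makes $v$ a simultaneous zero of all the operators $X_\alpha\otimes 1$ and $1\otimes X_\alpha$. The only points needing a little care are the elementary bookkeeping with kernels of maps on a tensor product (or, in the alternative, with weight components) and the already-established input that weakly maximal vectors of a simple module with $p$-restricted highest weight lie in the highest weight space.
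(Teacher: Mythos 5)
Your proof is correct. The overall strategy coincides with the paper's: everything reduces to producing a single $\alpha\in\Phi^+$ with $(1\otimes X_\alpha)\cdot v\neq 0$, after which Proposition~\ref{prop:weaklymaximalvector} applied to a maximal such root finishes the argument. Where you diverge is in how that non-vanishing is obtained. The paper argues directly: by Lemma~\ref{lem:weaklymaximaltensor} the component $v_{(\lambda,\delta-\lambda)}=v^+\otimes w$ is non-zero, the hypothesis $\delta\neq\lambda+\mu$ makes $w$ a non-weakly-maximal weight vector of $L(\mu)$, so some $X_\gamma\cdot w\neq 0$, and a projection onto $L(\lambda)_\lambda\otimes L(\mu)_{\delta-\lambda+\gamma}$ shows $(1\otimes X_\gamma)\cdot v\neq 0$. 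You instead argue by contradiction with a kernel-intersection computation, showing that if all $(1\otimes X_\alpha)\cdot v$ (hence all $(X_\alpha\otimes 1)\cdot v$) vanish then $v\in L(\lambda)_\lambda\otimes L(\mu)_\mu$, forcing $\delta=\lambda+\mu$. Both arguments ultimately rest on the same input --- that a weakly maximal vector of $L(\mu)$ with $\mu\in X_1$ must have weight $\mu$ --- but your main route bypasses Lemma~\ref{lem:weaklymaximaltensor} entirely and is slightly more self-contained, while the paper's version is more constructive in that it exhibits a specific component of $v$ on which $1\otimes X_\gamma$ acts non-trivially (and your second, componentwise variant is essentially the paper's proof in contrapositive form).
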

\begin{proof}
	Write $v=\sum v_{(\gamma,\delta-\gamma)}$ with $v_{(\gamma,\delta-\gamma)}\in L(\lambda)_\gamma\otimes L(\mu)_{\delta-\gamma}$. Then $0\neq v_{(\lambda,\delta-\lambda)}$ by Lemma \ref{lem:weaklymaximaltensor} and therefore $v_{(\lambda,\delta-\lambda)}=v^+\otimes w$ for some non-zero $v^+\in L(\lambda)_\lambda$ and $w\in L(\mu)_{\delta-\lambda}$. As $\delta-\lambda\neq\mu$, $w$ is not a weakly maximal vector in $L(\mu)$ and there exists $\gamma\in\Phi^+$ such that $X_\gamma\cdot w\neq 0$. It follows that $\left(1\otimes X_\gamma\right)\cdot v\neq 0$ and we choose $\beta \in\Phi^+$ maximal with the property that $\left(1\otimes X_\beta\right)\cdot v\neq 0$. Then $\left(1\otimes X_\beta\right)\cdot v$ is a weakly maximal vector by Proposition \ref{prop:weaklymaximalvector}.
\end{proof}

Now we establish some criteria under which a weakly maximal vector of non-$p$-restricted weight generates a non-simple $G_1$-submodule. Most of these criteria are based on the following lemma:

\begin{Lemma} \label{lem:weakmaxvecnonsimpleG1}
	Let $V$ be a rational $G$-module and $v\in V$ a weakly maximal vector of weight $\delta$. Suppose that there exists $\alpha\in\Delta$ such that $p\nmid \langle\delta,\alpha^\vee\rangle+1$ and write $\langle\delta,\alpha^\vee\rangle+1=p\cdot q+ r$ with $0<r<p$ and~$q\in\Z$. If $v$ generates a simple $G_1$-submodule of $V$ then $X_{-\alpha,r}\cdot v=0$.
\end{Lemma}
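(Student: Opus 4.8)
The plan is to prove the stronger assertion that $X_{-\alpha}^r\cdot v=0$, which is equivalent to the claim since $0<r<p$ forces $r!$ to be a unit in $k$ and $X_{-\alpha}^r=r!\cdot X_{-\alpha,r}$ (by definition of the divided power). So I would argue by contradiction: suppose $w:=X_{-\alpha}^r\cdot v\neq 0$, and first show that $w$ is again a weakly maximal vector, necessarily of weight $\delta-r\alpha$; that is, $X_\beta\cdot w=0$ for every $\beta\in\Phi^+$.

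For $\beta=\alpha$ one only needs the $\mathfrak{sl}_2$-triple attached to the simple root $\alpha$: the integral commutation relation $X_\alpha X_{-\alpha}^r=X_{-\alpha}^r X_\alpha+r\cdot X_{-\alpha}^{r-1}\bigl(H_\alpha-(r-1)\bigr)$, together with $X_\alpha\cdot v=0$ and the fact that $H_\alpha$ acts on $v$ by $\langle\delta,\alpha^\vee\rangle$, gives $X_\alpha\cdot w=r\bigl(\langle\delta,\alpha^\vee\rangle-r+1\bigr)\cdot X_{-\alpha}^{r-1}\cdot v$; and $\langle\delta,\alpha^\vee\rangle-r+1=(\langle\delta,\alpha^\vee\rangle+1)-r=pq$ vanishes in $k$. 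For $\beta\in\Phi^+\setminus\{\alpha\}$, I would commute $X_\beta$ past the $r$ factors $X_{-\alpha}$: a straightforward induction on $r$, using that $[X_{\beta-j\alpha},X_{-\alpha}]$ is an integer multiple of $X_{\beta-(j+1)\alpha}$ (with the convention $X_\gamma=0$ if $\gamma\notin\Phi$), shows $X_\beta\cdot X_{-\alpha}^r\cdot v\in\sum_{j=0}^{r}k\cdot X_{-\alpha}^{r-j}\cdot\bigl(X_{\beta-j\alpha}\cdot v\bigr)$. The term $j=0$ vanishes since $v$ is weakly maximal; for $j\geq 1$ the weight $\beta-j\alpha$ is never $0$ (otherwise $\beta=\alpha$) and, because $\alpha$ is \emph{simple}, it is never a negative root (if $\beta-j\alpha=-\gamma$ with $\gamma\in\Phi^+$ then $j\alpha=\beta+\gamma$, which is impossible since $\beta\neq\alpha$ has a positive coefficient on some simple root distinct from $\alpha$), so $X_{\beta-j\alpha}\cdot v=0$ whenever $\beta-j\alpha\in\Phi$. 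Hence $X_\beta\cdot w=0$, and $w$ is weakly maximal of weight $\delta-r\alpha$.

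To conclude, set $U=u_k(\mathfrak{g})\cdot v$, which is a simple $G_1$-module by hypothesis. Since $0\neq w\in U$, simplicity forces $u_k(\mathfrak{g})\cdot w=U$. Now use the triangular decomposition $u_k(\mathfrak{g})=u_k^-\,u_k^0\,u_k^+$, where $u_k^+$, $u_k^-$ and $u_k^0$ denote the subalgebras generated by the $X_\beta$ for $\beta\in\Phi^+$, the $X_\beta$ for $\beta\in\Phi^-$, and the $\binom{H_i}{1}$, respectively. Since $w$ is a weakly maximal weight vector, $u_k^+\cdot w=k\cdot w$ and $u_k^0\cdot w=k\cdot w$, so $U=u_k(\mathfrak{g})\cdot w=u_k^-\cdot w$. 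Every weight occurring in $u_k^-\cdot w$ is therefore $\leq\delta-r\alpha$; but $v\in U$ has weight $\delta$, so $\delta\leq\delta-r\alpha$, i.e.\ $r\alpha\leq 0$, contradicting $r\geq 1$ and $\alpha\in\Phi^+$. Hence $w=0$, and the lemma follows.

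The step I expect to be the main obstacle is the case $\beta\neq\alpha$ of the weak-maximality check: one must ensure that iterated commutation of $X_\beta$ with $X_{-\alpha}$ only ever produces vectors $X_{\beta-j\alpha}\cdot v$ with $\beta-j\alpha$ a positive root (or zero), which is precisely where the hypothesis $\alpha\in\Delta$ is essential. The divisibility hypothesis $p\nmid\langle\delta,\alpha^\vee\rangle+1$ is used exactly to kill the $\beta=\alpha$ contribution, and the final weight comparison is legitimate because $U$, though only a $G_1$-submodule of $V$, is still a sum of $T$-weight spaces of $V$.
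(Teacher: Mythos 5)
Your proof is correct, but it takes a genuinely different route from the paper's. The paper argues entirely through the weight structure of the simple $G_1$-module generated by $v$: by Remark \ref{rem:WeakMaxVecGeneratesSimple}, $u_k(\mathfrak{g})\cdot v\cong L(\delta_0)\vert_{G_1}$ where $\delta=\delta_0+p\delta_1$ with $\delta_0\in X_1$, the hypothesis $p\nmid\langle\delta,\alpha^\vee\rangle+1$ forces $r=\langle\delta_0,\alpha^\vee\rangle+1$, and then $s_\alpha(\delta_0-r\alpha)=\delta_0+\alpha$ shows by $W$-invariance of weights that $\delta_0-r\alpha$ is not a weight of $L(\delta_0)$, so $X_{-\alpha,r}\cdot v=0$ — about four lines, with no commutator computations. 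You instead show by explicit calculation in the hyperalgebra (the integral $\mathfrak{sl}_2$-relation for $\beta=\alpha$, and iterated commutation using simplicity of $\alpha$ for $\beta\neq\alpha$) that $X_{-\alpha}^r\cdot v$ would be a weakly maximal vector of strictly lower weight, and then derive a contradiction with simplicity via the triangular decomposition of $u_k(\mathfrak{g})$. Both arguments are sound; the paper's is shorter because it outsources the work to the known description of $\soc_{G_1}$/$\head_{G_1}$ and the $W$-symmetry of weight sets, whereas yours is self-contained at the level of the hyperalgebra and in fact reproves, in this special case, the reflection symmetry that the paper invokes. One small remark on your closing commentary: the vanishing of the $\beta=\alpha$ term uses only the identity $\langle\delta,\alpha^\vee\rangle+1-r=pq$, which holds by the definition of $q$ and $r$; the hypothesis $p\nmid\langle\delta,\alpha^\vee\rangle+1$ is really what guarantees $r>0$, without which the final weight comparison (and indeed the statement itself) would fail.
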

\begin{proof}
	Write $\delta=\delta_0+p\delta_1$ with $\delta_0\in X_1$ and $\delta_1\in X$. Assume that $v$ generates a simple $G_1$-submodule of $V$, so that $u_k(\mathfrak{g})\cdot v\cong  L(\delta_0)\vert_{G_1}$ by Remark \ref{rem:WeakMaxVecGeneratesSimple}.
	We have
	\[ p\cdot q + r = \langle\delta,\alpha^\vee\rangle + 1 = \langle\delta_0+p\delta_1,\alpha^\vee\rangle + 1 = p\cdot \langle\delta_1,\alpha^\vee\rangle + \langle\delta_0,\alpha^\vee\rangle + 1 \]
	and it follows that $r=\langle\delta_0,\alpha^\vee\rangle+1$ as $\delta_0\in X_1$ and $p\nmid \langle\delta,\alpha^\vee\rangle+1$. Hence $\delta_0-r\alpha=s_\alpha(\delta_0)-\alpha$ and $s_\alpha(\delta_0-r\alpha)=\delta_0+\alpha$, where $s_\alpha\in W$ denotes the reflection along $\alpha$. We conclude that $\delta_0-r\alpha$ is not a weight of $L(\delta_0)$, so $X_{-\alpha,r}\cdot v=0$. 
\end{proof}

\begin{Corollary} \label{cor:highestweightvectorprestricted}
	Let $\lambda,\mu\in X_1$ and let $v$ and $w$ be maximal vectors in $L(\lambda)$ and $L(\mu)$, respectively. If the maximal vector $v\otimes w$ generates a simple $G_1$-submodule of $L(\lambda)\otimes L(\mu)$ then $\lambda+\mu\in X_1$.
\end{Corollary}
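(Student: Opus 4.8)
The plan is to argue by contradiction, invoking Lemma~\ref{lem:weakmaxvecnonsimpleG1}. Put $\delta=\lambda+\mu$, so that $v\otimes w$ is a weakly maximal vector of weight $\delta$ in $L(\lambda)\otimes L(\mu)$, and assume for contradiction that $\delta\notin X_1$. Since $\delta\in X^+$, there is a simple root $\alpha=\alpha_i\in\Delta$ with $\langle\delta,\alpha^\vee\rangle\geq p$; on the other hand $\langle\lambda,\alpha^\vee\rangle\leq p-1$ and $\langle\mu,\alpha^\vee\rangle\leq p-1$ because $\lambda,\mu\in X_1$, so $p\leq\langle\delta,\alpha^\vee\rangle\leq 2p-2$. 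Hence $\langle\delta,\alpha^\vee\rangle+1$ lies strictly between $p$ and $2p$, so $p\nmid\langle\delta,\alpha^\vee\rangle+1$, and in the notation of Lemma~\ref{lem:weakmaxvecnonsimpleG1} we get $q=1$ and $r=\langle\delta,\alpha^\vee\rangle+1-p$ with $1\leq r\leq p-1$. Writing $r=\langle\lambda,\alpha^\vee\rangle+\langle\mu,\alpha^\vee\rangle+1-p$ and using $\langle\lambda,\alpha^\vee\rangle,\langle\mu,\alpha^\vee\rangle\leq p-1$ once more, we also record the bounds $r\leq\langle\lambda,\alpha^\vee\rangle$ and $r\leq\langle\mu,\alpha^\vee\rangle$.

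Next I would show that $X_{-\alpha,r}\cdot(v\otimes w)\neq 0$. Since $v\otimes w$ generates a simple $G_1$-submodule by hypothesis, this contradicts Lemma~\ref{lem:weakmaxvecnonsimpleG1} and forces $\delta\in X_1$, as desired. By the standard formula for the coproduct of a divided power, $\Delta(X_{-\alpha,r})=\sum_{i=0}^{r}X_{-\alpha,i}\otimes X_{-\alpha,r-i}$, and therefore
\[ X_{-\alpha,r}\cdot(v\otimes w)=\sum_{i=0}^{r}(X_{-\alpha,i}\cdot v)\otimes(X_{-\alpha,r-i}\cdot w). \]
The $i$-th summand lies in $L(\lambda)_{\lambda-i\alpha}\otimes L(\mu)_{\mu-(r-i)\alpha}$, and these are pairwise distinct summands of the decomposition $L(\lambda)\otimes L(\mu)=\bigoplus_{\nu,\eta}L(\lambda)_\nu\otimes L(\mu)_\eta$; hence $X_{-\alpha,r}\cdot(v\otimes w)$ vanishes if and only if every summand vanishes, so it suffices to exhibit one nonzero summand. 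Regarding $L(\lambda)$ as a module for $L_{\{i\}}\cong\SL_2$, the vector $v$ is a maximal vector of $\SL_2$-weight $\langle\lambda,\alpha^\vee\rangle<p$, and since the Weyl module of that highest weight for $\SL_2$ is simple one gets $X_{-\alpha,i}\cdot v\neq 0$ for $0\leq i\leq\langle\lambda,\alpha^\vee\rangle$; symmetrically $X_{-\alpha,j}\cdot w\neq 0$ for $0\leq j\leq\langle\mu,\alpha^\vee\rangle$. By the bounds $r\leq\langle\lambda,\alpha^\vee\rangle$ and $r\leq\langle\mu,\alpha^\vee\rangle$ recorded above, every summand is then nonzero---for instance $v\otimes(X_{-\alpha,r}\cdot w)$---and we are done.

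I expect the only genuinely delicate point to be the rank-one assertion that $X_{-\alpha,i}\cdot v\neq 0$ and $X_{-\alpha,j}\cdot w\neq 0$ throughout the stated ranges; this is exactly where the hypotheses $\lambda,\mu\in X_1$ (equivalently $\langle\lambda,\alpha^\vee\rangle,\langle\mu,\alpha^\vee\rangle<p$) are used, everything else being weight bookkeeping together with the $p$-adic arithmetic already packaged in Lemma~\ref{lem:weakmaxvecnonsimpleG1}. To prove it one restricts to the distribution algebra of $L_{\{i\}}\cong\SL_2$: as $v$ is a weight vector killed by $X_\alpha$, the submodule it generates is a highest weight module of highest weight $m\coloneqq\langle\lambda,\alpha^\vee\rangle$, necessarily with simple head the $\SL_2$-module of highest weight $m$; since $m<p$ this head coincides with the Weyl module and has a one-dimensional weight space at $\lambda-i\alpha$ for each $0\leq i\leq m$. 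That weight space is spanned by $X_{-\alpha,i}\cdot v$, which is therefore nonzero, and likewise for $w$.
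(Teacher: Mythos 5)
Your proof is correct and follows essentially the same route as the paper: assume $\langle\lambda+\mu,\alpha^\vee\rangle\geq p$ for some $\alpha\in\Delta$, note that $p\nmid\langle\lambda+\mu,\alpha^\vee\rangle+1$ since $\lambda,\mu\in X_1$, expand $X_{-\alpha,r}\cdot(v\otimes w)$ via the coproduct and use the $\SL_2$-restriction to see it is nonzero, then contradict Lemma~\ref{lem:weakmaxvecnonsimpleG1}. The only (harmless) difference is that you verify nonvanishing just for the relevant $r$, whereas the paper records it for all $t\leq\langle\lambda+\mu,\alpha^\vee\rangle$, and you spell out the rank-one argument that the paper leaves implicit.
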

\begin{proof}
	Suppose for a contradiction that $\langle\lambda+\mu,\alpha^\vee\rangle\geq p$ for some $\alpha\in\Delta$. We have $X_{-\alpha,r}v\neq 0$ for $r=0,\ldots,\langle\lambda,\alpha^\vee\rangle$ and $X_{-\alpha,s}w\neq 0$ for $s=0,\ldots,\langle\mu,\alpha^\vee\rangle$ and it follows that
	\[ X_{-\alpha,t}\cdot (v\otimes w) = \sum_{i=0}^t (X_{-\alpha,i}\cdot v) \otimes (X_{-\alpha,t-i}\cdot w) \neq 0 \]
	for $t=0,\ldots,\langle\lambda+\mu,\alpha^\vee\rangle$. If $\langle\lambda+\mu,\alpha^\vee\rangle=q\cdot p+r$ with $q\in\Z$ and $0\leq r<p$ then clearly $r<\langle\lambda+\mu,\alpha^\vee\rangle$. Moreover, we have $p<\langle\lambda+\mu,\alpha^\vee\rangle+1<2p$ as $\langle\lambda,\alpha^\vee\rangle< p$ and $\langle\mu,\alpha^\vee\rangle< p$, so $p\nmid \langle\lambda+\mu,\alpha^\vee\rangle+1$. Now Lemma~\ref{lem:weakmaxvecnonsimpleG1} yields that $v\otimes w$ generates a non-simple $G_1$-submodule of $L(\lambda)\otimes L(\mu)$, a contradiction.
\end{proof}

In order to prove the next proposition, we first need an easy lemma about root systems.

\begin{Lemma} \label{lem:rootnegativecoeff}
	Let $\Phi$ be a root system of type different from $\mathrm{G}_2$ and let $\alpha,\beta\in\Phi^+$ such that $\langle\beta,\alpha^\vee\rangle<0$. Then $\langle\beta,\alpha^\vee\rangle\in\{-1,-2\}$ and $\beta-\alpha\notin\Phi$.
\end{Lemma}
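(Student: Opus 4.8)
The plan is to prove both assertions — that $\langle\beta,\alpha^\vee\rangle\in\{-1,-2\}$ and that $\beta-\alpha\notin\Phi$ — by reducing to the rank-$2$ situation spanned by $\alpha$ and $\beta$. For the first assertion I would invoke the standard fact that for any two non-proportional roots $\alpha,\beta$ in a root system, the product $\langle\beta,\alpha^\vee\rangle\langle\alpha,\beta^\vee\rangle\in\{0,1,2,3\}$, with the value $3$ occurring only in type $\mathrm{G}_2$. Since we are assuming $\Phi$ has no component of type $\mathrm{G}_2$, this product is at most $2$, and since $\langle\beta,\alpha^\vee\rangle<0$ forces both Cartan integers to be negative (they have the same sign, namely that of $\langle\alpha,\beta\rangle$), we get $|\langle\beta,\alpha^\vee\rangle|\cdot|\langle\alpha,\beta^\vee\rangle|\le 2$ with both factors positive integers, hence $|\langle\beta,\alpha^\vee\rangle|\le 2$. (Note $\alpha$ and $\beta$ are not proportional: if $\beta=c\alpha$ with $c>0$ then $\langle\beta,\alpha^\vee\rangle=2c>0$, contradicting the hypothesis.)

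For the second assertion, I would argue as follows. Since $\langle\beta,\alpha^\vee\rangle<0$, a standard lemma on root strings tells us that $\beta+\alpha\in\Phi$; more precisely, if $\langle\beta,\alpha^\vee\rangle=-m<0$ then the $\alpha$-string through $\beta$ runs $\beta-p'\alpha,\ldots,\beta+q'\alpha$ with $p'-q'=\langle\beta,\alpha^\vee\rangle=-m$, so $q'\ge m\ge 1$. I want to conclude $p'=0$, i.e. $\beta-\alpha\notin\Phi$. Suppose for contradiction that $\beta-\alpha\in\Phi$, so $p'\ge 1$, hence $q'=p'+m\ge 2$, so the $\alpha$-string through $\beta$ has length at least $p'+q'+1\ge 4$. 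But in a root system with no $\mathrm{G}_2$ component, every root string has length at most $3$ (lengths $1$ through $4$ are $\{1,2,3\}$ for all types except $\mathrm{G}_2$, where length-$4$ strings occur); this is the contradiction. Alternatively, and perhaps more cleanly, I would use that $\beta-\alpha\in\Phi$ together with $\alpha,\beta\in\Phi^+$ means one of $\alpha,\beta$ is higher than $\beta-\alpha$, and examine the rank-$2$ sub-root-system generated by $\alpha$ and $\beta$: it must be of type $\mathrm{A}_1\times\mathrm{A}_1$, $\mathrm{A}_2$, $\mathrm{B}_2$ or $\mathrm{G}_2$, and in the first three cases one checks directly that it is impossible to have $\langle\beta,\alpha^\vee\rangle<0$ with both $\beta-\alpha$ and $\beta+\alpha$ being roots — that configuration forces type $\mathrm{G}_2$.

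The cleanest self-contained route, which I would actually write, combines these: the subsystem $\Psi=\Phi\cap(\Z\alpha+\Z\beta)$ is a rank-$2$ root system, and by the classification of rank-$2$ root systems it is of type $\mathrm{A}_1\times\mathrm{A}_1$, $\mathrm{A}_2$, $\mathrm{B}_2=\mathrm{C}_2$, or $\mathrm{G}_2$; the last is excluded because a $\mathrm{G}_2$ subsystem of $\Phi$ would have to lie inside a single irreducible component, which would then itself be of type $\mathrm{G}_2$ (irreducible root systems containing a $\mathrm{G}_2$ subsystem of full rank are $\mathrm{G}_2$; in higher-rank types one would need to check no $\mathrm{G}_2$ appears as a subsystem, which is true). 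In types $\mathrm{A}_1\times\mathrm{A}_1$, $\mathrm{A}_2$, $\mathrm{B}_2$ one verifies by inspection of the (finitely many) positive roots that whenever $\alpha,\beta\in\Psi^+$ with $\langle\beta,\alpha^\vee\rangle<0$, one has $\langle\beta,\alpha^\vee\rangle\in\{-1,-2\}$ and $\beta-\alpha\notin\Phi$.

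I expect the main obstacle to be the justification that no $\mathrm{G}_2$ subsystem can occur, i.e. pinning down exactly why ``$\Phi$ not of type $\mathrm{G}_2$'' rules out the $\mathrm{G}_2$ case for $\Psi$; the honest statement needed is that an irreducible root system admitting a closed rank-$2$ subsystem of type $\mathrm{G}_2$ must itself be of type $\mathrm{G}_2$, together with the observation that $\langle\beta,\alpha^\vee\rangle\langle\alpha,\beta^\vee\rangle=3$ forces $\Psi$ to be of type $\mathrm{G}_2$. Once that point is granted, the rest is a routine finite check over the three remaining rank-$2$ types, which I would not grind through in detail but simply assert after setting up the case division.
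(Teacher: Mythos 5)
Your proposal is correct and follows essentially the same route as the paper: the paper also passes to the rank-two subsystem $(\Z\alpha+\Z\beta)\cap\Phi$, excludes type $\mathrm{G}_2$ on the grounds that $\Phi$ has no $\mathrm{G}_2$ subsystem, and derives the contradiction from the $\alpha$-string through $\beta-\alpha$ (the paper concludes the string is exactly $\{\beta-\alpha,\beta,\beta+\alpha\}$ in type $\mathrm{B}_2$ and computes $\langle\beta,\alpha^\vee\rangle=0$, while you instead invoke the bound that strings have length at most $3$ outside $\mathrm{G}_2$ — an equivalent step). The point you flag as the main obstacle, that a non-$\mathrm{G}_2$ root system contains no $\mathrm{G}_2$ subsystem, is simply asserted in the paper as well.
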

\begin{proof}
	Consider the subsystem $\Phi^\prime\coloneqq(\Z\alpha+\Z\beta)\cap\Phi$ of $\Phi$. As $\langle\beta,\alpha^\vee\rangle<0$, we have $\alpha+\beta\in\Phi^\prime$. Suppose for a contradiction that $\beta-\alpha\in\Phi^\prime$. Then $\Phi^\prime$ contains a root string of length at least $3$, so $\Phi^\prime$ is of type $\mathrm{B}_2$ or $\mathrm{G}_2$, hence of type $\mathrm{B}_2$ as $\Phi$ is not of type $\mathrm{G}_2$ and therefore does not have a subsystem of type $\mathrm{G}_2$. Then $\{\beta-\alpha,\beta,\beta+\alpha\}$ is the $\alpha$-string through $\beta-\alpha$, so $\langle\beta-\alpha,\alpha^\vee\rangle=-2$ and $\langle\beta,\alpha^\vee\rangle=0$, a contradiction.
\end{proof}

The next result will be very useful in view of Proposition \ref{prop:nothighestweighthigherweakmaxvec}. In the proposition, we constructed from a weakly maximal vector $v\in L(\lambda)\otimes L(\mu)$ of weight $\delta\in X$ a weakly maximal vector $(1\otimes X_\beta)\cdot v$ for some $\beta\in\Phi^+$. Now we establish conditions on $\delta$ and $\beta$ under which $v$ generates a non-simple $G_1$-submodule.

\begin{Proposition} \label{prop:weakmaxvecnotG2}
	Assume that $\Phi$ is of type different from $\mathrm{G}_2$. Let $\lambda,\mu\in X_1$ and suppose that there is a weakly maximal vector $v\in L(\lambda)\otimes L(\mu)$ of weight $\delta\in X$ such that $\langle\delta,\alpha^\vee\rangle\geq p$ for some $\alpha\in\Delta$. Assume furthermore that there exists $\beta\in\Phi^+$ with $-\langle\beta,\alpha^\vee\rangle<p$ such that $w\coloneqq (1\otimes X_\beta)\cdot v$ is a weakly maximal vector in $L(\lambda)\otimes L(\mu)$ and $\langle\delta+\beta,\alpha^\vee\rangle<p$. Then $v$ generates a non-simple $G_1$-submodule of $L(\lambda)\otimes L(\mu)$.
\end{Proposition}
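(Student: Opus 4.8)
The plan is to combine Lemma~\ref{lem:weakmaxvecnonsimpleG1} with a careful analysis of the vectors $X_{-\alpha,r}\cdot v$ and $X_{-\alpha,r}\cdot w$, using the hypothesis $\langle\delta,\alpha^\vee\rangle\geq p$ together with $\langle\delta+\beta,\alpha^\vee\rangle<p$ to locate an integer $r$ with $0<r<p$, $p\nmid\langle\delta,\alpha^\vee\rangle+1$ (or a small variant thereof), at which $X_{-\alpha,r}\cdot v\neq 0$; then Lemma~\ref{lem:weakmaxvecnonsimpleG1} forces $v$ to generate a non-simple $G_1$-submodule. First I would record that, since $v$ is a weakly maximal vector of weight $\delta$ with $\langle\delta,\alpha^\vee\rangle\geq p$, the $\mathfrak{sl}_2$-theory for the copy of $\mathfrak{sl}_2$ attached to $\alpha$ applies: $v$ is a maximal vector for $X_\alpha$, so the divided powers $X_{-\alpha,r}\cdot v$ for $0\leq r\leq\langle\delta,\alpha^\vee\rangle$ are related by the standard Chevalley commutator relations, and in particular $X_{-\alpha,r}\cdot v\neq 0$ for all $r$ in that range provided none of these is killed — which is automatic from the representation theory of $u_k(\mathfrak{sl}_2)$ only after reduction mod $p$, so some care is needed here. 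The key idea is that $w=(1\otimes X_\beta)\cdot v$ having weight $\delta+\beta$ with $\langle\delta+\beta,\alpha^\vee\rangle<p$ means $w$ sits in a ``small'' $\alpha$-string, and relating $X_{-\alpha,r}\cdot w$ to $X_{-\alpha,r}\cdot v$ via the commutator $[X_{-\alpha},X_\beta]$ (using $-\langle\beta,\alpha^\vee\rangle<p$ and Lemma~\ref{lem:rootnegativecoeff} to control which root strings appear) will let me transfer nonvanishing information from $w$ back to $v$.

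More concretely, here is the step sequence I envisage. Step 1: set $m=\langle\delta,\alpha^\vee\rangle\geq p$ and $m'=\langle\delta+\beta,\alpha^\vee\rangle<p$, so $\langle\beta,\alpha^\vee\rangle=m'-m<0$, and by Lemma~\ref{lem:rootnegativecoeff} (applicable as $\Phi$ is not of type $\mathrm G_2$) we have $\langle\beta,\alpha^\vee\rangle\in\{-1,-2\}$ and $\beta-\alpha\notin\Phi$; thus $m\leq m'+2<p+2$, giving $m\in\{p,p+1\}$. Step 2: write $m+1=p\cdot q+r$ with $0<r<p$; since $m\in\{p,p+1\}$ we get $q=1$ and $r\in\{0,1\}$, but $r=0$ is excluded when $m=p-1$ which does not occur, so in fact $m=p$ gives $r=1$ and $m=p+1$ gives $r=2$ (and $p\nmid m+1$ since $p+1,p+2$ are not divisible by $p$ for $p\geq 2$ — one should double-check the edge case $p=2$, where $m+1=3$ or $4$, and $4$ \emph{is} divisible, so the case $m=p+1=3$ with $p=2$ needs separate handling or is excluded by hypothesis). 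Step 3: by Lemma~\ref{lem:weakmaxvecnonsimpleG1}, if $v$ generated a simple $G_1$-submodule then $X_{-\alpha,r}\cdot v=0$. Step 4: derive a contradiction by showing $X_{-\alpha,r}\cdot v\neq 0$. For this I would use that $w=(1\otimes X_\beta)\cdot v$ is weakly maximal and compute $X_{-\alpha,r}\cdot w$ in terms of $(1\otimes X_\beta)\cdot(X_{-\alpha,r}\cdot v)$ plus lower-order commutator terms $(1\otimes X_{\beta-\alpha}\cdot(\ldots))$-type corrections, which vanish because $\beta-\alpha\notin\Phi$ and because of the maximality properties; if $X_{-\alpha,r}\cdot v=0$ then $X_{-\alpha,r}\cdot w=0$ as well, but applying Lemma~\ref{lem:weakmaxvecnonsimpleG1} to $w$ (which has weight $\delta+\beta$ with $\langle\delta+\beta,\alpha^\vee\rangle<p$, so it lies in the ``$p$-restricted range'' for $\alpha$) one checks the hypothesis $p\nmid\langle\delta+\beta,\alpha^\vee\rangle+1$ fails or the relevant divided power is nonzero, yielding a contradiction with the fact that $w$ itself must generate a simple $G_1$-module (if $V$ is not yet known to have the complete reducibility property, one argues instead directly that $X_{-\alpha,r}\cdot v\neq 0$ by tracking a specific weight-space component).

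The main obstacle I anticipate is Step 4, the nonvanishing of $X_{-\alpha,r}\cdot v$: weakly maximal vectors need not behave like genuine highest weight vectors, so I cannot simply invoke $\mathfrak{sl}_2$-representation theory on $v$ directly. The cleanest route is probably to project onto a suitable weight space of $L(\lambda)\otimes L(\mu)$ and use Lemma~\ref{lem:weaklymaximaltensor} to pin down the leading term $v^+\otimes w'$ of $v$ (with $v^+\in L(\lambda)_\lambda$ a genuine maximal vector), then transfer the computation to the factor $L(\mu)$ where $X_\beta$ acts, and exploit that $X_{-\alpha,r}$ acting on $L(\lambda)$ at the maximal vector $v^+$ is nonzero as long as $r\leq\langle\lambda,\alpha^\vee\rangle$; when $r>\langle\lambda,\alpha^\vee\rangle$ one instead distributes $X_{-\alpha,r}$ across the tensor factors via the comultiplication and argues that at least one term survives, using $\langle\delta,\alpha^\vee\rangle\geq p$ to guarantee enough ``room'' on the $L(\mu)$ side. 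Balancing these cases — in particular handling small primes and the two possibilities $\langle\beta,\alpha^\vee\rangle\in\{-1,-2\}$ uniformly — is where the real work lies, and I would expect the write-up to require a short case distinction on $\langle\beta,\alpha^\vee\rangle$ and on whether $p=2$.
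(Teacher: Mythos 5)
Your Steps 1--3 match the paper's proof exactly: the bound $\langle\delta,\alpha^\vee\rangle\in\{p,p+1\}$ via Lemma \ref{lem:rootnegativecoeff}, the identification of $r\in\{1,2\}$ with $\langle\delta,\alpha^\vee\rangle+1=p+r$, the observation that $\beta-\alpha\notin\Phi$ makes $X_{-\alpha,s}$ and $X_\beta$ commute (so that $X_{-\alpha,r}\cdot w=(1\otimes X_\beta)\cdot X_{-\alpha,r}\cdot v$ and nonvanishing transfers from $w$ to $v$), and the reduction to Lemma \ref{lem:weakmaxvecnonsimpleG1}. Your worry about the edge case $p=2$, $\langle\delta,\alpha^\vee\rangle=p+1$ is resolved exactly as you suspect: that case forces $\langle\beta,\alpha^\vee\rangle=-2$, which the hypothesis $-\langle\beta,\alpha^\vee\rangle<p$ rules out; the same hypothesis is what gives $r=\langle\delta,\alpha^\vee\rangle+1-p\leq\langle\delta+\beta,\alpha^\vee\rangle$ in general.

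The genuine gap is in Step 4, which is the crux. Your primary argument there invokes ``the fact that $w$ itself must generate a simple $G_1$-module'' --- but nothing in the hypotheses gives you that, and assuming it would be circular in the intended applications (where one is trying to prove non-complete-reducibility). Your fallback --- pinning down the leading term of $v$ via Lemma \ref{lem:weaklymaximaltensor} and distributing $X_{-\alpha,r}$ over the tensor factors --- is the strategy of Proposition \ref{prop:nonpreslevel0}, which needs the extra hypothesis $\hgt_\alpha(\lambda+\mu-\delta)=0$ and Suprunenko's result to make the relevant component nonzero; it does not go through here without those inputs. The missing observation is much simpler and unconditional: $w$ is a weakly maximal vector of weight $\delta+\beta$, so restricting to the rank-one Levi $L_\alpha$ it is a genuine maximal vector of $\mathrm{SL}_2$-weight $c\coloneqq\langle\delta+\beta,\alpha^\vee\rangle$ with $0\leq c<p$; the $L_\alpha$-module it generates is therefore a nonzero quotient of the Weyl module $\Delta(c)$ for $\mathrm{SL}_2$, which is simple in the restricted range, whence $X_{-\alpha,s}\cdot w\neq 0$ for all $0\leq s\leq c$. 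Since $r\leq c$, this gives $X_{-\alpha,r}\cdot w\neq 0$ with no complete reducibility assumption, and the commutation identity then yields $X_{-\alpha,r}\cdot v\neq 0$, contradicting Lemma \ref{lem:weakmaxvecnonsimpleG1}. You did note that $w$ ``sits in a small $\alpha$-string,'' which is precisely this point, but your write-up never converts that remark into the needed nonvanishing statement; as proposed, Step 4 does not close.
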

\begin{proof}
	We have $\langle\beta,\alpha^\vee\rangle = \langle\delta+\beta,\alpha^\vee\rangle - \langle\delta,\alpha^\vee\rangle <0$ and therefore by Lemma \ref{lem:rootnegativecoeff}, $\beta-\alpha\notin\Phi$ and $\langle\beta,\alpha^\vee\rangle\in\{-1,-2\}$. Hence $X_{-\alpha,s}$ and $X_\beta$ commute for all $s\geq 0$. Thus we have
	\begin{align*}
	X_{-\alpha,s}\cdot w &= \left(\sum_{i=0}^s X_{-\alpha,i}\otimes X_{-\alpha,s-i}\right)\cdot(1\otimes X_\beta)\cdot v \\
	&= (1\otimes X_\beta) \cdot \left(\sum_{i=0}^s X_{-\alpha,i}\otimes X_{-\alpha,s-i}\right)\cdot v \\ 
	&= (1\otimes X_\beta)\cdot X_{-\alpha,s}\cdot v,
\end{align*}
	so $X_{-\alpha,s}\cdot v\neq 0$ whenever $X_{-\alpha,s}\cdot w\neq 0$.
	
	 Furthermore, as $\langle\beta,\alpha^\vee\rangle\in\{-1,-2\}$ we have $\langle\delta+\beta,\alpha^\vee\rangle\in\{p-2,p-1\}$ and $\langle\delta,\alpha^\vee\rangle\in\{p,p+1\}$. Writing $\langle\delta,\alpha^\vee\rangle+1=p+r$ with $r\in\{1,2\}$, we have
	 \[r = \langle \delta,\alpha^\vee \rangle + 1 - p \leq \langle\delta+\beta,\alpha^\vee\rangle \]
	 as $-\langle\beta,\alpha^\vee\rangle<p$. By considering the restriction of $U_k(\mathfrak{g})\cdot w$ to the Levi subgroup $L_\alpha$ with root system~$\{\alpha,-\alpha\}$, we find that $X_{-\alpha,r}\cdot w\neq 0$, so $X_{-\alpha,r}\cdot v\neq 0$ and the claim follows from Lemma~\ref{lem:weakmaxvecnonsimpleG1}.
\end{proof}

\begin{Proposition} \label{prop:nonpresnonsimpleG1}
	Assume that $\Phi$ is of type different from $\mathrm{G}_2$. Let $\lambda,\mu\in X_1$ and suppose that there is a weakly maximal vector $v\in L(\lambda)\otimes L(\mu)$ of weight $\delta\in X$ such that $\langle\delta,\alpha^\vee\rangle\geq p$ for some $\alpha\in\Delta$ such that $-\langle\beta,\alpha^\vee\rangle<p$ for all $\beta\in\Phi^+$. Then $L(\lambda)\otimes L(\mu)$ has a weakly maximal vector that generates a non-simple $G_1$-submodule.
\end{Proposition}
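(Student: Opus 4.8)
The plan is to obtain the desired weakly maximal vector by a finite iteration that ``pushes the weight upward'' until one of the criteria already established applies. Fix once and for all the simple root $\alpha\in\Delta$ supplied by the hypothesis, so that $\langle\delta,\alpha^\vee\rangle\geq p$ and $-\langle\beta,\alpha^\vee\rangle<p$ for \emph{every} $\beta\in\Phi^+$. I will run an induction along the dominance order on the weights of $L(\lambda)\otimes L(\mu)$, starting from $v$.

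First I would dispose of the base case $\delta=\lambda+\mu$. In that case the weight space $\bigl(L(\lambda)\otimes L(\mu)\bigr)_{\lambda+\mu}=L(\lambda)_\lambda\otimes L(\mu)_\mu$ is one-dimensional, spanned by $v^+\otimes w^+$ with $v^+$ and $w^+$ maximal vectors of $L(\lambda)$ and $L(\mu)$; hence $v$ is a nonzero scalar multiple of $v^+\otimes w^+$. Since $\lambda+\mu\in X^+$ and $\langle\lambda+\mu,\alpha^\vee\rangle\geq p$, we have $\lambda+\mu\notin X_1$, so the contrapositive of Corollary \ref{cor:highestweightvectorprestricted} shows that $v^+\otimes w^+$, and therefore $v$, generates a non-simple $G_1$-submodule.

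Now assume $\delta\neq\lambda+\mu$. By Proposition \ref{prop:nothighestweighthigherweakmaxvec} there exists $\beta\in\Phi^+$ such that $w\coloneqq(1\otimes X_\beta)\cdot v$ is again a weakly maximal vector, of weight $\delta+\beta$. If $\langle\delta+\beta,\alpha^\vee\rangle<p$, then every hypothesis of Proposition \ref{prop:weakmaxvecnotG2} is met --- in particular $-\langle\beta,\alpha^\vee\rangle<p$ by our choice of $\alpha$ --- and that proposition gives that $v$ generates a non-simple $G_1$-submodule, which is exactly what we want. If instead $\langle\delta+\beta,\alpha^\vee\rangle\geq p$, then $w$ is a weakly maximal vector of weight $\delta+\beta>\delta$ (in the dominance order, since $\beta\in\Phi^+$) that still satisfies $\langle\delta+\beta,\alpha^\vee\rangle\geq p$ for the \emph{same} $\alpha$; I then replace $v$ by $w$ and repeat the dichotomy.

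It remains to observe that this process terminates: $L(\lambda)\otimes L(\mu)$ is finite-dimensional, so it has only finitely many weights, and the weights produced along the iteration strictly increase in the dominance order at each step, so they cannot form an infinite chain. Thus after finitely many steps the iteration stops, and it can only stop in one of the two cases already handled --- reaching weight $\lambda+\mu$ (Corollary \ref{cor:highestweightvectorprestricted}) or reaching a configuration with $\langle\delta'+\beta',\alpha^\vee\rangle<p$ (Proposition \ref{prop:weakmaxvecnotG2}) --- and in either case $L(\lambda)\otimes L(\mu)$ acquires a weakly maximal vector generating a non-simple $G_1$-submodule. The only real work, and the main thing to be careful about, is the bookkeeping: checking that the invariant $\langle\delta',\alpha^\vee\rangle\geq p$ is preserved in the ``continue'' branch (so that Propositions \ref{prop:nothighestweighthigherweakmaxvec} and \ref{prop:weakmaxvecnotG2} keep applying with this fixed $\alpha$) and that the global assumption $-\langle\beta,\alpha^\vee\rangle<p$ is precisely what licenses the use of Proposition \ref{prop:weakmaxvecnotG2} at the terminal step. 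No new idea beyond combining the earlier results is needed.
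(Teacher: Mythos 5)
Your proof is correct and is essentially the paper's argument: the paper chooses a weight $\delta'$ maximal in $\{\chi\mid\langle\chi,\alpha^\vee\rangle\geq p\}$ admitting a weakly maximal vector and then applies Corollary \ref{cor:highestweightvectorprestricted} or Propositions \ref{prop:nothighestweighthigherweakmaxvec} and \ref{prop:weakmaxvecnotG2}, which is exactly your iteration phrased via a maximal choice instead of a terminating loop. The ingredients, the dichotomy, and the finiteness argument are identical.
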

\begin{proof}
	Let $Y=\{ \chi\in X \mid \langle \chi,\alpha^\vee \rangle\geq p\}\subseteq X$ and let $\delta^\prime\in Y$ be maximal with the property that $L(\lambda)\otimes L(\mu)$ has a weakly maximal vector $w$ of weight $\delta^\prime$. If $\delta^\prime=\lambda+\mu$ then $w$ generates a non-simple $G_1$-submodule by Corollary~\ref{cor:highestweightvectorprestricted}. If $\delta^\prime\neq\lambda+\mu$ then there exists $\beta\in\Phi^+$ such that $(1\otimes X_\beta)\cdot w$ is a weakly maximal vector in $L(\lambda)\otimes L(\mu)$ by Proposition \ref{prop:nothighestweighthigherweakmaxvec}. Then $\langle\delta^\prime+\beta,\alpha^\vee\rangle<p$ by maximality of $\delta^\prime$ and $w$ generates a non-simple $G_1$-submodule by Proposition \ref{prop:weakmaxvecnotG2}.
\end{proof}

Recall that $X_1^\prime=\{ \lambda\in X \mid \langle \lambda,\alpha^\vee \rangle < p \text{ for all }\alpha\in\Delta \}$. The following result is crucial for our proofs of Theorems \ref{thm:introA} and \ref{thm:introB} in Section \ref{sec:CRprestricted}. 

\begin{Corollary} \label{cor:nonpresnonsimpleG1}
	Assume that $\Phi$ is of type different from $\mathrm{G}_2$ and that $p>2$ if $\Phi$ is not simply laced. Let $\lambda,\mu\in X_1$ and suppose that $L(\lambda)\otimes L(\mu)$ has a weakly maximal vector of weight $\delta\in X\setminus X_1^\prime$. Then $L(\lambda)\otimes L(\mu)$ is not completely reducible as a $G_1$-module.
\end{Corollary}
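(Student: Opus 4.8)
The plan is to deduce this directly from Proposition \ref{prop:nonpresnonsimpleG1} together with Remark \ref{rem:WeakMaxVecGeneratesSimple}. Since $\delta\in X\setminus X_1^\prime$, the very definition of $X_1^\prime$ gives a simple root $\alpha\in\Delta$ with $\langle\delta,\alpha^\vee\rangle\geq p$; I would fix one such $\alpha$. To invoke Proposition \ref{prop:nonpresnonsimpleG1} with this $\alpha$ and the given weakly maximal vector $v$ of weight $\delta$, the only thing I must check is that $-\langle\beta,\alpha^\vee\rangle<p$ for every $\beta\in\Phi^+$.

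This is where the hypotheses on the type of $\Phi$ and on $p$ enter, and it is the only point that requires an argument. For non-proportional roots $\gamma,\beta$, the product of Cartan integers $\langle\beta,\gamma^\vee\rangle\langle\gamma,\beta^\vee\rangle$ lies in $\{0,1,2,3\}$, with the value $3$ occurring only inside a subsystem of type $\mathrm{G}_2$. Since $\Phi$ is of type different from $\mathrm{G}_2$, it has no subsystem of type $\mathrm{G}_2$, so $\langle\beta,\alpha^\vee\rangle\geq -2$ for all $\beta\in\Phi^+$; and if $\Phi$ is simply laced the product is at most $1$, so in fact $\langle\beta,\alpha^\vee\rangle\geq -1$. (For $\beta=\alpha$ we have $\langle\alpha,\alpha^\vee\rangle=2>0$, which causes no trouble.) Hence $-\langle\beta,\alpha^\vee\rangle\leq 2$ in general and $-\langle\beta,\alpha^\vee\rangle\leq 1$ in the simply laced case. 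Since $p\geq 2$ always and $p>2$ when $\Phi$ is not simply laced, we conclude $-\langle\beta,\alpha^\vee\rangle<p$ for all $\beta\in\Phi^+$, as required.

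With this verified, Proposition \ref{prop:nonpresnonsimpleG1} produces a weakly maximal vector $w\in L(\lambda)\otimes L(\mu)$ that generates a non-simple $G_1$-submodule. Finally, by the contrapositive of Remark \ref{rem:WeakMaxVecGeneratesSimple} (which rests on Curtis's theorem that $L(\delta_0)$ remains simple over $G_1$ for $\delta_0\in X_1$), the existence of such a $w$ shows that $L(\lambda)\otimes L(\mu)$ cannot be completely reducible as a $G_1$-module. I do not expect any genuine obstacle here: the statement is essentially a repackaging of Proposition \ref{prop:nonpresnonsimpleG1}, and the only care needed is the elementary bound on $\langle\beta,\alpha^\vee\rangle$ above together with the bookkeeping of the case $p=2$.
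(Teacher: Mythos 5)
Your proof is correct and follows the same route as the paper: reduce to Proposition \ref{prop:nonpresnonsimpleG1} by checking $-\langle\beta,\alpha^\vee\rangle<p$, then conclude via Remark \ref{rem:WeakMaxVecGeneratesSimple}. You simply spell out the bound on the Cartan integers ($\geq -1$ in simply laced types, $\geq -2$ otherwise when $\mathrm{G}_2$ is excluded) that the paper leaves as an implicit one-line assertion.
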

\begin{proof}
	The assumption implies that $-\langle\beta,\alpha^\vee\rangle<p$ for all $\alpha\in\Delta$ and $\beta\in\Phi^+$ and the claim is immediate from Remark \ref{rem:WeakMaxVecGeneratesSimple} and Proposition \ref{prop:nonpresnonsimpleG1}.
\end{proof}

The following result is due to I. Suprunenko, see page 20 in \cite{Suprunenko}. For the convenience of the reader, we include a proof here. For $\gamma=a_1\alpha_1+\cdots+a_n\alpha_n\in\sum_{\alpha\in\Delta}\Z\alpha$, write $\hgt_{\alpha_i}(\gamma)\coloneqq a_i$.

\begin{Proposition}[Suprunenko] \label{prop:Suprunenko}
	Suppose that $p>2$ if $\Phi$ is of type $\mathrm{B}_n$, $\mathrm{C}_n$ or $\mathrm{F}_4$ and $p>3$ if $\Phi$ is of type $\mathrm{G}_2$. Let $\lambda\in X_1$ and let $0\neq v\in L(\lambda)$ be a weight vector of weight $\delta\in X$ such that $\hgt_\alpha(\lambda-\delta)=0$ for some $\alpha\in\Delta$. Then $X_{-\alpha,r}\cdot v\neq 0$ for $r=0,\ldots,\langle\delta,\alpha^\vee\rangle$.
\end{Proposition}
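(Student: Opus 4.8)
The plan is to reduce to the rank-one situation by considering the Levi subgroup $L_\alpha$ with root system $\{\alpha,-\alpha\}$ (more precisely, its simply connected cover, which is $\SL_2$), together with the structure theory of $\SL_2$-modules and the fact that $L(\lambda)$ restricts to a module for $L_\alpha$ that is generated, as a $U_k(\mathfrak{g})$-module, from a maximal vector. The hypothesis $\hgt_\alpha(\lambda-\delta)=0$ says that $\delta$ and $\lambda$ differ only by roots supported away from $\alpha_\alpha$, which in particular forces $\langle\delta,\alpha^\vee\rangle = \langle\lambda,\alpha^\vee\rangle - \hgt_\alpha(\lambda-\delta)\langle\alpha_\alpha,\alpha^\vee\rangle$ — wait, more carefully: writing $\lambda-\delta=\sum_i a_i\alpha_i$ with $a_\alpha=0$, one has $\langle\delta,\alpha^\vee\rangle=\langle\lambda,\alpha^\vee\rangle-\sum_{i\neq\alpha}a_i\langle\alpha_i,\alpha^\vee\rangle$, so $\langle\delta,\alpha^\vee\rangle$ can actually exceed $\langle\lambda,\alpha^\vee\rangle$; the point will be to control it via the weight-string argument below rather than by a naive bound.

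First I would fix a maximal vector $v^+\in L(\lambda)_\lambda$ and recall that $L(\lambda)=U_k(\mathfrak{g})^-\cdot v^+$, where $U_k(\mathfrak{g})^-$ is generated by the divided powers $X_{-\gamma,r}$ for $\gamma\in\Phi^+$. The key combinatorial input is that among the $\gamma\in\Phi^+$ occurring when we write any weight $\delta$ with $\hgt_\alpha(\lambda-\delta)=0$, none involves $\alpha_\alpha$; this is where the hypotheses on $p$ and on the type of $\Phi$ enter, through the restriction that the relevant root subsystems behave well. Then I would argue by showing that the weight $\delta$ lies in the same $\langle X_\alpha, X_{-\alpha}\rangle$-string as a weight of the form $\delta+r\alpha$ for each $r$ in the claimed range, and that the $\SL_2$-submodule generated by $v$ under $X_\alpha$ has highest weight at least $\langle\delta,\alpha^\vee\rangle$ in the relevant sense. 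Concretely: since $\hgt_\alpha$ is unchanged by subtracting roots $\neq\alpha_\alpha$, all of $\delta,\delta+\alpha,\ldots,\delta+\langle\delta,\alpha^\vee\rangle\alpha$ are weights of $L(\lambda)$ (they appear in the $\SL_2$-module generated by $v^+$ or one of its $L_\alpha$-translates), and by the representation theory of $\SL_2$ — specifically that a weight vector of an $\SL_2$-module at the "bottom" of its string is not killed by the lowering operator until the string is exhausted, combined with the restriction $p>2$ (resp. $p>3$) ensuring no collapse of the string modulo $p$ — we get $X_{-\alpha,r}\cdot v\neq 0$ for $r=0,\ldots,\langle\delta,\alpha^\vee\rangle$.

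The main obstacle I anticipate is making the $\SL_2$-reduction rigorous: restricting $L(\lambda)$ to $L_\alpha$ does not give a simple module, and the weight vector $v$ need not be maximal for $L_\alpha$, so one cannot directly quote the irreducible $\SL_2$-theory. The clean way around this is to use that $v$ lies in the $L_\alpha$-submodule generated by vectors of the form $u\cdot v^+$ where $u$ is a product of divided powers $X_{-\gamma,r}$ with $\gamma\neq\alpha$, hence $u$ commutes (up to lower-weight corrections, which vanish because $\gamma-\alpha\notin\Phi$ under the type/prime hypotheses, exactly as in Lemma \ref{lem:rootnegativecoeff} and its use in Proposition \ref{prop:weakmaxvecnotG2}) with the $X_{-\alpha,r}$. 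Thus $X_{-\alpha,r}\cdot v$ is obtained from $X_{-\alpha,r}\cdot(\text{a sum of }u\cdot v^+)$, and on each $u\cdot v^+$ the divided powers $X_{-\alpha,r}$ act through the $\SL_2$-module generated by $v^+$, which is the Weyl module $\Delta_{\SL_2}(\langle\lambda,\alpha^\vee\rangle)$ and for which the non-vanishing of $X_{-\alpha,r}$ on the appropriate weight spaces is classical (Jantzen II.1.19). A careful bookkeeping of which $u$'s can cancel — ruling out cancellation by exploiting that the $w_i$-type components lie in distinct weight spaces for the subsystem $\Phi\setminus\{\pm\alpha\text{-part}\}$, as in the linear-independence step of Lemma \ref{lem:weaklymaximaltensor} — finishes the argument. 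The small-prime exclusions are precisely what guarantee that the relevant root strings in $\Phi$ have length $\le 2$ in the $\alpha$-direction, so that the commutation of $X_{-\alpha,r}$ past $X_{-\gamma,s}$ is exact and the $\SL_2$-strings do not degenerate mod $p$.
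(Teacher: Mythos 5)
There is a genuine gap, and it sits exactly at the point you flag as "the main obstacle". Your plan is to write $v$ as a combination of vectors $u\cdot v^+$ with $u$ a product of divided powers $X_{-\gamma,s}$ for positive roots $\gamma$ with $\hgt_\alpha(\gamma)=0$, commute $X_{-\alpha,r}$ past $u$, and then invoke the $\SL_2$-string through the highest weight vector $v^+$. Two things go wrong. First, the commutation is false: the obstruction to $X_{-\gamma,s}$ commuting with $X_{-\alpha,r}$ is $\gamma+\alpha\in\Phi$ (the commutator of two \emph{lowering} operators involves $X_{-\gamma-\alpha}$), not $\gamma-\alpha\in\Phi$, and $\gamma+\alpha$ is very often a root (e.g.\ $\gamma=\alpha_2$, $\alpha=\alpha_1$ in type $\mathrm{A}_2$); Lemma \ref{lem:rootnegativecoeff} addresses the commutator of a raising with a lowering operator and is not applicable here. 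Second, even if the commutation held it would prove the wrong thing: the $\alpha$-string through $v^+$ has length $\langle\lambda,\alpha^\vee\rangle+1$, so $X_{-\alpha,r}\cdot v^+=0$ for $r>\langle\lambda,\alpha^\vee\rangle$, whereas the proposition asserts nonvanishing up to $\langle\delta,\alpha^\vee\rangle$, which (as you yourself note at the start) can strictly exceed $\langle\lambda,\alpha^\vee\rangle$ and in the intended applications is $\geq p$. The whole content of the statement is that the cross terms coming from the \emph{failure} of this commutation produce the extra length of the string through $v$; a reduction that commutes them away cannot succeed. (Relatedly, your claim that $\delta,\delta+\alpha,\ldots,\delta+\langle\delta,\alpha^\vee\rangle\alpha$ are all weights of $L(\lambda)$ is false: $\delta+\alpha$ is not $\leq\lambda$, so $v$ sits at the \emph{top} of its $\alpha$-string, and the naive $\SL_2$-theory only gives that $v$ generates some highest-weight quotient of $\Delta_{\SL_2}(\langle\delta,\alpha^\vee\rangle)$, which could be a simple module in which $X_{-\alpha,r}$ kills the highest weight vector for some $r<\langle\delta,\alpha^\vee\rangle$.)

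For comparison, the paper's proof goes in the opposite direction: it takes a maximal counterexample $\delta<\lambda$ and uses a \emph{raising} operator $X_\beta$ for a simple root $\beta\neq\alpha$ with $X_\beta v\neq 0$ (which exists because $\lambda$ is $p$-restricted, so $v$ is not weakly maximal). If $t\le-\langle\beta,\alpha^\vee\rangle$ one climbs back up the $\alpha$-root string through $\beta$ using the identities $c_s X_{\beta+s\alpha}=[X_{\beta+(s+1)\alpha},X_{-\alpha}]$ (this is where the hypotheses on $p$ enter, via the structure constants); if $t>-\langle\beta,\alpha^\vee\rangle$ one applies the inductive hypothesis to $X_\beta v$ of weight $\delta+\beta$, uses that $X_\beta$ and $X_{-\alpha,r}$ do commute (here $\beta-\alpha\notin\Phi$ is the right condition), and finishes with the $s_\alpha$-symmetry of the $L_\alpha$-module spanned by the $X_{-\alpha,r}v$. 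You would need some analogue of this inductive mechanism; the direct reduction to the string through $v^+$ does not close.
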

\begin{proof}
	The claim is true if $\delta=\lambda$ as can easily be seen by considering the restriction of $L(\lambda)$ to the Levi subgroup $L_\alpha$ with root system $\{\alpha,-\alpha\}$. Now assume for a contradiction that $\delta<\lambda$ is maximal with the property that $\hgt_\alpha(\lambda-\delta)=0$ and $X_{-\alpha,t}\cdot v=0$ for some weight vector $v\in L(\lambda)$ of weight $\delta$ and some $t\in\{0,\ldots,\langle\delta,\alpha^\vee\rangle\}$. As $\lambda$ is $p$-restricted and $\delta\neq\lambda$, $v$ is not a weakly maximal vector in $L(\lambda)$ and there exists $\gamma\in\Phi^+$ such that $X_\gamma\cdot v\neq 0$. By the assumptions on $p$ and properties of Chevalley bases, it follows that there exists $\beta\in\Delta$ with $X_\beta\cdot v\neq 0$, in particular $\beta\neq\alpha$. Suppose first that $t\leq -\langle\beta,\alpha^\vee\rangle$. Then $\beta+s\alpha\in\Phi$ for $0\leq s\leq t$, and for $s<t$ there exists $c_s\in k^\times$ such that $c_s\cdot X_{\beta+s\alpha}=[X_{\beta+(s+1)\alpha},X_{-\alpha}]$, by the assumptions on $p$ and $\Phi$. Then an easy induction using the assumption that $\hgt_\alpha(\lambda-\delta)=0$ shows that $X_{\beta+s\alpha}X_{-\alpha}^s\cdot v =c_0 c_1\cdots c_{s-1} X_\beta\cdot v\neq 0$ for $0\leq s\leq t$. We conclude that $X_{-\alpha}^t\cdot v\neq 0$ and therefore $X_{-\alpha,t}\cdot v\neq 0$, a contradiction.
	
	Now suppose that $t>-\langle\beta,\alpha^\vee\rangle$. We have $\delta+\beta>\delta$ and $\hgt_\alpha(\lambda-\delta-\beta)=0$. By maximality of $\delta$, it follows that $X_{-\alpha,r}X_{\beta}\cdot v\neq 0$ for $r=0,\ldots,\langle\delta+\beta,\alpha^\vee\rangle$. Furthermore, $X_\beta$ and $X_{-\alpha,r}$ commute for all $r> 0$ as $\beta-\alpha\notin\Phi$ and we conclude that $X_{-\alpha,r}\cdot v\neq 0$ for $r=0,\ldots,\langle\delta+\beta,\alpha^\vee\rangle$. Now
	\[U\coloneqq \mathrm{span}_k\{X_{-\alpha,r}\cdot v\mid r\geq 0\}\]
	is an $L_\alpha$-submodule of $L(\lambda)$, so $U$ is invariant under the reflection $s_\alpha$ which sends the weight space $U_{\delta-r\alpha}$ to $U_{\delta-\langle\delta,\alpha^\vee\rangle\alpha+r\alpha}$. It follows that $X_{-\alpha,\langle\delta,\alpha^\vee\rangle-r}\cdot v\neq 0$ for $r=0,\ldots,\langle\delta+\beta,\alpha^\vee\rangle$, in particular $X_{-\alpha,t}\cdot v\neq 0$ as $t>-\langle\beta,\alpha^\vee\rangle$, a contradiction.
\end{proof}

\begin{Proposition} \label{prop:nonpreslevel0}
	Suppose that $p>2$ if $\Phi$ is of type $\mathrm{B}_n$, $\mathrm{C}_n$ or $\mathrm{F}_4$ and $p>3$ if $\Phi$ is of type $\mathrm{G}_2$.
	Let~$\lambda,\mu\in X_1$ and suppose that $L(\lambda)\otimes L(\mu)$ has a weakly maximal vector $v$ of non-$p$-restricted weight~$\delta$ and let $\alpha\in\Delta$ such that $\langle\delta,\alpha^\vee\rangle\geq p$. Assume additionally that $\hgt_\alpha(\lambda+\mu-\delta)=0$ and $p\nmid \langle \delta , \alpha^\vee \rangle + 1$.
	Then $v$ generates a non-simple $G_1$ submodule of $L(\lambda)\otimes L(\mu)$.
\end{Proposition}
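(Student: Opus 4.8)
The plan is to establish the contrapositive of the desired conclusion via Lemma~\ref{lem:weakmaxvecnonsimpleG1}. Writing $\langle\delta,\alpha^\vee\rangle+1=p\cdot q+r$ with $0<r<p$ and $q\in\Z$ (possible by the hypothesis $p\nmid\langle\delta,\alpha^\vee\rangle+1$), it suffices to prove that $X_{-\alpha,r}\cdot v\neq 0$. Throughout I abbreviate $a\coloneqq\langle\lambda,\alpha^\vee\rangle$ and $d\coloneqq\langle\delta,\alpha^\vee\rangle$; since $\lambda\in X_1$ we have $a<p\leq d$, hence $a\leq d$ and (as $r<p$) also $r\leq d$, which are the only arithmetic inputs needed at the end.

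First I would expand $v=\sum_{\gamma+\vartheta=\delta}v_{(\gamma,\vartheta)}$ with $v_{(\gamma,\vartheta)}\in L(\lambda)_\gamma\otimes L(\mu)_\vartheta$ and extract two consequences of the hypothesis $\hgt_\alpha(\lambda+\mu-\delta)=0$. For each non-zero summand, $\hgt_\alpha(\lambda-\gamma)$ and $\hgt_\alpha(\mu-\vartheta)$ are non-negative integers with sum $0$, so both vanish; in particular, if $\gamma\neq\lambda$ then $\lambda-\gamma$ is a non-zero non-negative combination of simple roots other than~$\alpha$, whence $\gamma\notin\lambda+\Z\alpha$. Next I would invoke Lemma~\ref{lem:weaklymaximaltensor} to get $v_{(\lambda,\delta-\lambda)}\neq 0$; as $\dim L(\lambda)_\lambda=1$, this allows me to write $v_{(\lambda,\delta-\lambda)}=v^+\otimes w$ with $0\neq v^+\in L(\lambda)_\lambda$ a maximal vector and $0\neq w\in L(\mu)_{\delta-\lambda}$. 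Since $\hgt_\alpha\bigl(\mu-(\delta-\lambda)\bigr)=\hgt_\alpha(\mu+\lambda-\delta)=0$, Proposition~\ref{prop:Suprunenko} will apply to $w$.

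The heart of the argument is the claim that for any $i\geq 0$, the component of $X_{-\alpha,r}\cdot v$ in $L(\lambda)_{\lambda-i\alpha}\otimes L(\mu)_{\delta-\lambda-(r-i)\alpha}$ equals $(X_{-\alpha,i}\cdot v^+)\otimes(X_{-\alpha,r-i}\cdot w)$. To see this, use $\Delta(X_{-\alpha,r})=\sum_{j=0}^{r}X_{-\alpha,j}\otimes X_{-\alpha,r-j}$: the contribution of $v_{(\gamma,\vartheta)}$ with lower index $j$ lies in $L(\lambda)_{\gamma-j\alpha}\otimes L(\mu)_{\vartheta-(r-j)\alpha}$, and matching the first tensor factor with $\lambda-i\alpha$ forces $\gamma=\lambda+(j-i)\alpha$, which by the previous paragraph happens only for $\gamma=\lambda$ and $j=i$. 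Then I would take $i\coloneqq\min(a,r)$, so that $0\leq i\leq a$ and $0\leq r-i\leq d-a=\langle\delta-\lambda,\alpha^\vee\rangle$ (here using $r\leq d$ and $d>a$). Applying Proposition~\ref{prop:Suprunenko} once to the weight vector $v^+$ of weight $\lambda$ in $L(\lambda)$ and once to $w$ in $L(\mu)$ gives $X_{-\alpha,i}\cdot v^+\neq 0$ and $X_{-\alpha,r-i}\cdot w\neq 0$, so the displayed component is a tensor product of two non-zero vectors and hence $X_{-\alpha,r}\cdot v\neq 0$; Lemma~\ref{lem:weakmaxvecnonsimpleG1} then finishes the proof. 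I expect the bookkeeping in this last step to be the main obstacle --- specifically, using $\hgt_\alpha(\lambda+\mu-\delta)=0$ to annihilate all cross-terms coming from summands with $\gamma\neq\lambda$, and checking that the index constraints $0\leq i\leq a$ and $0\leq r-i\leq d-a$ are simultaneously satisfiable, which boils down to the inequalities $a\leq d$ and $r\leq d$ guaranteed by $d\geq p$.
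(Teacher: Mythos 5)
Your proposal is correct and follows essentially the same route as the paper: Lemma \ref{lem:weaklymaximaltensor} to get $v_{(\lambda,\delta-\lambda)}=v^+\otimes w\neq 0$, Proposition \ref{prop:Suprunenko} for the non-vanishing of divided powers, a projection argument to kill the cross-terms, and Lemma \ref{lem:weakmaxvecnonsimpleG1} to conclude. The only (harmless) difference is your choice $i=\min(a,r)$; the paper simply takes $i=0$, using that $q\geq 1$ forces $r\leq\langle\delta,\alpha^\vee\rangle-(p-1)\leq\langle\delta-\lambda,\alpha^\vee\rangle$, so the single component $v^+\otimes(X_{-\alpha,r}\cdot w)$ already does the job.
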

\begin{proof}
	We may write $v=\sum_\gamma v_{(\gamma,\delta-\gamma)}$ with $v_{(\gamma,\delta-\gamma)}\in L(\lambda)_\gamma\otimes L(\mu)_{\delta-\gamma}$ and $v_{(\lambda,\delta-\lambda)}\neq 0$ by Lemma~\ref{lem:weaklymaximaltensor}, so that $v_{(\lambda,\delta-\lambda)}=v^+\otimes w_{\delta-\lambda}$ for some $0\neq v^+\in L(\lambda)_\lambda$ and $0\neq w_{\delta-\lambda}\in L(\mu)_{\delta-\lambda}$.
	
	Note that by Proposition \ref{prop:Suprunenko}, we have $X_{-\alpha,s}\cdot w_{\delta-\lambda}\neq 0$ for all $s$ with $0\leq s\leq \langle\delta-\lambda,\alpha^\vee \rangle$. Now writing $\langle\delta,\alpha^\vee \rangle+1=q\cdot p+r$ with $0\leq r<p$, we have $q\geq 1$ as $\langle\delta,\alpha^\vee \rangle\geq p$ and therefore
	\[r\leq \langle\delta,\alpha^\vee \rangle-(p-1) \leq \langle\delta-\lambda,\alpha^\vee \rangle,\]
	as $\lambda$ is $p$-restricted. Hence $X_{-\alpha,r}\cdot w_{\delta-\lambda}\neq 0$ and therefore $X_{-\alpha,r} v\neq 0$. Indeed, writing $p_{(\gamma,\vartheta)}$ for the linear projection from $L(\lambda)\otimes L(\mu)$ onto $L(\lambda)_\gamma\otimes L(\mu)_\vartheta$ as in the proof of Lemma \ref{lem:weaklymaximaltensor}, we have
	\[p_{(\lambda,\delta-\lambda-r\alpha)}\left(X_{-\alpha,r}\cdot v_{(\gamma,\delta-\gamma)}\right)=0\]
	for all $\gamma<\lambda$ and it follows that
	\[ p_{(\lambda,\delta-\lambda-r\alpha)}\left(X_{-\alpha,r} \cdot v\right) = p_{(\lambda,\delta-\lambda-r\alpha)}\left(X_{-\alpha,r} \cdot v_{(\lambda,\delta-\lambda)}\right) = v^+\otimes ( X_{-\alpha,r}\cdot w_{\delta-\lambda} ) \neq 0 , \]
	so $X_{-\alpha,r}\cdot v\neq 0$.
	Now the claim follows from Lemma \ref{lem:weakmaxvecnonsimpleG1}.
\end{proof}

\section{Some results about complete reducibility} \label{sec:someresults}

In this section, we cite some results from the literature and prove some important preliminary results.
We will need the following theorem of J. Brundan and A. Kleshchev; see Theorem B in~\cite{BrundanKleshchevSocle}.

\begin{Theorem}[Brundan-Kleshchev] \label{thm:BrundanKleshchevsocle}
	Let $\lambda,\mu\in X_1$ with $\langle\lambda,\alpha_0^\vee\rangle<p$. Then the socle of $\nabla(\lambda) \otimes \nabla(\mu)$ is $p$-restricted. In particular, the socle of $L(\lambda)\otimes L(\mu)$ is $p$-restricted.
\end{Theorem}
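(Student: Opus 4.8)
The plan is to prove the main assertion, that every composition factor of $\soc_G\bigl(\nabla(\lambda)\otimes\nabla(\mu)\bigr)$ is $p$-restricted; the statement for $L(\lambda)\otimes L(\mu)$ then follows at once, since $L(\lambda)=\soc_G\nabla(\lambda)$ and $L(\mu)=\soc_G\nabla(\mu)$ yield an embedding $L(\lambda)\otimes L(\mu)\hookrightarrow\nabla(\lambda)\otimes\nabla(\mu)$, so that $\soc_G\bigl(L(\lambda)\otimes L(\mu)\bigr)\subseteq\soc_G\bigl(\nabla(\lambda)\otimes\nabla(\mu)\bigr)$. So I fix a simple submodule $L(\nu)\subseteq\nabla(\lambda)\otimes\nabla(\mu)$ with $\nu\in X^+$ and aim to show $\nu\in X_1$.

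The first step is to extract weight restrictions on $\nu$. Using the tensor identity $\nabla(\lambda)\otimes\nabla(\mu)\cong\ind_B^G\bigl(\nabla(\mu)|_B\otimes\lambda\bigr)$ and Frobenius reciprocity, one has $\Hom_G\bigl(L(\nu),\nabla(\lambda)\otimes\nabla(\mu)\bigr)\cong\Hom_B\bigl(L(\nu)|_B,\nabla(\mu)|_B\otimes\lambda\bigr)$. By the Lie--Kolchin theorem the $B$-module $\nabla(\mu)|_B$ has a filtration with one-dimensional quotients $k_\eta$, where $\eta$ runs, with multiplicity, over the weights of $\nabla(\mu)$. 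Tensoring with $\lambda$ and using left-exactness of $\Hom_B\bigl(L(\nu)|_B,-\bigr)$ along the resulting filtration, together with the identity $\Hom_B\bigl(L(\nu)|_B,k_\theta\bigr)=\delta_{\theta,\nu}\,k$ (valid because $\ind_B^G k_\theta$ equals $\nabla(\theta)$, with simple socle $L(\theta)$, for $\theta\in X^+$ and vanishes otherwise), I obtain $\dim\Hom_G\bigl(L(\nu),\nabla(\lambda)\otimes\nabla(\mu)\bigr)\le\dim\nabla(\mu)_{\nu-\lambda}$. Hence $\nu-\lambda$ is a weight of $\nabla(\mu)$ and, exchanging the roles of $\lambda$ and $\mu$, also $\nu-\mu$ is a weight of $\nabla(\lambda)$. (The same conclusion can be reached more concretely by analysing the highest weight vector of $L(\nu)$ as in the proof of Lemma~\ref{lem:weaklymaximaltensor}, applying the divided-power operators $X_{\alpha,r}$ and using that $\nabla(\lambda)$, having simple socle, admits only one maximal vector up to scalars, of weight $\lambda$.)

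Next I would use the hypothesis $\langle\lambda,\alpha_0^\vee\rangle<p$. The highest root $\alpha_0$ is a dominant weight, so $\alpha_0^\vee$ is a dominant coweight; moreover, writing $\alpha_0=\sum_i m_i\alpha_i$, the coroot expands as $\alpha_0^\vee=\sum_i d_i\alpha_i^\vee$ with $d_i=m_i\langle\alpha_i,\alpha_i\rangle/\langle\alpha_0,\alpha_0\rangle$ a positive integer. Consequently $\langle\sigma,\alpha_i^\vee\rangle\le\langle\sigma,\alpha_0^\vee\rangle$ for every dominant weight $\sigma$ and every $i$, and $\langle\eta,\alpha_0^\vee\rangle\le\langle\lambda,\alpha_0^\vee\rangle<p$ for every weight $\eta$ of $\nabla(\lambda)$ (the maximum of $\langle\,\cdot\,,\alpha_0^\vee\rangle$ over the convex hull of $W\lambda$ is attained at $\lambda$). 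In particular every composition factor $L(\sigma)$ of $\nabla(\lambda)$ already satisfies $\sigma\in X_1$, so on restriction to $G_1$ all composition factors of $\nabla(\lambda)$ remain irreducible by Curtis's theorem.

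The crux --- and the main obstacle --- is the passage from ``$\nu-\mu$ is a weight of $\nabla(\lambda)$'' to ``$\nu\in X_1$''. The weight estimates above only give $\langle\nu,\alpha_0^\vee\rangle\le\langle\lambda,\alpha_0^\vee\rangle+\langle\mu,\alpha_0^\vee\rangle$, which does not suffice because $\langle\mu,\alpha_0^\vee\rangle$ can exceed $p$ even when $\mu\in X_1$ (for instance $\mu=(p-1)\rho$). To close the gap one works at the level of the Frobenius kernel: writing $\nu=\nu_0+p\nu_1$ with $\nu_0\in X_1$ and $\nu_1\in X^+$, the highest weight vector $v$ of $L(\nu)$ is a weakly maximal vector, so by Remark~\ref{rem:WeakMaxVecGeneratesSimple} it generates a $G_1$-submodule $u_k(\mathfrak{g})\cdot v\cong L(\nu_0)|_{G_1}$; as this also lies inside the semisimple $G_1$-module $L(\nu)|_{G_1}\cong\bigl(L(\nu_0)|_{G_1}\bigr)^{\oplus\dim L(\nu_1)}$, one obtains a $G_1T$-submodule of $\nabla(\lambda)\otimes\nabla(\mu)$ isomorphic to $L(\nu_0)\otimes k_{p\nu_1}$. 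One must then exploit the strong restriction on $\nabla(\lambda)|_{G_1}$ afforded by $\langle\lambda,\alpha_0^\vee\rangle<p$ --- in particular the $G_1T$-weights that a submodule of this shape forces to appear in $\nabla(\lambda)\otimes\nabla(\mu)$ --- to rule out $\nu_1\ne 0$. This $G_1$-module-theoretic analysis is the substantive part of Brundan and Kleshchev's proof of Theorem~B in~\cite{BrundanKleshchevSocle}, to which I would refer for the details; the genuine difficulty lies in this step rather than in the weight combinatorics.
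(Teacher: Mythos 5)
This statement is not proved in the paper at all: it is quoted verbatim as Theorem~B of \cite{BrundanKleshchevSocle}, so there is no internal argument to compare yours against. Judged on its own terms, your attempt is an honest and largely correct \emph{outline}, but not a proof. The reduction from $L(\lambda)\otimes L(\mu)$ to $\nabla(\lambda)\otimes\nabla(\mu)$ is fine, the tensor-identity/Frobenius-reciprocity computation giving $\dim\Hom_G\bigl(L(\nu),\nabla(\lambda)\otimes\nabla(\mu)\bigr)\le\dim\nabla(\mu)_{\nu-\lambda}$ (hence that $\nu-\lambda$ is a weight of $\nabla(\mu)$ and, by symmetry, $\nu-\mu$ a weight of $\nabla(\lambda)$) is correct, and so are the observations that $\alpha_0^\vee$ is a dominant coweight with $\langle\sigma,\alpha_i^\vee\rangle\le\langle\sigma,\alpha_0^\vee\rangle$ for dominant $\sigma$, and that $\langle\lambda,\alpha_0^\vee\rangle<p$ forces every composition factor of $\nabla(\lambda)$ to be $p$-restricted.

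The genuine gap is exactly where you say it is: none of the information you have assembled (the two weight-membership conditions together with $p$-restrictedness of the composition factors of $\nabla(\lambda)$) implies $\nu\in X_1$, and your final paragraph does not supply the missing implication --- it describes the shape of a $G_1T$-submodule that a putative non-restricted $\nu$ would produce and then refers to Brundan--Kleshchev for why this is impossible. Since that $G_1$-theoretic exclusion of $\nu_1\neq 0$ \emph{is} the content of the theorem (the preliminary weight combinatorics being routine), deferring it to the reference means the statement has not been proved. If you want a self-contained argument you would need to carry out the analogue of the weakly-maximal-vector analysis that Section~\ref{sec:weaklymaximalvectors} of this paper develops (the paper explicitly notes that Lemma~\ref{lem:weaklymaximaltensor} was inspired by the relevant computation in the proof of Theorem~3.3 of \cite{BrundanKleshchevSocle}): starting from the maximal vector of weight $\nu$, one applies suitable raising operators on the second tensor factor and exploits $\langle\lambda,\alpha_0^\vee\rangle<p$ to produce a contradiction with simplicity of the $G_1$-submodule it generates. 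As submitted, the proposal is a correct reduction plus a citation, not a proof.
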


Recall that a finite dimensional $G$-module $M$ is said to have a \emph{good filtration} if there exists a sequence of $G$-submodules
\[0=M_0\leq \cdots\leq M_m=M\]
such that for $i=1,\ldots,m$, either $M_i/M_{i-1}\cong\nabla(\lambda_i)$ for some $\lambda_i\in X^+$ or $M_i=M_{i-1}$. If such a filtration exists, we may assume that $\lambda_i<\lambda_j$ implies $i<j$; see  Remark 4 in Section II.4.16 in \cite{Jantzen}. Moreover, the tensor product of two modules with good filtrations has a good filtration by results of S. Donkin and O. Mathieu; see \cite{DonkinGoodFiltration} and \cite{MathieuGoodFiltration}. The module $M$ is said to be \emph{contravariantly self-dual} if $M\cong M^\tau$, where $M^\tau$ denotes the contravariant dual of $M$ as in Section II.2.12 of \cite{Jantzen}. The simple $G$-modules $L(\lambda)$ are contravariantly self-dual for all $\lambda\in X^+$ and the tensor product of two contravariantly self-dual $G$-modules is contravariantly self-dual. The following proposition and corollary are also due to J. Brundan and A. Kleshchev; see Proposition 4.7 and the discussion before Lemma 4.3 in \cite{BrundanKleshchevCR}.

\begin{Proposition}[Brundan-Kleshchev] \label{prop:BrundanKleshchev}
	Suppose that $W$ is a $G$-module with a good filtration and $N$ is a contravariantly self-dual submodule of $W$. Then $N$ is completely reducible if and only if
	\[ \Hom_G(L(\delta),N) \cong \Hom_G(\Delta(\delta),N) \]
	for all $\delta\in X^+$. 
\end{Proposition}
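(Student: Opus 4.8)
The plan is to prove both directions by exploiting the contravariant self-duality of $N$ together with the good filtration of $W$. First I would recall that for a module $M$ with good filtration, one has $\Ext^1_G(\Delta(\delta),M)=0$ for all $\delta\in X^+$; this is the defining cohomological characterization of good filtrations (see II.4.16 in \cite{Jantzen}). Dually, since $L(\delta)$ is contravariantly self-dual and $\Delta(\delta)^\tau=\nabla(\delta)$, the module $W^\tau$ has a \emph{Weyl filtration} (a filtration with sections $\Delta(\delta)$), equivalently $\Ext^1_G(M',\nabla(\delta))=0$ is the relevant vanishing there. The key structural input is the short exact sequence $0\to\rad\Delta(\delta)\to\Delta(\delta)\to L(\delta)\to 0$, which gives for any $G$-module $N$ a long exact sequence
\[
0\to\Hom_G(L(\delta),N)\to\Hom_G(\Delta(\delta),N)\to\Hom_G(\rad\Delta(\delta),N)\to\Ext^1_G(L(\delta),N)\to\cdots.
\]

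For the forward direction, suppose $N$ is completely reducible. Then $N$ is a direct sum of simple modules $L(\mu)$, and $\Hom_G(\Delta(\delta),L(\mu))$ is nonzero only when $\mu=\delta$ (since $L(\delta)=\head\Delta(\delta)$), in which case it is one-dimensional; similarly $\Hom_G(L(\delta),L(\mu))$ is one-dimensional exactly when $\mu=\delta$. So both Hom-spaces have the same dimension (the multiplicity of $L(\delta)$ in $N$), and the natural inclusion $\Hom_G(L(\delta),N)\hookrightarrow\Hom_G(\Delta(\delta),N)$ induced by $\Delta(\delta)\twoheadrightarrow L(\delta)$ is therefore an isomorphism. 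This direction is essentially formal.

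For the converse, suppose the Hom-condition holds for all $\delta$; I want to show $N$ is semisimple, equivalently $\rad N=0$, equivalently $\Ext^1_G(L(\delta),N)$ injects appropriately — more precisely it suffices to show $N$ has simple socle-quotient behaviour, but the cleanest route is: show that the inclusion $N\hookrightarrow W$ splits off a semisimple piece, or directly that $N^\tau$ (which equals $N$) has no proper submodule with semisimple quotient failing... Let me instead argue as follows. Since $W$ has a good filtration, $\Ext^1_G(\Delta(\delta),W)=0$, hence from the sequence $0\to N\to W\to W/N\to 0$ and the long exact Ext sequence, $\Hom_G(\Delta(\delta),W)\to\Hom_G(\Delta(\delta),W/N)\to\Ext^1_G(\Delta(\delta),N)\to 0$. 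The hard part, and the main obstacle, is to convert the hypothesis $\Hom_G(L(\delta),N)\cong\Hom_G(\Delta(\delta),N)$ into the vanishing of $\rad N$: the Hom-condition says exactly that every map $\Delta(\delta)\to N$ kills $\rad\Delta(\delta)$, i.e. factors through $L(\delta)$. One then shows that a contravariantly self-dual $N$ with this property must be semisimple — the argument being that $\soc N$ is contravariantly self-dual hence equals $N/\rad N$ as well, and any composition factor $L(\delta)$ of $\rad N$ would, by self-duality, also sit in the head of $N$, giving a nonzero map $N\to L(\delta)$; dualizing gives $L(\delta)\hookrightarrow N$, and a diagram chase comparing this with a lift $\Delta(\delta)\to N$ (which exists because $\Delta(\delta)$ is the projective cover of $L(\delta)$ in the appropriate truncated category, or because $\Ext^1_G(\Delta(\delta),-)$ vanishes against good-filtration modules and $N\subseteq W$) forces $\rad\Delta(\delta)$ to map nontrivially, contradicting the hypothesis. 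I would carry this out by induction on composition length of $N$, peeling off a simple summand $L(\delta)\subseteq\soc N$ that also lies in $\head N$ and checking the hypothesis is inherited by the complement. Making the lifting step precise — that every $\phi\colon L(\delta)\to N$ extends to $\tilde\phi\colon\Delta(\delta)\to N$ — is where I would invoke that $N$ sits inside $W$ which has a good filtration, so $\Ext^1_G(\Delta(\delta),N)\hookrightarrow\Ext^1_G(\Delta(\delta),W)=0$ is false in general; rather I would use that $\Hom_G(\Delta(\delta),N)\to\Hom_G(L(\delta),N)$... wait, that map goes the wrong way. The correct statement is that the inclusion $\iota^*\colon\Hom_G(L(\delta),N)\to\Hom_G(\Delta(\delta),N)$ is injective always and surjective under the hypothesis; surjectivity says precisely every $\Delta(\delta)\to N$ factors through $L(\delta)$, which combined with self-duality (every $L(\delta)\hookrightarrow N$ dualizes to $N\twoheadrightarrow L(\delta)$, and conversely) yields that $\head N$ and $\soc N$ have no common constituent lying strictly below the top, forcing $N=\soc N$. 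That final implication is the crux and I would write it carefully.
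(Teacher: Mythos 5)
First, a remark on the comparison: the paper does not prove this proposition at all --- it is quoted directly from Brundan--Kleshchev (Proposition 4.7 of \cite{BrundanKleshchevCR}) --- so the only question is whether your argument stands on its own. Your forward direction does: for semisimple $N$ both Hom-spaces have dimension equal to the multiplicity of $L(\delta)$ in $N$, since $L(\delta)=\head\Delta(\delta)$, and the natural injection $\Hom_G(L(\delta),N)\hookrightarrow\Hom_G(\Delta(\delta),N)$ is then an isomorphism.

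The converse, however, contains a genuine gap: the step you yourself call ``the crux'' is asserted rather than proved, and the material preceding it consists of abandoned attempts (you correctly observe that $\mathrm{Ext}^1_G(\Delta(\delta),N)$ need not vanish for a submodule $N$ of $W$, and the claim that head and socle ``have no common constituent lying strictly below the top'' is too vague to carry the conclusion --- for a semisimple $N$ they share every constituent). Here is the missing idea. Reformulate the hypothesis, as you do, as: every homomorphism $\Delta(\delta)\to N$ has simple image. Choose $\delta$ maximal among the highest weights of composition factors of $N$. Then $N_{\delta+\alpha}=0$ for every $\alpha\in\Phi^+$, so every vector of $N_\delta$ is a maximal vector; since $\Hom_G(\Delta(\delta),N)$ is precisely the space of maximal vectors of weight $\delta$ in $N$, we get $\dim\Hom_G(\Delta(\delta),N)=\dim N_\delta=[N:L(\delta)]$, whereas $\dim\Hom_G(L(\delta),N)=[\soc N:L(\delta)]$. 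The hypothesis therefore forces every copy of $L(\delta)$ into $\soc N$; contravariant self-duality gives $[\head N:L(\delta)]=[\soc N:L(\delta)]=[N:L(\delta)]$ and hence $[\rad N:L(\delta)]=0$, so the $L(\delta)$-isotypic component of $\soc N$ meets $\rad N$ trivially, maps isomorphically onto the $L(\delta)$-isotypic component of $\head N$, and splits off as a semisimple direct summand. The complementary summand is again contravariantly self-dual (Krull--Schmidt), inherits the Hom-condition, and has strictly fewer isomorphism classes of composition factors, so one concludes by induction on that number. Note that this route never needs the ambient module $W$, which is why your repeated attempts to bring the good filtration into play via $\mathrm{Ext}^1$-vanishing kept leading nowhere; the essential point is the identification of $\Hom_G(\Delta(\delta),N)$ with a full weight space of $N$ at a maximal weight, which your write-up never makes.
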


\begin{Corollary}[Brundan-Kleshchev] \label{cor:BrundanKleshchev}
	Let $\lambda,\mu\in X^+$. Then $L(\lambda)\otimes L(\mu)$ is completely reducible if and only if
	\[ \Hom_G(L(\delta),L(\lambda)\otimes L(\mu)) \cong \Hom_G(\Delta(\delta),L(\lambda)\otimes L(\mu)) \]
	for all $\delta\in X^+$.
\end{Corollary}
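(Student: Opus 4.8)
The plan is to deduce the corollary directly from Proposition \ref{prop:BrundanKleshchev} by exhibiting $L(\lambda)\otimes L(\mu)$ as a contravariantly self-dual submodule of a suitable module with a good filtration. The natural choice for the ambient module is $W\coloneqq\nabla(\lambda)\otimes\nabla(\mu)$, and the whole argument amounts to checking that the hypotheses of the proposition are satisfied for this $W$ and for $N\coloneqq L(\lambda)\otimes L(\mu)$.

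First I would verify that $W$ has a good filtration: each of $\nabla(\lambda)$ and $\nabla(\mu)$ has a good filtration of length one, so by the theorem of S. Donkin and O. Mathieu recalled above, the tensor product $\nabla(\lambda)\otimes\nabla(\mu)$ has a good filtration. Next I would observe that $N$ is a $G$-submodule of $W$: since $L(\nu)=\soc_G\nabla(\nu)$ is a $G$-submodule of $\nabla(\nu)$ for every $\nu\in X^+$, the module $L(\lambda)\otimes L(\mu)$ embeds into $\nabla(\lambda)\otimes\nabla(\mu)$ as a $G$-submodule. Finally, I would note that $N$ is contravariantly self-dual: each simple module $L(\nu)$ is contravariantly self-dual and the tensor product of two contravariantly self-dual modules is again contravariantly self-dual, as recalled in the text above.

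With these three verifications in hand, Proposition \ref{prop:BrundanKleshchev} applies verbatim to $W$ and $N$, and yields precisely the asserted equivalence: $L(\lambda)\otimes L(\mu)$ is completely reducible if and only if $\Hom_G(L(\delta),L(\lambda)\otimes L(\mu))\cong\Hom_G(\Delta(\delta),L(\lambda)\otimes L(\mu))$ for all $\delta\in X^+$. There is no substantial obstacle in this argument; the only point requiring any thought is the choice of the ambient module, and $\nabla(\lambda)\otimes\nabla(\mu)$ works because it simultaneously has a good filtration and contains $L(\lambda)\otimes L(\mu)$ as a contravariantly self-dual submodule.
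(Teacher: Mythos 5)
Your argument is correct and is exactly the intended derivation: the paper states the corollary as an immediate consequence of Proposition \ref{prop:BrundanKleshchev} applied to $W=\nabla(\lambda)\otimes\nabla(\mu)$ and $N=L(\lambda)\otimes L(\mu)$, using precisely the facts you invoke (Donkin--Mathieu for the good filtration of $W$, the inclusion $L(\nu)=\soc_G\nabla(\nu)\subseteq\nabla(\nu)$, and contravariant self-duality of tensor products of simple modules). All three hypotheses are verified correctly, so nothing is missing.
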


The $G_1$-socles of the induced $G$-modules have been described by H.H. Andersen; see equation (4.2) in the proof of Proposition 4.1 in \cite{Andersen}:

\begin{Lemma}[Andersen] \label{lem:socleG1}
	Let $\lambda\in X^+$ and write $\lambda=\lambda_0+p\lambda_1$ with $\lambda_0\in X_1$ and $\lambda_1\in X^+$. Then
	\[ \soc_{G_1}\nabla(\lambda)\cong L(\lambda_0)\otimes \nabla(\lambda_1)^{[1]} \]
	as $G$-modules.
\end{Lemma}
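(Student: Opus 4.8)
The plan is to compute $\soc_{G_1}\nabla(\lambda)$ through its multiplicity spaces. First note that $\soc_{G_1}\nabla(\lambda)$ is a $G$-submodule of $\nabla(\lambda)$, because $G_1$ is normal in $G$, and that it is semisimple over $G_1$. Since by Curtis' theorem and Steinberg's tensor product theorem the simple $G_1$-modules are precisely the restrictions $L(\mu_0)\vert_{G_1}$ with $\mu_0\in X_1$, all of which extend to $G$, any $G_1$-semisimple $G$-module $M$ decomposes $G$-equivariantly as $M\cong\bigoplus_{\mu_0\in X_1}L(\mu_0)\otimes\Hom_{G_1}(L(\mu_0),M)$, where $\Hom_{G_1}(L(\mu_0),M)$ carries the conjugation action of $G$, on which $G_1$ acts trivially, so that it is a Frobenius twist of a $G$-module (using $G/G_1\cong G^{(1)}\cong G$ via $F$). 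So it suffices to identify $\Hom_{G_1}(L(\mu_0),\nabla(\lambda))$ as a $G$-module for each $\mu_0\in X_1$, and I would argue that it vanishes unless $\mu_0=\lambda_0$ and equals $\nabla(\lambda_1)^{[1]}$ when $\mu_0=\lambda_0$.

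To this end I would rewrite $\Hom_{G_1}(L(\mu_0),\nabla(\lambda))\cong\bigl(L(\mu_0)^*\otimes\nabla(\lambda)\bigr)^{G_1}$ and then use the tensor identity for $\nabla(\lambda)=\ind_B^G(k_\lambda)$ to obtain $L(\mu_0)^*\otimes\nabla(\lambda)\cong\ind_B^G\!\bigl(L(\mu_0)^*\otimes k_\lambda\bigr)$, where on the right $L(\mu_0)^*\otimes k_\lambda$ is the $B$-module $L(\mu_0)^*$ with weights shifted by $\lambda$. The problem thus reduces to understanding $\ind_B^G(N)^{G_1}$ for a $B$-module $N$. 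The key point is that forming $G_1$-fixed points commutes with induction from $B$ up to a Frobenius twist: writing $\ind_B^G(N)=(N\otimes k[G])^B$, using that left $G$-translation and right $B$-translation on $k[G]$ commute, that $k[G]^{G_1}=k[G/G_1]\cong k[G^{(1)}]$, and that $G_1$ acts trivially on $G/G_1B\cong(G/B)^{(1)}$, one gets $\ind_B^G(N)^{G_1}\cong\ind_B^G(N')^{[1]}$, where $N'$ is the $B$-module characterised by $(N')^{[1]}\cong N^{B_1}$ (the $B_1$-fixed space $N^{B_1}$ being viewed as a $B/B_1$-module).

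It then remains to compute $N^{B_1}$ for $N=L(\mu_0)^*\otimes k_\lambda$. Since the first Frobenius kernel $U_1$ of the unipotent radical of $B$ acts trivially on $k_\lambda$, the $U_1$-invariants of $N$ are $\bigl(L(\mu_0)^*\bigr)^{U_1}\otimes k_\lambda$, and $\bigl(L(\mu_0)^*\bigr)^{U_1}$ is the space of vectors killed by all $X_{-\alpha}$ with $\alpha\in\Phi^+$; as $L(\mu_0)^*\cong L(-w_0\mu_0)$ is a simple $G_1$-module, this space is one-dimensional and equals the lowest weight space, of weight $-\mu_0$. Taking $T_1$-invariants of $\bigl(L(\mu_0)^*\bigr)^{U_1}\otimes k_\lambda$ then forces $-\mu_0+\lambda\in pX$, which (as $\lambda\equiv\lambda_0\pmod{pX}$ and $X_1$ embeds into $X/pX$) happens only for $\mu_0=\lambda_0$; and in that case $N^{B_1}\cong k_{\lambda-\lambda_0}=k_{p\lambda_1}\cong (k_{\lambda_1})^{[1]}$, so $N'\cong k_{\lambda_1}$. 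Feeding this back gives $\Hom_{G_1}(L(\mu_0),\nabla(\lambda))=0$ for $\mu_0\neq\lambda_0$ and $\Hom_{G_1}(L(\lambda_0),\nabla(\lambda))\cong\ind_B^G(k_{\lambda_1})^{[1]}=\nabla(\lambda_1)^{[1]}$, whence $\soc_{G_1}\nabla(\lambda)\cong L(\lambda_0)\otimes\nabla(\lambda_1)^{[1]}$ as $G$-modules.

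The step I expect to be the main obstacle is establishing the commutation $\ind_B^G(N)^{G_1}\cong\ind_B^G(N')^{[1]}$ with the correct Frobenius-twist bookkeeping; the auxiliary fact that the $\mathfrak u^-$-invariants of a simple $G_1$-module form a single one-dimensional weight space is standard, and everything else is formal. If one prefers to avoid the invariants-of-induced-modules computation, an alternative is to exhibit an explicit nonzero $G$-homomorphism $L(\lambda_0)\otimes\nabla(\lambda_1)^{[1]}\to\nabla(\lambda)$ via Frobenius reciprocity --- the required $B$-surjection onto $k_\lambda$ being the tensor product of the projection $L(\lambda_0)\twoheadrightarrow k_{\lambda_0}$ onto the $B$-head with the Frobenius twist $\nabla(\lambda_1)^{[1]}\twoheadrightarrow k_{p\lambda_1}$ of the counit --- to check that it is injective because its source has simple $G$-socle $L(\lambda)$ (a short computation using Steinberg's theorem) and that its image, being a $G_1$-semisimple quotient, lies in $\soc_{G_1}\nabla(\lambda)$; then only the vanishing $\Hom_{G_1}(L(\mu_0),\nabla(\lambda))=0$ for $\mu_0\neq\lambda_0$ is needed to conclude equality.
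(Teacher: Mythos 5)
Your main argument is correct, and it is worth noting that the paper itself offers no proof of this lemma at all --- it simply cites equation (4.2) in Andersen's Proposition 4.1 --- so your self-contained derivation is a genuine addition rather than a variant of an argument in the text. What you do is essentially the standard route (and, as far as I can tell, the same route Andersen takes): Clifford theory for the normal subgroup scheme $G_1 \trianglelefteq G$ reduces the problem to the multiplicity spaces $\Hom_{G_1}(L(\mu_0),\nabla(\lambda))$, the tensor identity converts these to $G_1$-invariants of an induced module, and the key commutation $\ind_B^G(N)^{G_1}\cong\ind_B^G(N')^{[1]}$ with $(N')^{[1]}=N^{B_1}$ follows exactly as you sketch, since at the level of $H^0$ one is only reordering invariants for the commuting left-$G_1$ and right-$B$ actions on $N\otimes k[G]$ and using $k[G]^{G_1}\cong k[G^{(1)}]$ with trivial right $B_1$-translation; no derived-functor input is needed. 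The computation of $N^{B_1}$ via the one-dimensional $\mathfrak{u}^-$-invariants of a simple $G_1$-module and the injectivity of $X_1\hookrightarrow X/pX$ (valid here because $G$ is simply connected) is also correct. One small caution about your closing alternative: knowing that $\Hom_{G_1}(L(\mu_0),\nabla(\lambda))=0$ for $\mu_0\neq\lambda_0$ together with an embedding $L(\lambda_0)\otimes\nabla(\lambda_1)^{[1]}\hookrightarrow\soc_{G_1}\nabla(\lambda)$ does \emph{not} by itself give equality --- you would still need to bound the multiplicity space $\Hom_{G_1}(L(\lambda_0),\nabla(\lambda))$ by $\dim\nabla(\lambda_1)$, which in effect forces you back to the invariants computation of your main argument (or to a character/dimension count). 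So the first proof should be regarded as the complete one and the second only as a way to exhibit the isomorphism explicitly once the multiplicity is known.
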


As $\Delta(\lambda)\cong \nabla(\lambda)^\tau$ for all $\lambda\in X^+$ (see Section II.2.13 in \cite{Jantzen}), we have an analogous description of the $G_1$-heads of the Weyl modules:

\begin{Corollary} \label{cor:headG1}
	Let $\lambda\in X^+$ and write $\lambda=\lambda_0+p\lambda_1$ with $\lambda_0\in X_1$ and $\lambda_1\in X^+$. Then
	\[ \head_{G_1}\Delta(\lambda)\cong L(\lambda_0)\otimes \Delta(\lambda_1)^{[1]} \]
	as $G$-modules.
\end{Corollary}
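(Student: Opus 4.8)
The statement to prove is Corollary \ref{cor:headG1}: for $\lambda = \lambda_0 + p\lambda_1$ with $\lambda_0 \in X_1$ and $\lambda_1 \in X^+$, we have $\head_{G_1}\Delta(\lambda) \cong L(\lambda_0) \otimes \Delta(\lambda_1)^{[1]}$ as $G$-modules. The plan is to obtain this by applying the contravariant duality functor $(-)^\tau$ to Lemma \ref{lem:socleG1}. The starting point is the defining relation $\Delta(\lambda) = \nabla(\lambda)^\tau$ together with the fact that $(-)^\tau$ is an exact, contravariant self-equivalence on finite-dimensional rational $G$-modules that fixes each simple module, i.e. $L(\nu)^\tau \cong L(\nu)$ for all $\nu \in X^+$.

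First I would recall that a contravariant duality interchanges socle and head: for any finite-dimensional rational $G$-module $M$, applying $(-)^\tau$ to the inclusion $\soc_{G_1} M \hookrightarrow M$ yields a surjection $M^\tau \twoheadrightarrow (\soc_{G_1} M)^\tau$, and since $(\soc_{G_1} M)^\tau$ is semisimple as a $G_1$-module while the kernel contains $\rad_{G_1}(M^\tau)$ (one checks that $(\rad_{G_1} M)^\tau$ is precisely the image of $\rad_{G_1}(M^\tau)$, using that $\tau$ commutes with restriction to $G_1$ — here one uses that $G_1$ is normal in $G$ and stable under the relevant anti-automorphism, so $\tau$ restricts to a contravariant duality on $G_1$-modules), we get $\head_{G_1}(M^\tau) \cong (\soc_{G_1} M)^\tau$. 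Applying this with $M = \nabla(\lambda)$ gives
\[ \head_{G_1}\Delta(\lambda) = \head_{G_1}\big(\nabla(\lambda)^\tau\big) \cong \big(\soc_{G_1}\nabla(\lambda)\big)^\tau \cong \big(L(\lambda_0) \otimes \nabla(\lambda_1)^{[1]}\big)^\tau, \]
where the last isomorphism is Lemma \ref{lem:socleG1}.

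It then remains to identify $\big(L(\lambda_0) \otimes \nabla(\lambda_1)^{[1]}\big)^\tau$. Since $\tau$ is monoidal on finite-dimensional modules ($(M \otimes N)^\tau \cong M^\tau \otimes N^\tau$) and commutes with Frobenius twist ($(N^{[1]})^\tau \cong (N^\tau)^{[1]}$), this is $L(\lambda_0)^\tau \otimes (\nabla(\lambda_1)^\tau)^{[1]} \cong L(\lambda_0) \otimes \Delta(\lambda_1)^{[1]}$, using $L(\lambda_0)^\tau \cong L(\lambda_0)$ and the definition $\Delta(\lambda_1) = \nabla(\lambda_1)^\tau$. This completes the argument.

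The only mildly delicate point — the one I would be most careful about — is the compatibility of the contravariant dual with restriction to $G_1$, so that $\tau$ genuinely swaps $G_1$-socle and $G_1$-head rather than merely the $G$-socle and $G$-head. This follows from the construction of $\tau$ in Section II.2.12 of \cite{Jantzen}: it is built from the anti-automorphism of $G$ fixing $T$ and swapping $B$ with the opposite Borel, which stabilizes $G_1$; hence $\tau$ descends to a contravariant self-equivalence of $G_1$-modules compatible with the $G$-module structure, and the socle/head interchange at the $G_1$-level is then formal. Everything else is a routine bookkeeping of the monoidal and Frobenius-twist compatibilities of $\tau$, which are all recorded in \cite{Jantzen}.
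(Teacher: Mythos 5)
Your proof is correct and is exactly the paper's intended argument: the paper derives Corollary \ref{cor:headG1} from Lemma \ref{lem:socleG1} with the single phrase ``by contravariant duality,'' and your write-up just spells out the standard compatibilities of $(-)^\tau$ (exchange of $G_1$-socle and $G_1$-head, monoidality, commutation with the Frobenius twist, and $L(\nu)^\tau\cong L(\nu)$) that this phrase relies on.
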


We deduce the following lemma:

\begin{Lemma} \label{lem:weylgeneratedG1}
	Let $\lambda\in X_1$. Then $\soc_{G_1} \nabla(\lambda)\cong L(\lambda)\cong \head_{G_1}\Delta(\lambda)$. In addition, $\Delta(\lambda)$ is generated as a $G_1$-module by any maximal vector of weight $\lambda$. 
\end{Lemma}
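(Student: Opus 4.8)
The plan is to prove Lemma \ref{lem:weylgeneratedG1} by combining Lemma \ref{lem:socleG1} and Corollary \ref{cor:headG1} with the observation that for $\lambda \in X_1$ the ``top part'' $\lambda_1$ of the $p$-adic decomposition is zero. First I would write $\lambda = \lambda_0 + p\lambda_1$ with $\lambda_0 \in X_1$ and $\lambda_1 \in X^+$; since $\lambda$ is already $p$-restricted, uniqueness of the $p$-adic decomposition forces $\lambda_0 = \lambda$ and $\lambda_1 = 0$. Then $\nabla(0) \cong k$ is the trivial module, so $\nabla(\lambda_1)^{[1]} \cong k$ and Lemma \ref{lem:socleG1} gives $\soc_{G_1} \nabla(\lambda) \cong L(\lambda) \otimes k \cong L(\lambda)$; dually, Corollary \ref{cor:headG1} gives $\head_{G_1} \Delta(\lambda) \cong L(\lambda)$.

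For the second assertion, I would argue as follows. Let $v \in \Delta(\lambda)$ be a maximal vector of weight $\lambda$ (one exists and spans the one-dimensional weight space $\Delta(\lambda)_\lambda$, since $\lambda$ is the highest weight of $\Delta(\lambda)$). Let $M = u_k(\mathfrak{g}) \cdot v = U_k(\mathfrak{g})_1 \cdot v$ be the $G_1$-submodule of $\Delta(\lambda)$ generated by $v$. The image of $v$ in $\head_{G_1}\Delta(\lambda) \cong L(\lambda)$ is a nonzero weight vector of weight $\lambda$ killed by all $X_\alpha$, $\alpha \in \Phi^+$ (as $v$ is maximal), hence a highest weight vector, so its $G_1$-span is the whole of $L(\lambda)|_{G_1} = \head_{G_1}\Delta(\lambda)$ — this uses that $L(\lambda)$ is simple as a $G_1$-module for $\lambda \in X_1$, which is Curtis's theorem, or alternatively just that $L(\lambda)$ is generated over $U_k(\mathfrak{g})$ by a highest weight vector and the degree-zero part of that generation stays within $u_k(\mathfrak{g})$ because $\lambda$ is $p$-restricted. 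Consequently $M$ surjects onto $\head_{G_1}\Delta(\lambda)$, i.e. $M + \rad_{G_1}\Delta(\lambda) = \Delta(\lambda)$.

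To upgrade this to $M = \Delta(\lambda)$, I would invoke a dimension count: $\dim_k \Delta(\lambda) = \dim_k \nabla(\lambda)$ and I want to show $\dim_k M \geq \dim_k \Delta(\lambda)$. Dualizing, $M^\tau$ is a $G_1$-quotient of $\nabla(\lambda) = \Delta(\lambda)^\tau$, and the $\tau$-dual of the surjection $\Delta(\lambda) \twoheadrightarrow \Delta(\lambda)/\rad_{G_1}\Delta(\lambda) \cong L(\lambda)$ composed with the inclusion $M \hookrightarrow \Delta(\lambda)$ shows that $M^\tau$ contains a copy of $L(\lambda)^\tau \cong L(\lambda)$ as a submodule; but $\soc_{G_1}\nabla(\lambda) \cong L(\lambda)$ is simple by the first part, so any nonzero $G_1$-submodule of $\nabla(\lambda)$ contains this socle. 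The cleanest route, though, is probably to observe directly that $\Delta(\lambda)$ has simple $G_1$-head $L(\lambda)$, so its only maximal $G_1$-submodule is $\rad_{G_1}\Delta(\lambda)$; since $v \notin \rad_{G_1}\Delta(\lambda)$ (its image in the head is nonzero), the submodule $M$ it generates is not contained in any maximal $G_1$-submodule, hence $M = \Delta(\lambda)$. I expect the main (minor) obstacle to be pinning down cleanly why a nonzero highest-weight vector in $L(\lambda)|_{G_1}$ generates all of it for $\lambda \in X_1$ — this is exactly where $p$-restrictedness is used, and it is ultimately Curtis's simplicity theorem cited in the introduction, so I would simply reference that and the equivalence between $G_1$-modules and $u_k(\mathfrak{g})$-modules recorded in Section \ref{sec:preliminaries}.
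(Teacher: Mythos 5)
Your proposal is correct and follows the paper's own argument: the first assertion is exactly Lemma \ref{lem:socleG1} and Corollary \ref{cor:headG1} specialized to $\lambda_1=0$, and your ``cleanest route'' for the second assertion (simple $G_1$-head, hence a unique maximal $G_1$-submodule not containing the maximal vector) is precisely the proof in the paper. The intermediate dualization/dimension-count paragraph is superfluous, and the simplicity of $L(\lambda)\vert_{G_1}$ that you rightly flag is indeed just Curtis's theorem for $\lambda\in X_1$, which the paper uses implicitly.
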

\begin{proof}
	We have $\soc_{G_1} \nabla(\lambda)\cong L(\lambda)$ by Lemma \ref{lem:socleG1} and $\head_{G_1}\Delta(\lambda)\cong L(\lambda)$ by Corollary \ref{cor:headG1}. It follows that $\rad_G \Delta(\lambda)$ is the unique maximal $G_1$-submodule of $\Delta(\lambda)$. If $v\in\Delta(\lambda)$ is a maximal vector of weight $\lambda$ then $v$ is not contained in $\rad_G\Delta(\lambda)$, so $\Delta(\lambda)$ is generated as a $G_1$-module by $v$.
\end{proof}

\begin{Corollary} \label{cor:maxvecnonsimpleG1}
	Let $V$ be a rational $G$-module and let $v\in V$ be a maximal vector of $p$-restricted weight $\delta\in X_1$. If $v$ generates a simple $G_1$-submodule of $V$ then $v$ generates a simple $G$-submodule of $V$.
\end{Corollary}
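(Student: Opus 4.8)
The plan is to compare the $G_1$-submodule generated by $v$ with a suitable $G$-submodule, using the hypothesis that $\delta\in X_1$. Let $M$ be the $G$-submodule of $V$ generated by $v$; since $v$ is a maximal vector of weight $\delta$, there is a surjection $\Delta(\delta)\twoheadrightarrow M$ by the universal property of the Weyl module, sending a chosen maximal vector $v^+\in\Delta(\delta)_\delta$ to $v$. First I would invoke Lemma \ref{lem:weylgeneratedG1}: since $\delta\in X_1$, the Weyl module $\Delta(\delta)$ is generated as a $G_1$-module by $v^+$. Applying the surjection $\Delta(\delta)\to M$ (which is in particular a $G_1$-module map), it follows that $M$ is generated as a $G_1$-module by $v$; that is, $M=u_k(\mathfrak g)\cdot v=U$, where $U$ denotes the $G_1$-submodule generated by $v$. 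So the $G$-submodule and the $G_1$-submodule generated by $v$ coincide as vector spaces.

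Now suppose $U$ is simple as a $G_1$-module. Since $M=U$ is a $G$-submodule that happens to be simple over the smaller ring $u_k(\mathfrak g)$, it is a fortiori simple as a $G$-module: any nonzero $G$-submodule of $M$ is in particular a nonzero $u_k(\mathfrak g)$-submodule, hence equals $U=M$. Therefore $v$ generates a simple $G$-submodule of $V$, which is exactly the claim.

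**Main obstacle.** There is essentially no obstacle here; the content is entirely packaged in Lemma \ref{lem:weylgeneratedG1}, and the only point requiring a moment's care is the passage from "$\Delta(\delta)$ is $G_1$-generated by its highest weight vector" to "the quotient $M$ is $G_1$-generated by the image of that vector," which is just functoriality of taking the submodule generated by an element under a surjection. One should also note explicitly that $\delta\in X_1$ is used twice — once to get the surjection from the Weyl module $\Delta(\delta)$ (this only needs $\delta\in X^+$) and crucially to apply Lemma \ref{lem:weylgeneratedG1} for the $G_1$-generation statement, which genuinely requires $p$-restrictedness.
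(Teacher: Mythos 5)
Your proof is correct and follows exactly the paper's argument: surject $\Delta(\delta)$ onto the $G$-submodule generated by $v$, use Lemma \ref{lem:weylgeneratedG1} to conclude that $U_k(\mathfrak{g})\cdot v=u_k(\mathfrak{g})\cdot v$, and deduce $G$-simplicity from $G_1$-simplicity. No differences worth noting.
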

\begin{proof}
	As $v$ is a maximal vector, the submodule $U_k(\mathfrak{g})\cdot v$ generated by $v$ is a homomorphic image of the Weyl module $\Delta(\delta)$. Now by Lemma \ref{lem:weylgeneratedG1}, $\Delta(\delta)$ is generated over $G_1$ by any maximal vector of weight $\delta$ and it follows that $U_k(\mathfrak{g})\cdot v = u_k(\mathfrak{g})\cdot v$. Hence, if $u_k(\mathfrak{g})\cdot v$ is a simple $G_1$-module then $U_k(\mathfrak{g})\cdot v$ is a simple $G$-module.
\end{proof}

\begin{Proposition} \label{prop:GandG1allMaxVecPRes}
	Let $\lambda,\mu\in X_1$ and suppose that all maximal vectors in $L(\lambda)\otimes L(\mu)$ have $p$-restricted weight. Then $L(\lambda)\otimes L(\mu)$ is completely reducible as a $G$-module if and only if $L(\lambda)\otimes L(\mu)$ is completely reducible as a $G_1$-module.
\end{Proposition}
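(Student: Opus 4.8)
The plan is to deduce everything from the homological criterion for complete reducibility in Corollary~\ref{cor:BrundanKleshchev}. Write $M=L(\lambda)\otimes L(\mu)$. First note that the implication ``$M$ completely reducible as a $G$-module $\Rightarrow$ $M$ completely reducible as a $G_1$-module'' needs no hypothesis on maximal vectors: if $M\cong\bigoplus_i L(\nu_i)$ with $\nu_i\in X^+$, then Steinberg's tensor product theorem gives $L(\nu_i)\cong L(\nu_{i,0})\otimes L(\nu_{i,1})^{[1]}$ with $\nu_{i,0}\in X_1$, the Frobenius-twisted factor is trivial as a $G_1$-module, and $L(\nu_{i,0})$ stays simple on restriction to $G_1$ by Curtis's theorem; hence each $L(\nu_i)\vert_{G_1}$, and therefore $M\vert_{G_1}$, is semisimple. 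So the content of the proposition is entirely in the converse.

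For the converse I would assume $M$ is completely reducible as a $G_1$-module and show that the canonical injection $\Hom_G(L(\delta),M)\hookrightarrow\Hom_G(\Delta(\delta),M)$ induced by the surjection $\Delta(\delta)\twoheadrightarrow L(\delta)$ is an isomorphism for every $\delta\in X^+$; then Corollary~\ref{cor:BrundanKleshchev} finishes the proof. So fix $\delta\in X^+$ and a nonzero homomorphism $\phi\colon\Delta(\delta)\to M$. Let $v_\delta^+\in\Delta(\delta)$ be a maximal vector of weight $\delta$; it generates $\Delta(\delta)$ as a $U_k(\mathfrak g)$-module, so $v\coloneqq\phi(v_\delta^+)$ is nonzero, it is a maximal vector of weight $\delta$ in $M$, and $\phi(\Delta(\delta))=U_k(\mathfrak g)\cdot v$. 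By the hypothesis on maximal vectors, $\delta\in X_1$. Since $M$ is completely reducible over $G_1$, Remark~\ref{rem:WeakMaxVecGeneratesSimple} shows that $v$ (being in particular weakly maximal) generates a simple $G_1$-submodule of $M$; as $\delta$ is $p$-restricted, Corollary~\ref{cor:maxvecnonsimpleG1} upgrades this to the statement that $v$ generates a simple $G$-submodule of $M$. That submodule is exactly $\phi(\Delta(\delta))$, and being a simple $G$-module containing a maximal vector of weight $\delta$ it is isomorphic to $L(\delta)$. Hence $\ker\phi$ is a maximal $G$-submodule of $\Delta(\delta)$, so $\ker\phi=\rad_G\Delta(\delta)$ and $\phi$ factors through $\Delta(\delta)\twoheadrightarrow L(\delta)$, i.e.\ $\phi$ lies in the image of $\Hom_G(L(\delta),M)$, as required.

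I do not anticipate a real obstacle: the argument is an assembly of Remark~\ref{rem:WeakMaxVecGeneratesSimple} (over $G_1$-semisimple modules, weakly maximal vectors generate simple $G_1$-submodules), Corollary~\ref{cor:maxvecnonsimpleG1} (in $p$-restricted weight, a $G_1$-simple submodule generated by a maximal vector is already $G$-simple), and the standard identification of $\Hom_G(\Delta(\delta),M)$ with the space of maximal vectors of weight $\delta$ in $M$. The one point that deserves care is pinpointing where the hypothesis is used: a nonzero $\phi\colon\Delta(\delta)\to M$ yields a nonzero maximal vector of weight $\delta$ in $M$, which by hypothesis forces $\delta\in X_1$, and this $p$-restrictedness is precisely what makes Corollary~\ref{cor:maxvecnonsimpleG1} applicable. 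Without it the argument genuinely breaks: for $\delta\notin X_1$ the Weyl module $\Delta(\delta)$ is far from being $G_1$-simple — its $G_1$-head is $L(\delta_0)\otimes\Delta(\delta_1)^{[1]}$ by Corollary~\ref{cor:headG1} — so there is no mechanism forcing $\phi$ to factor through $L(\delta)$.
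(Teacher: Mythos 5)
Your proposal is correct and takes essentially the same route as the paper: both hinge on Corollary~\ref{cor:BrundanKleshchev} together with Corollary~\ref{cor:maxvecnonsimpleG1} applied to a maximal vector of weight $\delta$, which the hypothesis forces to be $p$-restricted. The only difference is presentational — the paper proves the nontrivial implication contrapositively (not completely reducible over $G$ implies not completely reducible over $G_1$), whereas you argue it directly by showing every $\phi\colon\Delta(\delta)\to L(\lambda)\otimes L(\mu)$ factors through $L(\delta)$.
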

\begin{proof}
	If $L(\lambda)\otimes L(\mu)$ is completely reducible as a $G$-module then $L(\lambda)\otimes L(\mu)$ is completely reducible as a $G_1$-module as the restriction of a simple $G$-module to $G_1$ is always completely reducible.
	
	Suppose that $L(\lambda)\otimes L(\mu)$ is not completely reducible as a $G$-module. By Corollary \ref{cor:BrundanKleshchev}, there exists $\delta\in X^+$ such that the natural embedding of
	$\Hom_G(L(\delta),L(\lambda)\otimes L(\mu))$ into $\Hom_G(\Delta(\delta),L(\lambda)\otimes L(\mu))$
	is not surjective and it follows that there is a maximal vector $v\in L(\lambda)\otimes L(\mu)$ of weight $\delta$ that generates a non-simple $G$-submodule of $L(\lambda)\otimes L(\mu)$. Then $\delta\in X_1$ by assumption and $v$ generates a non-simple $G_1$-submodule of $L(\lambda)\otimes L(\mu)$ by Corollary \ref{cor:maxvecnonsimpleG1}. Hence $L(\lambda)\otimes L(\mu)$ is not completely reducible as a $G_1$-module.
\end{proof}

The proof of the following lemma was suggested to the author by Stephen Donkin.

\begin{Lemma} \label{lem:DonkinNonCR}
	Let $\lambda,\mu\in X_1$ and suppose that there exists $0\neq\delta\in X^+$ such that $L(p\delta)$ is a composition factor of $L(\lambda)\otimes L(\mu)$. Then $L(\lambda)\otimes L(\mu)$ is not completely reducible as a $G_1$-module.
\end{Lemma}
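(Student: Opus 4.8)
The plan is to produce a weakly maximal vector in $L(\lambda)\otimes L(\mu)$ whose weight is not $p$-restricted but which nonetheless generates a non-simple $G_1$-submodule; by Remark \ref{rem:WeakMaxVecGeneratesSimple}, the existence of such a vector shows $L(\lambda)\otimes L(\mu)$ is not completely reducible over $G_1$. The hypothesis gives us $0\neq\delta\in X^+$ with $L(p\delta)$ a composition factor of $L(\lambda)\otimes L(\mu)$. First I would use this to locate a maximal vector for $G$ — or at least a weakly maximal vector — of weight $p\delta$ inside $L(\lambda)\otimes L(\mu)$: since $L(p\delta)$ occurs as a composition factor and $p\delta$ is the highest weight of that factor, a standard argument (comparing weight multiplicities, or using that $\Hom_G(\Delta(p\delta),L(\lambda)\otimes L(\mu))\neq 0$ because $L(\lambda)\otimes L(\mu)$ has a good filtration so every composition factor's highest weight appears as a $\nabla$-layer top, dualized) yields a maximal vector $v$ of weight $p\delta$. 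Then $v$ is in particular a weakly maximal vector of weight $p\delta = 0 + p\delta$, so by Remark \ref{rem:WeakMaxVecGeneratesSimple} the $G_1$-submodule $U = u_k(\mathfrak{g})\cdot v$ it generates has $\head_{G_1}(U)\cong L(0)\vert_{G_1} = k$.

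Next I would argue that $U$ is not simple, i.e. $U\neq k$. The point is that $v$ is a genuine $G$-maximal vector, so $U_k(\mathfrak{g})\cdot v$ is a quotient of the Weyl module $\Delta(p\delta)$, which has highest weight $p\delta$ and hence (since $\delta\neq 0$) has dimension strictly greater than $1$ and contains weights strictly below $p\delta$. If $U = u_k(\mathfrak{g})\cdot v$ were simple over $G_1$ it would be $\cong L(0)\vert_{G_1} = k$, forcing $X_{-\alpha}\cdot v = 0$ for all $\alpha\in\Delta$. But then $v$ would be killed by all of $u_k(\mathfrak{g})$ beyond scalars, and combined with $v$ being a $G$-maximal vector — so the full $U_k(\mathfrak{g})$-submodule is the image of $\Delta(p\delta)$ — one gets a contradiction by producing a lowering operator (a divided power $X_{-\alpha,r}$ with $r\leq\langle p\delta,\alpha^\vee\rangle$, for a suitable $\alpha$ with $\langle\delta,\alpha^\vee\rangle>0$, which exists since $\delta\in X^+\setminus\{0\}$) that must act non-trivially on any nonzero quotient of $\Delta(p\delta)$ generated by its highest weight vector. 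Concretely, pick $\alpha\in\Delta$ with $\langle\delta,\alpha^\vee\rangle\geq 1$; then $\langle p\delta,\alpha^\vee\rangle\geq p$, and a weight-space count inside the rank-one Levi $L_\alpha$ shows the $L_\alpha$-socle generated by $v$ has dimension $\geq p+1$, so in particular $X_{-\alpha,r}\cdot v\neq 0$ for $0\leq r\leq\langle p\delta,\alpha^\vee\rangle$; taking $r$ with $0<r<p$ shows $U$ has a weight vector of weight $p\delta - r\alpha$ which is not $0$ and is not a scalar multiple of $v$, so $U\neq k$.

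The cleanest route, however, is probably to invoke Lemma \ref{lem:weakmaxvecnonsimpleG1} directly: with $v$ a weakly maximal vector of weight $p\delta$, choose $\alpha\in\Delta$ with $\langle\delta,\alpha^\vee\rangle\geq 1$, write $\langle p\delta,\alpha^\vee\rangle+1 = p\cdot q + r$ with $0<r<p$ (possible since $p\nmid p\langle\delta,\alpha^\vee\rangle+1$, as $r=1$ here and $q=\langle\delta,\alpha^\vee\rangle$); if $v$ generated a simple $G_1$-submodule then Lemma \ref{lem:weakmaxvecnonsimpleG1} forces $X_{-\alpha,1}\cdot v = X_{-\alpha}\cdot v = 0$. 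Doing this for every $\alpha\in\Delta$ with $\langle\delta,\alpha^\vee\rangle\geq 1$, and observing that for $\alpha$ with $\langle\delta,\alpha^\vee\rangle=0$ we have $\langle p\delta,\alpha^\vee\rangle = 0$ so $X_{-\alpha,r}\cdot v$ for $r\geq 1$ lands in a weight space $p\delta - r\alpha$ that is not below $p\delta$ and hence is zero anyway on the $G$-submodule generated by $v$ — we conclude $X_{-\alpha}\cdot v = 0$ for all $\alpha\in\Delta$, so $v$ is simultaneously a highest and lowest weight vector for the $G$-module $U_k(\mathfrak{g})\cdot v$, forcing that module to be $1$-dimensional, i.e. $L(\lambda)\otimes L(\mu)$ has a $1$-dimensional $G$-composition factor at a maximal vector of weight $p\delta\neq 0$ — impossible since a $1$-dimensional submodule of a $G$-module over a simply connected simple $G$ is trivial, so has weight $0$. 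This contradiction shows $v$ generates a non-simple $G_1$-submodule, completing the proof.

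I expect the main obstacle to be the first step: extracting from the mere statement "$L(p\delta)$ is a composition factor" an actual maximal vector (as opposed to just a subquotient), which requires knowing that $\Hom_G(\Delta(p\delta), L(\lambda)\otimes L(\mu))\neq 0$. This follows because $L(\lambda)\otimes L(\mu)$ has a good filtration (Donkin–Mathieu, as recalled in Section \ref{sec:someresults}), hence is contravariantly self-dual with a Weyl filtration too, so any composition factor $L(p\delta)$ forces $\Delta(p\delta)$ into a Weyl filtration layer, and contravariant self-duality then yields a nonzero map $\Delta(p\delta)\to L(\lambda)\otimes L(\mu)$; its image is generated by a maximal vector of weight $p\delta$. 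Everything after that is a short rank-one computation plus an application of Lemma \ref{lem:weakmaxvecnonsimpleG1}.
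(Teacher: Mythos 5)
Your proposal has two genuine gaps, and the underlying strategy cannot be repaired. First, the existence of a maximal vector of weight $p\delta$ is not established: $L(\lambda)\otimes L(\mu)$ does \emph{not} in general have a good filtration (only $\nabla(\lambda)\otimes\nabla(\mu)$ does; $L(\lambda)$ has a good filtration only when $L(\lambda)=\nabla(\lambda)$), and the mere fact that $L(p\delta)$ is a composition factor does not give $\Hom_G(\Delta(p\delta),L(\lambda)\otimes L(\mu))\neq 0$. Second, and more fundamentally, even if such a maximal vector $v$ existed it could perfectly well generate the simple $G$-submodule $L(p\delta)\cong L(\delta)^{[1]}$, on which $G_1$ acts trivially, so that $u_k(\mathfrak{g})\cdot v=kv$ \emph{is} a simple $G_1$-module; the vector $v$ is not the obstruction to $G_1$-complete reducibility. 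Concretely, your rank-one claim that $X_{-\alpha,r}\cdot v\neq 0$ for all $0\leq r\leq\langle p\delta,\alpha^\vee\rangle$ is false for weights divisible by $p$ (in the $\mathrm{SL}_2$-module $L(m)^{[1]}$ only the divided powers $X_{-\alpha,jp}$ move the highest weight vector), and your final step --- that $X_{-\alpha}\cdot v=0$ for all $\alpha\in\Delta$ forces $U_k(\mathfrak{g})\cdot v$ to be one-dimensional --- fails because the hyperalgebra contains divided powers $X_{-\alpha,r}$ with $r\geq p$ that are not generated by $X_{-\alpha}$; again $L(\delta)^{[1]}$ is exactly the counterexample.

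The paper's argument is of a different nature: it is a multiplicity count rather than a hunt for a bad (weakly) maximal vector. If $L(\lambda)\otimes L(\mu)$ were completely reducible over $G_1$, the composition factor $L(p\delta)\cong L(\delta)^{[1]}$ would be a $G_1$-submodule on which $G_1$ acts trivially, of dimension $\dim L(\delta)>1$, so that
\[ \dim\Hom_{G_1}\bigl(k,\,L(\lambda)\otimes L(\mu)\bigr)=\dim\Hom_{G_1}\bigl(L(\lambda)^*,L(\mu)\bigr)>1 , \]
which contradicts Schur's lemma because $L(\lambda)$ and $L(\mu)$ restrict to simple $G_1$-modules ($\lambda,\mu\in X_1$, Curtis's theorem). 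If you want to keep a self-contained argument in the spirit of Section \ref{sec:weaklymaximalvectors}, you would have to target the multiplicity of the trivial $G_1$-module in the socle, not the behaviour of a single maximal vector.
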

\begin{proof}
	Suppose for a contradiction that $L(\lambda)\otimes L(\mu)$ is completely reducible as a $G_1$-module. Then $L(p\delta)\cong L(\delta)^{[1]}$ is a trivial $G_1$-submodule of $L(\lambda)\otimes L(\mu)$ and $\dim L(\delta)>1$ as $\delta\neq 0$. Hence
	\[ 1 < \dim \Hom_{G_1}(k,L(\lambda)\otimes L(\mu)) = \dim \Hom_{G_1}(L(\lambda)^*,L(\mu)) , \]
	contradicting Schur's Lemma.
\end{proof}

We will also need the following result due to J.-P. Serre, see Proposition 2.3 in \cite{Serre}.

\begin{Proposition}[Serre] \label{prop:Serre}
	Let $H$ be a group and let $V$ be a finite-dimensional $kH$-module such that the canonical homomorphism $k\hookrightarrow V\otimes V^*$ splits.
	If $W$ is a $kH$-module such that $V\otimes W$ is completely reducible then $W$ is completely reducible.
\end{Proposition}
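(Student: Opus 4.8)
The plan is to show that every $kH$-submodule $N$ of $W$ is a direct summand; this is equivalent to $W$ being completely reducible. So I fix a submodule $N\subseteq W$ and aim to produce a $kH$-linear retraction $\sigma\colon W\to N$, that is, one with $\sigma\vert_N=\id_N$.

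The first step uses the hypothesis on $V\otimes W$: since $V\otimes N$ is a submodule of the completely reducible module $V\otimes W$, it is a direct summand, so there is a $kH$-linear projection $\rho\colon V\otimes W\to V\otimes N$ with $\rho\vert_{V\otimes N}=\id$. The second step records the splitting hypothesis in a usable form. Fix a basis $e_1,\dots,e_m$ of $V$ with dual basis $e_1^*,\dots,e_m^*$, so that the canonical map $k\hookrightarrow V\otimes V^*$ sends $1\mapsto\sum_i e_i\otimes e_i^*$. Composing with the $kH$-linear swap isomorphism $V\otimes V^*\xrightarrow{\sim}V^*\otimes V$, $v\otimes f\mapsto f\otimes v$, we obtain a $kH$-linear embedding $k\hookrightarrow V^*\otimes V$, $1\mapsto\sum_i e_i^*\otimes e_i$, which is again split; let $\pi\colon V^*\otimes V\to k$ be a $kH$-linear splitting, so that $\pi\bigl(\sum_i e_i^*\otimes e_i\bigr)=1$.

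Now I would define $\sigma\colon W\to N$ to be the composite of the $kH$-linear maps
\[ W\longrightarrow V^*\otimes V\otimes W\xrightarrow{\ \id_{V^*}\otimes\rho\ }V^*\otimes V\otimes N\xrightarrow{\ \pi\otimes\id_N\ }k\otimes N=N, \]
where the first arrow is $w\mapsto\sum_i e_i^*\otimes e_i\otimes w$. Each of these maps is $kH$-linear: the first because $\sum_i e_i^*\otimes e_i$ spans a trivial $kH$-submodule of $V^*\otimes V$ (it is the image of the $kH$-linear embedding $k\hookrightarrow V^*\otimes V$ above), and the other two because $\rho$ and $\pi$ are $kH$-linear. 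To finish, I would check that $\sigma\vert_N=\id_N$: if $w\in N$ then $e_i\otimes w\in V\otimes N$ for all $i$, hence $\rho(e_i\otimes w)=e_i\otimes w$, and therefore
\[ \sigma(w)=(\pi\otimes\id_N)\Bigl(\sum_i e_i^*\otimes e_i\otimes w\Bigr)=\pi\Bigl(\sum_i e_i^*\otimes e_i\Bigr)\cdot w=w. \]
Thus $\sigma$ splits the inclusion $N\hookrightarrow W$, so $N$ is a direct summand of $W$; as $N$ was arbitrary, $W$ is completely reducible.

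The construction of $\sigma$ is really the only content, and after that everything is a routine verification. The two points to be careful about are that the first map defining $\sigma$ is genuinely $kH$-equivariant --- which is precisely why one wants the distinguished vector $\sum_i e_i^*\otimes e_i$ to be $H$-fixed, i.e. why the hypothesis is phrased through the canonical map $k\to V\otimes V^*$ --- and that it is the \emph{splitting} $\pi$, and not merely the evaluation pairing $V^*\otimes V\to k$, that forces $\pi\bigl(\sum_i e_i^*\otimes e_i\bigr)=1$ rather than a scalar (namely $\dim V$) which could vanish in characteristic $p$.
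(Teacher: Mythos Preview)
Your proof is correct. The paper does not supply its own argument for this proposition---it is quoted as a result of Serre with a reference to \cite{Serre}---so there is nothing in the paper to compare against. What you have written is precisely the classical argument: tensor the inclusion $N\hookrightarrow W$ with $V$ to get a split inclusion (by complete reducibility of $V\otimes W$), then use the split coevaluation $k\hookrightarrow V^*\otimes V$ and its retraction to pass from a splitting of $V\otimes N\hookrightarrow V\otimes W$ back to a splitting of $N\hookrightarrow W$. Your closing paragraph correctly identifies the two places where care is needed, and in particular why one must use the retraction $\pi$ rather than the evaluation pairing.
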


\begin{Remark} \label{rem:Serre}
	\begin{enumerate}
	\item In the proofs of Propositions 2.1 and 2.3 in \cite{Serre}, we can replace the group algebra $kH$ by the finite-dimensional Hopf algebra $u_k(\mathfrak{g})$ (or any Hopf algebra, in fact), hence the above result is also valid for modules over the Frobenius kernel $G_1$.
	\item As pointed out in Remark 2.2 in \cite{Serre}, a sufficient condition for the splitting of the homomorphism $k\hookrightarrow V\otimes V^*$ is that $p$ does not divide $\dim V$. If $V$ is a simple module then
	\[ 1 = \dim \End_H(V) = \dim \Hom_H(k,V\otimes V^*) = \dim \Hom_H(V\otimes V^*,k) \]
	by Schur's lemma, so $\Hom_H(k,V\otimes V^*)$ is spanned by $x\mapsto x\cdot\id_V$ while $\Hom_H(V\otimes V^*,k)$ is spanned by the trace map, where we identify $V\otimes V^*$ with $\End_k(V)$. In that case, it follows that the embedding $k\hookrightarrow V\otimes V^*$ splits if and only if $p$ does not divide $\dim V$.
	\end{enumerate}
\end{Remark}

A finite-dimensional rational $G$-module is called \emph{multiplicity free} if all composition factors appear with multiplicity at most $1$. We now show that that multiplicity freeness of tensor products of simple modules implies complete reducibility:

\begin{Lemma} \label{lem:MultiplicityFree}
	Let $\lambda,\mu\in X^+$. If $L(\lambda)\otimes L(\mu)$ is multiplicity free then $L(\lambda)\otimes L(\mu)$ is completely reducible.
\end{Lemma}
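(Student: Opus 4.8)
The plan is to use Corollary \ref{cor:BrundanKleshchev}, which reduces complete reducibility of $L(\lambda)\otimes L(\mu)$ to checking that $\Hom_G(L(\delta),L(\lambda)\otimes L(\mu))\cong\Hom_G(\Delta(\delta),L(\lambda)\otimes L(\mu))$ for all $\delta\in X^+$. Set $M\coloneqq L(\lambda)\otimes L(\mu)$, which is contravariantly self-dual since $L(\lambda)$ and $L(\mu)$ are, and which has a good filtration since $\nabla(\lambda)$ and $\nabla(\mu)$ do (being simple in this case? no — rather, one invokes that $L(\lambda)\otimes L(\mu)$ is a submodule of $\nabla(\lambda)\otimes\nabla(\mu)$, which has a good filtration by Donkin--Mathieu; this is exactly the setup of Proposition \ref{prop:BrundanKleshchev}). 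So I work with $M$ as a self-dual submodule of the good-filtration module $W\coloneqq\nabla(\lambda)\otimes\nabla(\mu)$.

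First I would fix $\delta\in X^+$ and compare the two Hom spaces. There is always an inclusion $\Hom_G(L(\delta),M)\hookrightarrow\Hom_G(\Delta(\delta),M)$: any map $L(\delta)\to M$ pulls back along the surjection $\Delta(\delta)\twoheadrightarrow L(\delta)$, and this pullback is injective because $\Delta(\delta)$ is generated by its $\delta$-weight space, on which the map is determined. So the task is to show this inclusion is surjective, i.e. that every $G$-homomorphism $\varphi\colon\Delta(\delta)\to M$ factors through $L(\delta)$, equivalently $\varphi$ kills $\rad_G\Delta(\delta)$. The key point is a dimension count of weight-$\delta$ vectors. The image $\varphi(\Delta(\delta))$ is a quotient of $\Delta(\delta)$, hence its composition factors are among those of $\Delta(\delta)$ and $L(\delta)$ occurs in it with multiplicity $1$ (if $\varphi\neq0$). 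Since $M$ is multiplicity free, $L(\delta)$ occurs at most once in $M$. Now use self-duality: $\Hom_G(\Delta(\delta),M)\cong\Hom_G(\Delta(\delta),M^\tau)\cong\Hom_G(M,\nabla(\delta))^{?}$ — more cleanly, $\dim\Hom_G(\Delta(\delta),M)=[M:L(\delta)]$ when $M$ is contravariantly self-dual, because $M^\tau\cong M$ forces $\dim\Hom_G(\Delta(\delta),M)=\dim\Hom_G(M,\nabla(\delta))=[M:L(\delta)]$ (the last equality by the good filtration / $\nabla$ being the injective-like object for this multiplicity). Similarly $\dim\Hom_G(L(\delta),M)=[M:L(\delta)]$ when $M$ is multiplicity free (semisimplicity of the isotypic analysis forces the socle multiplicity to equal the composition multiplicity when the latter is $\le 1$ — one has to argue $L(\delta)\hookrightarrow M$ whenever $L(\delta)$ is a composition factor, using self-duality to pass between socle and head). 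Hence both dimensions equal $[M:L(\delta)]\in\{0,1\}$, so the inclusion $\Hom_G(L(\delta),M)\hookrightarrow\Hom_G(\Delta(\delta),M)$ is an isomorphism for every $\delta$, and Corollary \ref{cor:BrundanKleshchev} gives complete reducibility.

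The main obstacle I anticipate is making the two dimension computations fully rigorous, in particular the claim $\dim\Hom_G(L(\delta),M)=[M:L(\delta)]$ for multiplicity-free self-dual $M$: one needs that a multiplicity-one composition factor of a contravariantly self-dual module actually appears in the socle (equivalently, does not sit "strictly inside"). This follows because self-duality makes the socle and head have the same composition multiplicities, so if $L(\delta)$ occurred only in $\rad M/\soc M$ with multiplicity $1$ it would have to occur in both $\soc M$ and $\head M$, contradicting $[M:L(\delta)]=1$ — so it lies in the socle. Dually, $\dim\Hom_G(\Delta(\delta),M)=[M:L(\delta)]$ uses that $W$, and hence any good-filtration module, satisfies $\dim\Hom_G(\Delta(\delta),W)=(W:\nabla(\delta))$ and that passing to the self-dual submodule $M$ one gets $\dim\Hom_G(\Delta(\delta),M)=[M:L(\delta)]$ via $\Ext^1_G(\Delta(\delta),\nabla(\nu))=0$. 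Once these two identities are in place, the rest is formal. An alternative, slicker route avoiding the $\Hom$-dimension bookkeeping: show directly that a multiplicity-free module $M$ of the above type has $\rad M=\rad^2 M$ forcing $\rad M=0$; but I expect the Hom-comparison via Corollary \ref{cor:BrundanKleshchev} to be the cleanest, since that corollary is already available.
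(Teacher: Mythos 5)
There is a genuine gap in the key step. Your strategy via Corollary \ref{cor:BrundanKleshchev} reduces to showing that whenever $[M:L(\delta)]=1$ and $\Hom_G(\Delta(\delta),M)\neq 0$, one has $L(\delta)\subseteq\soc_G M$; but the sublemma you invoke for this -- ``a multiplicity-one composition factor of a contravariantly self-dual module lies in the socle'' -- is false in general. A uniserial contravariantly self-dual module with layers $L,L',L$ (for instance $L(1)\otimes L(1)$ for $\mathrm{SL}_2$ with $p=2$, which is $L(0)\,|\,L(2)\,|\,L(0)$) has $L'$ of multiplicity one sitting strictly between socle and head. Your justification of the sublemma also does not parse: if $L(\delta)$ occurs only in $\rad_G M/\soc_G M$ then it occurs in \emph{neither} $\soc_G M$ nor $\head_G M$, which is perfectly consistent with self-duality and yields no contradiction with $[M:L(\delta)]=1$. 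If you instead restrict the sublemma to modules that are multiplicity free in their entirety, it becomes equivalent to ``$M=\soc_G M$'', i.e.\ to the statement being proved, so as written the argument is circular at exactly the point where the work has to happen. (The first identity, $\dim\Hom_G(\Delta(\delta),M)=[M:L(\delta)]$, is likewise false in general -- the same $\mathrm{SL}_2$ example gives $1\neq 2$ for $\delta=0$ -- though you only need the inequality $\dim\Hom_G(\Delta(\delta),M)\leq[M:L(\delta)]$, which does hold.)

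The repair is to stop tracking a fixed $\delta$ and instead locate \emph{some} repeated composition factor, and this makes the Brundan--Kleshchev machinery unnecessary. This is what the paper does, in a few lines: if $M=L(\lambda)\otimes L(\mu)$ is not completely reducible then $\rad_G M\neq 0$; choose a simple submodule $L$ of $\soc_G(\rad_G M)\subseteq\soc_G M$. Contravariant self-duality gives $\soc_G M\cong\head_G M=M/\rad_G M$, so $L$ occurs as a composition factor both of $\rad_G M$ and of $M/\rad_G M$, whence $[M:L]\geq 2$ and $M$ is not multiplicity free. Note that the simple $L$ produced this way need not be the $L(\delta)$ you started from -- that is precisely the flexibility your fixed-$\delta$ argument lacks. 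Your closing alternative ($\rad_G M=\rad_G^2 M$ forces $\rad_G M=0$) is also not a valid inference in this category; the socle--head comparison above is the correct direct route.
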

\begin{proof}
	Suppose that $L(\lambda)\otimes L(\mu)$ is not completely reducible so that $\rad_G\big(L(\lambda)\otimes L(\mu)\big)\neq 0$. Let~$L$ be a simple submodule of $\soc_G\big(\rad_G\big(L(\lambda)\otimes L(\mu)\big)\big)\subseteq \soc_G\big( L(\lambda)\otimes L(\mu) \big)$. As $L(\lambda)\otimes L(\mu)$ is contravariantly self-dual, we have
	\[ \soc_G\big( L(\lambda)\otimes L(\mu) \big) \cong \head_G\big( L(\lambda)\otimes L(\mu) \big) = \big( L(\lambda)\otimes L(\mu) \big) / \rad_G\big( L(\lambda)\otimes L(\mu) \big) . \]
	Thus $L$ is a composition factor of $\rad_G\big( L(\lambda)\otimes L(\mu) \big)$ and of $\head_G\big( L(\lambda)\otimes L(\mu) \big)$, so $L(\lambda)\otimes L(\mu)$ is not multiplicity free.
\end{proof}

We conclude the section with a remark about truncation to Levi subgroups.

\begin{Remark} \label{rem:levi}
	Let $I\subseteq \{1,\ldots, n\}$ and consider the derived subgroup $L_I$ of the Levi subgroup of $G$ corresponding to $\Delta_I=\{\alpha_i\mid i\in I\}$. For $\lambda=a_1\omega_1+\cdots+a_n\omega_n\in X^+$, write $\lambda_I=\sum_{i\in I}a_i\omega_i$. For a rational $G$-module $M$ and $\mu\in X$, we define the \emph{truncation} of $M$ to $L_I$ at $\mu$ by
	\[ \mathrm{Tr}_I^\mu M = \bigoplus_{\delta\in \Z \Delta_I} M_{\mu-\delta} . \]
	 Then for $\lambda\in X^+$, $\mathrm{Tr}_I^\lambda L(\lambda)$ is the simple $L_I$-module of highest weight $\lambda_I$, see Sections  II.2.10 and~II.2.11 in \cite{Jantzen}. Analogously, $\mathrm{Tr}_I^\lambda \nabla(\lambda)$ and $\mathrm{Tr}_I^\lambda \Delta(\lambda)$ afford the induced module and the Weyl module of highest weight $\lambda_I$ for $L_I$. If $M$ is any finite-dimensional rational $G$-module of highest weight $\lambda\in X^+$, then it is straightforward to check that for $\mu\in (\lambda-\Z\Delta_I)\cap X^+$, the multiplicity of $L(\mu)$ as a composition factor of $M$ coincides with the multiplicity of the simple $L_I$-module of highest weight $\mu_I$ in $\mathrm{Tr}_I^\lambda M$. Furthermore, for $\lambda,\mu\in X^+$,
	\[ \mathrm{Tr}_I^{\lambda+\mu}\big( L(\lambda)\otimes L(\mu) \big) = \mathrm{Tr}_I^\lambda L(\lambda)\otimes \mathrm{Tr}_I^\mu L(\mu) \]
	is the tensor product of the simple $L_I$-modules of highest weights $\lambda_I$ and $\mu_I$. In particular, the latter tensor product is completely reducible whenever $L(\lambda)\otimes L(\mu)$ is completely reducible. This observation will be crucial in Sections \ref{sec:Anp2} and  \ref{sec:smallprimes}.
\end{Remark}

\section{Complete reducibility and \texorpdfstring{$p$}{p}-restrictedness} \label{sec:CRprestricted}

We are now ready to prove Theorems \ref{thm:introA} and \ref{thm:introB} for $G$ of type different from $\mathrm{G}_2$ and under the assumption that $p>2$ when $G$ is of type $\mathrm{B}_n$, $\mathrm{C}_n$ or $\mathrm{F}_4$.

\begin{Theorem} \label{thm:completelyreducibleprestricted}
	Assume that $\Phi$ is of type different from $\mathrm{G}_2$ and that $p>2$ if $\Phi$ is not simply laced. Let $\lambda,\mu\in X_1$. If $L(\lambda)\otimes L(\mu)$ is completely reducible then all composition factors of $L(\lambda)\otimes L(\mu)$ are $p$-restricted.
\end{Theorem}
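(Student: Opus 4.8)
The plan is to argue by contradiction and reduce the statement to Corollary \ref{cor:nonpresnonsimpleG1}, which contains all the hard work. Suppose $L(\lambda)\otimes L(\mu)$ is completely reducible as a $G$-module but has a composition factor $L(\delta)$ with $\delta\notin X_1$. I would first record that complete reducibility over $G$ forces complete reducibility over $G_1$: by Steinberg's tensor product theorem together with Curtis's theorem, the restriction of any simple $G$-module to $G_1$ is semisimple, so a direct sum of simple $G$-modules is semisimple as a $G_1$-module. Thus it suffices to contradict complete reducibility of $L(\lambda)\otimes L(\mu)$ over $G_1$.

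Next, since $L(\lambda)\otimes L(\mu)$ is semisimple as a $G$-module, the composition factor $L(\delta)$ is in fact a direct summand, so there is a genuine $G$-maximal vector $v$ of weight $\delta$ in $L(\lambda)\otimes L(\mu)$; in particular $v$ is a weakly maximal vector. Because $\delta\in X^+$ but $\delta\notin X_1 = X_1^\prime\cap X^+$, we have $\delta\notin X_1^\prime$, i.e. $\delta\in X\setminus X_1^\prime$. The hypotheses on $\Phi$ and $p$ in the theorem are precisely those of Corollary \ref{cor:nonpresnonsimpleG1}, so that corollary applies and shows that $L(\lambda)\otimes L(\mu)$ is \emph{not} completely reducible as a $G_1$-module, contradicting the reduction in the previous paragraph.

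The technical heart of the matter, namely turning a weakly maximal vector whose weight lies outside $X_1^\prime$ into a weakly maximal vector generating a non-simple $G_1$-submodule, has already been carried out in Section \ref{sec:weaklymaximalvectors} (via Propositions \ref{prop:weakmaxvecnotG2} and \ref{prop:nonpresnonsimpleG1}), so no serious obstacle remains in the present argument. The only steps requiring a moment's care are the two bookkeeping points above: that complete reducibility over $G$ promotes the non-$p$-restricted composition factor $L(\delta)$ to an actual maximal vector, which is the one place where complete reducibility is genuinely used — to place $L(\delta)$ in the socle — and that a dominant weight not lying in $X_1$ automatically lies in $X\setminus X_1^\prime$, which is immediate from the definition $X_1 = X_1^\prime\cap X^+$.
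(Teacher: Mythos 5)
Your argument is correct and coincides with the paper's own proof: both pass from complete reducibility over $G$ to complete reducibility over $G_1$, use semisimplicity to realize the composition factor $L(\delta)$ as a direct summand and hence obtain a (weakly) maximal vector of weight $\delta$, and then invoke Corollary \ref{cor:nonpresnonsimpleG1} to force $\delta\in X_1'\cap X^+=X_1$. No issues.
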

\begin{proof}
	Suppose that $L(\lambda)\otimes L(\mu)$ is completely reducible, so completely reducible as a $G_1$-module, and let $\delta\in X^+$ such that $L(\delta)$ is a composition factor of $L(\lambda)\otimes L(\mu)$. Then $L(\delta)$ is generated by a maximal vector $v\in L(\lambda)\otimes L(\mu)$ of weight $\delta$ and $\delta\in X_1^\prime$ by Corollary \ref{cor:nonpresnonsimpleG1}, hence $\delta\in X_1^\prime \cap X^+=X_1$.
\end{proof}

\begin{Theorem} \label{thm:completelyreducibleGandG1}
	Assume that $\Phi$ is of type different from $\mathrm{G}_2$ and that $p>2$ if $\Phi$ is not simply laced. Let $\lambda,\mu\in X_1$. Then $L(\lambda)\otimes L(\mu)$ is completely reducible as a $G$-module if and only if $L(\lambda)\otimes L(\mu)$ is completely reducible as a $G_1$-module.
\end{Theorem}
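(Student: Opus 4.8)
The plan is to reduce the statement to Proposition \ref{prop:GandG1allMaxVecPRes} by checking its hypothesis with the help of Corollary \ref{cor:nonpresnonsimpleG1}. One implication is immediate and does not use the assumptions on $\Phi$ and $p$: if $L(\lambda)\otimes L(\mu)$ is completely reducible as a $G$-module, then it is a direct sum of simple $G$-modules, and by Steinberg's tensor product theorem together with Curtis's theorem the restriction of each simple $G$-module to $G_1$ is completely reducible (a simple $G$-module is, up to Frobenius twists, of the form $L(\nu_0)\otimes L(\nu_1)^{[1]}$ with $\nu_0$ $p$-restricted, and on $G_1$ the second factor is trivial while the first remains simple). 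Hence $L(\lambda)\otimes L(\mu)$ is completely reducible as a $G_1$-module.

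For the converse, I would assume that $L(\lambda)\otimes L(\mu)$ is completely reducible as a $G_1$-module and aim to show that every maximal vector in $L(\lambda)\otimes L(\mu)$ has $p$-restricted weight. The key observation is that a maximal vector is in particular a weakly maximal vector, and that its weight lies in $X^+$. By the contrapositive of Corollary \ref{cor:nonpresnonsimpleG1}, under the standing hypotheses $L(\lambda)\otimes L(\mu)$ has no weakly maximal vector of weight in $X\setminus X_1^\prime$; consequently every maximal vector has weight in $X_1^\prime\cap X^+=X_1$, i.e.\ $p$-restricted. Proposition \ref{prop:GandG1allMaxVecPRes} then applies directly and yields that $L(\lambda)\otimes L(\mu)$ is completely reducible as a $G$-module, completing the proof.

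The substantive content has already been carried out in Section \ref{sec:weaklymaximalvectors}: the main obstacle is Corollary \ref{cor:nonpresnonsimpleG1}, which rests on Proposition \ref{prop:nonpresnonsimpleG1} and, through it, on the explicit passage from a weakly maximal vector $v$ of non-$p$-restricted weight to the higher weakly maximal vector $(1\otimes X_\beta)\cdot v$ furnished by Propositions \ref{prop:nothighestweighthigherweakmaxvec} and \ref{prop:weakmaxvecnotG2}. That construction is precisely where the exclusion of type $\mathrm{G}_2$ and the hypothesis $p>2$ in the non-simply-laced case are needed (via the root-system Lemma \ref{lem:rootnegativecoeff}). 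Granting those results, the argument above is a routine assembly of Corollary \ref{cor:nonpresnonsimpleG1} and Proposition \ref{prop:GandG1allMaxVecPRes}, and I do not expect any further difficulty; the only point requiring mild care is the elementary remark that maximal vectors are weakly maximal vectors of dominant weight, so that a non-$p$-restricted maximal vector would automatically have weight in $X\setminus X_1^\prime$.
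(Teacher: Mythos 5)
Your proposal is correct and follows the paper's own proof essentially verbatim: one direction via Steinberg's tensor product theorem and Curtis's theorem, and the converse by combining Corollary \ref{cor:nonpresnonsimpleG1} (to see that all maximal vectors have $p$-restricted weight) with Proposition \ref{prop:GandG1allMaxVecPRes}. No gaps.
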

\begin{proof}
	If $L(\lambda)\otimes L(\mu)$ is completely reducible as a $G$-module then $L(\lambda)\otimes L(\mu)$ is completely reducible as a $G_1$-module as the restriction of a simple $G$-module to $G_1$ is always completely reducible.
	
	Now suppose that $L(\lambda)\otimes L(\mu)$ is completely reducible as a $G_1$-module. Then by Corollary \ref{cor:nonpresnonsimpleG1}, all maximal vectors in $L(\lambda)\otimes L(\mu)$ have $p$-restricted weight and $L(\lambda)\otimes L(\mu)$ is completely reducible as a $G$-module by Proposition \ref{prop:GandG1allMaxVecPRes}.
\end{proof}

\begin{Remark} \label{rem:afterABforCnp2}
	Note that Proposition \ref{prop:GandG1allMaxVecPRes} is valid in arbitrary characteristic and for all types of root systems. In order to prove Theorems \ref{thm:introA} and \ref{thm:introB} for $\Phi$ and $p$ that are not included in the above statements, it would be sufficient to obtain an analogue of Corollary \ref{cor:nonpresnonsimpleG1} for the corresponding group~$G$. We will do this for $G$ of type $\mathrm{C}_n$ and $p=2$ in Section \ref{sec:smallprimes}; see Proposition \ref{prop:Cnp2nonCRG1} and Remark \ref{rem:Cnp2thmAB}.
\end{Remark}

\section{Results for \texorpdfstring{$G$}{G} of type \texorpdfstring{$\mathrm{G}_2$}{G2} and \texorpdfstring{$p\neq 3$}{p<>3}}  \label{sec:G2}

In this section, we prove Theorems \ref{thm:introA} and \ref{thm:introB} for $G$ of type $\mathrm{G}_2$ when $p\neq 3$.

Let $G$ be of type $\mathrm{G}_2$ and let the simple roots $\Delta=\{\alpha_1,\alpha_2\}$ be ordered such that $\alpha_1$ is short, that is $\langle \alpha_1,\alpha_2^\vee \rangle=-1$ and $\langle \alpha_2,\alpha_1^\vee \rangle=-3$. The highest root in $\Phi$ is given by $\alpha_0=3\alpha_1+2\alpha_2$ and we have $\langle\alpha_0 ,\alpha_0\rangle=\langle\alpha_2,\alpha_2\rangle=3\cdot\langle\alpha_1,\alpha_1\rangle$. For $\lambda\in X$, it follows that
	\[ \langle \lambda ,\alpha_0^\vee \rangle = \frac{2\langle\lambda,\alpha_0\rangle}{\langle\alpha_0,\alpha_0\rangle} = \langle \lambda,\alpha_1^\vee \rangle + 2\cdot  \langle \lambda,\alpha_2^\vee \rangle . \]

\begin{Lemma} \label{lem:G2inequality}
	Suppose that $p>3$, let $\lambda,\mu\in X_1$ and write $\lambda=a\omega_1+b\omega_2$ and $\mu=c\omega_1+d\omega_2$. If $L(\lambda)\otimes L(\mu)$ is completely reducible as a $G_1$-module then $a+c+3\cdot\min\{b,d\}<p$ and $b+d+\min\{a,c\}<p$. In particular, we have either $\langle\lambda,\alpha_0^\vee\rangle<p$ or $\langle\mu,\alpha_0^\vee\rangle<p$.
\end{Lemma}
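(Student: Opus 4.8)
The plan is to prove the two inequalities by contradiction (the closing assertion is then an elementary consequence), using an explicit construction of weakly maximal vectors together with Proposition \ref{prop:nonpreslevel0}. Since the hypothesis that $L(\lambda)\otimes L(\mu)$ is completely reducible as a $G_1$-module is symmetric in $\lambda$ and $\mu$, I may assume $b\le d$ when proving the first inequality (so that it reads $a+c+3b<p$) and $a\le c$ when proving the second (so that it reads $b+d+a<p$). I first note that $G_1$-complete reducibility forces $\lambda+\mu\in X_1$: the maximal vector of weight $\lambda+\mu$ in $L(\lambda)\otimes L(\mu)$ generates a simple $G_1$-submodule by Remark \ref{rem:WeakMaxVecGeneratesSimple}, so $\lambda+\mu\in X_1$ by Corollary \ref{cor:highestweightvectorprestricted}; in particular $a+c<p$ and $b+d<p$.

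Fix $i\in\{1,2\}$ and let $j$ be the other index. Let $v^+\in L(\lambda)$ and $w^+\in L(\mu)$ be maximal vectors and let $M'\le L(\lambda)$, $M''\le L(\mu)$ be the $L_{\{i\}}$-submodules they generate. Since $\langle\lambda,\alpha_i^\vee\rangle<p$ and $\langle\mu,\alpha_i^\vee\rangle<p$, these are the simple $L_{\{i\}}$-modules of highest weights $\langle\lambda,\alpha_i^\vee\rangle$ and $\langle\mu,\alpha_i^\vee\rangle$, and since $\langle\lambda+\mu,\alpha_i^\vee\rangle<p$, the $L_{\{i\}}$-submodule $M:=M'\otimes M''$ of $L(\lambda)\otimes L(\mu)$ decomposes by the Clebsch--Gordan rule as a direct sum of simple $L_{\{i\}}$-modules of highest weights $\langle\lambda+\mu,\alpha_i^\vee\rangle-2k$, for $k=0,\dots,\min\{\langle\lambda,\alpha_i^\vee\rangle,\langle\mu,\alpha_i^\vee\rangle\}$. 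Let $u_k\in M$ be a highest weight vector (for $L_{\{i\}}$) of the $k$-th summand. Then $u_k$ is a linear combination of vectors $x\otimes y$ with $x\in L(\lambda)_{\lambda-a'\alpha_i}$, $y\in L(\mu)_{\mu-b'\alpha_i}$ and $a'+b'=k$, so it has $T$-weight $\nu_k:=\lambda+\mu-k\alpha_i$, and $X_{\alpha_i}\cdot u_k=0$ by construction. I claim $u_k$ is a weakly maximal vector: each $\gamma\in\Phi^+$ other than $\alpha_i$ has positive $\alpha_j$-coefficient, so $\lambda+\mu-(\nu_k+\gamma)=k\alpha_i-\gamma$ has negative $\alpha_j$-coefficient, whence $\nu_k+\gamma\nleq\lambda+\mu$, so $\nu_k+\gamma$ is not a weight of $L(\lambda)\otimes L(\mu)$ and $X_\gamma\cdot u_k=0$; together with $X_{\alpha_i}\cdot u_k=0$ this is weak maximality.

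Now I run the contradiction. For the first inequality take $i=2$: then $\langle\nu_k,\alpha_1^\vee\rangle=a+c+3k$ (as $\langle\alpha_2,\alpha_1^\vee\rangle=-3$) and $\langle\nu_k,\alpha_2^\vee\rangle=b+d-2k$, and $u_k$ exists for $0\le k\le b$. Suppose $a+c+3b\ge p$; since $a+c<p$ we have $b\ge 1$, and there is a least $k_0\in\{1,\dots,b\}$ with $a+c+3k_0\ge p$. Minimality gives $a+c+3k_0<p+3$, so $a+c+3k_0\in\{p,p+1,p+2\}$ and hence $p\nmid a+c+3k_0+1$ because $p>3$. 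As $k_0\le b\le d$ we get $\langle\nu_{k_0},\alpha_2^\vee\rangle=b+d-2k_0\ge 0$, so $\nu_{k_0}$ is dominant, non-$p$-restricted (since $\langle\nu_{k_0},\alpha_1^\vee\rangle\ge p$), with $\hgt_{\alpha_1}(\lambda+\mu-\nu_{k_0})=\hgt_{\alpha_1}(k_0\alpha_2)=0$. Proposition \ref{prop:nonpreslevel0}, applied to $u_{k_0}$ with $\alpha=\alpha_1$, then shows that $u_{k_0}$ generates a non-simple $G_1$-submodule, contradicting $G_1$-complete reducibility by Remark \ref{rem:WeakMaxVecGeneratesSimple}. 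Hence $a+c+3b<p$, and by symmetry $a+c+3\min\{b,d\}<p$. The second inequality follows in the same way with $i=1$: here $\langle\nu_k,\alpha_2^\vee\rangle=b+d+k$ (as $\langle\alpha_1,\alpha_2^\vee\rangle=-1$), $u_k$ exists for $0\le k\le a$, and if $b+d+a\ge p$ then the least $k_0\ge1$ with $b+d+k_0\ge p$ has $b+d+k_0=p$, so $p\nmid b+d+k_0+1$, while $\langle\nu_{k_0},\alpha_1^\vee\rangle=a+c-2k_0\ge c-a\ge0$ keeps $\nu_{k_0}$ dominant; Proposition \ref{prop:nonpreslevel0} with $\alpha=\alpha_2$ gives the contradiction, so $b+d+\min\{a,c\}<p$. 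Finally, since $\langle\chi,\alpha_0^\vee\rangle=\langle\chi,\alpha_1^\vee\rangle+2\langle\chi,\alpha_2^\vee\rangle$, if $b\le d$ then $\langle\lambda,\alpha_0^\vee\rangle=a+2b\le a+c+3b<p$, and if $d\le b$ then $\langle\mu,\alpha_0^\vee\rangle=c+2d\le a+c+3d<p$. The only step needing real care is verifying that the $u_k$ are weakly maximal for all of $\Phi^+$ (not merely for $\alpha_i$) and the arithmetic that keeps $\langle\nu_{k_0},\alpha_1^\vee\rangle$ just above $p$ so the non-divisibility hypothesis of Proposition \ref{prop:nonpreslevel0} is met — which is exactly where $p>3$ enters.
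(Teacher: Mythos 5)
Your proof is correct and follows essentially the same route as the paper: the paper also reduces to the weights $\lambda+\mu-r\alpha_i$ for $0\le r\le\min\{b,d\}$ (resp. $\min\{a,c\}$), obtained there via truncation to the rank-one Levi subgroups as in Remark \ref{rem:levi}, and then applies Proposition \ref{prop:nonpreslevel0} with exactly the arithmetic you carry out (the forbidden range $p\le a+c+3r\le 2p-2$, where $p>3$ enters). Your explicit Clebsch--Gordan construction of the weakly maximal vectors $u_k$ and the verification that they are annihilated by all of $\Phi^+$ simply makes explicit a step the paper leaves implicit, so there is nothing to correct.
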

\begin{proof}
	The $\mathrm{SL}_2(k)$-module $L(b)\otimes L(d)$ has composition factors of highest weights $b+d-2r$ for all $0\leq r\leq \min\{b,d\}$ and by Remark \ref{rem:levi}, $L(\lambda)\otimes L(\mu)$ has composition factors of highest weights $\lambda+\mu-r\alpha_2$ for $0\leq r\leq \min\{b,d\}$. Hence by Proposition \ref{prop:nonpreslevel0},
	\[ \langle \lambda+\mu-r\alpha_2 , \alpha_1^\vee \rangle = a+c - r\cdot \langle \alpha_2 , \alpha_1^\vee \rangle = a+c+3r \]
	must not take values strictly between $p-1$ and $2p-1$ for $0\leq r\leq \min\{b,d\}$. As $p>3$, it follows that
	\[ a + c + 3\cdot\min\{b,d\}<p . \]
	Analogously, we obtain that $b+d+\min\{a,c\}<p$ as $\langle\alpha_1,\alpha_2^\vee\rangle=-1$. The first inequality yields that either $ \langle \lambda , \alpha_0^\vee \rangle = a+2b<p$ or $\langle \mu , \alpha_0^\vee \rangle = c+2d<p$ and hence the final claim.
\end{proof}

For $p\in\{2,5,7\}$, the proofs of the following theorems rely on computations which were carried out in GAP \cite{GAP4}. For a fixed prime $p$ and root system $\Phi$ (of small rank), S. Doty's WeylModules-package, available on his website \cite{Doty}, enables us to compute the characters of simple modules, the composition factors of Weyl modules and the composition factors of tensor products of simple modules. Let $G_\C$ be the simply connected simple algebraic group with root system $\Phi$ over $\C$. Then the multiplicity of an induced module $\nabla(\delta)$ in a good filtration of $\nabla(\lambda)\otimes\nabla(\mu)$ coincides with the multiplicity of the simple $G_\C$-module $\nabla_\C(\delta)$ in the tensor product $\nabla_\C(\lambda)\otimes\nabla_\C(\mu)$, and the latter can be computed in GAP. Note that the multiplicity of $\nabla(\delta)$ in a good filtration of $\nabla(\lambda)\otimes\nabla(\mu)$ is also given by $\dim \Hom_G\big(\Delta(\delta),\nabla(\lambda)\otimes\nabla(\mu)\big)$; see Proposition II.4.16 in \cite{Jantzen}.

\begin{Theorem} \label{thm:completelyreducibleprestrictedG2}
	Assume that $\Phi$ is of type $\mathrm{G}_2$ and $p\neq 3$. Let $\lambda,\mu\in X_1$. If $L(\lambda)\otimes L(\mu)$ is completely reducible then all composition factors of $L(\lambda)\otimes L(\mu)$ are $p$-restricted.
\end{Theorem}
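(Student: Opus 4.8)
The plan is to reduce the case of type $\mathrm{G}_2$ with $p\neq 3$ to the results already established for the simply laced and non-simply-laced cases via a case split on the size of $p$. First I would handle the ``large prime'' case $p>3$, which is the heart of the argument. Suppose $L(\lambda)\otimes L(\mu)$ is completely reducible; then it is completely reducible as a $G_1$-module, so Lemma \ref{lem:G2inequality} applies and gives $a+c+3\min\{b,d\}<p$ and $b+d+\min\{a,c\}<p$ (in the notation $\lambda=a\omega_1+b\omega_2$, $\mu=c\omega_1+d\omega_2$), and in particular $\langle\lambda,\alpha_0^\vee\rangle<p$ or $\langle\mu,\alpha_0^\vee\rangle<p$. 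After possibly swapping $\lambda$ and $\mu$ we may assume $\langle\lambda,\alpha_0^\vee\rangle<p$, so Theorem \ref{thm:BrundanKleshchevsocle} tells us $\soc_G\big(L(\lambda)\otimes L(\mu)\big)$ is $p$-restricted. Since $L(\lambda)\otimes L(\mu)$ is completely reducible, it equals its socle, so every composition factor is $p$-restricted. This disposes of $p>3$ essentially immediately once Lemma \ref{lem:G2inequality} is in hand.

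Next I would deal with the remaining primes $p\in\{2,5,7\}$ (the prime $p=3$ is excluded by hypothesis). Here the $\mathrm{G}_2$ root system has small rank, so the computational approach described in the paragraph before the theorem statement applies: using S.~Doty's WeylModules package in GAP one can enumerate all pairs $\lambda,\mu\in X_1$, compute the composition factors of $L(\lambda)\otimes L(\mu)$, and check directly whether $L(\lambda)\otimes L(\mu)$ is completely reducible (for instance by comparing $\dim\Hom_G(\Delta(\delta),L(\lambda)\otimes L(\mu))$ with $\dim\Hom_G(L(\delta),L(\lambda)\otimes L(\mu))$ for all relevant $\delta$, as in Corollary \ref{cor:BrundanKleshchev}, or equivalently by comparing characters of socle and head). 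One then verifies in each of these finitely many cases that complete reducibility forces all composition factors to be $p$-restricted. In fact for $p\in\{5,7\}$ Lemma \ref{lem:G2inequality} is not available (it requires $p>3$), so one cannot avoid the computation there unless one proves a sharper root-theoretic bound; but since the number of $p$-restricted weights is small, a direct check is the cleanest route.

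The main obstacle is the small-prime case, specifically $p\in\{5,7\}$: these are large enough that the tensor products $L(\lambda)\otimes L(\mu)$ and their composition factors are nontrivial to control by hand, yet the clean inequality of Lemma \ref{lem:G2inequality} just barely fails to give what is needed (it controls $a+c+3\min\{b,d\}$ and $b+d+\min\{a,c\}$, which bounds $\langle\lambda,\alpha_0^\vee\rangle$ or $\langle\mu,\alpha_0^\vee\rangle$ below $p$, and from there Theorem \ref{thm:BrundanKleshchevsocle} finishes things). Wait --- that observation actually shows Lemma \ref{lem:G2inequality} does suffice for \emph{all} $p>3$, including $5$ and $7$. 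So the genuinely computational case is only $p=2$, where Lemma \ref{lem:G2inequality} does not apply at all (it assumes $p>3$) and one must fall back on the GAP computation to classify the completely reducible tensor products of $2$-restricted weights in type $\mathrm{G}_2$ and confirm $p$-restrictedness of their composition factors. So the plan is: (i) for $p>3$, combine Lemma \ref{lem:G2inequality}, a reduction by symmetry to $\langle\lambda,\alpha_0^\vee\rangle<p$, and Theorem \ref{thm:BrundanKleshchevsocle}; (ii) for $p=2$, verify the statement by the finite GAP computation described above. The hard part is purely bookkeeping in the $p=2$ computation, together with making sure the symmetry reduction in (i) is legitimate, which it is since the hypotheses and conclusion are symmetric in $\lambda$ and $\mu$.
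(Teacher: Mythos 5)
Your proposal is correct and follows essentially the same route as the paper: for $p>3$ it uses Lemma \ref{lem:G2inequality} to reduce to $\langle\lambda,\alpha_0^\vee\rangle<p$ and then invokes Theorem \ref{thm:BrundanKleshchevsocle}, and for $p=2$ it falls back on a finite computation. The only difference is that the paper streamlines the $p=2$ case by first applying Corollary \ref{cor:highestweightvectorprestricted} to force $\lambda+\mu\in X_1$, leaving only the single pair $\{\omega_1,\omega_2\}$, which is then excluded via Lemma \ref{lem:DonkinNonCR}; your brute-force enumeration over all $2$-restricted pairs accomplishes the same thing.
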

\begin{proof}
	Suppose that $L(\lambda)\otimes L(\mu)$ is completely reducible. We may assume that $\lambda\neq 0$ and $\mu\neq 0$ and that $\lambda+\mu\in X_1$ by Corollary \ref{cor:highestweightvectorprestricted}. If $p=2$, it follows that $\{\lambda,\mu\}=\{\omega_1,\omega_2\}$ and we can compute that $L(2\omega_2)$ is a composition factor of $L(\omega_1)\otimes L(\omega_2)$, hence $L(\omega_1)\otimes L(\omega_2)$ is not completely reducible as a $G_1$-module by Lemma \ref{lem:DonkinNonCR}.
	
	If $p>3$ then we have either $ \langle \lambda , \alpha_0^\vee \rangle < p $ or $ \langle \mu , \alpha_0^\vee \rangle < p $ by Lemma \ref{lem:G2inequality} and the claim is immediate from Theorem \ref{thm:BrundanKleshchevsocle}.
\end{proof}

\begin{Theorem} \label{thm:completelyreducibleGandG1forG2}
	Assume that $\Phi$ is of type $\mathrm{G}_2$ and $p\neq 3$. Let $\lambda,\mu\in X_1$. Then $L(\lambda)\otimes L(\mu)$ is completely reducible as a $G$-module if and only if $L(\lambda)\otimes L(\mu)$ is completely reducible as a $G_1$-module.
\end{Theorem}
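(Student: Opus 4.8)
\ That complete reducibility over $G$ implies complete reducibility over $G_1$ is immediate, since the restriction of a simple $G$-module to $G_1$ is completely reducible. For the converse I would mimic the proof of Theorem~\ref{thm:completelyreducibleGandG1}: assuming $L(\lambda)\otimes L(\mu)$ is completely reducible as a $G_1$-module, it suffices by Proposition~\ref{prop:GandG1allMaxVecPRes} to show that every maximal vector of $L(\lambda)\otimes L(\mu)$ has $p$-restricted weight; equivalently, one needs a replacement for Corollary~\ref{cor:nonpresnonsimpleG1} valid in type $\mathrm{G}_2$. The obstruction is that Lemma~\ref{lem:rootnegativecoeff}, and hence Propositions~\ref{prop:weakmaxvecnotG2} and~\ref{prop:nonpresnonsimpleG1}, genuinely fail for $\mathrm{G}_2$ (a simple root can have a root string of length $4$ through it, and $\langle\beta,\alpha^\vee\rangle$ can equal $-3$), so the analogue has to be established by hand.

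For $p=2$ the claim is essentially a triviality. By Corollary~\ref{cor:highestweightvectorprestricted}, complete reducibility over $G_1$ forces $\lambda+\mu\in X_1=\{0,\omega_1,\omega_2,\omega_1+\omega_2\}$, which (if $\lambda,\mu\neq 0$) leaves only $\{\lambda,\mu\}=\{\omega_1,\omega_2\}$; but $L(\omega_1)\otimes L(\omega_2)$ is not completely reducible as a $G_1$-module, as shown in the proof of Theorem~\ref{thm:completelyreducibleprestrictedG2} via Lemma~\ref{lem:DonkinNonCR} and a GAP computation. Hence $\lambda=0$ or $\mu=0$, so $L(\lambda)\otimes L(\mu)$ is simple. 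For $p\in\{5,7\}$ I would, as announced before Theorem~\ref{thm:completelyreducibleprestrictedG2}, verify the statement by the GAP computations: there are only finitely many pairs of $p$-restricted weights, and for each one where $L(\lambda)\otimes L(\mu)$ is completely reducible over $G_1$ one checks directly, using the composition factors and the spaces $\Hom_G(\Delta(\delta),L(\lambda)\otimes L(\mu))$, that it is completely reducible over $G$ (here Theorem~\ref{thm:completelyreducibleprestrictedG2} is available).

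For $p>7$ I would argue as follows. Since $p>3$, we have $-\langle\beta,\alpha^\vee\rangle\le 3<p$ for all $\alpha\in\Delta$ and $\beta\in\Phi^+$. Suppose for a contradiction that $L(\lambda)\otimes L(\mu)$ has a maximal vector of non-$p$-restricted weight, and fix $\alpha\in\Delta$ for which there is a weakly maximal vector of weight $\delta$ with $\langle\delta,\alpha^\vee\rangle\ge p$. Adapting the argument of Proposition~\ref{prop:nonpresnonsimpleG1}, choose a weakly maximal vector $w$ of weight $\delta'$ maximal among the weakly maximal weights $\chi$ with $\langle\chi,\alpha^\vee\rangle\ge p$. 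If $\delta'=\lambda+\mu$, then $\lambda+\mu\notin X_1$, so the highest weight vector $w$ generates a non-simple $G_1$-submodule by Corollary~\ref{cor:highestweightvectorprestricted}, contradicting complete reducibility over $G_1$. Otherwise Proposition~\ref{prop:nothighestweighthigherweakmaxvec} produces $\beta\in\Phi^+$ with $w'\coloneqq(1\otimes X_\beta)\cdot w$ weakly maximal, and maximality of $\delta'$ forces $\langle\delta'+\beta,\alpha^\vee\rangle<p$, hence $\langle\beta,\alpha^\vee\rangle<0$. Inspecting the root system of type $\mathrm{G}_2$, the pair $(\alpha,\beta)$ must be one of $(\alpha_1,\alpha_2)$, $(\alpha_1,\alpha_1+\alpha_2)$, $(\alpha_2,\alpha_1)$ or $(\alpha_2,3\alpha_1+\alpha_2)$, so $\langle\delta',\alpha^\vee\rangle\in\{p,p+1,p+2\}$; writing $\langle\delta',\alpha^\vee\rangle+1=p+r$ we have $r\in\{1,2,3\}$ and $p\nmid\langle\delta',\alpha^\vee\rangle+1$. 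Since $0\le\langle\delta'+\beta,\alpha^\vee\rangle<p$ and $w'$ is killed by $X_\alpha$, restriction of the submodule generated by $w'$ to the rank-one Levi $L_\alpha$ shows $X_{-\alpha,r}\cdot w'\neq 0$ (the relevant Weyl module for $L_\alpha$ is simple). For the three pairs with $\beta-\alpha\notin\Phi$ the operators $X_{-\alpha,r}$ and $1\otimes X_\beta$ commute, whence $X_{-\alpha,r}\cdot w\neq 0$, and Lemma~\ref{lem:weakmaxvecnonsimpleG1} then shows that $w$ generates a non-simple $G_1$-submodule — a contradiction. This rules out a maximal vector of non-$p$-restricted weight, and Proposition~\ref{prop:GandG1allMaxVecPRes} finishes the proof.

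The step I expect to be the main obstacle is the remaining pair $(\alpha,\beta)=(\alpha_1,\alpha_1+\alpha_2)$, where $\beta-\alpha=\alpha_2\in\Phi$, $\langle\delta',\alpha_1^\vee\rangle=p$ and $r=1$. Here the commutation computation only gives
\[ X_{-\alpha_1}\cdot w' \;=\; (1\otimes X_{\alpha_1+\alpha_2})\cdot(X_{-\alpha_1}\cdot w)\;\pm\;3\,(1\otimes X_{\alpha_2})\cdot w , \]
with the constant non-zero in $k$ because $p\neq 3$, so $X_{-\alpha_1}\cdot w'\neq 0$ does not by itself force $X_{-\alpha_1}\cdot w\neq 0$. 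To close this case I would exploit that $\beta=\alpha_1+\alpha_2$ is maximal among the roots $\gamma$ with $(1\otimes X_\gamma)\cdot w\neq 0$ and analyse the $L(\mu)$-components of $w$ and of $(1\otimes X_{\alpha_2})\cdot w$ by means of Lemma~\ref{lem:weaklymaximaltensor}, bringing in the numerical restrictions of Lemma~\ref{lem:G2inequality}; alternatively, one can try to choose the weakly maximal vector along the chain so that Proposition~\ref{prop:nonpreslevel0} applies directly. This is where the bulk of the work will lie, and it is presumably also the reason the small primes $p\in\{5,7\}$ are handled by computer.
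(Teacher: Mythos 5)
The forward implication, the $p=2$ case, and the general three-of-four-pairs analysis for $p>7$ are all fine, but your proof has a genuine gap that you yourself flag: the case $(\alpha,\beta)=(\alpha_1,\alpha_1+\alpha_2)$, where $\beta-\alpha=\alpha_2\in\Phi$ and the commutator term $\pm 3\,(1\otimes X_{\alpha_2})\cdot w$ does not vanish for $p\neq 3$, is never closed. Neither of your suggested repairs is routine: Proposition~\ref{prop:nonpreslevel0} needs the hypothesis $\hgt_\alpha(\lambda+\mu-\delta')=0$ (and $p>3$ for type $\mathrm{G}_2$ it is excluded anyway by its own hypothesis $p>3$... in fact it does apply for $p>3$, but the height condition is a real restriction and there is no reason the maximal $\delta'$ in your chain satisfies it), and the ``analyse the components via Lemma~\ref{lem:weaklymaximaltensor}'' step is precisely the hard computation you have not done. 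Your plan for $p\in\{5,7\}$ is also underspecified: an exhaustive check over all pairs requires an effective criterion for deciding complete reducibility over $G_1$ (and over $G$) for each pair, and neither $\dim\Hom_G(L(\delta),L(\lambda)\otimes L(\mu))$ nor $G_1$-semisimplicity is computable from the character data that the WeylModules package provides; only good-filtration multiplicities of $\nabla(\lambda)\otimes\nabla(\mu)$ and composition factors are.

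For comparison, the paper sidesteps the weakly-maximal-vector machinery entirely for $p>3$. Lemma~\ref{lem:G2inequality} shows that complete reducibility over $G_1$ forces $\langle\lambda,\alpha_0^\vee\rangle<p$ or $\langle\mu,\alpha_0^\vee\rangle<p$, so Theorem~\ref{thm:BrundanKleshchevsocle} makes the $G$-socle of $L(\lambda)\otimes L(\mu)$ $p$-restricted. A maximal vector $v$ of weight $\delta$ generating a non-simple $G$-submodule $U$ then has $\delta\notin X_1$ (Corollary~\ref{cor:maxvecnonsimpleG1}), and since $U|_{G_1}$ is completely reducible, $U$ is a quotient of $\head_{G_1}\Delta(\delta)\cong L(\delta_0)\otimes\Delta(\delta_1)^{[1]}$; $p$-restrictedness of $\soc_G U$ forces $L(0)$ to be a composition factor of $\Delta(\delta_1)$ with $\langle\delta_1,\alpha_0^\vee\rangle<4$. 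For $p>7$ this is impossible by the linkage principle (such $\delta_1$ lie in the closure of the bottom alcove), and for $p=5,7$ only a handful of targeted GAP checks remain. If you want to salvage your approach, you would need to prove an honest $\mathrm{G}_2$-analogue of Corollary~\ref{cor:nonpresnonsimpleG1}; as written, the argument does not establish the theorem.
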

\begin{proof}
	As before, if $L(\lambda)\otimes L(\mu)$ is completely reducible as a $G$-module then $L(\lambda)\otimes L(\mu)$ is completely reducible as a $G_1$-module.
	Suppose for a contradiction that $L(\lambda)\otimes L(\mu)$ is completely reducible as a $G_1$-module, but not as a $G$-module. For $p=2$, the statement follows as in the proof of Theorem \ref{thm:completelyreducibleprestrictedG2}, so now assume that~$p>3$. By Lemma \ref{lem:G2inequality}, we may assume without loss of generality that $\langle\lambda,\alpha_0^\vee\rangle<p$ so that the socle of $L(\lambda)\otimes L(\mu)$ is $p$-restricted by Theorem \ref{thm:BrundanKleshchevsocle}. By Corollary \ref{cor:BrundanKleshchev}, there exists $\delta\in X^+$ such that the natural embedding of
	$\Hom_G(L(\delta),L(\lambda)\otimes L(\mu))$ into $\Hom_G(\Delta(\delta),L(\lambda)\otimes L(\mu))$
	is not surjective.
	Hence there exists a maximal vector $v\in L(\lambda)\otimes L(\mu)$ of weight $\delta$ that generates a non-simple $G$-submodule $U=U_k(\mathfrak{g})\cdot v$. As $L(\lambda)\otimes L(\mu)$ is completely reducible as a $G_1$-module, $v$ generates a simple $G_1$-submodule and it follows that $\delta$ is non-$p$-restricted by Corollary \ref{cor:maxvecnonsimpleG1}. Now $U$ is a quotient of $\Delta(\delta)$ and $U\vert_{G_1}$ is completely reducible, hence $U$ is a quotient of $\head_{G_1}\Delta(\delta)\cong L(\delta_0)\otimes \Delta(\delta_1)^{[1]}$, where $\delta=\delta_0+p\delta_1$ with $\delta_0\in X_1$ and $\delta_1\in X^+$ (see Corollary \ref{cor:headG1}). Furthermore, $\soc_G U$ is $p$-restricted as $\soc_G(L(\lambda)\otimes L(\mu))$ is $p$-restricted and it follows that $L(\delta_0)\otimes \Delta(\delta_1)^{[1]}$ has a $p$-restricted composition factor. Hence the trivial module $L(0)$ is a composition factor of $\Delta(\delta_1)$.
	
	If $\delta=\lambda+\mu-x\alpha_1 -y\alpha_2$ then
	\[ \langle \delta , \alpha_0^\vee \rangle = \langle \lambda , \alpha_0^\vee \rangle + \langle \mu , \alpha_0^\vee \rangle - x \langle\alpha_1,\alpha_0^\vee\rangle - y \langle \alpha_2 , \alpha_0^\vee \rangle = \langle \lambda , \alpha_0^\vee \rangle + \langle \mu , \alpha_0^\vee \rangle - y . \]
	Now $\langle \lambda , \alpha_0^\vee \rangle<p$ by assumption and $\langle \mu , \alpha_0^\vee \rangle = \langle \mu , \alpha_1^\vee \rangle + 2 \langle \mu , \alpha_2^\vee \rangle<3p$ as $\mu$ is $p$-restricted. It follows that $\langle \delta , \alpha_0^\vee \rangle<4p$ and $\langle \delta_1 , \alpha_0^\vee \rangle<4$. Thus $\delta_1=a\omega_1+b\omega_2$ with $a+2b<4$, that is
	\[ (a,b)\in \{(1,0),(2,0),(3,0),(0,1),(1,1)\} . \]
	Let $\tilde{\alpha}_0\in\Phi$ be the highest short root. If $p>7$ then $\langle\delta_1+\rho,\tilde{\alpha}_0^\vee\rangle=2a+3b+5\leq p$, so $\delta_1$ lies in the closure of the bottom alcove and $\Delta(\delta_1)$ is simple by the linkage principle, a contradiction. If~$p=5$ then again $\Delta(\delta_1)$ is simple, as we can compute using GAP \cite{GAP4}.
	Finally, if $p=7$ then the unique dominant weight $\delta_1$ with $\langle \delta_1 , \alpha_0^\vee \rangle<4$ such that $L(0)$ is a composition factor of $\Delta(\delta_1)$ is $\delta_1=2\omega_1$.
	Thus $\delta=\delta_0+14\omega_1$ and so~$\langle\delta,\alpha_1^\vee\rangle\geq 14$. As $\delta\leq\lambda+\mu$ and $\lambda+\mu$ is $7$-restricted by Corollary \ref{cor:highestweightvectorprestricted}, it follows that $\lambda+\mu$ is one of the weights $5\omega_1+6\omega_2$ and $6\omega_1+6\omega_2$. Using the inequalities from Lemma~\ref{lem:G2inequality}, we conclude that either~$\{\lambda,\mu\}=\{5\omega_1,6\omega_2\}$ or $\{\lambda,\mu\}=\{6\omega_1,6\omega_2\}$. Now by decomposing the character of $\nabla(\lambda)\otimes\nabla(\mu)$ using GAP in both cases, we find that
	\[ 0 = \Hom_G(\Delta(\delta^\prime),\nabla(\lambda)\otimes \nabla(\mu)) \supseteq \Hom_G(\Delta(\delta^\prime),L(\lambda)\otimes L(\mu)) \]
	for all $\delta^\prime\in X^+$ with $\langle\delta^\prime,\alpha_1^\vee\rangle\geq 14$, a contradiction.
\end{proof}

\section{Type \texorpdfstring{$\mathrm{A}_n$}{An} in characteristic \texorpdfstring{$2$}{2}} \label{sec:Anp2}

In this section, we classify all pairs of $p$-restricted weights $\lambda$ and $\mu$ such that $L(\lambda)\otimes L(\mu)$ is completely reducible for $G$ of type $\mathrm{A}_n$ when $p=2$.

Let $G=\SL_{n+1}(k)$ and denote by $V=k^{n+1}$ the natural $n+1$-dimensional $\GL_{n+1}(k)$-module with dual module $V^*$. We have $V=L(\omega_1)$ and $V^*=L(\omega_n)$ as $G$-modules. In  \cite{BrundanKleshchevCR}, J. Brundan and A. Kleshchev classify the simple $\GL_{n+1}(k)$-modules $L$ such that $V\otimes L$ is completely reducible, in arbitrary characteristic $p$. We recall their result:

The dominant weights for $\GL_{n+1}(k)$ can be identified with the set of $n+1$-tuples $\lambda=(\lambda_1,\ldots,\lambda_{n+1})$ of integers with $\lambda_1\geq\cdots\geq\lambda_{n+1}$ and we write $L^\prime(\lambda)$ and $\Delta^\prime(\lambda)$ for the corresponding simple module and the corresponding Weyl module, respectively (to distinguish from our notation for $G=\SL_{n+1}(k)$). After tensoring with a power of the one-dimensional determinant representation of $\GL_{n+1}(k)$, we may assume that $\lambda_n\geq 0$, so it is sufficient to consider the simple modules $L^\prime(\lambda)$ for $\lambda$ a partition with $n+1$ parts.
Now fix a partition $\lambda=(\lambda_1,\ldots,\lambda_{n+1})$. For $i\in\{1,\ldots,n+1\}$, we say that $i$ is $\lambda$-addable if $i=1$ or $\lambda_i<\lambda_{i-1}$, and $\lambda$-removable if $i=n+1$ or $\lambda_{i+1}<\lambda_i$. For $a,b\in\Z$, denote by $\res(a,b)$ the residue of $b-a$ in $\Z/p\Z$. For $i\in\{1,\ldots,n+1\}$, we define the following sets:
\begin{align*}
	M_\lambda^-(i) & =\{  j<i \mid j\text{ is }\lambda \text{-removable and } \res(j,\lambda_j)=\res(i,\lambda_i+1)  \} \\
	M_\lambda^+(i) & =\{  j<i \mid j\text{ is }\lambda \text{-addable and } \res(j,\lambda_j+1)=\res(i,\lambda_i+1)  \}
\end{align*}
Then $i\in\{1,\ldots,n+1\}$ is called $\lambda$-conormal if $i$ is $\lambda$-addable and there is an increasing injection
\[\varphi\colon M_\lambda^-(i)\hookrightarrow M_\lambda^+(i) , \]
that is, an injection with $\varphi(j)>j$ for all $j\in M_\lambda^-(i)$. We say that $i$ is $\lambda$-cogood if $i$ is $\lambda$-conormal and maximal among the $\lambda$-conormal $j$ with $\res(j,\lambda_j+1)=\res(i,\lambda_i+1)$. Denote by $\varepsilon_i$ the $n+1$-tuple $(0,\ldots,0,1,0,\ldots,0)$ with entry $1$ in the $i$-th position.

The following result combines Theorem 5.11 and Corollary 5.12 in \cite{BrundanKleshchevCR}.

\begin{Theorem}[Brundan-Kleshchev] \label{thm:BrundanKleshchevTensorNatural}
	Let $\lambda$ and $\mu$ be partitions with $n+1$ parts.
	\begin{enumerate}
		\item The space $\Hom_{\GL_{n+1}(k)}(\Delta^\prime(\mu),V\otimes L^\prime(\lambda))$ is zero unless $\mu=\lambda+\varepsilon_i$ for some $\lambda$-conormal $i$, in which case it is $1$-dimensional.
		\item The space $\Hom_{\GL_{n+1}(k)}(L^\prime(\mu),V\otimes L^\prime(\lambda))$ is zero unless $\mu=\lambda+\varepsilon_i$ for some $\lambda$-cogood $i$, in which case it is $1$-dimensional.
		\item $V\otimes L^\prime(\lambda)$ is completely reducible if and only if every $\lambda$-conormal $i$ is $\lambda$-cogood.
	\end{enumerate}
\end{Theorem}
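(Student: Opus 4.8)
The plan is to obtain parts (1) and (2) from the theory of modular branching for $\GL_{n+1}(k)$ applied to the functor of tensoring with the natural module $V$, and then to deduce (3) formally. Write $\nabla^\prime(\nu)=\Delta^\prime(\nu)^\tau$ for the induced $\GL_{n+1}(k)$-module of highest weight $\nu$. First I would record the easy bounds. The module $V$ is tilting, and by the Pieri rule $V\otimes\nabla^\prime(\lambda)$ has a good filtration, and $V\otimes\Delta^\prime(\lambda)$ a Weyl filtration, with sections $\nabla^\prime(\lambda+\varepsilon_i)$ respectively $\Delta^\prime(\lambda+\varepsilon_i)$ indexed by the $\lambda$-addable $i$. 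Since $L^\prime(\lambda)\hookrightarrow\nabla^\prime(\lambda)$ and $\mathrm{Ext}^1_{\GL_{n+1}(k)}(\Delta^\prime(\mu),\nabla^\prime(\nu))=0$ while $\Hom_{\GL_{n+1}(k)}(\Delta^\prime(\mu),\nabla^\prime(\nu))=\delta_{\mu\nu}\cdot k$, this forces $\dim\Hom_{\GL_{n+1}(k)}(\Delta^\prime(\mu),V\otimes L^\prime(\lambda))\leq 1$, and likewise $\dim\Hom_{\GL_{n+1}(k)}(L^\prime(\mu),V\otimes L^\prime(\lambda))\leq 1$ using the simple socle of each $\nabla^\prime(\nu)$, with both Hom-spaces vanishing unless $\mu=\lambda+\varepsilon_i$ for some $\lambda$-addable $i$. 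What then remains is to decide, for each $\lambda$-addable $i$, exactly when each Hom-space is nonzero.

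This is the crux. I would decompose $-\otimes V=\bigoplus_{\bar a\in\Z/p\Z}F_{\bar a}$ along the block decomposition of the Schur algebra, so that $F_{\bar a}$ is the exact ``$\bar a$-induction'' functor, with a biadjoint $E_{\bar a}$ (the analogous summand of $-\otimes V^*$), satisfying $V\otimes L^\prime(\lambda)=\bigoplus_{\bar a}F_{\bar a}L^\prime(\lambda)$ and $[F_{\bar a}\Delta^\prime(\lambda)]=\sum_i[\Delta^\prime(\lambda+\varepsilon_i)]$, the sum over $\lambda$-addable $i$ with $\res(i,\lambda_i+1)=\bar a$; on the Grothendieck group these functors realise the $\widehat{\mathfrak{sl}}_p$-action on the level-one Fock space with the Weyl modules as standard basis. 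The essential input is then the \emph{modular branching rule} --- due to Kleshchev and Brundan, obtainable from the $\mathfrak{sl}_2$-categorification of Chuang--Rouquier or from Grojnowski's affine Hecke algebra methods, and equivalent via Schur--Weyl duality to the branching rules for symmetric groups --- which asserts that $F_{\bar a}L^\prime(\lambda)$ has simple head and simple socle, both isomorphic to $L^\prime(\lambda+\varepsilon_i)$ for $i$ the $\lambda$-cogood node with residue $\bar a$ (and $F_{\bar a}L^\prime(\lambda)=0$ if there is no such node), while $\Hom_{\GL_{n+1}(k)}(\Delta^\prime(\mu),F_{\bar a}L^\prime(\lambda))\neq 0$ precisely for $\mu=\lambda+\varepsilon_i$ with $i$ a $\lambda$-conormal node of residue $\bar a$. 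I would then check that the combinatorics of cancelling addable against removable $\bar a$-nodes, read from the top row downwards, singles out exactly the nodes $i$ admitting an increasing injection $M_\lambda^-(i)\hookrightarrow M_\lambda^+(i)$ as the conormal ones (and the last uncancelled one as cogood), so that the notions here agree with the definitions above. Summing over $\bar a$ then yields (1) and (2).

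For (3) I would argue as follows. Both $V$ and $L^\prime(\lambda)$ are contravariantly self-dual, so $V\otimes L^\prime(\lambda)$ is a contravariantly self-dual submodule of the good-filtered module $V\otimes\nabla^\prime(\lambda)$, and Proposition \ref{prop:BrundanKleshchev} (applied to $\GL_{n+1}(k)$) says that $V\otimes L^\prime(\lambda)$ is completely reducible if and only if
\[ \Hom_{\GL_{n+1}(k)}\big(L^\prime(\delta),V\otimes L^\prime(\lambda)\big)\cong\Hom_{\GL_{n+1}(k)}\big(\Delta^\prime(\delta),V\otimes L^\prime(\lambda)\big) \]
for every dominant weight $\delta$. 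By (1) and (2) both sides are at most one-dimensional and supported on $\delta=\lambda+\varepsilon_i$, the left-hand side being nonzero exactly for $\lambda$-cogood $i$ and the right-hand side exactly for $\lambda$-conormal $i$; since a cogood node is by definition conormal, these conditions coincide for all $\delta$ if and only if every $\lambda$-conormal $i$ is $\lambda$-cogood, which is (3). The single genuinely hard step is the modular branching rule invoked above: identifying the head, the socle and the maximal vectors of $F_{\bar a}L^\prime(\lambda)$ in terms of the crystal of the basic representation of $\widehat{\mathfrak{sl}}_p$ is deep, whereas the remaining ingredients --- the Pieri rule, the $\mathrm{Ext}^1$-vanishing between Weyl and induced modules, the $(F_{\bar a},E_{\bar a})$-adjunctions and Proposition \ref{prop:BrundanKleshchev} --- are formal.
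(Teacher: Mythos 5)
The paper does not actually prove this statement: it is quoted directly from Brundan--Kleshchev (Theorem 5.11 and Corollary 5.12 of \cite{BrundanKleshchevCR}), so there is no in-paper argument to compare against. Your sketch is a correct reconstruction of the route Brundan--Kleshchev themselves take --- the Pieri-rule upper bounds via $\mathrm{Ext}^1$-vanishing and simple socles of induced modules, the block decomposition of $-\otimes V$ into $i$-induction functors, the modular branching rule as the single deep input, and the contravariant-self-duality criterion of Proposition \ref{prop:BrundanKleshchev} to deduce (3) from (1) and (2) --- and you correctly identify which steps are formal and which one carries the real weight.
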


Now for a partition $\lambda=(\lambda_1,\ldots,\lambda_{n+1})$, the restriction of the simple module $L^\prime(\lambda)$ to $G=\SL_{n+1}(k)$ is simple of highest weight
\[ \lambda^\prime \coloneqq (\lambda_1-\lambda_2)\cdot \omega_1 + \cdots + (\lambda_n-\lambda_{n+1})\cdot \omega_n \in X^+ \]
and the $\GL_{n+1}(k)$-module $V\otimes L^\prime(\lambda)$ is completely reducible if and only if the $G$-module $V\otimes L(\lambda^\prime)$ is completely reducible. For a dominant weight $\mu=a_1\omega_1+\cdots+a_n\omega_n\in X^+$, we define a partition $\pi(\mu)=(\lambda_1,\ldots,\lambda_{n+1})$ by $\lambda_i=a_1+\cdots + a_{n+1-i}$ for $i<{n+1}$ and $\lambda_{n+1}=0$. Then clearly $\pi(\mu)^\prime=\mu$, so the $G$-module $V\otimes L(\mu)$ is completely reducible if and only if every $\pi(\mu)$-conormal $i$ is $\pi(\mu)$-cogood. Also note that $(\pi(\mu)+\varepsilon_i)^\prime=\mu-\omega_{i-1}+\omega_i$ for all $\lambda$-addable $i$, where we take $\omega_0$ and $\omega_{n+1}$ to be $0$.

The Loewy length of a rational $G$-module $M$ is defined to be the minimal length of a filtration with completely reducible quotients.

\begin{Corollary} \label{cor:LoewyLengthAnp2}
	Let $\mu\in X^+$. If $V\otimes L(\mu)$ is not completely reducible then $V\otimes L(\mu)$ has Loewy length at least $3$.
\end{Corollary}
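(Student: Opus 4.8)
The plan is to argue by contradiction, using the extremely rigid description of the homomorphism spaces into $V\otimes L(\mu)$ supplied by Theorem~\ref{thm:BrundanKleshchevTensorNatural}. First I would pass to the $\GL_{n+1}(k)$-picture: set $\lambda=\pi(\mu)$ and $N=V\otimes L^\prime(\lambda)$, so that $N\vert_{\SL_{n+1}(k)}=V\otimes L(\mu)$. Since the central torus of $\GL_{n+1}(k)$ acts on all of $N$ through a single character, the $\GL_{n+1}(k)$-submodules of $N$ coincide with the $G$-submodules, so $N$ and $V\otimes L(\mu)$ have the same submodule lattice and hence the same Loewy length; it therefore suffices to show that $N$ has Loewy length at least $3$. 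So I would assume for contradiction that $\rad^2 N=0$, equivalently $\rad N\subseteq\soc N$, and record that $N$ is contravariantly self-dual (being a tensor product of contravariantly self-dual modules), so that $\soc N\cong\head N$.

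The heart of the argument is to locate a weight $\delta$ with $\Hom_{\GL_{n+1}(k)}(\Delta^\prime(\delta),N)\neq 0$ but $\Hom_{\GL_{n+1}(k)}(L^\prime(\delta),N)=0$. Since $N$ is not completely reducible, part~(3) of Theorem~\ref{thm:BrundanKleshchevTensorNatural} gives an index $i_0$ that is $\lambda$-conormal but not $\lambda$-cogood, and I would take $\delta=\lambda+\varepsilon_{i_0}$. By parts~(1) and~(2) of that theorem, $\Hom_{\GL_{n+1}(k)}(\Delta^\prime(\delta),N)$ is one-dimensional (as $i_0$ is $\lambda$-conormal), while $\Hom_{\GL_{n+1}(k)}(L^\prime(\delta),N)=0$, the latter because $i_0$ is the unique index with $\lambda+\varepsilon_{i_0}=\delta$ and it is not $\lambda$-cogood. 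In particular $L^\prime(\delta)$ is a composition factor neither of $\soc N$ nor, by self-duality, of $\head N$.

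To conclude I would fix a nonzero homomorphism $\phi\colon\Delta^\prime(\delta)\to N$ and consider $U=\phi(\Delta^\prime(\delta))$, a quotient of $\Delta^\prime(\delta)$, hence a module with simple head $L^\prime(\delta)$ and unique maximal submodule $\rad U$. If $U\subseteq\rad N$, then $U$ is a submodule of the semisimple module $\rad N$, hence semisimple, so $U\cong L^\prime(\delta)\subseteq\soc N$, contradicting $\Hom_{\GL_{n+1}(k)}(L^\prime(\delta),N)=0$. Therefore $U\not\subseteq\rad N$, and the nonzero image of $U$ in $\head N=N/\rad N$ is a nonzero quotient of $U$, hence has head $L^\prime(\delta)$; being a submodule of the semisimple module $\head N$ it is itself semisimple and thus isomorphic to $L^\prime(\delta)$, contradicting that $L^\prime(\delta)$ is not a composition factor of $\head N$. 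Hence $N$, and so $V\otimes L(\mu)$, has Loewy length at least $3$. The one point I expect to require care is the clean vanishing $\Hom_{\GL_{n+1}(k)}(L^\prime(\delta),N)=0$ (rather than merely a strict inequality of dimensions against the $\Delta^\prime(\delta)$-Hom space): this is exactly what the multiplicity-one assertions in Theorem~\ref{thm:BrundanKleshchevTensorNatural} provide, and it is the only input beyond elementary manipulations with radicals, socles and contravariant duality.
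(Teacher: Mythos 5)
Your proposal is correct and follows essentially the same route as the paper: both extract a $\lambda$-conormal, non-cogood index from Theorem \ref{thm:BrundanKleshchevTensorNatural} to produce a weight $\delta$ with $\Hom(\Delta^\prime(\delta),N)\neq 0$ but $\Hom(L^\prime(\delta),N)=0$, and then use contravariant self-duality to rule out Loewy length $2$. Your write-up merely makes explicit two points the paper leaves implicit, namely the passage between $\GL_{n+1}(k)$- and $\SL_{n+1}(k)$-submodule lattices and the case analysis showing that $L^\prime(\delta)$ would have to occur in the head.
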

\begin{proof}
	Suppose that $V\otimes L(\mu)$ is not completely reducible, so $V\otimes L(\mu)$ has Loewy length at least $2$. By Theorem \ref{thm:BrundanKleshchevTensorNatural}, there is a $\pi(\mu)$-conormal $i\in\{1,\ldots,n\}$ that is not $\pi(\mu)$-cogood and we have
	\[ \Hom_G(\Delta(\mu-\omega_{i-1}+\omega_i),V\otimes L(\mu))\neq 0 \qquad \text{and} \qquad \Hom_G(L(\mu-\omega_{i-1}+\omega_i),V\otimes L(\mu))=0 . \]
	If $V\otimes L(\mu)$ has Loewy length $2$ then $L(\mu-\omega_{i-1}+\omega_i)$ belongs to $\head_G(V\otimes L(\mu))$, hence
	\[ 0 \neq \Hom_G(V\otimes L(\mu),L(\mu-\omega_{i-1}+\omega_i)) \cong \Hom_G(L(\mu-\omega_{i-1}+\omega_i),V\otimes L(\mu)) \]
	by contravariant duality, a contradiction. Hence $V\otimes L(\mu)$ has Loewy length at least $3$.
\end{proof}

For $p=2$, we can derive from Theorem \ref{thm:BrundanKleshchevTensorNatural} a simple characterization of the $2$-restricted weights $\mu$ such that $V\otimes L(\mu)$ is completely reducible.
\medskip

\noindent
\textbf{Hypothesis.} For the rest of the section, suppose that $p=2$.

\begin{Proposition} \label{prop:BrundanKleshchevGood}
	Let $\mu\in X_1$. Then $V\otimes L(\mu)$ is completely reducible if and only if $\mu=\omega_{i_1}+\cdots+\omega_{i_r}$ for even numbers $i_1,\ldots,i_r$ with $1<i_1<\cdots<i_r\leq n$.  
\end{Proposition}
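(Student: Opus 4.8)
My plan is to run the argument through the Brundan--Kleshchev classification. By Theorem~\ref{thm:BrundanKleshchevTensorNatural}(3) together with the translation to $\SL_{n+1}(k)$ recalled above, $V\otimes L(\mu)$ is completely reducible if and only if every $\pi(\mu)$-conormal index is $\pi(\mu)$-cogood. Since $p=2$ there are only two residue classes, and in each nonempty class exactly the largest conormal index is cogood; hence the criterion is equivalent to the statement that each of the two residue classes contains at most one $\pi(\mu)$-conormal index. So the task reduces to identifying the conormal indices of $\lambda:=\pi(\mu)$ explicitly.

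Write $\mu=\omega_{i_1}+\cdots+\omega_{i_r}$ with $1\le i_1<\cdots<i_r\le n$. Since $\pi(\mu)^\prime=\mu$, the partition $\lambda$ has $\lambda_m-\lambda_{m+1}=1$ precisely for $m\in\{i_1,\ldots,i_r\}$, so $\lambda_m=|\{k\mid i_k\ge m\}|$. Consequently the $\lambda$-addable indices are $1=p_0<p_1<\cdots<p_r$ with $p_k=i_k+1$ for $k\ge 1$, the $\lambda$-removable indices are $i_1<\cdots<i_r<n+1$, and --- taking residues of $b-a$ modulo $2$ --- one computes $\res(p_k,\lambda_{p_k}+1)\equiv r+k+i_k\pmod 2$ for $0\le k\le r$ (with $i_0:=0$) and $\res(i_l,\lambda_{i_l})\equiv r+l+i_l+1\pmod 2$ for $1\le l\le r$. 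Set $\rho_k:=(r+k+i_k)\bmod 2$, so that $p_k$ lies in residue class $\rho_k$ and the removable index $i_l$ lies in class $1-\rho_l$. From $\rho_1-\rho_0\equiv 1+i_1$ and $\rho_k-\rho_{k-1}\equiv 1+(i_k-i_{k-1})\pmod 2$ for $k\ge 2$, the sequence $\rho_0,\rho_1,\ldots,\rho_r$ strictly alternates if and only if all the $i_k$ are even.

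Next I would pin down the conormal $p_k$. The indices $p_0$ and $p_1$ are always conormal, since $M_\lambda^-(p_0)=M_\lambda^-(p_1)=\emptyset$. For $k\ge 2$, suppose $\rho_{k-1}\neq\rho_k$: then the removable index $i_{k-1}$ lies in $M_\lambda^-(p_k)$, while the only $\lambda$-addable index among $p_0,\ldots,p_{k-1}$ exceeding $i_{k-1}$ is $p_{k-1}$ itself, which lies in class $\rho_{k-1}\neq\rho_k$ and hence not in $M_\lambda^+(p_k)$; so no increasing injection $M_\lambda^-(p_k)\hookrightarrow M_\lambda^+(p_k)$ can exist and $p_k$ is not conormal. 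Therefore, if all $i_k$ are even (so $\rho_0,\ldots,\rho_r$ alternate), the conormal indices are exactly $p_0,p_1$, which lie in distinct classes, and $V\otimes L(\mu)$ is completely reducible. Conversely, if some $i_k$ is odd, then $\rho_0,\ldots,\rho_r$ does not alternate; let $k$ be minimal with $\rho_k=\rho_{k-1}$. If $k=1$ then $p_0$ and $p_1$ are conormal and lie in the same class. If $k\ge 2$ then $\rho_0,\ldots,\rho_{k-1}$ alternate, and using the interleaving $p_{l-1}\le i_l<p_l$ one checks that $i_l\mapsto p_{l+1}$ is a well-defined increasing injection $M_\lambda^-(p_k)\hookrightarrow M_\lambda^+(p_k)$, so $p_k$ is conormal; since $\rho_k=\rho_{k-1}$ equals $\rho_0$ or $\rho_1$, the index $p_k$ lies in the same class as $p_0$ or $p_1$. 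In every case some residue class contains two conormal indices, so $V\otimes L(\mu)$ is not completely reducible.

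The bulk of the work is the residue bookkeeping at the addable and removable indices of $\pi(\mu)$ and, in the last step, verifying that the assignment $i_l\mapsto p_{l+1}$ really lands in $M_\lambda^+(p_k)$ and is increasing with $\varphi(j)>j$; this uses that $\rho_0,\ldots,\rho_{k-1}$ alternate (which fixes the parities of the indices $l$ with $i_l\in M_\lambda^-(p_k)$) together with the interleaving of the addable and removable indices. That verification is the main obstacle.
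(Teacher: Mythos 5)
Your proposal is correct and follows essentially the same route as the paper: translate complete reducibility into the condition that each of the two residue classes contains at most one $\pi(\mu)$-conormal index, compute the residues at the addable and removable positions of $\pi(\mu)$, observe that $1$ and $i_1+1$ are always conormal, and detect a clash via the same increasing injection $i_l\mapsto i_{l+1}+1$ when some $i_k$ is odd. The residue bookkeeping and the verification of the injection check out, so no gaps.
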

\begin{proof}
	As $\mu$ is $2$-restricted, we have $\mu=\omega_{i_1}+\cdots+\omega_{i_r}$ for certain $1\leq i_1<\cdots<i_r\leq n$. Let us write
	\[ \lambda=(\lambda_1,\ldots,\lambda_{n+1}) \coloneqq \pi(\mu)=(r^{i_1},(r-1)^{i_2-i_1},\ldots,1^{i_r-i_{r-1}},0^{n+1-i_r}) , \]
	where the notation $a^s$ stands for an $s$-tuple $(a,\ldots,a)$.
	The $\lambda$-addable indices are $\{1,i_1+1,\ldots,i_r+1 \}$ and the $\lambda$-removable indices are $\{i_1,\ldots,i_r,n+1\}$. Note that $M_\lambda^-(1)=\varnothing$, so $1$ is $\lambda$-conormal. Furthermore, we have
	\[ \res(i_1,\lambda_{i_1})=\overline{r-i_1}\neq \overline{r-i_1-1}= \res(i_1+1,\lambda_{i_1+1}+1) , \]
	where we denote by $\overline{a}$ the image of $a\in\Z$ in $\Z/2\Z$, and it follows that $M_\lambda^-(i_1+1)=\varnothing$, so $i_1+1$ is $\lambda$-conormal.
	
	Assume first that $V\otimes L(\mu)$ is completely reducible so that every $\pi(\mu)$-conormal $i$ is $\pi(\mu)$-cogood. Suppose for a contradiction that $i_1,\ldots,i_r$ are not all even and let $s\in\{1,\ldots,r\}$ be minimal with the property that $i_s$ is odd. If $s=1$ then
	\[ \res(1,\lambda_1+1)=\overline{r}=\overline{r-i_1-1}=\res(i_1+1,\lambda_{i_1+1}+1) \]
	and $1$ is not $\lambda$-cogood, a contradiction. Hence $s>1$. We have
	\[ \res(i_s+1,\lambda_{i_s+1}+1)=\overline{(r+1-s)-(i_s+1)}=\overline{r+1-s} \]
	as $i_s$ is odd and for $j<s$,
	\[ \res(i_j,\lambda_{i_j})=\overline{(r+1-j)-i_j}=\overline{r+1-j} \]
	as $i_j$ is even, so $i_j\in M_\lambda^-(i_s+1)$ if and only if $\overline{s-j}=0$. Furthermore, we have
	\[ \res(i_s,\lambda_{i_s}) = \overline{(r+1-s)-i_s}=\overline{r-s}\neq \overline{r+1-s} , \]
	so $i_s\notin M_\lambda^-(i_s+1)$. 
	Analogously, we have
	\[ \res(i_j+1,\lambda_{i_j+1}+1)=\overline{(r+1-j)-(i_j+1)}=\overline{r-j} \]
	for $j<s$ and therefore $i_j+1\in M_\lambda^+(i_s+1)$ if and only if $\overline{s-j}\neq 0$. It follows that for all $j<s$ with $i_j\in M_\lambda^-(i_s+1)$, we have $i_{j+1}+1\in M_\lambda^+(i_s+1)$ and $i_j\mapsto i_{j+1}+1$ defines an increasing injection
	\[M_\lambda^-(i_s+1)\hookrightarrow M_\lambda^+(i_s+1)\]
	so $i_{s}+1$ is $\lambda$-conormal. Now $1$ and $i_1+1$ are $\lambda$-conormal with
	\[\res(1,\lambda_1+1)=\overline{r}\qquad \text{and} \qquad \res(i_1+1,\lambda_{i_1+1}+1)=\overline{r-i_1-1}=\overline{r-1}\]
	the two distinct elements of $\Z/2\Z$. Hence either $\res(i_s+1,\lambda_{i_s+1}+1)=\res(i_1+1,\lambda_{i_1+1}+1)$ or $\res(i_s+1,\lambda_{i_s+1}+1)=\res(1,\lambda_1+1)$ and it follows that one of $1$ and $i_1+1$ is not $\lambda$-cogood, a contradiction.
	
	Now suppose that $i_1,\ldots,i_r$ are even. As before, $1$ and $i_1+1$ are $\lambda$-conormal with
	\[ \res(1,\lambda_1+1)\neq\res(i_1+1,\lambda_{i_1+1}+1) . \]
	We show that $i_t+1$ is not $\lambda$-conormal for $t>1$. Indeed, we have
	\[ \res(i_j+1,\lambda_{i_j+1}+1)=\overline{(r+1-j)-(i_j+1)}=\overline{r-j} \]
	and
	\[ \res(i_j,\lambda_{i_j})=\overline{(r+1-j)-i_j}=\overline{r+1-j} \]
	for $1\leq j\leq r$, so
	\[ M_\lambda^-(i_t+1)=\{ i_j \mid j<t \text{ and } \overline{t-j}\neq 0 \} \quad\text{and}\quad M_\lambda^+(i_t+1)\subseteq \{ i_j+1 \mid j<t \text{ and } \overline{t-j}=0 \}\cup\{1\} . \]
	Hence the maximal element of $M_\lambda^-(i_t+1)$ is $i_{t-1}$ while the maximal element of $M_\lambda^+(i_t+1)$ is $i_{t-2}+1$ if $t>2$ and $1$ otherwise. Now as $1<i_1$ and $i_{t-1}-i_{t-2}>0$ is even if $t>2$, there does not exist an increasing injection from $M_\lambda^-(i_t+1)$ to $M_\lambda^+(i_t+1)$ and $i_t+1$ is not $\lambda$-conormal.
	
	We conclude that $1$ and $i_1+1$ are $\lambda$-cogood, so $V\otimes L(\mu)$ is completely reducible by Theorem~\ref{thm:BrundanKleshchevTensorNatural}.
\end{proof}

\begin{Remark} \label{rem:Anp2}
	We note some more consequences of the proof of Proposition \ref{prop:BrundanKleshchevGood}. Let $\mu=\omega_{i_1}+\cdots+\omega_{i_r}$ with $1<i_1<\ldots<i_r\leq n$.
	\begin{enumerate}
		\item If $i_1,\ldots,i_r$ are even then $V\otimes L(\mu)$ is completely reducible and $1$ and $i_1+1$ are the unique $\pi(\mu)$-cogood indices. It follows from Theorem \ref{thm:BrundanKleshchevTensorNatural} that
		\[ V\otimes L(\mu) \cong L(\omega_1+\mu)\oplus L(\mu-\omega_{i_1}+\omega_{i_1+1}) . \]
		\item If $i_1,\ldots,i_r$ are not all even and $s\in\{1,\ldots, r\}$ is minimal with the property that $i_s$ is odd then $i_s+1$ is $\pi(\mu)$-conormal. If $s=r$, it follows that $i_r+1$ is $\pi(\mu)$-cogood and
		\[ \Hom_G(L(\mu-\omega_{i_r}+\omega_{i_r+1}),V\otimes L(\mu)) \neq 0 , \]
		where we take $\omega_{n+1}$ to be $0$.
	\end{enumerate}
\end{Remark}

\begin{Lemma} \label{lem:Anp2highestroot}
	Let $\mu\in X_1\setminus \{0\}$. Then $L(\omega_1+\omega_n)\otimes L(\mu)$ is not completely reducible.
\end{Lemma}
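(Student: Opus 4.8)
Throughout set $V=L(\omega_1)$ (the natural module, so $V^*=L(\omega_n)$), put $E=L(\omega_1+\omega_n)$, and write $\mu=\omega_{i_1}+\cdots+\omega_{i_r}$ with $1\le i_1<\cdots<i_r\le n$ and $r\ge1$, which is possible since $\mu\in X_1\setminus\{0\}$. The plan is to separate the cases $p\nmid n+1$ and $p\mid n+1$. Suppose first that $p\nmid n+1$ (so $n$ is even). Then the trace map splits the inclusion of the scalars $k\hookrightarrow V\otimes V^*\cong\End_k(V)$, and $\mathfrak{sl}_{n+1}$ is irreducible of highest weight $\omega_1+\omega_n$ because $p\nmid n+1$; hence $V\otimes V^*\cong k\oplus E$. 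Assuming for a contradiction that $E\otimes L(\mu)$ is completely reducible, $V\otimes\bigl(V^*\otimes L(\mu)\bigr)\cong L(\mu)\oplus\bigl(E\otimes L(\mu)\bigr)$ is completely reducible, and since $p\nmid\dim V$ the map $k\hookrightarrow V\otimes V^*$ splits (Remark \ref{rem:Serre}(2)), so $V^*\otimes L(\mu)$ is completely reducible by Proposition \ref{prop:Serre}; the same argument with $V^*$ in place of $V$ shows $V\otimes L(\mu)$ is completely reducible. By Proposition \ref{prop:BrundanKleshchevGood} all $i_j$ are then even, whereas composing with the graph automorphism $\tau$ of $G$ (which sends $V^*\otimes L(\mu)$ to $V\otimes L(\tau\mu)$ with $\tau\mu=\sum_j\omega_{n+1-i_j}$) and applying Proposition \ref{prop:BrundanKleshchevGood} again forces all $n+1-i_j$ even, i.e.\ all $i_j$ odd. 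Since $r\ge1$ this is absurd.

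Now suppose $p\mid n+1$, so $n$ is odd. If $i_1=1$ or $i_r=n$ then the $\alpha_1^\vee$- or $\alpha_n^\vee$-coordinate of $(\omega_1+\omega_n)+\mu$ equals $p$, so $(\omega_1+\omega_n)+\mu\notin X_1$; by Corollary \ref{cor:highestweightvectorprestricted} the highest weight vector of $E\otimes L(\mu)$ generates a non-simple $G_1$-submodule, so $E\otimes L(\mu)$ is not completely reducible. Otherwise $\{i_1,\dots,i_r\}\subseteq\{2,\dots,n-1\}$ (so $n\ge3$). If some $i_j$ is odd, truncate $E\otimes L(\mu)$ to the Levi $L_I$ with $I=\{1,\dots,n-1\}$, a copy of $\SL_n$: since $(\omega_1+\omega_n)_I=\omega_1$, Remark \ref{rem:levi} shows the truncation is the natural $\SL_n$-module tensored with $L_{L_I}(\mu_I)$, which would be completely reducible; but Proposition \ref{prop:BrundanKleshchevGood} applied to $\SL_n$ then forces $i_j$ even, a contradiction.

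It remains to treat $p\mid n+1$ with all $i_j$ even and $2\le i_1<\cdots<i_r\le n-1$. Here I would compute the composition factors of $E\otimes L(\mu)$ directly. By Remark \ref{rem:Anp2}(1) and the graph automorphism, $V^*\otimes L(\mu)$ is completely reducible and equals $L(\omega_n+\mu)\oplus L(\mu-\omega_{i_r}+\omega_{i_r-1})$; on the other hand $[E]=[V\otimes V^*]-2[L(0)]$, so $[E\otimes L(\mu)]=[V\otimes L(\omega_n+\mu)]+[V\otimes L(\mu-\omega_{i_r}+\omega_{i_r-1})]-2[L(\mu)]$, and the right-hand side can be evaluated using the Pieri-type good filtration of $V$ tensored with a Weyl module together with the decomposition numbers of the relevant Weyl modules. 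The strategy is to extract from this a composition factor of the form $L(p\delta)$ with $0\ne\delta\in X^+$ and then conclude via Lemma \ref{lem:DonkinNonCR} that $E\otimes L(\mu)$ is not completely reducible as a $G_1$-module, hence not as a $G$-module; the first case $n=3$, $\mu=\omega_2$ already shows the mechanism, since $E\otimes L(\omega_2)$ has composition factors $L(\omega_1+\omega_2+\omega_3)$, $L(2\omega_2)$ and $L(\omega_1+\omega_3)$, and $L(2\omega_2)=L(\omega_2)^{[1]}$ is $p$-singular.

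The main obstacle is precisely this last case. When $p\nmid n+1$ one has the splitting $V\otimes V^*\cong k\oplus E$ and a parity obstruction: completeness of $V\otimes L(\mu)$ and of $V^*\otimes L(\mu)$ forces the support of $\mu$ to consist simultaneously of even and of odd indices. When $p\mid n+1$ that splitting is lost, and for odd $n$ the two natural restrictions of $E\otimes L(\mu)$ to $\SL_n$ Levi subgroups impose the \emph{same} parity condition on the support of $\mu$, so there is no parity obstruction, the relevant maximal vector has $p$-restricted weight (so Corollary \ref{cor:highestweightvectorprestricted} says nothing), and Serre's proposition is unavailable. One therefore genuinely has to produce a $p$-singular composition factor inside $E\otimes L(\mu)$, which is why an honest computation of composition factors (via good filtrations of $\nabla(\omega_1+\omega_n)\otimes\nabla(\mu)$, equivalently via $[E]=[V\otimes V^*]-2[L(0)]$ and the Brundan–Kleshchev decomposition of $V^*\otimes L(\mu)$) seems unavoidable in this case.
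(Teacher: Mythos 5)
Your reductions are fine as far as they go: the Serre-splitting argument for $p\nmid n+1$ is correct (and is a genuinely different, arguably slicker, route than the paper's double truncation to the two type-$\mathrm{A}_{n-1}$ Levi subgroups, which handles both parities of $n$ at once and concludes directly that $n+1$ must be even), and the dispatching of $i_1=1$, $i_r=n$ via Corollary \ref{cor:highestweightvectorprestricted} and of odd $i_j$ via truncation and Proposition \ref{prop:BrundanKleshchevGood} matches the paper. But the proof is not complete: the case $p\mid n+1$ with $2\le i_1<\cdots<i_r\le n-1$ all even is exactly the heart of the lemma, and there you only offer a strategy (find a composition factor $L(2\delta)$, $\delta\neq 0$, and apply Lemma \ref{lem:DonkinNonCR}). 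That strategy is not carried out, it is not clear that such a factor always exists, and as you yourself note it would require the decomposition numbers of Weyl modules for $\SL_{n+1}$ at $p=2$, which are not available in general. So this is a genuine gap, not a routine verification left to the reader.

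The computation is in fact avoidable, and the missing ingredient is Corollary \ref{cor:LoewyLengthAnp2}, which you never invoke. The paper's argument in the hard case runs as follows. Since $V\otimes V^*$ is uniserial with layers $k$, $L(\omega_1+\omega_n)$, $k$, tensoring its composition series with $L(\mu)$ gives a three-step filtration of $M\coloneqq V\otimes V^*\otimes L(\mu)$ with completely reducible quotients (using the assumed complete reducibility of $L(\omega_1+\omega_n)\otimes L(\mu)$ for the middle layer); hence $M$ has Loewy length at most $3$, and because the bottom layer $V_1\otimes L(\mu)\cong L(\mu)$ is simple, at most one indecomposable direct summand of $M$ can have Loewy length $3$. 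On the other hand, Remark \ref{rem:Anp2}(1) gives $V\otimes L(\mu)\cong L(\mu+\omega_1)\oplus L(\mu-\omega_{i_1}+\omega_{i_1+1})$, and neither $V^*\otimes L(\mu+\omega_1)$ nor $V^*\otimes L(\mu-\omega_{i_1}+\omega_{i_1+1})$ is completely reducible by Proposition \ref{prop:BrundanKleshchevGood} and duality (here one uses that $n$ and $n-i_1$ are odd), so by Corollary \ref{cor:LoewyLengthAnp2} each of these two direct summands of $M$ has Loewy length at least $3$ --- a contradiction. You should replace your unexecuted character-theoretic strategy in the final case with an argument of this kind (or some other complete argument); as it stands the lemma is not proved.
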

\begin{proof}
	Suppose for a contradiction that $L(\omega_1+\omega_n)\otimes L(\mu)$ is completely reducible. As $\mu\in X_1$, we have $\mu=\omega_{i_1}+\cdots+\omega_{i_r}$ for certain $i_1<\ldots<i_r$, where $1<i_1$ and~$i_r<n$ by Corollary \ref{cor:highestweightvectorprestricted}. Moreover, the truncation of $L(\omega_1+\omega_n)\otimes L(\mu)$ to the two Levi subgroups of type~$\mathrm{A}_{n-1}$ is completely reducible by Remark \ref{rem:levi} and it follows from Proposition \ref{prop:BrundanKleshchevGood} that $i_1,\ldots,i_r$ and $n+1-i_1,\ldots,n+1-i_r$ are even. Hence $n+1$ is even, so $V\otimes V^*$ is indecomposable of composition length $3$, with a unique composition series
	\[ 0\leq V_1\leq V_2\leq V_3=V\otimes V^* , \]
	where $V_1=\soc_G(V\otimes V^*)$ and $V_2=\rad_G(V\otimes V^*)$ and we have
	$V_1\cong V_3/V_2\cong k$ and $V_2/V_1 \cong L(\omega_1+\omega_n)$. 
	Then
	\[ 0\leq V_1\otimes L(\mu) \leq V_2\otimes L(\mu) \leq V_3 \otimes L(\mu) = V\otimes V^*\otimes L(\mu) \]
	is a filtration with completely reducible quotients and it follows that $V\otimes V^*\otimes L(\mu)$ has Loewy length at most $3$, and $V\otimes V^*\otimes L(\mu)$ has at most $1$ indecomposable direct summand of Loewy length $3$. Indeed, if
	\[ V\otimes V^*\otimes L(\mu) = M_1 \oplus \cdots \oplus M_t \]
	for indecomposable $G$-modules $M_1,\ldots,M_t$ then the intersections $M_{ij}\coloneqq M_i\cap (V_j\otimes L(\mu))$ for $j=1,2,3$ afford a filtration of $M_i$ with completely reducible quotients. However, as $V_1\otimes L(\mu)\cong L(\mu)$ is simple, there is at most one $i\in\{1,\ldots,t\}$ such that $M_{i1}=M_i\cap(V_1\otimes L(\mu))$ is non-trivial, hence $M_j$ has Loewy length at most $2$ for $j\neq i$.
	
	Now $V\otimes L(\mu)\cong L(\mu+\omega_1)\oplus L(\mu-\omega_{i_1}+\omega_{i_1+1})$ by Remark \ref{rem:Anp2} and neither of
	\[ V^*\otimes L(\mu+\omega_1) \qquad\text{and}\qquad V^*\otimes L(\mu-\omega_{i_1}+\omega_{i_1+1}) \]
	is completely reducible by Proposition \ref{prop:BrundanKleshchevGood} and duality, as $n$ and $n+1-(i_1+1)$ are odd. Thus
	\[V\otimes V^*\otimes L(\mu) \cong \big( V^*\otimes L(\mu+\omega_1) \big) \oplus \big( V^*\otimes L(\mu-\omega_{i_1}+\omega_{i_1+1}) \big) \]
	has at least two indecomposable direct summands of Loewy length at least $3$ by Corollary~\ref{cor:LoewyLengthAnp2}, a contradiction. 
\end{proof}

\begin{Lemma} \label{lem:Anp2omega2}
	Let $n\geq 4$ and $2<i< n$. Then $L(\omega_2)\otimes L(\omega_i+\omega_n)$ is not completely reducible.
\end{Lemma}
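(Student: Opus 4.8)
The plan is to use the truncation-to-Levi principle of Remark \ref{rem:levi} together with the classification in Proposition \ref{prop:BrundanKleshchevGood} to reduce the statement to a manageable computation, analogous to the strategy of Lemma \ref{lem:Anp2highestroot}. Concretely, suppose for contradiction that $L(\omega_2)\otimes L(\omega_i+\omega_n)$ is completely reducible. First I would exploit the obvious symmetry: the two natural parabolic subgroups of $\SL_{n+1}(k)$ with Levi of type $\mathrm{A}_{n-1}$ (obtained by deleting $\alpha_1$, respectively $\alpha_n$) give, via Remark \ref{rem:levi}, that the tensor products of the corresponding simple $\mathrm{A}_{n-1}$-modules are completely reducible. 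Truncating away $\alpha_n$ turns $L(\omega_2)\otimes L(\omega_i+\omega_n)$ into $L(\omega_2)\otimes L(\omega_i)$ for $\mathrm{A}_{n-1}=\SL_n(k)$ (since $\omega_i+\omega_n$ restricts to $\omega_i$ when $i<n$). Truncating away $\alpha_1$ instead sends $\omega_2\mapsto\omega_1$ and $\omega_i+\omega_n\mapsto\omega_{i-1}+\omega_{n-1}$ for $\SL_n(k)$, which by Proposition \ref{prop:BrundanKleshchevGood} forces $i-1$ and $n-1$ to be even, i.e. $i$ and $n$ are odd; in particular $n\ge 5$.

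Next I would extract structural information about $L(\omega_2)\otimes L(\omega_i)$. Using the reversal symmetry of type $\mathrm{A}_n$ (the diagram automorphism sending $\omega_j\mapsto\omega_{n+1-j}$), complete reducibility of $L(\omega_2)\otimes L(\omega_i+\omega_n)$ also yields complete reducibility of $L(\omega_{n-1}+\omega_1)\otimes L(\omega_{n+1-i})=L(\omega_1+\omega_{n-1})\otimes L(\omega_{n+1-i})$. But $\omega_1+\omega_{n-1}$ for $\SL_{n+1}(k)$ is not of the form $\omega_{j_1}+\cdots+\omega_{j_r}$ with all $j_t$ even starting above $1$ — indeed it contains $\omega_1$ — so directly applying Proposition \ref{prop:BrundanKleshchevGood} is not available for $L(\omega_1+\omega_{n-1})$, but $V^*\otimes L(\omega_1+\omega_{n-1})$, which agrees with $V^*\otimes(L(\omega_1)\otimes L(\omega_{n-1}))$ up to composition factors, feeds into the Loewy-length bookkeeping of Corollary \ref{cor:LoewyLengthAnp2}. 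More directly, I expect the cleanest route mirrors Lemma \ref{lem:Anp2highestroot}: decompose $V\otimes L(\omega_i+\omega_n)$. Since $n$ is odd, $n+1$ is even, and $\omega_i+\omega_n$ has $p$-restricted form supported on $\{i,n\}$; by Remark \ref{rem:Anp2}(2) (with $i_r=n$ odd and $s=r$), one gets $\Hom_G\big(L(\omega_i+\omega_1),V\otimes L(\omega_i+\omega_n)\big)\ne 0$ is not quite what is needed — rather I would compute $V^*\otimes L(\omega_i+\omega_n)$ using that $\mu=\omega_i+\omega_n$ with $i$ odd is covered by Remark \ref{rem:Anp2} applied to the reversed weight, obtaining that $V^*\otimes L(\omega_i+\omega_n)$ is completely reducible precisely when all indices are even, which fails. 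Then $V\otimes V^*\otimes L(\omega_i+\omega_n)$ is analyzed two ways — associating as $(V\otimes V^*)\otimes L(\omega_i+\omega_n)$ versus $V\otimes(V^*\otimes L(\omega_i+\omega_n))$ versus $V^*\otimes(V\otimes L(\omega_i+\omega_n))$ — and the Loewy-length count from Corollary \ref{cor:LoewyLengthAnp2} produces a contradiction exactly as in Lemma \ref{lem:Anp2highestroot}.

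The remaining step is to pin down $V\otimes L(\omega_2)$ and $V\otimes L(\omega_i+\omega_n)$ explicitly enough to run the Loewy-length argument, which is a direct application of Theorem \ref{thm:BrundanKleshchevTensorNatural} and Remark \ref{rem:Anp2}. I expect $V\otimes L(\omega_2)\cong L(\omega_1+\omega_2)\oplus L(\omega_3)$ (by Remark \ref{rem:Anp2}(1), since $2$ is even and $1,\,3$ are the cogood indices), and similarly $V\otimes L(\omega_i+\omega_n)$ will not be completely reducible because $n$ is odd — this is where the oddness of $n$ is essential. The main obstacle is organizing the bookkeeping for the Levi-truncation constraints so as to cleanly force $i,n$ odd and then showing that one of the associativity rearrangements of $V\otimes V^*\otimes L(\omega_i+\omega_n)$ (equivalently, of a suitable related triple product) carries too many Loewy-length-$\ge 3$ summands; handling the case $i$ versus $n$ symmetrically and keeping track of which $\omega_j$ get shifted under truncation is the fiddly part, but it is entirely parallel to the proof of Lemma \ref{lem:Anp2highestroot} and involves no new ideas.
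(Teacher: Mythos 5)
Your reduction to the case where $i$ and $n$ are both odd (by truncating away $\alpha_1$ and applying Proposition \ref{prop:BrundanKleshchevGood}) is correct and is exactly the first step of the paper's proof. But the main engine of your argument does not work. You propose to run the Loewy-length count of Lemma \ref{lem:Anp2highestroot} on $V\otimes V^*\otimes L(\omega_i+\omega_n)$. In Lemma \ref{lem:Anp2highestroot} that count starts from the filtration of $(V\otimes V^*)\otimes L(\mu)$ induced by the composition series of $V\otimes V^*$ with successive factors $k$, $L(\omega_1+\omega_n)$, $k$: the resulting Loewy-length bound requires the middle section $L(\omega_1+\omega_n)\otimes L(\mu)$ to be completely reducible, which there is the standing hypothesis. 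Here your hypothesis is complete reducibility of $L(\omega_2)\otimes L(\omega_i+\omega_n)$, and $L(\omega_2)$ does not occur in $V\otimes V^*$ at all; worse, the middle section $L(\omega_1+\omega_n)\otimes L(\omega_i+\omega_n)$ is \emph{not} completely reducible (that is precisely the content of Lemma \ref{lem:Anp2highestroot}), so the filtration of $V\otimes V^*\otimes L(\omega_i+\omega_n)$ has a non-semisimple layer and yields no Loewy-length bound. As written, the hypothesis about $L(\omega_2)$ is never used in your argument.

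The correct analogue, and what the paper does, is to use $V\otimes V$, whose unique composition series in characteristic $2$ has factors $L(\omega_2)$, $L(2\omega_1)\cong L(\omega_1)^{[1]}$, $L(\omega_2)$. Tensoring with $L(\omega_i+\omega_n)$ gives a filtration of $M=V\otimes V\otimes L(\omega_i+\omega_n)$ whose outer layers are completely reducible by hypothesis and whose middle layer $L(\omega_1)^{[1]}\otimes L(\omega_i+\omega_n)\cong L(2\omega_1+\omega_i+\omega_n)$ is simple. Note that even after this fix your intended endgame (counting indecomposable summands of Loewy length $3$) does not transfer: in Lemma \ref{lem:Anp2highestroot} that count hinges on the bottom layer $V_1\otimes L(\mu)\cong L(\mu)$ being simple, whereas here the bottom layer $L(\omega_2)\otimes L(\omega_i+\omega_n)$ is only completely reducible. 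The paper instead observes that $(\rad_G M+\soc_G M)/\soc_G M$ embeds into the simple module $L(2\omega_1+\omega_i+\omega_n)$, checks that $i+1$ is $\pi(\omega_i+\omega_n)$-cogood so that $U=V\otimes L(\omega_{i+1}+\omega_n)$ embeds into $M$, uses Proposition \ref{prop:BrundanKleshchevGood} and Corollary \ref{cor:LoewyLengthAnp2} (with $n$ odd) to see that $U$ has Loewy length exactly $3$, and derives a contradiction because the nonzero second Loewy layer of $U$ would then embed into $L(2\omega_1+\omega_i+\omega_n)$ while $U$ has highest weight $\omega_1+\omega_{i+1}+\omega_n<2\omega_1+\omega_i+\omega_n$. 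These two ingredients --- the switch from $V\otimes V^*$ to $V\otimes V$, and the highest-weight comparison replacing the summand count --- are missing from your proposal.
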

\begin{proof}
	Suppose for a contradiction that $L(\omega_2)\otimes L(\omega_i+\omega_n)$ is completely reducible. By truncating to~$L_{[2,n]}$ and applying Proposition \ref{prop:BrundanKleshchevGood}, we see that $i$ and $n$ are odd.
	The module $V\otimes V$ has a unique composition series $0=V_0 \leq V_1\leq V_2\leq V_3=V\otimes V$, where
	\[ V_1=\soc_G(V\otimes V)\qquad \text{and}\qquad  V_2=\rad_G(V\otimes V) , \]
	and we have $V_1\cong V_3/V_2\cong L(\omega_2)$ and $V_2/V_1\cong L(2\omega_1)\cong L(\omega_1)^{[1]}$. Then
	\[ 0 \leq V_1\otimes L(\omega_i+\omega_n) \leq V_2\otimes L(\omega_i+\omega_n) \leq V_3\otimes L(\omega_i+\omega_n) = V\otimes V \otimes L(\omega_i+\omega_n) \eqqcolon M \]
	is a filtration of $M$ with completely reducible quotients, so the Loewy length of $M$ is at most $3$. Furthermore, the middle layer of this filtration
	\[ V_2\otimes L(\omega_i+\omega_n) / V_1\otimes L(\omega_i+\omega_n) \cong L(\omega_1)^{[1]}\otimes L(\omega_i+\omega_n)\cong L(2\omega_1+\omega_i+\omega_n) \]
	is simple and $(\rad_G M+\soc_G M)/\soc_G M$ embeds into $L(2\omega_1+\omega_i+\omega_n)$. We show that $i+1$ is $\pi(\omega_i+\omega_n)$-cogood. Indeed,
	\[ \pi(\omega_i+\omega_n)=(2^i,1^{n-i},0)\eqqcolon \lambda \]
	has addable indices $\{1,i+1,n+1\}$ and removable indices $\{i,n,n+1\}$ and $i+1$ is $\lambda$-conormal by Remark~\ref{rem:Anp2}. Furthermore, $\res(n+1,1)=\overline{-n}\neq\overline{1-i}=\res(i+1,1+1)$ as $i$ and $n$ are odd and it follows that $i+1$ is $\lambda$-cogood.
	Note that $(\lambda+\varepsilon_{i+1})^\prime=\omega_{i+1}+\omega_n$, so by Theorem \ref{thm:BrundanKleshchevTensorNatural}, there is an embedding of $L(\omega_{i+1}+\omega_n)$ into $V\otimes L(\omega_i+\omega_n)$ and it follows that $U\coloneqq V\otimes L(\omega_{i+1}+\omega_n)$ can be embedded into $V\otimes V\otimes L(\omega_i+\omega_n)=M$.
	Now $U$ is not completely reducible by Proposition~\ref{prop:BrundanKleshchevGood} as $n$ is odd, so $U$ has Loewy length at least~$3$ by Corollary~\ref{cor:LoewyLengthAnp2} and we conclude that both $U$ and $M$ have Loewy length~$3$. Then $(\rad_G U+\soc_G U)/\soc_G U$ is non trivial and embeds into
	\[ (\rad_G M+\soc_G M)/\soc_G M\cong L(2\omega_1+\omega_i+\omega_n) . \] However, $U=V\otimes L(\omega_{i+1}+\omega_n)$ has highest weight $\omega_1+\omega_{i+1}+\omega_n<2\omega_1+\omega_i+\omega_n$, a contradiction.
\end{proof}

\begin{Proposition} \label{prop:Anp2}
	Let $\lambda,\mu\in X_1\setminus\{0\}$ such that $L(\lambda)\otimes L(\mu)$ is completely reducible. Then, up to reordering of $\lambda$ and $\mu$, we have $\lambda=\omega_i$ for some $1\leq i\leq n$ and $\mu=\omega_{j_1}+\cdots+\omega_{j_r}$ for some $1\leq j_1 < \cdots < j_r\leq n$ such that either $i<j_1$ or $i>j_r$.
\end{Proposition}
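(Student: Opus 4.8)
The plan is to exploit that for $p=2$ every $2$-restricted weight is a sum of \emph{distinct} fundamental weights, together with the truncation principle of Remark~\ref{rem:levi}: restricting the pair $(\lambda,\mu)$ to a Levi subgroup $L_{[a,b]}\cong\SL_{b-a+2}$ again produces a completely reducible tensor product of simple modules, where $\omega_i$ is sent to the $(i-a+1)$-st fundamental weight if $a\le i\le b$ (and to $0$ otherwise), so that $\omega_a$ and $\omega_b$ become the first and last fundamental weights of the Levi, realising its natural module $V$ and its dual $V^\ast$. I would first record two facts. Write $\lambda=\omega_{i_1}+\cdots+\omega_{i_s}$ and $\mu=\omega_{j_1}+\cdots+\omega_{j_r}$ with $i_1<\cdots<i_s$ and $j_1<\cdots<j_r$. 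Since $L(\lambda)\otimes L(\mu)$ is completely reducible, its highest weight vector generates a simple $G_1$-submodule (Remark~\ref{rem:WeakMaxVecGeneratesSimple}), so $\lambda+\mu\in X_1$ by Corollary~\ref{cor:highestweightvectorprestricted}; as $p=2$ this forces the supports $\{i_1,\dots,i_s\}$ and $\{j_1,\dots,j_r\}$ to be disjoint. Moreover, whenever an interval $[a,b]$ has both endpoints in $\mathrm{supp}(\lambda)$ and meets $\mathrm{supp}(\mu)$ nontrivially, truncating to $L_{[a,b]}$ turns $\lambda$ into $\omega_1+\omega_N$ (where $N$ is the rank of the Levi) and $\mu$ into a nonzero $2$-restricted weight, contradicting Lemma~\ref{lem:Anp2highestroot}; and symmetrically with $\lambda$ and $\mu$ interchanged.

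Next I would treat the case $s=1$, say $\lambda=\omega_i$. If $r=1$ the conclusion is immediate from disjointness of supports. If $r\ge 2$ I claim $i\notin(j_1,j_r)$: otherwise $j_t<i<j_{t+1}$ for some consecutive $j_t,j_{t+1}\in\mathrm{supp}(\mu)$, and truncating to $[j_t,j_{t+1}]$ carries $\mu$ to $\omega_1+\omega_N$ of the Levi and $\lambda$ to the nonzero fundamental weight $\omega_{i-j_t+1}$, contradicting Lemma~\ref{lem:Anp2highestroot}. Combined with $i\notin\mathrm{supp}(\mu)$ this yields $i<j_1$ or $i>j_r$, as required. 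By symmetry the proof is then complete unless $s\ge 2$ and $r\ge 2$, which I must rule out.

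So assume $s,r\ge 2$. Applying the endpoint observation above to consecutive elements of either support shows that no point of one support lies strictly between two consecutive points of the other, so $\mathrm{supp}(\mu)\cap[i_1,i_s]=\varnothing$ and $\mathrm{supp}(\lambda)\cap[j_1,j_r]=\varnothing$; hence the intervals $[i_1,i_s]$ and $[j_1,j_r]$ are disjoint, and after swapping $\lambda$ and $\mu$ if necessary I may assume $i_s<j_1$. Truncating to $[i_1,j_r]$ and relabelling, I reduce to $G=\SL_{N+1}$ with $\mathrm{supp}(\lambda)=P\ni 1$, $\mathrm{supp}(\mu)=Q\ni N$, $a:=\max P<b:=\min Q$, and $|P|,|Q|\ge 2$. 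Truncating to $[1,b]$ turns $\mu$ into $\omega_b$, the last fundamental weight of $\SL_{b+1}$, which realises $V^\ast$; by Proposition~\ref{prop:BrundanKleshchevGood} (applied after the graph automorphism of $\mathrm{A}_{b}$ interchanging $V$ and $V^\ast$), complete reducibility of $V^\ast\otimes L(\lambda)$ forces $p\equiv b+1\pmod 2$ for all $p\in P$, and since $1\in P$ this means every element of $P$ is odd, in particular $a\ge 3$. Finally, letting $q_2$ be the second smallest element of $Q$, I truncate to $[a-1,q_2]$: since $a-1$ is even it lies outside $P$, so $\lambda$ restricts to $\omega_2$, whereas $\mu$ restricts to $\omega_{b-a+2}+\omega_{\mathrm{rank}}$ with $\mathrm{rank}=q_2-a+2$; as $2<b-a+2<\mathrm{rank}$ and $\mathrm{rank}\ge 4$, Lemma~\ref{lem:Anp2omega2} shows this tensor product is not completely reducible, the desired contradiction.

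The main obstacle is exactly this last case $s,r\ge 2$: making a single invocation of Lemma~\ref{lem:Anp2omega2} work requires first determining the parities of the exponents of $\lambda$ (via the $V^\ast$-analogue of Proposition~\ref{prop:BrundanKleshchevGood}), so that the truncation interval $[a-1,q_2]$ can be chosen to pin $\lambda$ down to exactly $\omega_2$ while simultaneously presenting $\mu$ in the form $\omega_i+\omega_{\mathrm{rank}}$ with $2<i<\mathrm{rank}$. All the remaining reductions are routine truncation arguments played off against Lemma~\ref{lem:Anp2highestroot}.
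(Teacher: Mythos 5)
Your proof is correct and follows essentially the same route as the paper's: disjointness of supports via Corollary \ref{cor:highestweightvectorprestricted}, truncation to Levi subgroups combined with Lemma \ref{lem:Anp2highestroot} to control the combinatorics of the supports, Proposition \ref{prop:BrundanKleshchevGood} to extract parity information, and Lemma \ref{lem:Anp2omega2} to kill the remaining configuration. The only differences are organizational (you separate the two supports globally before localizing, whereas the paper truncates to a four-point window around a bad "switch", and you read off parities from $P$ via $V^*$ rather than from $Q$ via $V$), and all the hypotheses of the cited lemmas are verified correctly at each application.
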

\begin{proof}
	As $\lambda,\mu\in X_1$ and $p=2$, we have $\lambda=\omega_{i_1}+\cdots+\omega_{i_s}$ for certain $1\leq i_1 < \cdots < i_s\leq n$ and $\mu=\omega_{j_1}+\cdots+\omega_{j_r}$ for certain $1\leq j_1 < \cdots < j_r\leq n$. By Corollary \ref{cor:highestweightvectorprestricted}, we have $\lambda+\mu\in X_1$ and it follows that the sets $I=\{i_1,\ldots,i_s\}$ and $J=\{j_1,\ldots, j_r\}$ are disjoint.  Let $1\leq x_1<\cdots < x_{r+s}\leq n$ such that $I\cup J=\{x_1,\ldots,x_{r+s}\}$ and let $1\leq t< r+s$ such that $x_t$ and $x_{t+1}$ do not belong to the same index set. We show that~$t\in\{1,r+s-1\}$.
	
	Suppose for a contradiction that $1<t$ and $t+1<r+s$. Without loss of generality, assume that $x_t\in I$ and $x_{t+1}\in J$. By truncating to $L_{[x_{t-1},x_{t+2}]}$, we may further assume that
	\[r+s=4, \qquad t=2, \qquad x_1=1, \qquad \text{and} \qquad x_4=n .\]
	If $1\in J$ or $n\in I$ then $L(\lambda)\otimes L(\mu)$ is not completely reducible by Lemma \ref{lem:Anp2highestroot}. Hence $1\in I$ and $n\in J$, so that $\lambda=\omega_1+\omega_{x_2}$ and $\mu=\omega_{x_3}+\omega_n$. Truncating to $L_{[x_2,n]}$ and applying Proposition~\ref{prop:BrundanKleshchevGood}, we see that $x_3-x_2+1$ and $n-x_2+1$ are even, in particular $n\neq x_3+1$. Then Lemma \ref{lem:Anp2omega2} shows that $L(\lambda)\otimes L(\mu)$ is not completely reducible, after truncating to $L_{[1,x_{3}+1]}$ and taking duals, a contradiction.
	
	Hence $t\in\{1,r+s-1\}$ and up to reordering of $\lambda$ and $\mu$, we have $I\subseteq \{x_1,x_{r+s}\}$. If $I=\{x_1,x_{r+s}\}$ then $\lambda=\omega_{x_1}+\omega_{x_{r+s}}$ and $L(\lambda)\otimes L(\mu)$ is not completely reducible by Lemma \ref{lem:Anp2highestroot} and a truncation argument, a contradiction. We conclude that $\lambda=\omega_{x_1}$ or $\lambda=\omega_{x_{r+s}}$, as required. 
\end{proof}

\begin{Remark} \label{rem:TensorFundamentalGF}
	Suppose that $n\geq 2$ and $i\in\{1,\ldots, n\}$ such that $i<n+1-i$. Then $L(\omega_i)^*\cong L(\omega_{n+1-i})$ and it follows that there is a canonical embedding $k\hookrightarrow L(\omega_i)\otimes L(\omega_{n+1-i})$. By applying the same argument to the truncation of $L(\omega_i)\otimes L(\omega_{n+1-i})$ to $L_{[j+1,n-j]}$ for $1\leq j< i$, we see that $L(\omega_{j}+\omega_{n+1-j})$ is a composition factor of $L(\omega_i)\otimes L(\omega_{n+1-i})$.
	
	Note that $L(\omega_i)=\nabla(\omega_i)$ and $L(\omega_{n+1-i})\cong \nabla(\omega_{n+1-i})$, so $L(\omega_i)\otimes L(\omega_{n+1-i})$ has a good filtration. Arguing as in the previous paragraph, we see that the sections of such a filtration are precisely the induced modules $\nabla(\omega_j+\omega_{n+1-j})$ for $1\leq j\leq i$ and $\nabla(0)$, each with multiplicity $1$.
\end{Remark}

\begin{Lemma} \label{lem:Anp2WeylHighestRoot}
	Suppose that $n\geq 2$. Then $\nabla(\omega_1+\omega_n)$ is irreducible if and only if $n$ is even.
\end{Lemma}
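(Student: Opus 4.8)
The plan is to realize $\nabla(\omega_1+\omega_n)$ as a quotient of $\End_k(V)\cong V\otimes V^*$ and then to exploit the trace map $\End_k(V)\to k$, whose restriction to the scalars $k\cdot\mathrm{id}_V$ detects the parity of $n+1=\dim V$.

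First I would extract the good-filtration input. By Remark \ref{rem:TensorFundamentalGF} applied with $i=1$ (legitimate since $n\geq 2$), the $G$-module $V\otimes V^*\cong\End_k(V)$ has a good filtration whose sections are $\nabla(0)=k$ and $\nabla(\omega_1+\omega_n)$, each with multiplicity one. Ordering the filtration so that the section of smaller highest weight comes first, we obtain a trivial submodule $M_1\cong k$ of $\End_k(V)$ with $\End_k(V)/M_1\cong\nabla(\omega_1+\omega_n)$; in particular $\dim\nabla(\omega_1+\omega_n)=(n+1)^2-1\geq 8$. Since $\Hom_G(k,\End_k(V))\cong\End_G(V)$ is one-dimensional by Schur's lemma, $\End_k(V)$ has a unique trivial submodule, which must therefore be $M_1=k\cdot\mathrm{id}_V$. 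Thus $\End_k(V)/(k\cdot\mathrm{id}_V)\cong\nabla(\omega_1+\omega_n)$.

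Now consider the trace map $\mathrm{tr}\colon\End_k(V)\to k$, a surjective homomorphism of $G$-modules with $\mathrm{tr}(\mathrm{id}_V)=n+1$. If $n$ is odd, then $p=2$ divides $n+1$, so $\mathrm{tr}(\mathrm{id}_V)=0$ and $\mathrm{tr}$ factors through a surjection $\nabla(\omega_1+\omega_n)\cong\End_k(V)/(k\cdot\mathrm{id}_V)\twoheadrightarrow k$; as $\dim\nabla(\omega_1+\omega_n)\geq 8$, this shows $\nabla(\omega_1+\omega_n)$ is reducible. If $n$ is even, then $p=2$ does not divide $n+1$, so $\mathrm{tr}$ restricts to an isomorphism $k\cdot\mathrm{id}_V\xrightarrow{\ \sim\ }k$ and hence $\End_k(V)=(k\cdot\mathrm{id}_V)\oplus\ker(\mathrm{tr})$, whence $\ker(\mathrm{tr})\cong\End_k(V)/(k\cdot\mathrm{id}_V)\cong\nabla(\omega_1+\omega_n)$. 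As a direct summand of the contravariantly self-dual module $\End_k(V)$ with contravariantly self-dual complement $k\cdot\mathrm{id}_V$, the module $\ker(\mathrm{tr})$ is contravariantly self-dual by the Krull--Schmidt theorem. Therefore $\nabla(\omega_1+\omega_n)\cong\ker(\mathrm{tr})$ has simple socle $L(\omega_1+\omega_n)$ and, by self-duality, simple head $L(\omega_1+\omega_n)$; since moreover $[\nabla(\omega_1+\omega_n):L(\omega_1+\omega_n)]=1$, a nonzero radical would have $L(\omega_1+\omega_n)$ in its socle and thus contribute a second copy of $L(\omega_1+\omega_n)$, which is impossible. Hence $\nabla(\omega_1+\omega_n)=L(\omega_1+\omega_n)$ is irreducible.

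I do not expect a genuine obstacle here; the points that need a little care are the identification of the bottom section of the good filtration with $k\cdot\mathrm{id}_V$ via Schur's lemma, and keeping track of $\dim\nabla(\omega_1+\omega_n)>1$ so that the surjection onto $k$ really witnesses reducibility in the odd case. The statement itself is of course classical, $\nabla(\omega_1+\omega_n)$ being the adjoint module for type $\mathrm{A}_n$, which is irreducible precisely when $p\nmid n+1$.
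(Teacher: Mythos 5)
Your proof is correct, but it takes a genuinely different route from the paper. Both arguments start from the same input (Remark \ref{rem:TensorFundamentalGF}: $V\otimes V^*$ has a good filtration with sections $\nabla(0)$ and $\nabla(\omega_1+\omega_n)$), but the paper then quotes Proposition \ref{prop:BrundanKleshchevGood} --- the combinatorial Brundan--Kleshchev classification of when $V\otimes L(\mu)$ is completely reducible --- to handle the even case, and Lemma \ref{lem:MultiplicityFree} (multiplicity free implies completely reducible) for the converse. You instead run the classical trace-form argument on $\End_k(V)\cong V\otimes V^*$: identifying the bottom section of the filtration with $k\cdot\mathrm{id}_V$ via Schur's lemma, observing that $\mathrm{tr}(\mathrm{id}_V)=n+1$ vanishes exactly when $n$ is odd (giving a trivial quotient of $\nabla(\omega_1+\omega_n)$, hence reducibility), and in the even case splitting off $\ker(\mathrm{tr})\cong\nabla(\omega_1+\omega_n)$ as a contravariantly self-dual summand, which is then simple since it has simple socle, simple head, and $[\nabla(\lambda):L(\lambda)]=1$. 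All steps check out, including the Krull--Schmidt cancellation used to get self-duality of $\ker(\mathrm{tr})$. What your approach buys is independence from the Brundan--Kleshchev combinatorics: it is the elementary classical proof that the adjoint module of $\mathrm{SL}_{n+1}$ is irreducible precisely when $p\nmid n+1$, and it works verbatim in any characteristic. What the paper's approach buys is brevity given machinery it needs anyway for the rest of Section \ref{sec:Anp2}. It is worth noting that your technique is exactly the one the paper deploys one lemma later (Lemma \ref{lem:Anp2Weylomega2omegan1}, via Remark \ref{rem:Serre} on the splitting of $k\hookrightarrow V\otimes V^*$), so your argument is entirely in the spirit of the paper's toolkit.
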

\begin{proof}
	By Remark \ref{rem:TensorFundamentalGF}, $L(\omega_1)\otimes L(\omega_n)$ has a good filtration with sections $\nabla(\omega_1+\omega_n)$ and $\nabla(0)=k$. If~$n$ is even then $L(\omega_1)\otimes L(\omega_n)$ is completely reducible by Proposition \ref{prop:BrundanKleshchevGood} and it follows that $\nabla(\omega_1+\omega_n)$ is simple. If $\nabla(\omega_1+\omega_n)$ is simple then $L(\omega_1)\otimes L(\omega_n)$ is multiplicity free, hence $L(\omega_1)\otimes L(\omega_n)$ is completely reducible by Lemma \ref{lem:MultiplicityFree} and $n$ is even by Proposition \ref{prop:BrundanKleshchevGood}.
\end{proof}

\begin{Lemma} \label{lem:Anp2Weylomega2omegan1}
	Suppose that $n\geq 4$. Then $\nabla(\omega_2+\omega_{n-1})$ is irreducible if and only if $n\equiv 2\mod 4$.
\end{Lemma}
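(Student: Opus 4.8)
The plan is to study the $G$-module $M\coloneqq L(\omega_2)\otimes L(\omega_{n-1})\cong L(\omega_2)\otimes L(\omega_2)^*\cong\End_k(L(\omega_2))$. Since $n\geq 4$, Remark~\ref{rem:TensorFundamentalGF} (applied with $i=2$) shows that $M$ has a good filtration with sections $\nabla(\omega_2+\omega_{n-1})$, $\nabla(\omega_1+\omega_n)$ and $\nabla(0)$, each of multiplicity~$1$. First I would pin down the composition factors of $\nabla(\omega_2+\omega_{n-1})$: identifying dominant weights with partitions of $n+1$ as for $\mathrm{GL}_{n+1}$, the weight $\omega_2+\omega_{n-1}$ corresponds to $(2,2,1^{n-3})$, and the only partitions of $n+1$ it dominates are $(2,2,1^{n-3})$, $(2,1^{n-1})$ and $(1^{n+1})$, corresponding to $\omega_2+\omega_{n-1}$, $\omega_1+\omega_n$ and $0$. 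Hence every composition factor of $\nabla(\omega_2+\omega_{n-1})$ is one of $L(\omega_2+\omega_{n-1})$, $L(\omega_1+\omega_n)$ and $L(0)$, and $L(\omega_2+\omega_{n-1})$ occurs with multiplicity~$1$.

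Next I would truncate to the Levi subgroup $L_{[2,n-1]}$, which is of type $\mathrm{A}_{n-2}$. By Remark~\ref{rem:levi}, $\mathrm{Tr}_{[2,n-1]}^{\omega_2+\omega_{n-1}}\nabla(\omega_2+\omega_{n-1})$ is the induced module for $L_{[2,n-1]}$ of highest weight $(\omega_2+\omega_{n-1})_{[2,n-1]}$, which is the highest root of $L_{[2,n-1]}$; and since $\omega_2+\omega_{n-1}-(\omega_1+\omega_n)=\alpha_2+\cdots+\alpha_{n-1}\in\Z\Delta_{[2,n-1]}$ with $(\omega_1+\omega_n)_{[2,n-1]}=0$, the same remark gives $\big[\nabla(\omega_2+\omega_{n-1}):L(\omega_1+\omega_n)\big]=\big[\nabla_{L_{[2,n-1]}}(\text{highest root}):L_{L_{[2,n-1]}}(0)\big]$. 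If $\nabla(\omega_2+\omega_{n-1})$ is irreducible then so is this truncation, so $\nabla_{L_{[2,n-1]}}(\text{highest root})$ is irreducible and $n-2$ is even by Lemma~\ref{lem:Anp2WeylHighestRoot} (applied to $L_{[2,n-1]}$); in particular $\nabla(\omega_2+\omega_{n-1})$ is reducible whenever $n$ is odd. On the other hand, for $n$ even both $\nabla_{L_{[2,n-1]}}(\text{highest root})$ and $\nabla(\omega_1+\omega_n)$ are irreducible by Lemma~\ref{lem:Anp2WeylHighestRoot}, so the above multiplicity is $0$; then by the good filtration the only composition factor of $\nabla(\omega_2+\omega_{n-1})$ besides $L(\omega_2+\omega_{n-1})$ can be $L(0)$, occurring with multiplicity $[M:L(0)]-1$, and $\nabla(\omega_2+\omega_{n-1})$ is irreducible if and only if $[M:L(0)]=1$.

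The crux is to compute $[M:L(0)]$ for even $n$, and this is governed by the parity of $\dim L(\omega_2)=\binom{n+1}{2}$. Write $\iota\colon k\to M$ for the inclusion of $\id_{L(\omega_2)}$ and $\mathrm{ev}\colon M\to k$ for the trace; both are $G$-module maps, $\iota$ is injective, $\mathrm{ev}$ is surjective, and $\mathrm{ev}\circ\iota$ is multiplication by $\dim L(\omega_2)$. If $n\equiv 0\mod 4$ then $\binom{n+1}{2}$ is even, so $\iota(k)\subseteq\ker(\mathrm{ev})$; since $M/\ker(\mathrm{ev})\cong k$ and $\ker(\mathrm{ev})$ contains the trivial submodule $\iota(k)$, we get $[M:L(0)]\geq 2$ and $\nabla(\omega_2+\omega_{n-1})$ is reducible. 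If $n\equiv 2\mod 4$ then $\binom{n+1}{2}$ is odd, $\mathrm{ev}\circ\iota$ is an isomorphism, and $M=\iota(k)\oplus C$ with $C\coloneqq\ker(\mathrm{ev})$. As $M^G\cong\End_G(L(\omega_2))=k$ by Schur's Lemma, $\iota(k)$ is the unique trivial submodule of $M$, hence the bottom step $\nabla(0)$ of the good filtration, so $C$ is contravariantly self-dual, has good filtration with sections $\nabla(\omega_1+\omega_n)=L(\omega_1+\omega_n)$ and $\nabla(\omega_2+\omega_{n-1})$, and satisfies $\Hom_G(k,C)=0$. Peeling off $L(\omega_2+\omega_{n-1})$ and then $L(\omega_1+\omega_n)$ as direct summands — each occurs in $C$ with multiplicity~$1$ and, being a quotient of $C$, respectively of the remaining summand, lies in the socle by self-duality — leaves a contravariantly self-dual module all of whose composition factors are trivial and with no trivial submodule, hence $0$. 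Therefore $[M:L(0)]=1$ and $\nabla(\omega_2+\omega_{n-1})$ is irreducible. Combining the cases, $\nabla(\omega_2+\omega_{n-1})$ is irreducible precisely when $n$ is even and $\binom{n+1}{2}$ is odd, i.e. when $n\equiv 2\mod 4$.

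I expect the main obstacle to be controlling the composition factors of $\nabla(\omega_2+\omega_{n-1})$: once they are confined to $L(\omega_2+\omega_{n-1})$, $L(\omega_1+\omega_n)$ and $L(0)$ by the list of dominant weights below $\omega_2+\omega_{n-1}$, and $L(\omega_1+\omega_n)$ is ruled out for even $n$ by the truncation to $L_{[2,n-1]}$ together with Lemma~\ref{lem:Anp2WeylHighestRoot}, the whole statement reduces to the multiplicity of the trivial module, which the elementary parity argument with the trace map settles.
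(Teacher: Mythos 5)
Your reduction of the problem is sound and closely parallels the paper's: both arguments hinge on the good filtration of $L(\omega_2)\otimes L(\omega_{n-1})$ from Remark \ref{rem:TensorFundamentalGF}, on ruling out $L(\omega_1+\omega_n)$ as a composition factor of $\nabla(\omega_2+\omega_{n-1})$ via truncation to $L_{[2,n-1]}$ and Lemma \ref{lem:Anp2WeylHighestRoot}, and on the parity of $\dim L(\omega_2)=\tfrac{n(n+1)}{2}$ controlling whether $k\hookrightarrow L(\omega_2)\otimes L(\omega_2)^*$ splits. The case $n$ odd, the case $n\equiv 0\bmod 4$, and the setup $M=\iota(k)\oplus C$ for $n\equiv 2\bmod 4$ are all correct. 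The gap is in the final peeling step. You peel off $L(\omega_2+\omega_{n-1})$ first, justifying that it is a direct summand of $C$ because it "is a quotient of $C$". That claim is unproved, and in fact it is \emph{equivalent} to the statement being proved: any surjection $C\to L(\omega_2+\omega_{n-1})$ must kill the submodule $N\cong\nabla(\omega_1+\omega_n)=L(\omega_1+\omega_n)$ and hence factors through $C/N\cong\nabla(\omega_2+\omega_{n-1})$; but $L(\omega_2+\omega_{n-1})$ occurs with multiplicity one in $\nabla(\omega_2+\omega_{n-1})$ and is its socle, so it lies in the head of $\nabla(\omega_2+\omega_{n-1})$ if and only if that module is simple. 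So the first peeling step assumes the conclusion.

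The repair is easy, and there are two natural options. (i) Peel in the opposite order: $L(\omega_1+\omega_n)$ genuinely lies in $\soc_G C$ (it is the bottom section $N$ of the good filtration of $C$), occurs in $C$ with multiplicity one for $n$ even, and hence by contravariant self-duality of $C$ splits off; the complementary summand is then isomorphic to $C/N\cong\nabla(\omega_2+\omega_{n-1})$ and is contravariantly self-dual, and a contravariantly self-dual module with simple socle of multiplicity one is simple. (ii) Argue as the paper does: since $L(\omega_1+\omega_n)$ is not a composition factor of $\nabla(\omega_2+\omega_{n-1})$ and $L(\omega_2+\omega_{n-1})$ has multiplicity one there, non-simplicity of $\nabla(\omega_2+\omega_{n-1})$ would force $k$ into its head, hence into $\head_G C$, so $k$ would occur twice in $\head_G\bigl(L(\omega_2)\otimes L(\omega_{n-1})\bigr)$, contradicting $\dim\Hom_G\bigl(L(\omega_2)\otimes L(\omega_2)^*,k\bigr)=1$. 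With either fix your proof is complete and is essentially the paper's.
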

\begin{proof}
	Suppose that $\nabla(\omega_2+\omega_{n-1})$ is irreducible. Then so is the truncation of $\nabla(\omega_2+\omega_{n-1})$ to $L_{[2,n-1]}$ and it follows that $n$ is even by Lemma \ref{lem:Anp2WeylHighestRoot}. By Remark \ref{rem:TensorFundamentalGF}, the tensor product $L(\omega_2)\otimes L(\omega_{n-1})$ has a good filtration with sections $\nabla(\omega_2+\omega_{n-1})$, $\nabla(\omega_1+\omega_n)$ and $\nabla(0)=k$. Furthermore, we have $L(\omega_{n-1})\cong L(\omega_2)^*$ and $\dim L(\omega_2)=\frac{n(n+1)}{2}$. If $n$ is divisible by $4$ then $\dim L(\omega_2)$ is even and by Remark~\ref{rem:Serre}, the canonical embedding $k\hookrightarrow L(\omega_2)\otimes L(\omega_2)^*$ does not split. Using contravariant duality, it follows that $k$ appears with multiplicity at least $2$ as a composition factor of $L(\omega_2)\otimes L(\omega_{n-1})$, hence~$k$ appears as a composition factor of one of the induced modules $\nabla(\omega_2+\omega_{n-1})$ and~$\nabla(\omega_1+\omega_n)$. Now $\nabla(\omega_1+\omega_n)$ is simple by Lemma \ref{lem:Anp2WeylHighestRoot} and it follows that $\nabla(\omega_2+\omega_{n-1})$ is non-simple.
	
	Now suppose that $n\equiv 2\mod 4$. Then $\dim L(\omega_2)$ is odd, so the canonical embedding
	\[k\hookrightarrow L(\omega_2)\otimes L(\omega_2)^*\]
	splits and we can write
	$L(\omega_2)\otimes L(\omega_{n-1}) \cong k \oplus M$
	for a $G$-module $M$ with a good filtration $0\leq N\leq M$ such that \[M/N\cong \nabla(\omega_2+\omega_{n-1})\qquad \text{and}\qquad N\cong \nabla(\omega_1+\omega_n) . \]
	Note that the only dominant weights below $\omega_2+\omega_n$ are $\omega_1+\omega_n$ and $0$. Now $L(\omega_1+\omega_n)$ is not a composition factor of $\nabla(\omega_2+\omega_{n-1})$ as the truncation of $\nabla(\omega_2+\omega_{n-1})$ to $L_{[2,n-1]}$ is simple by Lemma~\ref{lem:Anp2WeylHighestRoot}. Hence, if $\nabla(\omega_2+\omega_{n-1})$ is non-simple then $L(0)=k$ appears in the head of $\nabla(\omega_2+\omega_{n-1})$, hence in the head of $M$. Then $k$ appears with multiplicity $2$ in the head of $L(\omega_2)\otimes L(\omega_{n-1})$, contradicting Schur's lemma. Hence $\nabla(\omega_2+\omega_{n-2})$ is simple.
\end{proof}

\begin{Lemma} \label{lem:Anp2fundamental}
	Let $1\leq i < j\leq n$. Then $L(\omega_i)\otimes L(\omega_j)$ is completely reducible if and only if $j-i$ is odd and one of the following holds:
	\begin{enumerate}
	 \item $i=1$ or $j=n$,
	 \item $i=2$ or $j=n-1$ and $j-i\equiv 3\mod 4$.
	\end{enumerate}
\end{Lemma}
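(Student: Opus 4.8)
The plan is to use that each $\omega_i$ is minuscule, so (as recorded in Remark~\ref{rem:TensorFundamentalGF}) $L(\omega_i)=\nabla(\omega_i)$; hence $N:=L(\omega_i)\otimes L(\omega_j)$ is contravariantly self-dual and has a good filtration, and the multiplicity of $\nabla(\delta)$ in it is the Littlewood--Richardson coefficient $c^\delta_{\omega_i,\omega_j}$, computed by the Pieri rule. Since $N$ is dual to $L(\omega_{n+1-j})\otimes L(\omega_{n+1-i})$ and conditions (1), (2) are invariant under $(i,j)\mapsto(n+1-j,n+1-i)$, I may assume $i+j\leq n+1$. Then, arguing as in Remark~\ref{rem:TensorFundamentalGF}, the good filtration of $N$ has exactly the $i+1$ pairwise non-isomorphic sections
\[ \nabla(\omega_b+\omega_{i+j-b})\ \ (1\leq b\leq i)\qquad\text{and}\qquad \nabla(\omega_{i+j}) \]
(with $\nabla(\omega_{n+1})=\nabla(0)=k$), and the conditions to be proved simplify to: either $i=1$ and $j-i$ is odd, or $i=2$ and $j-i\equiv 3\pmod 4$ (in particular $i\geq 3$ should never give complete reducibility).

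First I would reduce complete reducibility of $N$ to the simplicity of all these sections. Since $N$ is contravariantly self-dual with a good filtration, Proposition~\ref{prop:BrundanKleshchev} gives that $N$ is completely reducible if and only if $\dim\Hom_G(L(\delta),N)=\dim\Hom_G(\Delta(\delta),N)$ for every $\delta$. Summing over $\delta$, the right-hand side always totals the number of good-filtration sections, namely $i+1$, while the left-hand side totals the composition length of $\soc_G N$. If $N$ is completely reducible this forces the composition length of $N$ itself to be $i+1$; but that length equals the sum of the composition lengths of the sections, so each section must be simple. Conversely, if all sections are simple then $N$ has $i+1$ distinct composition factors, so $N$ is multiplicity free and hence completely reducible by Lemma~\ref{lem:MultiplicityFree}. (The section $\nabla(\omega_{i+j})$ is always simple, once more by minusculeness.)

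It remains to decide when $\nabla(\omega_b+\omega_c)$ is simple for $b<c$. The key inputs: $\nabla(\omega_1+\omega_c)$ has composition factors only among $L(\omega_1+\omega_c)$ and $L(\omega_{c+1})$, and truncating to the Levi subgroup $L_{[1,c]}$ — where $\omega_1+\omega_c$ is the highest-root weight — reduces its simplicity to Lemma~\ref{lem:Anp2WeylHighestRoot}, yielding: $\nabla(\omega_1+\omega_c)$ is simple iff $c$ is even. Likewise $\nabla(\omega_2+\omega_c)$ has composition factors only among $L(\omega_2+\omega_c)$, $L(\omega_1+\omega_{c+1})$ and $L(\omega_{c+2})$, and truncating to $L_{[1,c+1]}$ reduces to Lemma~\ref{lem:Anp2Weylomega2omegan1}, giving: $\nabla(\omega_2+\omega_c)$ is simple iff $c\equiv 1\pmod 4$. (In each case one checks that the listed dominant weights differ from $\omega_b+\omega_c$ only by $\Z$-combinations of simple roots lying inside the chosen Levi, so the truncation genuinely detects simplicity.) For $i=1$ the only relevant section is $\nabla(\omega_1+\omega_j)$, simple iff $j$ is even, i.e. iff $j-i$ is odd; for $i=2$ one needs in addition $\nabla(\omega_2+\omega_j)$ simple, i.e. $j\equiv 1\pmod 4$, equivalently $j-i\equiv 3\pmod 4$. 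These are precisely conditions (1) and (2).

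For $i\geq 3$ the goal is the opposite — to show $N$ is never completely reducible — so it suffices to find one of the sections with $b\in\{1,2,3\}$ that is not simple. If $i+j$ is even the $b=1$ section fails; if $i+j\equiv 1\pmod 4$ the $b=2$ section $\nabla(\omega_2+\omega_{i+j-2})$ fails, since $i+j-2\equiv 3\pmod 4$; and if $i+j\equiv 3\pmod 4$ then $c:=i+j-3\equiv 0\pmod 4$, and truncating the $b=3$ section $\nabla(\omega_3+\omega_c)$ to $L_{[2,n]}$ turns it into a Weyl module of the shape $\nabla(\omega_2+\omega_{c-1})$, which is not simple because $c-1\equiv 3\pmod 4$; hence $\nabla(\omega_3+\omega_c)$ is not simple. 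I expect this last case to be the main obstacle, being the one not covered head-on by either recorded Weyl-module lemma; the device that resolves it is that the $b=3$ section of $L(\omega_3)\otimes L(\omega_j)$, after truncation to $L_{[2,n]}$, becomes a $b=2$-type Weyl module, to which the general form of the criterion coming from Lemma~\ref{lem:Anp2Weylomega2omegan1} applies. Assembling the cases completes the classification.
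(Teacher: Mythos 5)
Your proof is correct and follows essentially the same route as the paper's: both reduce the question to the simplicity of the good-filtration sections $\nabla(\omega_a+\omega_b)$, decide that simplicity by truncating to Levi subgroups and invoking Lemmas \ref{lem:Anp2WeylHighestRoot} and \ref{lem:Anp2Weylomega2omegan1}, use Lemma \ref{lem:MultiplicityFree} for sufficiency, and rule out $i\geq 3$ by exhibiting two sections whose simplicity would force contradictory congruences modulo $4$. The differences are only organizational: you normalize by duality to $i+j\leq n+1$ and list all sections via the Pieri rule, and you make explicit (via the counting argument with Proposition \ref{prop:BrundanKleshchev}) that complete reducibility forces every section to be simple, whereas the paper truncates the tensor product itself to Levis such as $L_{[i-2,j+2]}$ and leaves that step implicit.
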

\begin{proof}
	If $j-i$ is odd and $i=1$ or $j=n$ then $L(\omega_i)\otimes L(\omega_j)$ is completely reducible by Proposition \ref{prop:BrundanKleshchevGood}. Now suppose that $i=2$, $j<n$ and $j-2\equiv 3\mod 4$.
	We have $L(\omega_2)=\nabla(\omega_2)$ and $L(\omega_j)=\nabla(\omega_j)$, so $L(\omega_2)\otimes L(\omega_j)$ has a good filtration with sections $\nabla(\omega_2+\omega_j)$, $\nabla(\omega_1+\omega_{j+1})$ and $\nabla(\omega_{j+2})$, where we consider $\omega_{j+2}=0$ if $j+2>n$. Indeed, this is clear for $j=n-1$ by Remark \ref{rem:TensorFundamentalGF}, in the general case it follows by truncating to $L_{[1,j+1]}$ as $\omega_1+\omega_{j+1}$ and $\omega_{j+2}$ are the only dominant weights below~$\omega_2+\omega_j$. Using a similar truncation argument, it follows from Lemmas \ref{lem:Anp2WeylHighestRoot} and \ref{lem:Anp2Weylomega2omegan1} that the modules $\nabla(\omega_2+\omega_j)$ and $\nabla(\omega_1+\omega_{j+1})$ are simple. Thus the good filtration of $L(\omega_2)\otimes L(\omega_j)$ is also a composition series and $L(\omega_2)\otimes L(\omega_j)$ is multiplicity free, hence completely reducible by Lemma \ref{lem:MultiplicityFree}. If $j=n-1$, $i>1$ and $j-i\equiv 3\mod 4$ then the claim follows as before by dualizing.
	
	Now let $i$ and $j$ be arbitrary and suppose that $L(\omega_i)\otimes L(\omega_j)$ is completely reducible. The induced module $\nabla(\omega_i+\omega_j)$ appears in a good filtration of $L(\omega_i)\otimes L(\omega_j)$, hence $\nabla(\omega_i+\omega_j)$ is simple. By truncating to $L_{[i,j]}$ and applying Lemma \ref{lem:Anp2WeylHighestRoot}, we see that $j-i$ is odd. Furthermore, if $i>1$ and $j<n$ then truncating to $L_{[i-1,j+1]}$ and applying Lemma \ref{lem:Anp2Weylomega2omegan1} yields $j-i\equiv 3\mod 4$. Finally, suppose that $i>2$ and $j<n-1$, we show that $L(\omega_i)\otimes L(\omega_j)$ is not completely reducible. By truncting to $L_{[i-2,j+2]}$, we may assume that $i=3$ and $j=n-2$. The induced modules $\nabla(\omega_3+\omega_{n-2})$ and $\nabla(\omega_2+\omega_{n-1})$ appear in a good filtration of $L(\omega_3)\otimes L(\omega_{n-2})$. Suppose for a contradiction that $L(\omega_3)\otimes L(\omega_{n-2})$ is completely reducible. Then $\nabla(\omega_2+\omega_{n-1})$ is simple and $n\equiv 2\mod 4$ by Lemma \ref{lem:Anp2Weylomega2omegan1}. Analogously, the truncation of $\nabla(\omega_3+\omega_{n-2})$ to $L_{[2,n-1]}$ is simple and Lemma \ref{lem:Anp2Weylomega2omegan1} yields $n-2\equiv 2\mod 4$, a contradiction.
\end{proof}

\begin{Theorem} \label{thm:ClassificationAnp2}
	Let $G$ be of type $\mathrm{A}_n$ and $p=2$. 
	Let $\lambda,\mu\in X_1\setminus\{0\}$. Then $L(\lambda)\otimes L(\mu)$ is completely reducible if and only if one of the following holds, up to reordering $\lambda$ and $\mu$:
	\begin{enumerate}
	\item $\lambda=\omega_1$ and $\mu=\omega_{i_1}+\cdots+\omega_{i_r}$ for even numbers $i_1,\ldots,i_r$ with $1<i_1<\ldots<i_r\leq n$,
	\item $\lambda=\omega_n$ and $\mu=\omega_{i_1}+\cdots+\omega_{i_r}$ for certain $1\leq i_1<\cdots <i_r< n$ such that $n+1-i_j$ is even for all $j\in\{1,\ldots, r\}$,
	\item $\lambda=\omega_2$ and $\mu=\omega_j$ for some $2<j\leq n$ with $j-2\equiv 3\mod 4$,
	\item $\lambda=\omega_{n-1}$ and $\mu=\omega_j$ for some $1\leq j<n-1$ with $n-1-j\equiv 3\mod 4$.
	\end{enumerate}
\end{Theorem}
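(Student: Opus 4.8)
The plan is to prove both directions of the equivalence from the results already established in this section. For \emph{sufficiency}, Case (1) is precisely Proposition \ref{prop:BrundanKleshchevGood} (since $L(\omega_1)=V$), and Case (2) follows from it by duality, because $L(\omega_n)\otimes L(\mu)\cong\big(L(\omega_1)\otimes L(\mu^*)\big)^*$ and $\mu^*=\omega_{n+1-i_r}+\cdots+\omega_{n+1-i_1}$ has only even indices, the smallest being $n+1-i_r>1$. Likewise Case (3) is the instance $i=2$ of Lemma \ref{lem:Anp2fundamental}, and Case (4) follows from that instance by duality via $L(\omega_{n-1})\otimes L(\omega_j)\cong\big(L(\omega_2)\otimes L(\omega_{n+1-j})\big)^*$, using $(n+1-j)-2=n-1-j\equiv 3\mod 4$ and $n+1-j>2$.

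For \emph{necessity}, assume $L(\lambda)\otimes L(\mu)$ is completely reducible. By Proposition \ref{prop:Anp2}, after reordering we may take $\lambda=\omega_i$ and $\mu=\omega_{j_1}+\cdots+\omega_{j_r}$ with $1\le j_1<\cdots<j_r\le n$ and either $i<j_1$ or $i>j_r$. The list of four conditions is closed under reordering and under the duality $(\omega_a,\sum_t\omega_{b_t})\mapsto(\omega_{n+1-a},\sum_t\omega_{n+1-b_t})$, which swaps (1) with (2) and (3) with (4); hence I may assume $i<j_1$, so that $i\le n-1$. If $i=1$, then all $j_t$ are even by Proposition \ref{prop:BrundanKleshchevGood}, and we are in Case (1).

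Suppose now $1<i<n$. First I claim $r=1$. If not, then $j_2$ exists with $i<j_1<j_2$; since $i\ge 2$ we may truncate $L(\omega_i)\otimes L(\mu)$ to the Levi subgroup $L_{[i-1,j_2]}$, of type $\mathrm{A}_{j_2-i+2}$, and by Remark \ref{rem:levi} the result is again completely reducible. But that truncation is the tensor product of the simple $L_{[i-1,j_2]}$-modules of highest weights $\omega_2$ and $\omega_{j_1-i+2}+\omega_{j_2-i+2}$ (labelling the simple roots $\alpha_{i-1},\ldots,\alpha_{j_2}$ of $L_{[i-1,j_2]}$ in order), with $2<j_1-i+2<j_2-i+2$ and $j_2-i+2\ge 4$ because $j_2\ge i+2$; this contradicts Lemma \ref{lem:Anp2omega2}. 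Hence $\mu=\omega_j$ with $j=j_1>i$, and Lemma \ref{lem:Anp2fundamental} gives that $j-i$ is odd and that $j=n$, or else $i=2$ or $j=n-1$ with $j-i\equiv 3\mod 4$. If $j=n$ then $n-i$ is odd, so reordering to $(\omega_n,\omega_i)$ lands in Case (2); if $j<n$ and $i=2$ we are in Case (3); and if $j<n$ and $i\ne 2$ then $j=n-1$ with $n-1-i\equiv 3\mod 4$, so reordering to $(\omega_{n-1},\omega_i)$ lands in Case (4). Since $i<j_1\le n$ forces $i\ne n$, this covers everything.

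The one point requiring care is the congruence bookkeeping in the final step together with keeping track of which reordering or dualization is active, but this is mechanical. The essential input is all already available — Proposition \ref{prop:Anp2} to make $\lambda$ fundamental, Proposition \ref{prop:BrundanKleshchevGood} for $\lambda=\omega_1$, Lemma \ref{lem:Anp2omega2} to force $\mu$ fundamental when $1<i<n$, and Lemma \ref{lem:Anp2fundamental} for the fundamental-by-fundamental classification — so I expect no substantive obstacle.
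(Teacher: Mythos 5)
Your proof is correct and follows essentially the same route as the paper: sufficiency from Proposition \ref{prop:BrundanKleshchevGood} and Lemma \ref{lem:Anp2fundamental} (with the duality step made explicit), and necessity via Proposition \ref{prop:Anp2}, a truncation to Lemma \ref{lem:Anp2omega2} to force $\mu$ fundamental when $1<i<n$, and then Lemma \ref{lem:Anp2fundamental}. Your explicit Levi computation for the $r\geq 2$ case correctly fills in what the paper compresses into ``a truncation argument.''
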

\begin{proof}
	If $\lambda$ and $\mu$ are as in points (1) to (4) above then $L(\lambda)\otimes L(\mu)$ is completely reducible by Proposition \ref{prop:BrundanKleshchevGood} and Lemma \ref{lem:Anp2fundamental}.
	Now suppose that $L(\lambda)\otimes L(\mu)$ is completely reducible. By Proposition~\ref{prop:Anp2}, we may assume that $\lambda=\omega_i$ for some $1\leq i\leq n$ and $\mu=\omega_{i_1}+\cdots+\omega_{i_r}$ with $1\leq i_1 < \cdots < i_r\leq n$ such that either $i<i_1$ or $i_r<i$. If $\lambda=\omega_1$ then $\mu$ is as in (1) and if $\lambda=\omega_n$ then $\mu$ is as in (2) by Proposition \ref{prop:BrundanKleshchevGood}. Now assume that $1<i<n$. If $r\geq 2$ then $L(\lambda)\otimes L(\mu)$ is not completely reducible by Lemma \ref{lem:Anp2omega2} and a truncation argument, a contradiction. Hence $r=1$ and $\mu=\omega_{i_1}$. Now the claim follows from Lemma \ref{lem:Anp2fundamental}.
\end{proof}

\section{Results for small primes} \label{sec:smallprimes}

In this section, we give proofs of Theorems \ref{thm:introA} and \ref{thm:introB} for $G$ of type $\mathrm{B}_n$, $\mathrm{C}_n$ and $\mathrm{F}_4$ when $p=2$, and for $G$ of type $\mathrm{G}_2$ when $p=3$.

Suppose that $\Phi$ is of type $\mathrm{B}_n$, $\mathrm{C}_n$, $\mathrm{F}_4$ or $\mathrm{G}_2$ and denote by $\Phi_\ell$ and $\Phi_s$ the long roots and the short roots in $\Phi$, respectively. Furthermore, define $\Delta_\ell=\Phi_\ell\cap \Delta$ and $\Delta_s=\Phi_s\cap\Delta$ and let
\[ X_\ell=\{\lambda\in X\mid\langle\lambda,\alpha^\vee\rangle=0\text{ for all }\alpha\in\Delta_s\} \quad\text{and}\quad X_s=\{\lambda\in X\mid\langle\lambda,\alpha^\vee\rangle=0\text{ for all }\alpha\in\Delta_\ell\} , \]
so that $X=X_\ell\oplus X_s$.
The following theorem due to R. Steinberg provides a refinement of the tensor product theorem in small characteristic, see Theorem 11.1 in \cite{Steinberg}:

\begin{Theorem}[Steinberg] \label{thm:SteinbergIsogeny}
	Suppose that $p=2$ and $G$ is of type $\mathrm{B}_n$, $\mathrm{C}_n$ or $\mathrm{F}_4$ or that $p=3$ and $G$ is of type $\mathrm{G}_2$. Let $\lambda\in X^+$ and write $\lambda=\lambda_\ell+\lambda_s$ with $\lambda_\ell\in X_\ell$ and $\lambda_s\in X_s$. Then $L(\lambda)\cong L(\lambda_\ell)\otimes L(\lambda_s)$.
\end{Theorem}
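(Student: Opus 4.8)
The plan is to imitate the proof of the ordinary Steinberg tensor product theorem, with the role of the Frobenius endomorphism played by the \emph{very special isogeny}. Under the stated hypotheses on $p$ and on the type of $\Phi$, the group $G$ admits a very special isogeny $\phi\colon G\to\bar G$, where $\bar G$ is simple of dual type (long and short roots of $\Phi$ interchanged), together with a partner isogeny $\bar\phi\colon\bar G\to G$ with $\bar\phi\circ\phi=F$. The kernel $\ker\phi$ is an infinitesimal subgroup scheme contained in $G_1$, generated by the infinitesimal short root subgroups $(U_\alpha)_1$ for $\alpha\in\Phi_s$ simple together with an appropriate part of $T_1$. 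On the character group, $\phi^*\colon X(\bar T)\to X$ is injective, maps dominant weights to dominant weights, and sends the fundamental weight $\bar\omega_i$ of $\bar G$ to $\omega_i$ if $\alpha_i$ is long and to $p\,\omega_i$ if $\alpha_i$ is short; in particular the image of $\phi^*$ consists of the weights $\nu$ with $p\mid\langle\nu,\alpha^\vee\rangle$ for every short simple root $\alpha$, and hence contains $X_\ell$.

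The first (easy) step is to show that $L(\lambda_\ell)$ is inflated along $\phi$. Since $\lambda_\ell\in X_\ell$ vanishes on all short coroots, it lies in the image of $\phi^*$; write $\lambda_\ell=\phi^*(\bar\mu)$ for the unique $\bar\mu\in X(\bar T)$, which is dominant, and in fact $p$-restricted for $\bar G$ because $\langle\lambda_\ell,\alpha^\vee\rangle=\langle\lambda,\alpha^\vee\rangle<p$ for the long simple roots $\alpha$. As $\phi$ is surjective, pullback along $\phi$ preserves simplicity, and it preserves highest weights because $\phi^*$ respects the dominance order; hence $\phi^*L_{\bar G}(\bar\mu)\cong L(\lambda_\ell)$. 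In particular $\ker\phi$ acts trivially on $L(\lambda_\ell)$.

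The second step, and the heart of the matter, is the analogue of Curtis's theorem for $\ker\phi$: the restriction $L(\lambda_s)\vert_{\ker\phi}$ is simple. Here one uses that the simple modules for the infinitesimal group scheme $\ker\phi$ are parametrized by the weights $\nu$ that are ``restricted with respect to the short simple coroots'', i.e.\ $\langle\nu,\alpha^\vee\rangle<p$ for all short simple $\alpha$, and that $\lambda_s$ satisfies this bound since $\langle\lambda_s,\alpha^\vee\rangle=\langle\lambda,\alpha^\vee\rangle<p$ for short simple $\alpha$ and $\langle\lambda_s,\alpha^\vee\rangle=0$ for long simple $\alpha$. This is precisely the point at which the special value of $p$ (namely $2$ for $\mathrm B_n,\mathrm C_n,\mathrm F_4$ and $3$ for $\mathrm G_2$) enters, through the degeneration of the structure constants ($\pm 2$, respectively $\pm 3$) linking long and short root vectors in the Chevalley basis; establishing it requires a genuine analysis of $\ker\phi$ and its representations, and is the main obstacle.

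Granting these two facts, the argument concludes exactly as in Steinberg's proof. Fix highest weight vectors $v_\ell\in L(\lambda_\ell)$ and $v_s\in L(\lambda_s)$; then $v_s\otimes v_\ell$ is a maximal vector of weight $\lambda=\lambda_\ell+\lambda_s$ in $M\coloneqq L(\lambda_s)\otimes L(\lambda_\ell)$, and $\lambda$ is the unique maximal weight of $M$, occurring with multiplicity one. Since $\ker\phi$ is normal in $G$, acts trivially on $L(\lambda_\ell)$, and makes $L(\lambda_s)$ simple, the restriction $M\vert_{\ker\phi}\cong\big(L(\lambda_s)\vert_{\ker\phi}\big)^{\oplus\dim L(\lambda_\ell)}$ is isotypic, and the multiplicity space $\Hom_{\ker\phi}(L(\lambda_s),M)$ is, as a $G$-module, isomorphic to $L(\lambda_\ell)$. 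For any nonzero $G$-submodule $N\subseteq M$, the space $\Hom_{\ker\phi}(L(\lambda_s),N)$ is a nonzero $G$-submodule of $\Hom_{\ker\phi}(L(\lambda_s),M)\cong L(\lambda_\ell)$, hence equals all of it by simplicity of $L(\lambda_\ell)$; comparing dimensions forces $N=M$. Thus $M$ is a simple $G$-module possessing a maximal vector of weight $\lambda$, so $M\cong L(\lambda)$, as claimed. As an alternative to invoking the structure of $\ker\phi$, one can argue directly inside the hyperalgebra $U_k(\mathfrak g)$: in the special characteristic the above structure constants vanish, so that divided powers of long and of short root vectors commute sufficiently, and one shows by an explicit computation with the comultiplication that $v_s\otimes v_\ell$ generates $M$ and that $M$ is simple.
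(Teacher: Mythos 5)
The paper does not prove this statement: it is quoted verbatim from Steinberg (Theorem 11.1 of \cite{Steinberg}) and used as a black box, so there is no internal proof to compare against. Your sketch is, in outline, precisely Steinberg's original argument: factor the Frobenius through the very special isogeny $\phi\colon G\to\bar G$, observe that one tensor factor is inflated along $\phi$ (hence trivial on $\ker\phi$) while the other restricts irreducibly to $\ker\phi$, and then run the Clifford-theoretic multiplicity-space argument. The endgame you give is correct: with $\ker\phi$ acting trivially on $L(\lambda_\ell)$ and irreducibly on $L(\lambda_s)$, the isotypic decomposition together with $\End_{\ker\phi}(L(\lambda_s))=k$ does force any nonzero $G$-submodule of $L(\lambda_s)\otimes L(\lambda_\ell)$ to be everything. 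The combinatorics of $\phi^*$ on fundamental weights is also stated correctly.

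However, as a proof the proposal has a genuine gap, and you name it yourself: the assertion that $L(\lambda_s)\vert_{\ker\phi}$ is simple is exactly the hard content of the theorem (the analogue of Curtis's theorem for the infinitesimal kernel of the special isogeny), and it is not established -- it is only described as ``the main obstacle.'' Everything else in the argument is routine bookkeeping around this fact, so deferring it means the theorem has not actually been proved; the alternative ``direct computation in the hyperalgebra'' mentioned at the end is likewise only gestured at. Two smaller points: the existence and the stated properties of $\phi$ (kernel contained in $G_1$, behaviour of $\phi^*$ on weights, the partner isogeny with $\bar\phi\circ\phi=F$) also need a citation or construction, since they are special to these $(\Phi,p)$; and your argument repeatedly uses the bounds $\langle\lambda,\alpha^\vee\rangle<p$, which hold only for $\lambda\in X_1$, whereas the statement is for arbitrary $\lambda\in X^+$ -- the general case needs an additional reduction via the ordinary Steinberg tensor product theorem. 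For the purposes of this paper a citation to Steinberg is the appropriate ``proof''; if you want a self-contained argument, the step you must actually supply is the irreducibility of $L(\lambda_s)$ over $\ker\phi$.
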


\subsection{Type \texorpdfstring{$\mathrm{B}_n$}{Bn}}

Let $G$ be of type $\mathrm{B}_n$ and $p=2$. The labeling of simple roots is such that $\Delta_s=\{\alpha_n\}$ and $\Delta_\ell=\{\alpha_1,\ldots,\alpha_{n-1}\}$.

\begin{Lemma} \label{lem:MaxVecLastWeightBn}
	Let $V$ be a rational $G$-module and suppose that $V$ has a maximal vector $v$ of weight $\delta=\delta_0+2\omega_n$, with $\delta_0\in X_1$. If $V$ is completely reducible as a $G_1$-module then $v$ generates a simple $G$-submodule of $V$.
\end{Lemma}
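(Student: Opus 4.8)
The plan is to study the $G$-submodule $U\coloneqq U_k(\mathfrak{g})\cdot v$ generated by $v$. Since $v$ is a maximal vector of dominant weight $\delta$, $U$ is a homomorphic image of the Weyl module $\Delta(\delta)$. Because $p=2$, the equality $\delta=\delta_0+2\omega_n=\delta_0+p\omega_n$ is in fact the $p$-adic decomposition of $\delta$: indeed $\delta_0\in X_1$ by hypothesis, $\omega_n\in X_1\subseteq X^+$, and rewriting $\delta_0+2\omega_n$ as $\delta_0+p\omega_n$ only changes the $\omega_n$-coordinate by a carry, so the $p$-restricted part of $\delta$ is still $\delta_0$. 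Corollary \ref{cor:headG1} then gives
\[ \head_{G_1}\Delta(\delta)\cong L(\delta_0)\otimes\Delta(\omega_n)^{[1]} \]
as $G$-modules.

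The key input is that $\Delta(\omega_n)$ is irreducible: $\omega_n$ is a minuscule weight for $\mathrm{B}_n$ (it is the highest weight of the spin module), so all weights of $\Delta(\omega_n)$ lie in a single Weyl group orbit, $\Delta(\omega_n)$ has no proper nonzero submodule, and $\Delta(\omega_n)=L(\omega_n)$ in every characteristic. Combining this with Steinberg's tensor product theorem (applicable as $\delta_0\in X_1$) we get
\[ \head_{G_1}\Delta(\delta)\cong L(\delta_0)\otimes L(\omega_n)^{[1]}\cong L(\delta_0+p\omega_n)=L(\delta) , \]
so the $G_1$-head of $\Delta(\delta)$ is already a simple $G$-module.

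Finally, assume $V$ is completely reducible as a $G_1$-module. Then $U$, being a $G_1$-submodule of $V$, is completely reducible as a $G_1$-module, so the surjection $\Delta(\delta)\twoheadrightarrow U$ annihilates $\rad_{G_1}\Delta(\delta)$ and hence factors through $\head_{G_1}\Delta(\delta)$; here $\rad_{G_1}\Delta(\delta)$ is a $G$-submodule because $G$ normalizes $G_1$, so this factorization is through $G$-modules. Thus $U$ is a nonzero $G$-quotient of $\head_{G_1}\Delta(\delta)\cong L(\delta)$, whence $U\cong L(\delta)$ is simple, i.e. $v$ generates a simple $G$-submodule of $V$. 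Almost the entire argument is formal once Corollary \ref{cor:headG1} is available; the only genuine ingredient is the irreducibility of the spin module $\Delta(\omega_n)$ at $p=2$, and I expect the only point needing care to be the bookkeeping that pins down the $p$-adic decomposition of $\delta$ as $\delta_0+p\omega_n$, so that Corollary \ref{cor:headG1} applies with $\lambda_1=\omega_n$.
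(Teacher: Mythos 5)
Your proof is correct and follows essentially the same route as the paper: both identify $\delta_0+2\omega_n$ as the $p$-adic decomposition of $\delta$, apply Corollary \ref{cor:headG1} together with the minusculeness of $\omega_n$ and Steinberg's tensor product theorem to see that $\head_{G_1}\Delta(\delta)\cong L(\delta)$ is already simple as a $G$-module, and then use $G_1$-complete reducibility of $V$ to force $U_k(\mathfrak{g})\cdot v$ to be a quotient of this head. Your write-up just spells out the factorization through $\head_{G_1}\Delta(\delta)$ more explicitly than the paper does.
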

\begin{proof}
	Suppose that $V$ is completely reducible as a $G_1$-module. Then $U_k(\mathfrak{g})\cdot v$ is a $G$-submodule of
	\[ \head_{G_1}\Delta(\delta)\cong L(\delta_0)\otimes\Delta(\omega_n)^{[1]} \cong L(\delta_0)\otimes L(\omega_n)^{[1]} \cong L(\delta_0+2\omega_n) , \]
	where the first isomorphism follows from Corollary \ref{cor:headG1} and $\Delta(\omega_n)=L(\omega_n)$ as $\omega_n$ is a minuscule weight. Hence $v$ generates a simple $G$-submodule of $V$.
\end{proof}

\begin{Proposition} \label{prop:Bn}
	Let $\lambda,\mu\in X_1$. Then $L(\lambda)\otimes L(\mu)$ is completely reducible if and only if $L(\lambda)\otimes L(\mu)$ is completely reducible as a $G_1$-module. Moreover, if $L(\lambda)\otimes L(\mu)$ is completely reducible then all composition factors of $L(\lambda)\otimes L(\mu)$ are $p$-restricted.
\end{Proposition}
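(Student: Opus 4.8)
The plan is to reduce to type $\mathrm{A}_{n-1}$ via Steinberg's refined tensor product theorem. Since $\Delta_s=\{\alpha_n\}$ and $p=2$, any $\lambda\in X_1$ decomposes as $\lambda=\lambda_\ell+a\omega_n$ with $\lambda_\ell\in X_\ell$ and $a\in\{0,1\}$, and $L(\lambda)\cong L(\lambda_\ell)\otimes L(a\omega_n)$ by Theorem~\ref{thm:SteinbergIsogeny}; writing $\mu=\mu_\ell+b\omega_n$ likewise gives
\[ L(\lambda)\otimes L(\mu)\cong\big(L(\lambda_\ell)\otimes L(\mu_\ell)\big)\otimes\big(L(a\omega_n)\otimes L(b\omega_n)\big). \]
The implication ``completely reducible as a $G$-module $\Rightarrow$ completely reducible as a $G_1$-module'' is immediate, since the restriction of a simple $G$-module to $G_1$ is completely reducible. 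For the converse and for the statement about composition factors, the idea is to adapt the argument of Proposition~\ref{prop:GandG1allMaxVecPRes}: assuming $L(\lambda)\otimes L(\mu)$ is completely reducible as a $G_1$-module, I would show that every maximal vector generates a simple $G$-submodule, so that $L(\lambda)\otimes L(\mu)$ is completely reducible as a $G$-module by Corollary~\ref{cor:BrundanKleshchev}, and that every composition factor is $p$-restricted.

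The crux is to control the weight of a maximal vector $v\in L(\lambda)\otimes L(\mu)$, say of weight $\delta=\delta_0+2\delta_1$ with $\delta_0\in X_1$ and $\delta_1\in X^+$. To bound $\delta$ in the ``long'' directions $\alpha_1,\dots,\alpha_{n-1}$, I would truncate $L(\lambda)\otimes L(\mu)$ to the Levi subgroup $L_{[1,n-1]}$ of type $\mathrm{A}_{n-1}$: by Remark~\ref{rem:levi} the result is the tensor product of the simple $L_{[1,n-1]}$-modules of highest weights $\lambda_\ell$ and $\mu_\ell$, and complete reducibility forces $\lambda_\ell$ and $\mu_\ell$ into the narrow list supplied by Theorem~\ref{thm:ClassificationAnp2} and Proposition~\ref{prop:BrundanKleshchevGood}. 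Combining this with $\delta\le\lambda+\mu$, with Lemma~\ref{lem:weakmaxvecnonsimpleG1} applied to $v$ (which generates a simple $G_1$-submodule by hypothesis), and with a hands-on weakly-maximal-vector analysis in the spirit of Section~\ref{sec:weaklymaximalvectors} --- carried out directly because Corollary~\ref{cor:nonpresnonsimpleG1} is not available in type $\mathrm{B}_n$ at $p=2$ --- I expect to conclude that $\delta$ is either $p$-restricted or of the form $\delta_0+2\omega_n$ with $\delta_0\in X_1$. In the first case $v$ generates a simple $G$-submodule by Corollary~\ref{cor:maxvecnonsimpleG1}; in the second by Lemma~\ref{lem:MaxVecLastWeightBn}, which is exactly the statement tailored to the weight $\delta_0+2\omega_n$ and uses that $\omega_n$ is minuscule, so $\Delta(\omega_n)=L(\omega_n)$.

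For the statement about composition factors I would argue as in Theorem~\ref{thm:completelyreducibleprestricted}: a composition factor $L(\nu)$ of the now semisimple module $L(\lambda)\otimes L(\mu)$ is a direct summand, hence generated by a maximal vector, so by the previous step $\nu$ is $p$-restricted or $\nu=\nu_0+2\omega_n$ with $\nu_0\in X_1$. It remains to exclude the latter. Using the Steinberg decomposition, such a factor can only arise through the ``short'' tensor factor when $a=b=1$, i.e. $\lambda=\lambda_\ell+\omega_n$ and $\mu=\mu_\ell+\omega_n$; in that situation I would derive a contradiction either from the type-$\mathrm{A}_{n-1}$ constraints together with Lemmas~\ref{lem:MultiplicityFree} and~\ref{lem:DonkinNonCR}, or by observing that $\dim L(\omega_n)=2^n$ is even, so that the canonical map $k\hookrightarrow L(\omega_n)\otimes L(\omega_n)$ does not split (Remark~\ref{rem:Serre}) and hence $L(\lambda)\otimes L(\mu)$ is not completely reducible by Proposition~\ref{prop:Serre}.

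The step I expect to be the main obstacle is precisely the weight bound for maximal vectors: the clean criteria of Propositions~\ref{prop:weakmaxvecnotG2} and~\ref{prop:nonpreslevel0}, and hence Corollary~\ref{cor:nonpresnonsimpleG1}, fail in type $\mathrm{B}_n$ when $p=2$, so one cannot simply quote Section~\ref{sec:weaklymaximalvectors}; instead the restriction on $\delta_1$ has to be forced through the refined tensor product decomposition and the type-$\mathrm{A}_{n-1}$ classification, and the configuration $a=b=1$ --- where the ``short'' part $L(\omega_n)\otimes L(\omega_n)$ is itself non-completely-reducible --- needs separate, careful treatment.
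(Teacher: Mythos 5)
Your skeleton matches the paper's: decompose via Theorem \ref{thm:SteinbergIsogeny}, truncate to the type-$\mathrm{A}_{n-1}$ Levi and invoke Theorem \ref{thm:ClassificationAnp2} to constrain $\lambda_\ell,\mu_\ell$, and use Lemma \ref{lem:MaxVecLastWeightBn} together with Corollary \ref{cor:maxvecnonsimpleG1} and Proposition \ref{prop:GandG1allMaxVecPRes} to pass from $G_1$ to $G$. But the proposal stops exactly where the proof begins. The step you flag as the ``main obstacle'' --- showing that every maximal vector has weight in $X_1$ or of the form $\delta_0+2\omega_n$ --- is never carried out, and it is not a uniform weight bound in the paper either: after reducing to $\lambda_\ell=\omega_i$ with $i\in\{1,2,n-2,n-1\}$, the paper runs a four-case analysis in which the bound is established only for $\lambda_\ell=\omega_1$ (using Proposition \ref{prop:nonpresnonsimpleG1} for the \emph{long} simple roots, which is still available at $p=2$, plus Lemma \ref{lem:weaklymaximaltensor} to cap $\langle\delta,\alpha_n^\vee\rangle$ because the weights of $L(\omega_1)$ are known), while the cases $\lambda_\ell\in\{\omega_2,\omega_{n-2},\omega_{n-1}\}$ with $\mu_\ell$ not reducible to the first case are \emph{eliminated outright} by exhibiting composition factors of non-restricted weight $(2+\varepsilon_\lambda+\varepsilon_\mu)\omega_n$ and contradicting $G_1$-complete reducibility via Lemma \ref{lem:DonkinNonCR}, a $\Hom_{G_1}$ computation through Theorem \ref{thm:SteinbergIsogeny}, or a reduction to the $\omega_1$ case. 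None of that is sketched, and ``$\delta\le\lambda+\mu$'' does not bound $\langle\delta,\alpha_n^\vee\rangle$ from above (e.g.\ $2\omega_n\le\omega_1+\omega_{n-1}$).

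The treatment of the ``moreover'' clause contains a genuine error. A composition factor of weight $\nu_0+2\omega_n$ does \emph{not} arise only ``through the short tensor factor when $a=b=1$'': the case $a=b=1$ is already vacuous under your hypothesis, since then $\langle\lambda+\mu,\alpha_n^\vee\rangle=2$ and Corollary \ref{cor:highestweightvectorprestricted} kills $G_1$-complete reducibility at the outset. The dangerous factors come from the \emph{long} parts, e.g.\ $\omega_1+\omega_{n-1}-(\alpha_1+\cdots+\alpha_{n-1})=2\omega_n$, so $L(\omega_1)\otimes L(\omega_{n-1})$ has a composition factor of highest weight $2\omega_n$ with $a=b=0$; this is precisely what the paper exploits in its cases (2)--(4) to derive contradictions. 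Your fallback via $\dim L(\omega_n)=2^n$ and Proposition \ref{prop:Serre} again only addresses the excluded configuration $\lambda=\mu=\omega_n$. Note also that if a maximal vector of weight $\delta_0+2\omega_n$ really did survive, Lemma \ref{lem:MaxVecLastWeightBn} would hand you a simple direct summand $L(\delta_0+2\omega_n)$ and the ``moreover'' statement would be false --- so these weights must be shown not to occur at all, which is the content of the missing case analysis.
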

\begin{proof}
	As before, if $L(\lambda)\otimes L(\mu)$ is completely reducible as a $G$-module then $L(\lambda)\otimes L(\mu)$ is completely reducible as a $G_1$-module.
	Suppose now that $L(\lambda)\otimes L(\mu)$ is completely reducible as a $G_1$-module, in particular $\lambda+\mu\in X_1$ by Corollary \ref{cor:highestweightvectorprestricted}. We can write
	\[\lambda=\lambda^\prime + \varepsilon_\lambda \omega_n\qquad\text{and}\qquad\mu=\mu^\prime + \varepsilon_\mu \omega_n\]
	with $\lambda^\prime,\mu^\prime\in X_\ell$ and $\varepsilon_\lambda,\varepsilon_\mu\in\{0,1\}$ not both equal to $1$, so that
	\[ L(\lambda)\cong L(\lambda^\prime)\otimes L(\varepsilon_\lambda \omega_n) \qquad \text{and} \qquad L(\mu)\cong L(\mu^\prime)\otimes L(\varepsilon_\mu \omega_n) \]
	by Theorem \ref{thm:SteinbergIsogeny}. If $\lambda^\prime=0$ or $\mu^\prime=0$ then Theorem \ref{thm:SteinbergIsogeny} implies that $L(\lambda)\otimes L(\mu)$ is simple of highest weight $\lambda+\mu\in X^+$, so now assume that $\lambda^\prime,\mu^\prime\neq 0$.
	
	The truncation of $L(\lambda)\otimes L(\mu)$ to $L_{[1,n-1]}$ is completely reducible when restricted to the Frobenius kernel of $L_{[1,n-1]}$ and by Theorems \ref{thm:completelyreducibleGandG1} and \ref{thm:ClassificationAnp2}, we may assume that $\lambda^\prime=\omega_i$ with $i\in\{1,2,n-2,n-1\}$. We write $\mu^\prime=\omega_{i_1}+\cdots+\omega_{i_r}$ with indices $1\leq i_1<\cdots<i_r\leq n-1$ and consider the four possibilities in turn:
	\begin{enumerate}
		\item Suppose that $\lambda^\prime=\omega_1$. We have
		\[ L(\lambda)\otimes L(\mu)\cong L(\lambda^\prime)\otimes L(\varepsilon_\lambda \omega_n) \otimes L(\mu) \cong L(\lambda^\prime)\otimes L(\mu+\varepsilon_\lambda\omega_n) \]
		by Theorem \ref{thm:SteinbergIsogeny} as $\varepsilon_\lambda+\varepsilon_\mu\leq 1$. After replacing $\mu$ by $\mu+\varepsilon_\lambda\omega_n$, we may assume that $\lambda=\lambda^\prime=\omega_1$. Then $\langle\lambda,\alpha_0^\vee\rangle=1<2$ and $\soc_G\big( L(\lambda)\otimes L(\mu) \big)$ is $p$-restricted by Theorem \ref{thm:BrundanKleshchevsocle}.
		
		If all maximal vectors in $L(\lambda)\otimes L(\mu)$ have $p$-restricted weight then the claim follows from Proposition \ref{prop:GandG1allMaxVecPRes}, so now assume for a contradiction that $L(\lambda)\otimes L(\mu)$ has a maximal vector $v$ of weight $\delta\in X^+\setminus X_1$. For $j<n$, we have $-\langle\beta,\alpha_j^\vee\rangle<2$ for all $\beta\in\Phi^+$ as $\alpha_j\in\Delta_\ell$, so $\langle\delta,\alpha_j^\vee\rangle<2$ by Proposition \ref{prop:nonpresnonsimpleG1} and it follows that $\langle\delta,\alpha_n^\vee\rangle\geq 2$. Furthermore, $\delta-\mu$ is a weight of $L(\lambda)=L(\omega_1)$ by Lemma \ref{lem:weaklymaximaltensor} and it follows that $\langle\delta,\alpha_n^\vee\rangle\in\{2,3\}$ as $\langle\nu,\alpha_n^\vee\rangle\leq 2$ for all $\nu\in X$ with $L(\omega_1)_\nu\neq 0$. As $L(\lambda)\otimes L(\mu)$ has $p$-restricted socle, $v$ generates a non-simple $G$-submodule of $L(\lambda)\otimes L(\mu)$ and by Lemma \ref{lem:MaxVecLastWeightBn}, $L(\lambda)\otimes L(\mu)$ is not completely reducible as a $G_1$-module, a contradiction.
		\item Suppose that $\lambda^\prime=\omega_2$ and $\mu^\prime\neq\omega_1$ so that $r=1$ by Theorem \ref{thm:ClassificationAnp2}, and let $2<j=i_1$.
		If $j=n-1$ then $L(\lambda)\otimes L(\mu)$ is not completely reducible as a $G_1$-module. Indeed, by truncating to $L_{[2,n]}$, we may assume that $\lambda^\prime=\omega_1$ so that $L(\lambda)\otimes L(\mu)\cong L(\omega_1)\otimes L(\omega_{n-1})\otimes L((\varepsilon_\lambda+\varepsilon_\mu)\cdot\omega_n)$ has a composition factor of non-$2$-restricted highest weight $(2+\varepsilon_\lambda+\varepsilon_\mu)\cdot\omega_n$ and the claim follows from part (1).
		If $j=n-2$ then by truncating to $L_{[1,n-1]}$ and arguing as in Remark~\ref{rem:TensorFundamentalGF}, we see that $L(\lambda)\otimes L(\mu)$ has a composition factor of weight $(2+\varepsilon_\lambda+\varepsilon_\mu)\cdot \omega_n$. If $\varepsilon_\lambda+\varepsilon_\mu=0$ then $L(\lambda)\otimes L(\mu)$ is not completely reducible by Lemma \ref{lem:DonkinNonCR}. So $\varepsilon_\lambda+\varepsilon_\mu=1$ and
		\[ L(\lambda)\otimes L(\mu)\cong L(\omega_2)\otimes L(\omega_{n-2}) \otimes L(\omega_n) . \]
	Moreover, we have $L(3\omega_n)\cong L(\omega_n)\otimes L(\omega_n)^{[1]}$, where $L(\omega_n)^{[1]}$ is a trivial $G_1$-module and as $L(\lambda)\otimes L(\mu)$ is completely reducible as a $G_1$-module, it follows that
		\begin{align*}
			0 &\neq \Hom_{G_1}\big(L(\omega_n),L(\omega_2)\otimes L(\omega_{n-2})\otimes L(\omega_n)\big) \\
			& \cong \Hom_{G_1}\big(L(\omega_2)\otimes L(\omega_n),L(\omega_{n-2})\otimes L(\omega_n)\big) \\
			& \cong \Hom_{G_1}\big(L(\omega_2+\omega_n),L(\omega_{n-2}+\omega_n)\big)
		\end{align*}
		by Theorem \ref{thm:SteinbergIsogeny}. This implies that $2=n-2$ and $\lambda+\mu$ is non-$2$-restricted, a contradiction.
		Finally, if $j<n-2$ then all maximal vectors in $L(\omega_2)\otimes L(\omega_i)$ have $2$-restricted weight as all dominant weights below $\omega_2+\omega_j$ are $2$-restricted, and the claim follows from Proposition \ref{prop:GandG1allMaxVecPRes}.
		\item Suppose that $\lambda^\prime=\omega_{n-1}$ and by truncating to $L_{[i_r,n]}$, assume that $\mu^\prime=\omega_1$. Then by truncating to $L_{[1,n-1]}$, we see that $L(\lambda)\otimes L(\mu)$ has a composition factor of non-$2$-restricted highest weight
		\[ \lambda+\mu-\alpha_1-\ldots-\alpha_{n-1}=(2+\varepsilon_\lambda+\varepsilon_\mu) \cdot \omega_n , \]
		contradicting the assumption that $L(\lambda)\otimes L(\mu)$ is completely reducible, by part (1).
		\item Suppose that $\lambda^\prime=\omega_{n-2}$ and $\mu^\prime\notin\{\omega_1,\omega_{n-1}\}$, so $r=1$ by Theorem \ref{thm:ClassificationAnp2} and we let $j=i_1>1$. By truncating to $L_{[j-1,n]}$, we may assume that $\mu^\prime=\omega_2$. Then by truncating to~$L_{[1,n-1]}$ and arguing as in Remark \ref{rem:TensorFundamentalGF}, we see that $L(\lambda)\otimes L(\mu)$ has a composition factor of non-$2$-restricted highest weight $(2+\varepsilon_\lambda+\varepsilon_\mu)\cdot \omega_n$
		contradicting the assumption that $L(\lambda)\otimes L(\mu)$ is completely reducible, as in part (2). \qedhere
	\end{enumerate}
\end{proof}

\begin{Remark}
	The preceding proposition shows that Theorems \ref{thm:introA} and \ref{thm:introB} are valid for $G$ of type $\mathrm{B}_n$ when $p=2$.
\end{Remark}

\subsection{Type \texorpdfstring{$\mathrm{C}_n$}{Cn}}

Let $G$ be of type $\mathrm{C}_n$ and $p=2$. We start with an easy lemma about the root system of $G$.

\begin{Lemma} \label{lem:Cnminus2}
	Let $\beta\in\Phi^+$ and $\alpha\in\Delta$. If $\langle\beta,\alpha^\vee\rangle=-2$ then there exists $\gamma\in\Delta$ such that $\langle\beta,\gamma^\vee\rangle=2$.
\end{Lemma}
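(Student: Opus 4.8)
The plan is to argue by an explicit realization of the root system of type $\mathrm{C}_n$. Take an orthogonal basis $e_1,\ldots,e_n$ of $\mathbb{R}^n$, let the short roots be $\pm e_i\pm e_j$ ($i\neq j$) and the long roots be $\pm 2e_i$, and choose the simple roots $\alpha_i=e_i-e_{i+1}$ for $1\leq i\leq n-1$ and $\alpha_n=2e_n$. With the normalization $\langle\alpha,\alpha\rangle=2$ for short roots one has $\langle e_i,e_j\rangle=\delta_{ij}$, so $\alpha_i^\vee=e_i-e_{i+1}$ for $i<n$ while $\alpha_n^\vee=e_n$; this last point (the coroot normalization of the long simple root) is the only thing one needs to be careful about.

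First I would note that $\langle\beta,\alpha^\vee\rangle=-2$ forces $\alpha$ to be a short root and $\beta$ a long root. Indeed, $\langle\beta,\alpha^\vee\rangle=2\langle\beta,\alpha\rangle/\langle\alpha,\alpha\rangle$ has absolute value $2$, so $\langle\beta,\alpha\rangle^2=\langle\alpha,\alpha\rangle^2$; since $\beta$ and $\alpha$ are not proportional (as $\beta,\alpha\in\Phi^+$ and $\langle\beta,\alpha^\vee\rangle\neq 2$), the Cauchy--Schwarz inequality is strict and gives $\langle\alpha,\alpha\rangle<\langle\beta,\beta\rangle$. As every root of $\mathrm{C}_n$ has squared length $2$ or $4$, it follows that $\alpha$ is short and $\beta$ is long. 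In particular $\alpha=\alpha_j=e_j-e_{j+1}$ for some $j\leq n-1$, with $\alpha^\vee=e_j-e_{j+1}$, and $\beta=2e_k$ for some $k$.

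Then I would simply compute: $\langle\beta,\alpha^\vee\rangle=\langle 2e_k,e_j-e_{j+1}\rangle=2(\delta_{kj}-\delta_{k,j+1})$, which equals $-2$ exactly when $k=j+1$, so $\beta=2e_{j+1}$. Taking $\gamma=\alpha_{j+1}\in\Delta$ now finishes: if $j+1<n$ then $\gamma^\vee=e_{j+1}-e_{j+2}$ and $\langle\beta,\gamma^\vee\rangle=\langle 2e_{j+1},e_{j+1}-e_{j+2}\rangle=2$, while if $j+1=n$ then $\gamma=\alpha_n$, $\gamma^\vee=e_n$, and $\langle\beta,\gamma^\vee\rangle=\langle 2e_n,e_n\rangle=2$. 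The whole proof is routine; there is no substantial obstacle, only the minor case split $j+1<n$ versus $j+1=n$ and the bookkeeping of coroot lengths.
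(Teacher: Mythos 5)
Your proof is correct and follows essentially the same route as the paper's: both use the standard realization of $\mathrm{C}_n$ in $\R^n$ with $\Phi^+=\{e_i\pm e_j\}\cup\{2e_i\}$, deduce that $\beta$ is long and $\alpha$ is short, pin down $\beta=2e_{j+1}$, and exhibit an explicit simple root $\gamma$ (the paper takes $\gamma=e_i-e_{i+1}$ when $\beta=2e_i$ with $i<n$ and $\gamma=2e_n$ when $i=n$, matching your case split). Your Cauchy--Schwarz justification of the length comparison is a minor elaboration of a step the paper simply asserts.
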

\begin{proof}
	Consider the real vector space $E=\R^n$ with canonical basis $e_1,\ldots,e_n$, equipped with the standard scalar product $\langle\cdot\,,\cdot\rangle$. The root system of type $\mathrm{C}_n$ can be realized in $E$ as $\Phi=\Phi^+\sqcup\Phi^-$ with 
	\[\Phi^+=\{ e_i\pm e_j \mid 1\leq i<j\leq n\}\cup\{ 2 e_i \mid 1\leq i\leq n \} \]
	and the simple roots are $\Delta=\{e_i-e_{i+1}\mid 1\leq i<n \}\cup \{2e_n\}$.
	If $\langle\beta,\alpha^\vee\rangle=-2$ then $\beta$ is long and $\alpha$ is short, so $\beta=2e_i$ for some $i\in\{1,\ldots,n\}$. If $i=n$, we can take $\gamma=\beta=2e_n\in\Delta$. If $i<n$ then the claim follows with $\gamma\coloneqq e_i-e_{i+1}\in \Delta$.
\end{proof}

The following result is a weaker version of Proposition \ref{prop:weakmaxvecnotG2}, the proof is almost the same.

\begin{Lemma} \label{lem:weakmaxvecCnp2}
	Let $\lambda,\mu\in X_1$ and let $v\in L(\lambda)\otimes L(\mu)$ be a weakly maximal vector of weight $\delta\in X^+ \setminus X_1$. Assume furthermore that there exists $\beta\in\Phi^+$ such that $w\coloneqq (1\otimes X_\beta)\cdot v$ is a weakly maximal vector in $L(\lambda)\otimes L(\mu)$ and $\delta+\beta\in X_1^\prime$. Then $v$ generates a non-simple $G_1$-submodule of $L(\lambda)\otimes L(\mu)$.
\end{Lemma}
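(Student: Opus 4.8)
The plan is to follow the proof of Proposition \ref{prop:weakmaxvecnotG2} almost verbatim; the one genuinely new point is that, since $p=2$, the condition $-\langle\beta,\alpha^\vee\rangle<p$ used there is not automatic, so I must rule out $\langle\beta,\alpha^\vee\rangle=-2$ by hand.

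First I would choose the simple root. Since $\delta\in X^+\setminus X_1$, there is some $\alpha\in\Delta$ with $\langle\delta,\alpha^\vee\rangle\geq p=2$; fix such an $\alpha$. The key observation is that $\langle\beta,\gamma^\vee\rangle\geq -1$ for every $\gamma\in\Delta$. Indeed, as $\Phi$ is of type $\mathrm{C}_n$, every Cartan integer lies in $\{0,\pm 1,\pm 2\}$, so if this failed we would have $\langle\beta,\gamma^\vee\rangle=-2$ for some $\gamma\in\Delta$, and then Lemma \ref{lem:Cnminus2} would produce $\gamma^\prime\in\Delta$ with $\langle\beta,(\gamma^\prime)^\vee\rangle=2$; using $\delta+\beta\in X_1^\prime$ this gives $\langle\delta,(\gamma^\prime)^\vee\rangle=\langle\delta+\beta,(\gamma^\prime)^\vee\rangle-\langle\beta,(\gamma^\prime)^\vee\rangle<p-2=0$, contradicting $\delta\in X^+$. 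Applying this with $\gamma=\alpha$ gives $\langle\beta,\alpha^\vee\rangle\geq -1$, while $\langle\beta,\alpha^\vee\rangle=\langle\delta+\beta,\alpha^\vee\rangle-\langle\delta,\alpha^\vee\rangle\leq(p-1)-p=-1$ since $\delta+\beta\in X_1^\prime$ and $\langle\delta,\alpha^\vee\rangle\geq p$. Hence $\langle\beta,\alpha^\vee\rangle=-1$, $\langle\delta,\alpha^\vee\rangle=2$ and $\langle\delta+\beta,\alpha^\vee\rangle=1$.

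Next, since $\langle\beta,\alpha^\vee\rangle=-1$ and $\Phi$ is not of type $\mathrm{G}_2$, Lemma \ref{lem:rootnegativecoeff} gives $\beta-\alpha\notin\Phi$, so $X_{-\alpha,s}$ and $X_\beta$ commute for all $s\geq 0$. Exactly as in the proof of Proposition \ref{prop:weakmaxvecnotG2}, expanding $\Delta(X_{-\alpha,s})=\sum_{i}X_{-\alpha,i}\otimes X_{-\alpha,s-i}$ and using this commutation yields
\[ X_{-\alpha,s}\cdot w = (1\otimes X_\beta)\cdot X_{-\alpha,s}\cdot v , \]
so $X_{-\alpha,s}\cdot v\neq 0$ whenever $X_{-\alpha,s}\cdot w\neq 0$. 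Since $w$ is weakly maximal, $X_\alpha\cdot w=0$, so considering the restriction of $U_k(\mathfrak{g})\cdot w$ to the Levi subgroup $L_\alpha$ with root system $\{\alpha,-\alpha\}$ and using that the Weyl module of highest weight $\langle\delta+\beta,\alpha^\vee\rangle=1<p$ for $L_\alpha$ is simple, we get $X_{-\alpha,1}\cdot w\neq 0$, hence $X_{-\alpha,1}\cdot v\neq 0$. Finally $\langle\delta,\alpha^\vee\rangle+1=3$ is not divisible by $p=2$, and writing $3=p\cdot 1+1$ the relevant residue is $r=1$; since $X_{-\alpha,r}\cdot v\neq 0$, Lemma \ref{lem:weakmaxvecnonsimpleG1} shows that $v$ cannot generate a simple $G_1$-submodule of $L(\lambda)\otimes L(\mu)$, as required.

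The only step that is not a direct transcription of the earlier argument is the proof that $\langle\beta,\alpha^\vee\rangle=-1$, and this is exactly where I expect the real work to be: it is precisely here that the strengthened hypotheses $\delta\in X^+$ and $\delta+\beta\in X_1^\prime$ (in place of the single-root conditions of Proposition \ref{prop:weakmaxvecnotG2}) are needed, together with the type-$\mathrm{C}_n$ root-system fact of Lemma \ref{lem:Cnminus2}. Everything else goes through word for word.
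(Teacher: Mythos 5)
Your proof is correct and is essentially identical to the paper's own argument: fix $\alpha\in\Delta$ with $\langle\delta,\alpha^\vee\rangle\geq 2$, rule out $\langle\beta,\alpha^\vee\rangle=-2$ via Lemma \ref{lem:Cnminus2} (the paper contradicts $\delta+\beta\in X_1'$ using dominance of $\delta$, you contradict dominance of $\delta$ using $\delta+\beta\in X_1'$ — the same computation), then conclude $\langle\beta,\alpha^\vee\rangle=-1$, use Lemma \ref{lem:rootnegativecoeff} and the commutation of $X_{-\alpha,s}$ with $X_\beta$ to transfer $X_{-\alpha}\cdot w\neq 0$ to $X_{-\alpha}\cdot v\neq 0$, and finish with Lemma \ref{lem:weakmaxvecnonsimpleG1}. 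You also correctly identified ruling out $\langle\beta,\alpha^\vee\rangle=-2$ as the one genuinely new step relative to Proposition \ref{prop:weakmaxvecnotG2}.
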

\begin{proof}
	Let $\alpha\in\Delta$ such that $\langle\delta,\alpha^\vee\rangle\geq p$. We have $\langle\delta+\beta,\alpha^\vee\rangle < p$ as $\delta+\beta\in X_1^\prime$, so $\langle\beta,\alpha^\vee\rangle<0$ and therefore $\beta-\alpha\notin\Phi$ and $\langle\beta,\alpha^\vee\rangle\in\{-1,-2\}$ by Lemma \ref{lem:rootnegativecoeff}. As in the proof of Proposition \ref{prop:weakmaxvecnotG2}, it follows that $X_{-\alpha}\cdot v\neq 0$ whenever $X_{-\alpha}\cdot w\neq 0$.
	
	If $\langle\beta,\alpha^\vee\rangle=-2$ then there exists $\gamma\in\Delta$ such that $\langle\beta,\gamma^\vee\rangle=2$ by Lemma \ref{lem:Cnminus2}. Then $\langle\delta+\beta,\gamma^\vee\rangle\geq 2$ and $\delta+\beta\notin X_1^\prime$, a contradiction. Hence $\langle\beta,\alpha^\vee\rangle=-1$ and it follows that $\langle\delta,\alpha^\vee\rangle=2$ and $\langle\delta+\beta,\alpha^\vee\rangle=1$. As $w$ is a weakly maximal vector of weight $\delta+\beta$, we have $X_{-\alpha}w\neq 0$ and hence $X_{-\alpha}v\neq 0$. Now the claim follows from Lemma \ref{lem:weakmaxvecnonsimpleG1}.
\end{proof}

\begin{Proposition} \label{prop:Cnp2nonCRG1}
	Let $\lambda,\mu\in X_1$ and suppose that $L(\lambda)\otimes L(\mu)$ has a weakly maximal vector $v$ of weight $\delta\in X\setminus X_1^\prime$. Then $L(\lambda)\otimes L(\mu)$ is not completely reducible as a $G_1$-module.
\end{Proposition}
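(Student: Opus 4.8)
The plan is to transfer the proof of Proposition~\ref{prop:nonpresnonsimpleG1} to type $\mathrm{C}_n$ in characteristic $2$, using Lemma~\ref{lem:weakmaxvecCnp2} in place of Proposition~\ref{prop:weakmaxvecnotG2}. By Remark~\ref{rem:WeakMaxVecGeneratesSimple} it is enough to exhibit one weakly maximal vector of $L(\lambda)\otimes L(\mu)$ that generates a non-simple $G_1$-submodule.

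First I would choose, among the weights of weakly maximal vectors of $L(\lambda)\otimes L(\mu)$ that lie in $X\setminus X_1^\prime$ --- a non-empty set, since $\delta$ is one of them --- a weight $\delta^\prime$ that is maximal for the dominance order, and fix a weakly maximal vector $w$ of weight $\delta^\prime$. If $\delta^\prime=\lambda+\mu$, then $\lambda+\mu\in X^+\setminus X_1$, so the maximal vector spanning $L(\lambda)_\lambda\otimes L(\mu)_\mu$ generates a non-simple $G_1$-submodule by Corollary~\ref{cor:highestweightvectorprestricted}, and we are done. If $\delta^\prime\neq\lambda+\mu$, then Proposition~\ref{prop:nothighestweighthigherweakmaxvec} produces $\beta\in\Phi^+$ with $(1\otimes X_\beta)\cdot w$ a weakly maximal vector of weight $\delta^\prime+\beta>\delta^\prime$; by maximality of $\delta^\prime$ this forces $\delta^\prime+\beta\in X_1^\prime$, and Lemma~\ref{lem:weakmaxvecCnp2}, applied to $w$ and this $\beta$, shows that $w$ generates a non-simple $G_1$-submodule.

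The point requiring care is that Lemma~\ref{lem:weakmaxvecCnp2} is stated only for weakly maximal vectors of \emph{dominant} weight, so one must also handle the possibility that $\delta^\prime$ is not dominant, say $\langle\delta^\prime,\alpha_i^\vee\rangle<0$ for some $i$. Then $\delta^\prime\neq\lambda+\mu$, and Proposition~\ref{prop:nothighestweighthigherweakmaxvec} again yields $\beta\in\Phi^+$ with $(1\otimes X_\beta)\cdot w$ weakly maximal and, by maximality, $\delta^\prime+\beta\in X_1^\prime$. Fixing $\gamma\in\Delta$ with $\langle\delta^\prime,\gamma^\vee\rangle\geq 2$, the inequality $\langle\delta^\prime+\beta,\gamma^\vee\rangle<2$ forces $\langle\beta,\gamma^\vee\rangle<0$, hence $\langle\beta,\gamma^\vee\rangle\in\{-1,-2\}$ and $\beta-\gamma\notin\Phi$ by Lemma~\ref{lem:rootnegativecoeff}. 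When $\langle\beta,\gamma^\vee\rangle=-1$ one gets $\langle\delta^\prime,\gamma^\vee\rangle=2$ and $\langle\delta^\prime+\beta,\gamma^\vee\rangle=1$, so the $L_\gamma$-submodule generated by $(1\otimes X_\beta)\cdot w$ is the two-dimensional simple $\mathrm{SL}_2$-module; hence $X_{-\gamma}\cdot\big((1\otimes X_\beta)\cdot w\big)\neq 0$, and since $X_{-\gamma}(1\otimes X_\beta)=(1\otimes X_\beta)X_{-\gamma}$ (because $\beta-\gamma\notin\Phi$) we get $X_{-\gamma}\cdot w\neq 0$; as $\langle\delta^\prime,\gamma^\vee\rangle+1=3$ is not divisible by $p=2$, with remainder $r=1$, Lemma~\ref{lem:weakmaxvecnonsimpleG1} shows that $w$ itself generates a non-simple $G_1$-submodule.

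The only remaining possibility, and what I expect to be the main obstacle, is $\langle\beta,\gamma^\vee\rangle=-2$ --- exactly the configuration for which the inequality $-\langle\beta,\alpha^\vee\rangle<p$ used freely in Section~\ref{sec:CRprestricted} is unavailable in type $\mathrm{C}_n$ at $p=2$. Here Lemma~\ref{lem:Cnminus2} supplies $\gamma^\prime\in\Delta$ with $\langle\beta,(\gamma^\prime)^\vee\rangle=2$; the idea is to argue, as in the last step of the proof of Lemma~\ref{lem:weakmaxvecCnp2} but now allowing $\delta^\prime$ to be non-dominant, that the relations $\langle\delta^\prime,\gamma^\vee\rangle\geq 2$, $\langle\beta,\gamma^\vee\rangle=-2$ and $\langle\delta^\prime+\beta,(\gamma^\prime)^\vee\rangle<2$ are incompatible with the maximality of $\delta^\prime$ --- if necessary by re-running the preceding analysis at $\gamma^\prime$, which also pairs negatively with $\delta^\prime$, and descending until a contradiction is reached.
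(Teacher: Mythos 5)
Your reduction to a maximal weight $\delta^\prime$ and the treatment of the dominant case (via Proposition \ref{prop:nothighestweighthigherweakmaxvec} and Lemma \ref{lem:weakmaxvecCnp2}) and of the non-dominant sub-case $\langle\beta,\gamma^\vee\rangle=-1$ (via the $L_\gamma\cong\SL_2$ computation and Lemma \ref{lem:weakmaxvecnonsimpleG1}) are correct. But the final sub-case you identify as ``the main obstacle'' --- $\delta^\prime$ non-dominant and $\langle\beta,\gamma^\vee\rangle=-2$, i.e.\ $\beta$ a long root $2e_i$ --- is a genuine gap, not a routine verification. Here Lemma \ref{lem:Cnminus2} gives $\gamma^\prime\in\Delta$ with $\langle\beta,(\gamma^\prime)^\vee\rangle=+2$, and $\delta^\prime+\beta\in X_1^\prime$ then forces $\langle\delta^\prime,(\gamma^\prime)^\vee\rangle\le -1$; this merely says $\delta^\prime$ is also non-dominant at $\gamma^\prime$ (your phrase ``pairs negatively'' conflates the signs of $\langle\beta,(\gamma^\prime)^\vee\rangle$ and $\langle\delta^\prime,(\gamma^\prime)^\vee\rangle$), and it is not in conflict with the maximality of $\delta^\prime$. ``Re-running the analysis at $\gamma^\prime$ and descending'' has no content as stated: there is no new root and no quantity that decreases, so the proposed induction neither starts nor terminates. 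Since weakly maximal vectors of non-dominant weight cannot be excluded a priori, the proof is incomplete exactly where the type-$\mathrm{C}_n$, $p=2$ difficulty lives.

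The paper closes this gap by a different, global mechanism rather than a local root computation. It embeds $V=L(\lambda)\otimes L(\mu)$ into $\nabla(\lambda)\otimes\nabla(\mu)$ and fixes a good filtration with sections $\nabla(\lambda_j)$. If $V$ were $G_1$-completely reducible, a weakly maximal vector of weight $\delta_i\in X\setminus X_1^\prime$ would have semisimple $G_1$-image in some section, hence land in $\soc_{G_1}\nabla(\lambda_j)$; Lemma \ref{lem:weylgeneratedG1} rules out $\lambda_j\in X_1$, and Corollary 4.6 of \cite{BrundanKleshchevCR} then produces a genuine \emph{maximal} vector of the \emph{dominant}, non-$p$-restricted weight $\lambda_j\ge\delta_i$. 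Dominance is exactly what Lemma \ref{lem:weakmaxvecCnp2} needs (its proof kills the $\langle\beta,\gamma^\vee\rangle=-2$ case precisely because $\langle\delta,(\gamma^\prime)^\vee\rangle\ge 0$), so either one gets the desired non-simple $G_1$-submodule or a new weakly maximal vector of weight $\lambda_j+\beta>\delta_i$ in $X\setminus X_1^\prime$; iterating yields an infinite ascending chain of weights of $V$, a contradiction. If you want to keep your single ``maximal $\delta^\prime$'' formulation, you must still import this good-filtration step: applied to $\delta^\prime$ it shows $\lambda_j=\delta^\prime$ by maximality, i.e.\ that the maximal such weight is automatically dominant and carries a maximal vector, after which your dominant-case argument finishes the proof.
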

\begin{proof}
	Suppose for a contradiction that $V\coloneqq L(\lambda)\otimes L(\mu)$ is completely reducible as a $G_1$-module, so~$\lambda+\mu\in X_1$ by Corollary \ref{cor:highestweightvectorprestricted}. Recall that $V$ is a submodule of $\nabla(\lambda)\otimes\nabla(\mu)$ and $\nabla(\lambda)\otimes\nabla(\mu)$ has a good filtration. Fix a good filtration
	\[0=M_0\leq\cdots\leq M_s=\nabla(\lambda)\otimes\nabla(\mu)\]
	with $M_i/M_{i-1}\cong\nabla(\lambda_i)$ for $i>0$ such that $\lambda_i>\lambda_j$ implies $i>j$, and set $N_i\coloneqq M_i\cap V$. Then
	\[0=N_0\leq\cdots\leq N_s=V\]
	is a filtration of $V$ such that $N_i/N_{i-1}$ is a (possibly zero) submodule of $\nabla(\lambda_i)$. Starting from $v_0\coloneqq v$ and $\delta_0\coloneqq \delta$, we construct a sequence of weakly maximal vectors $v_i\in V$ of weight $\delta_i\in X\setminus X_1^\prime$ such that $\delta_i<\delta_{i+1}$ for all $i\geq 0$ as follows:
	
	Suppose that $v_i$ of weight $\delta_i$ has been constructed and let $j\in\{1,\ldots,s\}$ such that $v_i\in N_j\setminus N_{j-1}$. Then the image of $u_k(\mathfrak{g})\cdot v_i$ in $N_j/N_{j-1}$ is non-zero and lies in the $G_1$-socle of $N_j/N_{j-1}$, hence in the $G_1$-socle of $\nabla(\lambda_j)$. If $\lambda_j\in X_1$ then $\soc_{G_1}\nabla(\lambda_j)\cong L(\lambda_j)$ by Lemma \ref{lem:weylgeneratedG1}, so $\delta_i=\lambda_j$ contradicting the assumption that $\delta_i\in X\setminus X_1^\prime$. Hence $\lambda_j$ is non-$p$-restricted, in particular $\lambda_j\neq\lambda+\mu$. Moreover, by Corollary 4.6 in \cite{BrundanKleshchevCR}, we have
	\[ 0\neq \Hom_G(\Delta(\lambda_j),V) , \]
	so $V$ has a maximal vector $w$ of weight $\lambda_j$. By Proposition \ref{prop:nothighestweighthigherweakmaxvec}, there exists $\beta\in\Phi^+$ such that $(1\otimes X_\beta)\cdot w$ is a weakly maximal vector. If $\lambda_j+\beta\in X_1^\prime$ then $w$ generates a non-simple $G_1$-submodule of $V$ by Lemma \ref{lem:weakmaxvecCnp2}, a contradiction. Hence $\lambda_j+\beta\in X\setminus X_1^\prime$ and we set $v_{i+1}=(1\otimes X_\beta)\cdot w$, a weakly maximal vector of weight $\delta_{i+1}=\lambda_j+\beta>\delta_i$.
	
	Thus, we obtain a sequence of weakly maximal vectors $v_0,v_1,\ldots$ in $V$ of weights $\delta_0<\delta_1<\cdots$. However, the set of weights of $V$ is finite, a contradiction.
\end{proof}

\begin{Remark} \label{rem:Cnp2thmAB}
	The preceding proposition shows that Corollary \ref{cor:nonpresnonsimpleG1} is also valid for $G$ of type $\mathrm{C}_n$ when $p=2$. Recall that Proposition \ref{prop:GandG1allMaxVecPRes} is valid in arbitrary characteristic. Now Theorems \ref{thm:introA} and \ref{thm:introB} can be proven exactly as in Section \ref{sec:CRprestricted}, using Proposition \ref{prop:Cnp2nonCRG1} instead of Corollary \ref{cor:nonpresnonsimpleG1}, see Remark \ref{rem:afterABforCnp2}.
\end{Remark}

The following lemma will be used in the next subsection, where we consider the case of $\mathrm{F}_4$.

\begin{Lemma} \label{lem:C3p2}
	Assume that $\Phi$ is of type $\mathrm{C}_3$ and $p=2$ and let $\lambda,\mu\in X_1\setminus \{0\}$. Then $L(\lambda)\otimes L(\mu)$ is completely reducible as a $G_1$-module if and only if, up to reordering of $\lambda$ and $\mu$, $\lambda=\lambda_\ell$ and $\mu=\mu_s$. In that case, $L(\lambda)\otimes L(\mu)\cong L(\lambda+\mu)$.
\end{Lemma}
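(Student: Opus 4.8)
The plan is to combine Steinberg's refined tensor product theorem (Theorem~\ref{thm:SteinbergIsogeny}) with the non-complete-reducibility criteria obtained above. Throughout I use that, by Remark~\ref{rem:Cnp2thmAB}, a tensor product of two $2$-restricted simple modules for $G$ of type $\mathrm{C}_n$ is completely reducible as a $G$-module if and only if it is completely reducible as a $G_1$-module, so I pass between the two freely. With the labelling of Lemma~\ref{lem:Cnminus2} one has $\Delta_\ell=\{\alpha_3\}$ and $\Delta_s=\{\alpha_1,\alpha_2\}$, hence $X_\ell=\Z\omega_3$ and $X_s=\Z\omega_1\oplus\Z\omega_2$, and for $\lambda\in X_1$ the Steinberg components satisfy $\lambda_\ell\in\{0,\omega_3\}$ and $\lambda_s\in\{0,\omega_1,\omega_2,\omega_1+\omega_2\}$. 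For the ``if'' direction, suppose $\lambda=\lambda_\ell$ and $\mu=\mu_s$; then $\lambda\in X_\ell$, $\mu\in X_s$, so $(\lambda+\mu)_\ell=\lambda$ and $(\lambda+\mu)_s=\mu$, and Theorem~\ref{thm:SteinbergIsogeny} gives $L(\lambda+\mu)\cong L(\lambda)\otimes L(\mu)$. In particular this tensor product is simple, hence completely reducible, and the final assertion of the lemma follows.

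For the ``only if'' direction, assume $L(\lambda)\otimes L(\mu)$ is completely reducible with $\lambda,\mu\in X_1\setminus\{0\}$. By Corollary~\ref{cor:highestweightvectorprestricted} we have $\lambda+\mu\in X_1$, so (as $p=2$) the supports of $\lambda$ and $\mu$ in $\{1,2,3\}$ are disjoint; in particular at most one of $\lambda_\ell,\mu_\ell$ equals $\omega_3$. Writing $L(\lambda)\otimes L(\mu)\cong\bigl(L(\lambda_\ell)\otimes L(\mu_\ell)\bigr)\otimes\bigl(L(\lambda_s)\otimes L(\mu_s)\bigr)$ by Theorem~\ref{thm:SteinbergIsogeny} and using disjointness, a short case analysis shows that, up to reordering, the only possibilities \emph{not} of the required form $\lambda=\lambda_\ell$, $\mu=\mu_s$ are: \textbf{(i)} $\lambda_\ell=\mu_\ell=0$ and $\{\lambda_s,\mu_s\}=\{\omega_1,\omega_2\}$, i.e. $\{\lambda,\mu\}=\{\omega_1,\omega_2\}$; and \textbf{(ii)} exactly one of $\lambda_\ell,\mu_\ell$ equals $\omega_3$ while both $\lambda_s,\mu_s$ are nonzero, forcing $\{\lambda_s,\mu_s\}=\{\omega_1,\omega_2\}$ and hence $L(\lambda)\otimes L(\mu)\cong L(\omega_3)\otimes L(\omega_1)\otimes L(\omega_2)$. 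I must rule out both. Since $\omega_1$ is minuscule and $\wedge^2_0 V=L(\omega_2)$ is irreducible (here $V=L(\omega_1)$ is the natural module and $p\nmid 3$), we have $L(\omega_1)\otimes L(\omega_2)=\nabla(\omega_1)\otimes\nabla(\omega_2)$, which has a good filtration by Donkin--Mathieu; the multiplicity of $\nabla(\delta)$ in it equals that of $\nabla_\C(\delta)$ in $\nabla_\C(\omega_1)\otimes\nabla_\C(\omega_2)$, so its sections are $\nabla(\omega_1+\omega_2)$, $\nabla(\omega_3)$, $\nabla(\omega_1)$, each once. In particular $L(\omega_3)$ is a composition factor of $L(\omega_1)\otimes L(\omega_2)$. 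For case (ii), the weight space of the highest weight $2\omega_3$ of $L(\omega_3)\otimes L(\omega_3)$ is one-dimensional, so $L(2\omega_3)=L(\omega_3)^{[1]}$ is a composition factor of $L(\omega_3)\otimes L(\omega_3)$, hence of $L(\omega_3)\otimes L(\omega_1)\otimes L(\omega_2)$; Lemma~\ref{lem:DonkinNonCR} then shows this module is not completely reducible.

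Case (i) is the main point. I claim $\nabla(\omega_3)$ is not simple for type $\mathrm{C}_3$ when $p=2$: wedging with the invariant alternating form $\theta\in\wedge^2 V$ yields a nonzero $G$-homomorphism $V\to\wedge^3 V$, $v\mapsto v\wedge\theta$, and a direct computation shows the contraction $\wedge^3 V\to V$ sends $v\wedge\theta$ to $(3-1)\,v=2v=0$, so the image lies in $\wedge^3_0 V=\Delta(\omega_3)$; thus $L(\omega_1)$ occurs as a composition factor of $\Delta(\omega_3)$, hence of $\nabla(\omega_3)$ (equivalently $\dim L(\omega_3)<\dim\nabla(\omega_3)$). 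Therefore
\[ \bigl[\,L(\omega_1)\otimes L(\omega_2):L(\omega_1)\,\bigr]\ \geq\ \bigl[\nabla(\omega_3):L(\omega_1)\bigr]+\bigl[\nabla(\omega_1):L(\omega_1)\bigr]\ \geq\ 2, \]
whereas $\dim\Hom_G\bigl(\Delta(\omega_1),L(\omega_1)\otimes L(\omega_2)\bigr)=1$, this being the multiplicity of $\nabla(\omega_1)$ in the good filtration. If $L(\omega_1)\otimes L(\omega_2)$ were completely reducible, Corollary~\ref{cor:BrundanKleshchev} would force $\dim\Hom_G\bigl(L(\omega_1),L(\omega_1)\otimes L(\omega_2)\bigr)=1$; but in a completely reducible module this dimension equals the composition multiplicity, which is at least $2$ --- a contradiction. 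Hence $L(\omega_1)\otimes L(\omega_2)$ is not completely reducible, which finishes the ``only if'' direction.

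The main obstacle is thus establishing the reducibility of $\nabla(\omega_3)$ (equivalently $\dim L(\omega_3)=8$): the contraction identity $v\wedge\theta\mapsto(n-1)v$ for $\mathrm{Sp}_{2n}$ is elementary but must be checked against the normalisation chosen for $\wedge^3_0 V$, while everything else reduces to the Weyl character identity for $\nabla_\C(\omega_1)\otimes\nabla_\C(\omega_2)$ and the bookkeeping above.
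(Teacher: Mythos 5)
Your proposal is correct and follows the same overall skeleton as the paper's proof: Steinberg's refined tensor product theorem plus Corollary \ref{cor:highestweightvectorprestricted} reduce everything to the two "bad" configurations $L(\omega_1)\otimes L(\omega_2)$ and $L(\omega_1)\otimes L(\omega_2)\otimes L(\omega_3)$, the latter being killed via the composition factor $L(2\omega_3)=L(\omega_3)^{[1]}$ (you cite Lemma \ref{lem:DonkinNonCR}, the paper cites Proposition \ref{prop:Cnp2nonCRG1}; both work), and the former via the comparison $[L(\omega_1)\otimes L(\omega_2):L(\omega_1)]\geq 2 > 1 = \dim\Hom_G(\Delta(\omega_1),\nabla(\omega_1)\otimes\nabla(\omega_2))$. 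The genuine difference is how the multiplicity $2$ is obtained: the paper simply computes it in GAP, whereas you derive it by hand from the reducibility of $\nabla(\omega_3)$, which you prove by exhibiting the copy of $V$ inside $\ker\bigl(\wedge^3 V\to V\bigr)$ via $v\mapsto v\wedge\theta$ (the contraction of $v\wedge\theta$ being $(n-1)v=0$ for $n=3$, $p=2$). This is a worthwhile trade: your argument is self-contained and explains \emph{why} the extra copy of $L(\omega_1)$ appears, at the cost of the bookkeeping you flag — though note that the identification of $\ker(\wedge^3 V\to V)$ with $\Delta(\omega_3)$ versus $\nabla(\omega_3)$ is immaterial, since the kernel has character $\chi(\omega_3)$ (the contraction is surjective and $\operatorname{ch}\wedge^3 V=\chi(\omega_3)+\chi(\omega_1)$), and composition factors are determined by the character; this also disposes of your normalisation worry. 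The remaining inputs (irreducibility of $\nabla(\omega_2)$ for $n=3$ odd, and the good filtration multiplicities from the characteristic-zero decomposition $V_{\omega_1}\otimes V_{\omega_2}\cong V_{\omega_1+\omega_2}\oplus V_{\omega_3}\oplus V_{\omega_1}$, with $84=64+14+6$) are all correct.
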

\begin{proof}
	The labeling of simple roots is such that $\Delta_s=\{\alpha_1,\alpha_2\}$ and $\Delta_\ell=\{\alpha_3\}$.

	If $\lambda=\lambda_\ell$ and $\mu=\mu_s$ then $L(\lambda)\otimes L(\mu)\cong L(\lambda+\mu)$ by Theorem \ref{thm:SteinbergIsogeny}, so $L(\lambda)\otimes L(\mu)$ is completely reducible as a $G_1$-module. Now suppose that $L(\lambda)\otimes L(\mu)$ is completely reducible as a $G_1$-module. Then $\lambda+\mu\in X_1$ by Corollary \ref{cor:highestweightvectorprestricted} and it follows that, up to reordering of $\lambda$ and $\mu$, we have $\lambda_\ell=0$ and so $\lambda_s\neq 0$. Suppose for a contradiction that $\mu_s\neq 0$. Then by Theorem \ref{thm:SteinbergIsogeny}, we have
	\[ L(\lambda)\otimes L(\mu) \cong L(\omega_1)\otimes L(\omega_2)\otimes L(\mu_\ell) , \]
	where either $\mu_\ell=0$ or $\mu_\ell=\omega_3$. If $\mu_\ell=\omega_3$ then $L(\lambda)\otimes L(\mu)$ has a composition factor of highest weight~$2\omega_3$, contradicting Proposition \ref{prop:Cnp2nonCRG1}. Furthermore, we can compute using GAP that $L(\omega_1)$ appears with multiplicity two as a composition factor of $L(\omega_1)\otimes L(\omega_2)$ and that
	\[ \dim \Hom_G\big(\Delta(\omega_1),\nabla(\omega_1)\otimes \nabla(\omega_2)\big)=1 , \]
	hence $L(\omega_1)\otimes L(\omega_2)$ is not completely reducible as a $G$-module and also not completely reducible as a $G_1$-module by Remark \ref{rem:Cnp2thmAB}, a contradiction.
\end{proof}

\subsection{Type \texorpdfstring{$\mathrm{F}_4$}{F4}}

Let $G$ be of type $\mathrm{F}_4$ and $p=2$. The labeling of simple roots is such that $\Delta_\ell=\{\alpha_1,\alpha_2\}$ and $\Delta_s=\{\alpha_3,\alpha_4\}$, hence $L_{[1,3]}$ is of type $\mathrm{B}_3$ and $L_{[2,4]}$ is of type $\mathrm{C}_3$.

\begin{Proposition} \label{prop:F4p2}
	Let $\lambda,\mu\in X_1\setminus\{0\}$. Then $L(\lambda)\otimes L(\mu)$ is completely reducible as a $G_1$-module if and only if, up to reordering of $\lambda$ and $\mu$, $\lambda=\lambda_\ell$ and $\mu=\mu_s$. In that case, $L(\lambda)\otimes L(\mu)\cong L(\lambda+\mu)$.
\end{Proposition}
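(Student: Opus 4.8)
The plan is to deduce both implications from the refined tensor product decomposition of Theorem~\ref{thm:SteinbergIsogeny} together with truncation to the two maximal Levi subgroups $L_{[1,3]}$, of type $\mathrm{B}_3$, and $L_{[2,4]}$, of type $\mathrm{C}_3$. The ``if'' direction is immediate: if $\lambda=\lambda_\ell$ and $\mu=\mu_s$ then $(\lambda+\mu)_\ell=\lambda_\ell$ and $(\lambda+\mu)_s=\mu_s$, so $L(\lambda)\otimes L(\mu)\cong L(\lambda_\ell)\otimes L(\mu_s)\cong L(\lambda+\mu)$ by Theorem~\ref{thm:SteinbergIsogeny}, which is simple and in particular completely reducible over $G_1$.

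For the converse, assume $L(\lambda)\otimes L(\mu)$ is completely reducible over $G_1$. By Corollary~\ref{cor:highestweightvectorprestricted} we have $\lambda+\mu\in X_1$, so, writing $\lambda=\sum_i a_i\omega_i$ and $\mu=\sum_i b_i\omega_i$ with $a_i,b_i\in\{0,1\}$ (as $p=2$), the supports of $\lambda$ and $\mu$ are disjoint subsets of $\{1,2,3,4\}$. By Remark~\ref{rem:levi}, the truncation $\mathrm{Tr}_{[2,4]}^{\lambda+\mu}\big(L(\lambda)\otimes L(\mu)\big)$ is a tensor product of simple $L_{[2,4]}$-modules that is completely reducible over the Frobenius kernel of $L_{[2,4]}$, and likewise for $L_{[1,3]}$. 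In $L_{[2,4]}$ the long simple root is $\alpha_2$ and the short ones are $\alpha_3,\alpha_4$; when neither truncated factor is zero, Lemma~\ref{lem:C3p2} forces, after possibly interchanging $\lambda$ and $\mu$, that $a_3=a_4=0$ and $b_2=0$, and the cases in which one truncated factor vanishes (so that one of $\lambda,\mu$ equals $\omega_1$) are handled separately. In all cases one is reduced, after renaming, to the situation where $\lambda$ is supported on $\{1,2\}$, and it remains to rule out that $\mu$ still has $\omega_1$ or $\omega_2$ in its support.

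By disjointness this leaves exactly two obstructing configurations: $\lambda=\omega_1$ with $b_2=1$, and $\lambda=\omega_2$ with $b_1=1$. Truncating these to $L_{[1,3]}$, of type $\mathrm{B}_3$, produces the modules $L(\omega_1)\otimes L(\omega_2+b_3\omega_3)$, resp.\ $L(\omega_2)\otimes L(\omega_1+b_3\omega_3)$, which would have to be completely reducible over the Frobenius kernel of $L_{[1,3]}$. Since $\omega_1$ and $\omega_2$ are both long weights for $\mathrm{B}_3$, Theorem~\ref{thm:SteinbergIsogeny} gives no simplification here, and I expect the proof to conclude by showing that none of the finitely many $\mathrm{B}_3$ tensor products $L(\omega_1)\otimes L(\omega_2)$, $L(\omega_1)\otimes L(\omega_2+\omega_3)$, $L(\omega_2)\otimes L(\omega_1+\omega_3)$ is completely reducible over the corresponding Frobenius kernel — either by exhibiting a maximal vector of non-$p$-restricted weight as in the proof of Proposition~\ref{prop:Bn} (combined with Theorem~\ref{thm:ClassificationAnp2} for the $\mathrm{A}_2$-Levi), or, more expediently, by the GAP-assisted character and $\Hom$ computations already used for Lemma~\ref{lem:C3p2}. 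Once these configurations are excluded, $\mu$ is supported on $\{3,4\}$, so $\lambda=\lambda_\ell$ and $\mu=\mu_s$, and $L(\lambda)\otimes L(\mu)\cong L(\lambda_\ell)\otimes L(\mu_s)\cong L(\lambda+\mu)$ by Theorem~\ref{thm:SteinbergIsogeny}.

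The main obstacle is the bookkeeping around the two ``interior'' simple roots $\alpha_2$ (long) and $\alpha_3$ (short): each of them lies in both $L_{[1,3]}$ and $L_{[2,4]}$, so no single truncation cleanly peels the long part $\lambda_\ell$ off the short part $\lambda_s$ when a support straddles $\{\alpha_2,\alpha_3\}$, and one is forced to argue through the rank-$3$ Levis rather than the rank-$2$ ones, where by Theorem~\ref{thm:SteinbergIsogeny} too many tensor products stay irreducible to be of use. Beyond this organisational work, the only genuinely new input I anticipate is the explicit, GAP-assisted verification of the handful of rank-$3$ base cases in types $\mathrm{B}_3$ and $\mathrm{C}_3$.
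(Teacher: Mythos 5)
Your proposal follows essentially the same route as the paper's proof: Corollary \ref{cor:highestweightvectorprestricted} together with Theorem \ref{thm:SteinbergIsogeny} and truncation to the $\mathrm{C}_3$-Levi $L_{[2,4]}$ (via Lemma \ref{lem:C3p2}) reduce the converse direction to the configuration $L(\omega_1)\otimes L(\omega_2)\otimes L(\mu_s)$, which is then ruled out in the $\mathrm{B}_3$-Levi $L_{[1,3]}$. The one step you leave unexecuted --- non-complete reducibility of the finitely many residual $\mathrm{B}_3$ tensor products --- is settled in the paper without any further GAP computation: the $\mathrm{A}_2$-truncation argument of Remark \ref{rem:TensorFundamentalGF} exhibits a composition factor of non-$2$-restricted highest weight $(\langle\mu_s,\alpha_3^\vee\rangle+2)\cdot\omega_3$ in the $L_{[1,3]}$-truncation, contradicting Proposition \ref{prop:Bn}.
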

\begin{proof}
	If $\lambda=\lambda_\ell$ and $\mu=\mu_s$ then $L(\lambda)\otimes L(\mu)\cong L(\lambda+\mu)$ by Theorem \ref{thm:SteinbergIsogeny}, so $L(\lambda)\otimes L(\mu)$ is completely reducible as a $G_1$-module. Now suppose that $L(\lambda)\otimes L(\mu)$ is completely reducible as a $G_1$-module, so~$\lambda+\mu\in X_1$ by Corollary \ref{cor:highestweightvectorprestricted}. By truncating to $L_{[2,4]}$ and applying Lemma \ref{lem:C3p2}, we obtain that, up to reordering of $\lambda$ and $\mu$, $\lambda_s=0$ and so $\lambda_\ell\neq 0$. Suppose for a contradiction that $\mu_\ell\neq 0$. Then by Corollary \ref{cor:highestweightvectorprestricted} and Theorem \ref{thm:SteinbergIsogeny}, we have
	\[ L(\lambda)\otimes L(\mu) \cong L(\omega_1)\otimes L(\omega_2) \otimes L(\mu_s) . \]
	It follows that the truncation of $L(\lambda)\otimes L(\mu)$ to $L_{[1,3]}$ has a composition factor of non-$2$-restricted highest weight $(\langle\mu_s,\alpha_3^\vee\rangle+2)\cdot\omega_3$, contradicting Theorem \ref{prop:Bn}.
\end{proof}

\begin{Remark} \label{rem:F4p2}
	Theorems \ref{thm:introA} and \ref{thm:introB} are now immediate from Proposition \ref{prop:F4p2}: If $\lambda,\mu\in X_1$ such that $L(\lambda)\otimes L(\mu)$ is completely reducible as a $G_1$-module then $\lambda+\mu\in X_1$ and $L(\lambda)\otimes L(\mu)\cong L(\lambda+\mu)$, in particular $L(\lambda)\otimes L(\mu)$ is completely reducible as a $G$-module and all composition factors are $p$-restricted.
\end{Remark}
	
\subsection{Type \texorpdfstring{$\mathrm{G}_2$}{G2}}
	
Let $G$ be of type $\mathrm{G}_2$ and $p=3$. We choose the same labeling of simple roots as in Section \ref{sec:G2}, that is $\Delta_s=\{\alpha_1\}$ and $\Delta_\ell=\{\alpha_2\}$. As is Section \ref{sec:G2}, we use GAP (and specifically \cite{Doty}) to compute the composition factors of Weyl modules and of tensor products of simple modules.

\begin{Proposition} \label{prop:G2p3}
	Let $\lambda,\mu\in X_1\setminus\{0\}$. Then $L(\lambda)\otimes L(\mu)$ is completely reducible as a $G_1$-module if and only if, up to reordering of $\lambda$ and $\mu$, $\lambda=\lambda_\ell$ and $\mu=\mu_s$. In that case, $L(\lambda)\otimes L(\mu)\cong L(\lambda+\mu)$.
\end{Proposition}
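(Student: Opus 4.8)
The ``if'' direction is immediate: if, after reordering, $\lambda=\lambda_\ell\in X_\ell$ and $\mu=\mu_s\in X_s$, then $\lambda+\mu\in X_1$ with $(\lambda+\mu)_\ell=\lambda$ and $(\lambda+\mu)_s=\mu$, so Theorem \ref{thm:SteinbergIsogeny} gives $L(\lambda)\otimes L(\mu)\cong L(\lambda+\mu)$, which is simple and hence completely reducible both as a $G$-module and as a $G_1$-module. For the converse I would assume $L(\lambda)\otimes L(\mu)$ completely reducible as a $G_1$-module, deduce $\lambda+\mu\in X_1$ from Corollary \ref{cor:highestweightvectorprestricted}, and write $\lambda_s=a\omega_1$, $\mu_s=c\omega_1$, $\lambda_\ell=b\omega_2$, $\mu_\ell=d\omega_2$, so that $a+c\le 2$ and $b+d\le 2$. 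By Theorem \ref{thm:SteinbergIsogeny},
\[ L(\lambda)\otimes L(\mu)\cong\big(L(\lambda_s)\otimes L(\mu_s)\big)\otimes\big(L(\lambda_\ell)\otimes L(\mu_\ell)\big), \]
and the goal will be to force, after reordering, $a=0$ and $d=0$; the isomorphism $L(\lambda)\otimes L(\mu)\cong L(\lambda+\mu)$ then follows from the ``if'' direction.

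The heart of the argument will be to reduce everything to the two statements: $L(\omega_1)\otimes L(\omega_1)$ and $L(\omega_2)\otimes L(\omega_2)$ are \emph{not} completely reducible as $G_1$-modules. Granting these, suppose for contradiction that $a\ge 1$ and $c\ge 1$; then $a=c=1$, so $L(\lambda_s)\otimes L(\mu_s)\cong L(\omega_1)\otimes L(\omega_1)$, whereas $L(\lambda_\ell)\otimes L(\mu_\ell)$ is one of $L(0)$, $L(\omega_2)$, $L(2\omega_2)$ or $L(\omega_2)\otimes L(\omega_2)$. These all have dimension prime to $3$: for $L(0)$, $L(\omega_2)$ and $L(\omega_2)\otimes L(\omega_2)$ this is clear since $\dim L(\omega_1)=\dim L(\omega_2)=7$, and for $L(2\omega_2)$, whose dimension equals $\dim L(2\omega_1)$, this is a GAP computation \cite{GAP4,Doty} (failing which, the finitely many pairs in which $L(2\omega_1)$ or $L(2\omega_2)$ occurs as the extra factor are checked directly in GAP). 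Then Remark \ref{rem:Serre} shows that the canonical map $k\hookrightarrow\big(L(\lambda_\ell)\otimes L(\mu_\ell)\big)\otimes\big(L(\lambda_\ell)\otimes L(\mu_\ell)\big)^*$ splits over $G_1$, so Proposition \ref{prop:Serre} forces $L(\omega_1)\otimes L(\omega_1)$ to be completely reducible as a $G_1$-module, a contradiction; hence $a=0$ or $c=0$. Running the symmetric argument, now peeling off the $s$-part (again of dimension prime to $3$), gives $b=0$ or $d=0$. Since $\lambda\ne 0\ne\mu$, the only possibility left is, up to reordering, $a=0$ and $d=0$.

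For the two base facts I plan to pass from $G_1$ to $G$ via Proposition \ref{prop:GandG1allMaxVecPRes}. For $L(\omega_1)\otimes L(\omega_1)$ every dominant weight below $2\omega_1$ — namely $0,\omega_1,2\omega_1,\omega_2$ — is $p$-restricted, so every maximal vector has $p$-restricted weight and it suffices to show that $L(\omega_1)\otimes L(\omega_1)$ is not completely reducible as a $G$-module. In characteristic $3$ one has $L(\omega_1)=\nabla(\omega_1)$ (the $7$-dimensional module is irreducible), so $L(\omega_1)^{\otimes2}$ has a good filtration, and I would compute in GAP the multiplicities $\dim\Hom_G\big(\Delta(\delta),\nabla(\omega_1)\otimes\nabla(\omega_1)\big)$ together with the composition factors of $L(\omega_1)\otimes L(\omega_1)$; Corollary \ref{cor:BrundanKleshchev} then exhibits a weight $\delta$ at which complete reducibility fails. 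For $L(\omega_2)\otimes L(\omega_2)$ the same scheme works, the only subtlety being that $3\omega_1$ is a dominant non-$p$-restricted weight below $2\omega_2$: if $L(3\omega_1)$ is a composition factor one invokes Lemma \ref{lem:DonkinNonCR} instead, and otherwise all maximal vectors are again $p$-restricted and Proposition \ref{prop:GandG1allMaxVecPRes} applies as before.

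I expect the main obstacle to be the absence of a Levi subgroup of $G_2$ large enough to mimic the truncation reductions used for $\mathrm{F}_4$ and $\mathrm{C}_3$ in Proposition \ref{prop:F4p2} and Lemma \ref{lem:C3p2}: the proper Levi subgroups here are copies of $\SL_2$, whose tensor products are completely reducible by Clebsch--Gordan and hence detect nothing. So the genuine content sits in the explicit characteristic-$3$ analysis of $L(\omega_1)\otimes L(\omega_1)$ (and $L(\omega_2)\otimes L(\omega_2)$), and in the step converting ``not completely reducible over $G$'' into ``not completely reducible over $G_1$'', which is precisely where Proposition \ref{prop:GandG1allMaxVecPRes} and Lemma \ref{lem:DonkinNonCR} are needed.
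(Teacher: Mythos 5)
Your overall strategy is the paper's: use Theorem \ref{thm:SteinbergIsogeny} to factor everything into short and long parts, peel off tensor factors of dimension prime to $3$ via Proposition \ref{prop:Serre} and Remark \ref{rem:Serre}, and reduce to the non-complete-reducibility over $G_1$ of $L(\omega_1)\otimes L(\omega_1)$ and $L(\omega_2)\otimes L(\omega_2)$; your treatment of these two base cases (multiplicity comparison via Corollary \ref{cor:BrundanKleshchev} combined with Proposition \ref{prop:GandG1allMaxVecPRes}, resp.\ the composition factor $L(3\omega_1)\cong L(\omega_1)^{[1]}$ and Lemma \ref{lem:DonkinNonCR}) is sound and consistent with what the paper does. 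The gap is in the claim that every possible ``extra factor'' has dimension prime to $3$: for $G_2$ in characteristic $3$ one has $\dim L(2\omega_1)=\dim L(2\omega_2)=27$, so the peeling argument fails for exactly the two pairs $\{\lambda,\mu\}=\{\omega_1+2\omega_2,\,\omega_1\}$ and $\{\lambda,\mu\}=\{2\omega_1+\omega_2,\,\omega_2\}$ — and these are precisely the two cases the paper must treat separately at the end of its proof.

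Your fallback of ``checking these pairs directly in GAP'' does not close the gap as stated, because the available computations yield only characters and composition factors, not complete reducibility over $G_1$. For $L(\omega_1+2\omega_2)\otimes L(\omega_1)$ the rescue is a tool you already have: GAP shows $L(3\omega_2)\cong L(\omega_2)^{[1]}$ is a composition factor, so Lemma \ref{lem:DonkinNonCR} applies. But for $L(2\omega_1+\omega_2)\otimes L(\omega_2)$ the unique non-$3$-restricted composition factor is $L(5\omega_1)$, which is not of the form $L(p\delta)$, so Lemma \ref{lem:DonkinNonCR} is unavailable and a genuinely structural argument is required. The paper argues: by Theorem \ref{thm:BrundanKleshchevsocle} the socle is $p$-restricted, hence the module is not completely reducible over $G$; if all maximal vectors have restricted weight one concludes by Proposition \ref{prop:GandG1allMaxVecPRes}, and otherwise a maximal vector of weight $5\omega_1$ would generate a quotient of $\head_{G_1}\Delta(5\omega_1)\cong L(2\omega_1)\otimes\Delta(\omega_1)^{[1]}\cong L(5\omega_1)$ by Corollary \ref{cor:headG1}, placing a non-restricted simple module in the ($p$-restricted) socle, a contradiction. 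That step is the real content hiding behind your parenthetical fallback, and it is missing from your proposal.
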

\begin{proof}
	If $\lambda=\lambda_\ell$ and $\mu=\mu_s$ then $L(\lambda)\otimes L(\mu)\cong L(\lambda+\mu)$ by Theorem \ref{thm:SteinbergIsogeny}, so $L(\lambda)\otimes L(\mu)$ is completely reducible as a $G_1$-module. We show that $L(\lambda)\otimes L(\mu)$ is not completely reducible as a $G_1$-module in the remaining cases. By Corollary \ref{cor:highestweightvectorprestricted}, it is sufficient to consider $\lambda,\mu\in X_1$ such that $\lambda+\mu\in X_1$.
	
	We can compute using GAP that that $L(3\omega_1)\cong L(\omega_1)^{[1]}$ is a composition factor of $L(\omega_2)\otimes L(\omega_2)$, so $L(\omega_2)\otimes L(\omega_2)$ is not completely reducible as a $G_1$-module by Lemma \ref{lem:DonkinNonCR}.
	As $L(\omega_1)$ is $7$-dimensional, we conclude from Proposition \ref{prop:Serre} and Remark \ref{rem:Serre} that the tensor products
	\[ L(\omega_1+\omega_2)\otimes L(\omega_2)\cong L(\omega_1)\otimes L(\omega_2)\otimes L(\omega_2) \]
	and
	\[ L(\omega_1+\omega_2)\otimes L(\omega_1+\omega_2)\cong L(\omega_1) \otimes L(\omega_1)\otimes L(\omega_2)\otimes L(\omega_2) \]
	are not completely reducible as $G_1$-modules.
	
	Furthermore, we have $L(\omega_1)=\nabla(\omega_1)$ and it follows that $L(\omega_1)\otimes L(\omega_1)=\nabla(\omega_1)\otimes\nabla(\omega_1)$ has a good filtration. We can compute that $\nabla(\omega_2)$ appears in a good filtration of $L(\omega_1)\otimes L(\omega_1)$, and that~$\nabla(\omega_2)$ is non-simple. Then $\nabla(\omega_2)$ is not completely reducible as a $G_1$-module by Lemma \ref{lem:weylgeneratedG1} and it follows that $L(\omega_1)\otimes L(\omega_1)$ is not completely reducible as a $G_1$-module. As $\dim L(\omega_2)=7$, we conclude from Proposition \ref{prop:Serre} and Remark \ref{rem:Serre} that the tensor product
	\[ L(\omega_1+\omega_2)\otimes L(\omega_1) \cong L(\omega_2)\otimes L(\omega_1)\otimes L(\omega_1) \]
	is not completely reducible as a $G_1$-module.
	
	It remains to consider the tensor products $L(2\omega_1+\omega_2)\otimes L(\omega_2)$ and $L(\omega_1+2\omega_2)\otimes L(\omega_1)$. We can compute that $L(3\omega_2)\cong L(\omega_2)^{[1]}$ is a composition factor of $L(\omega_1+2\omega_2)\otimes L(\omega_1)$, so $L(\omega_1+2\omega_2)\otimes L(\omega_1)$ is not completely reducible as a $G_1$-module by Lemma \ref{lem:DonkinNonCR}. We can further compute that $L(5\omega_1)$ is the unique non-$3$-restricted composition factor of $L(2\omega_1+\omega_2)\otimes L(\omega_2)$. By Theorem \ref{thm:BrundanKleshchevsocle}, the socle of $L(2\omega_1+\omega_2)\otimes L(\omega_2)$ is $p$-restricted, so $L(\lambda)\otimes L(\mu)$ is not completely reducible. If all maximal vectors in $L(2\omega_1+\omega_2)\otimes L(\omega_2)$ have $3$-restricted weight then $L(2\omega_1+\omega_2)\otimes L(\omega_2)$ is not completely reducible as a $G_1$-module by Proposition \ref{prop:GandG1allMaxVecPRes}. Now suppose for a contradiction that $L(2\omega_1+\omega_2)\otimes L(\omega_2)$ has a maximal vector $v$ of non-$p$-restricted weight $\delta$ and that $L(2\omega_1+\omega_2)\otimes L(\omega_2)$ is completely reducible as a $G_1$-module. Then $\delta=5\omega_1$ and $U\coloneqq U_k(\mathfrak{g})\cdot v$ is isomorphic to a quotient of
	\[\head_{G_1}\Delta(5\omega_1)\cong L(2\omega_1)\otimes\Delta(\omega_1)^{[1]} \cong L(5\omega_1) \]
	by Corollary \ref{cor:headG1} as $\Delta(\omega_1)=L(\omega_1)$. Thus $U\cong L(5\omega_1)$ is a non-$p$-restricted simple module in the socle of $L(2\omega_1+\omega_2)\otimes L(\omega_2)$, a contradiction.
\end{proof}

\begin{Remark} \label{rem:G2p3}
	As in the previous subsection, Theorems \ref{thm:introA} and \ref{thm:introB} are immediate from Proposition~\ref{prop:G2p3}, see Proposition \ref{prop:F4p2} and Remark \ref{rem:F4p2}.
\end{Remark}

\section{The reduction theorem} \label{sec:reductiontheorem}

In order to prove Theorem \ref{thm:introC}, we will need the following result about indecomposability of twisted tensor products. The proof was suggested to the author by Stephen Donkin.

\begin{Lemma} \label{lem:Donkin}
	Let $\lambda\in X_1$ and let $M$ be an indecomposable rational $G$-module. Then $L(\lambda)\otimes M^{[1]}$ is indecomposable.
\end{Lemma}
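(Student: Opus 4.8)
The plan is to prove indecomposability by showing that the $k$-algebra $\End_G\big(L(\lambda)\otimes M^{[1]}\big)$ has no idempotents other than $0$ and $1$; since $L(\lambda)\otimes M^{[1]}\neq 0$, this gives the claim. Concretely, I would exhibit a $k$-algebra isomorphism
\[ \eta\colon\End_G(M)\xrightarrow{\ \sim\ }\End_G\big(L(\lambda)\otimes M^{[1]}\big),\qquad f\mapsto\id_{L(\lambda)}\otimes f , \]
where $f$ on the right is viewed as an endomorphism of $M^{[1]}$ (the Frobenius twist does not change the underlying linear map). As $M$ is indecomposable, $\End_G(M)$ has only the trivial idempotents, and via $\eta$ the same then holds for $\End_G\big(L(\lambda)\otimes M^{[1]}\big)$. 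The key structural input is that $\lambda$ is $p$-restricted, so that $L(\lambda)\vert_{G_1}$ is simple by Curtis's theorem \cite{Curtis}, hence absolutely simple with $\End_{G_1}L(\lambda)=k$ by Schur's lemma.

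That $\eta$ is an injective homomorphism of $k$-algebras is immediate (injectivity because $L(\lambda)\neq 0$), so the heart of the argument is surjectivity. Given $\phi\in\End_G\big(L(\lambda)\otimes M^{[1]}\big)$, I would first restrict $\phi$ to $G_1$. Since $G_1$ acts trivially on $M^{[1]}$, the module $L(\lambda)\otimes M^{[1]}$ is, over $G_1$, a copy of the absolutely simple module $L(\lambda)$ tensored with the multiplicity space $M$; the standard computation for absolutely simple modules then gives $\End_{G_1}\big(L(\lambda)\otimes M^{[1]}\big)=\id_{L(\lambda)}\otimes\End_k(M)$, so $\phi=\id_{L(\lambda)}\otimes g$ for a unique $g\in\End_k(M)$. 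It then remains to see that $g$ is a morphism of $G$-modules. Here I would use that $G$ is smooth over the algebraically closed field $k$, so $G(k)$ is dense in $G$ and a linear map between rational $G$-modules is a $G$-module homomorphism as soon as it is $G(k)$-equivariant. On $k$-points the $G$-action on the tensor product is diagonal, $h\cdot(x\otimes m)=(hx)\otimes(hm)$; comparing $\phi\big(h\cdot(x\otimes m)\big)$ with $h\cdot\phi(x\otimes m)$ yields $(hx)\otimes g(hm)=(hx)\otimes h\,g(m)$, and choosing $x$ with $hx\neq 0$ gives $g(hm)=h\,g(m)$ for all $h\in G(k)$. Thus $g\in\End_G(M)$ and $\phi=\eta(g)$, so $\eta$ is onto.

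I expect the main obstacle to be the bookkeeping: identifying $\End_{G_1}\big(L(\lambda)\otimes M^{[1]}\big)$ with $\id_{L(\lambda)}\otimes\End_k(M)$, and passing cleanly between the comodule description of a rational $G$-module and the action on $k$-points. Both points are routine but need a little care, in particular if $M$ is infinite-dimensional, where one works inside finite-dimensional $G$-submodules using that rational $G$-modules are locally finite.
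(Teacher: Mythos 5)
Your proof is correct and rests on the same key fact as the paper's: since $L(\lambda)\vert_{G_1}$ is simple and $G_1$ acts trivially on $M^{[1]}$, everything is controlled by the multiplicity space $M^{[1]}\cong M$. The paper packages this as the $G$-module isomorphism $\Hom_{G_1}(L(\lambda),L(\lambda)\otimes M^{[1]})\cong M^{[1]}$ and transfers indecomposability directly, whereas you establish the algebra isomorphism $\End_G(M)\cong\End_G\big(L(\lambda)\otimes M^{[1]}\big)$ and argue via idempotents (verifying $G$-equivariance of the multiplicity-space endomorphism by density of $k$-points rather than by the natural $G$-action on $\Hom_{G_1}$); these are two renderings of essentially the same argument.
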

\begin{proof}
	As $M^{[1]}$ is a trivial $G_1$-module and $L(\lambda)\vert_{G_1}$ is simple, we have isomorphisms of $G$-modules
	\[ \Hom_{G_1}(L(\lambda),L(\lambda)\otimes M^{[1]}) \cong \Hom_{G_1}(L(\lambda),L(\lambda)) \otimes M^{[1]} \cong M^{[1]} . \]
	Furthermore, $L(\lambda)\otimes M^{[1]}$ is semisimple and $L(\lambda)$-isotypic as a $G_1$-module, so any non-trivial decomposition of $L(\lambda)\otimes M^{[1]}$ as a direct sum of $G$-submodules would afford a non-trivial decomposition of the $G$-module $\Hom_{G_1}(L(\lambda),L(\lambda)\otimes M^{[1]}) \cong M^{[1]}$.
	As $M$ is indecomposable, so is $M^{[1]}$ and it follows that $L(\lambda)\otimes M^{[1]}$ is indecomposable.
\end{proof}

\begin{Theorem} \label{thm:reduction}
	Let $\lambda,\mu\in X^+$ and write $\lambda=\lambda_0+p\lambda_1$ and $\mu=\mu_0+p\mu_1$ with $\lambda_0,\mu_0\in X_1$ and $\lambda_1,\mu_1\in X^+$. Then $L(\lambda)\otimes L(\mu)$ is completely reducible if and only if $L(\lambda_0)\otimes L(\mu_0)$ and $L(\lambda_1)\otimes L(\mu_1)$ are completely reducible.
\end{Theorem}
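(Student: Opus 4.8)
The plan is to use the two tensor product decompositions recorded in the introduction together with the Frobenius twist to split the problem into a $p$-restricted part and a higher part, and then to apply Theorems \ref{thm:introA} and \ref{thm:introB} (in their fully general form, valid for all types and primes, as assembled in Sections \ref{sec:CRprestricted}--\ref{sec:smallprimes}) to the $p$-restricted factor, and induction on the size of the weights to the higher factor. Write
\[ L(\lambda)\otimes L(\mu) \cong \big( L(\lambda_0)\otimes L(\mu_0) \big) \otimes \big( L(\lambda_1)\otimes L(\mu_1) \big)^{[1]} . \]
For the ``if'' direction, suppose $L(\lambda_0)\otimes L(\mu_0)$ and $L(\lambda_1)\otimes L(\mu_1)$ are completely reducible. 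By Theorem \ref{thm:introA}, every composition factor of $L(\lambda_0)\otimes L(\mu_0)$ is of the form $L(\nu)$ with $\nu\in X_1$, so $L(\lambda_0)\otimes L(\mu_0)\cong\bigoplus_j L(\nu_j)$ with all $\nu_j$ $p$-restricted; and $L(\lambda_1)\otimes L(\mu_1)\cong\bigoplus_i L(\xi_i)$. Hence $L(\lambda)\otimes L(\mu)\cong\bigoplus_{i,j} L(\nu_j)\otimes L(\xi_i)^{[1]}\cong\bigoplus_{i,j} L(\nu_j+p\xi_i)$ by Steinberg's tensor product theorem, which is completely reducible.

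For the ``only if'' direction, suppose $L(\lambda)\otimes L(\mu)$ is completely reducible. First apply the decomposition $L(\lambda)\otimes L(\mu)\cong L(\lambda_0)\otimes L(\mu_0)\otimes \big(L(\lambda_1)\otimes L(\mu_1)\big)^{[1]}$ and note that a completely reducible module stays completely reducible under $G_1$-restriction; since $\big(L(\lambda_1)\otimes L(\mu_1)\big)^{[1]}$ is a trivial $G_1$-module, the $G_1$-module $L(\lambda)\otimes L(\mu)\vert_{G_1}$ is a direct sum of copies of $L(\lambda_0)\otimes L(\mu_0)\vert_{G_1}$, so $L(\lambda_0)\otimes L(\mu_0)$ is completely reducible as a $G_1$-module, hence as a $G$-module by Theorem \ref{thm:introB}. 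For the higher factor: by Theorem \ref{thm:introA}, $L(\lambda_0)\otimes L(\mu_0)$ has $p$-restricted socle, so it contains $L(0)=k$ as a direct summand (indeed the highest weight $\lambda_0+\mu_0$ is $p$-restricted by Corollary \ref{cor:highestweightvectorprestricted}, but more to the point complete reducibility of $L(\lambda_0)\otimes L(\mu_0)$ means we may pick out any summand); so $\big(L(\lambda_1)\otimes L(\mu_1)\big)^{[1]}$ is a direct summand of the completely reducible module $L(\lambda)\otimes L(\mu)$, whence $L(\lambda_1)\otimes L(\mu_1)$ is completely reducible. The one subtlety is that $L(0)$ need not occur in $L(\lambda_0)\otimes L(\mu_0)$; this is handled by instead picking any simple summand $L(\nu)$ of $L(\lambda_0)\otimes L(\mu_0)$ (e.g. $\nu=\lambda_0+\mu_0$) and observing that $L(\nu)\otimes\big(L(\lambda_1)\otimes L(\mu_1)\big)^{[1]}\cong L(\nu+p\lambda_1+p\mu_1\text{-part})\ldots$ is a direct summand; more cleanly, invoke Serre's Proposition \ref{prop:Serre}: if $p\nmid\dim L(\nu)$ we are done, and in general one uses that a direct summand $L(\nu)\otimes N^{[1]}$ of a completely reducible module with $N=L(\lambda_1)\otimes L(\mu_1)$ forces $N$ completely reducible because $L(\nu)\otimes N^{[1]}\cong\bigoplus$ twists of $N$ by Steinberg applied componentwise after decomposing $N$... — the honest route here is to use Lemma \ref{lem:Donkin}: if $N$ were not completely reducible it would have an indecomposable non-simple summand $M$, and then $L(\nu)\otimes M^{[1]}$ is an indecomposable non-simple summand of $L(\lambda)\otimes L(\mu)$, contradiction.

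The main obstacle is precisely this last point — extracting complete reducibility of $L(\lambda_1)\otimes L(\mu_1)$ from that of the twisted tensor product $L(\nu)\otimes\big(L(\lambda_1)\otimes L(\mu_1)\big)^{[1]}$ without a free trivial summand on the first factor. I expect Lemma \ref{lem:Donkin} to be the right tool: it shows $L(\nu)\otimes M^{[1]}$ is indecomposable for $M$ indecomposable, so writing $L(\lambda_1)\otimes L(\mu_1)=\bigoplus_r M_r$ into indecomposables gives $L(\nu)\otimes\big(L(\lambda_1)\otimes L(\mu_1)\big)^{[1]}=\bigoplus_r \big(L(\nu)\otimes M_r^{[1]}\big)$ as a decomposition into indecomposables; a summand of a completely reducible module is completely reducible, so each $L(\nu)\otimes M_r^{[1]}$ is simple, which by Lemma \ref{lem:Donkin}'s proof (comparing $G_1$-heads, or just dimension count since $L(\nu)\vert_{G_1}$ is simple and $M_r^{[1]}$ is $G_1$-trivial) forces each $M_r$ to be simple. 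Finally one glues back: $L(\lambda)\otimes L(\mu)$ completely reducible $\Rightarrow$ its $G_1$-restriction is, $\Rightarrow$ (as above) $L(\lambda_0)\otimes L(\mu_0)$ is $G_1$-completely-reducible $\Rightarrow$ $G$-completely-reducible by Theorem \ref{thm:introB}; and then fixing any simple summand $L(\nu)$ of it and running the Donkin argument yields $L(\lambda_1)\otimes L(\mu_1)$ completely reducible. Conversely the ``if'' direction is the short Steinberg computation above. I would present the ``if'' direction first, then the ``only if'' direction in the two steps (restricted factor via Theorems \ref{thm:introA}, \ref{thm:introB}; higher factor via Lemma \ref{lem:Donkin}).
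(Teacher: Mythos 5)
Your proposal is correct and, once the digressions about a trivial summand and Serre's proposition are discarded, it is essentially the paper's own proof: the ``if'' direction is the same Steinberg computation using Theorem \ref{thm:introA}, and the ``only if'' direction extracts $L(\lambda_0)\otimes L(\mu_0)$ via $G_1$-restriction and Theorem \ref{thm:introB}, then applies Lemma \ref{lem:Donkin} to an indecomposable summand $M$ of $L(\lambda_1)\otimes L(\mu_1)$ tensored with a ($p$-restricted, by Theorem \ref{thm:introA}) simple summand of $L(\lambda_0)\otimes L(\mu_0)$ to force $M$ simple.
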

\begin{proof}
	We have $L(\lambda)=L(\lambda_0)\otimes L(\lambda_1)^{[1]}$ and $L(\mu)=L(\mu_0)\otimes L(\mu_1)^{[1]}$ by Steinberg's tensor product theorem and therefore
	\[ L(\lambda)\otimes L(\mu) = L(\lambda_0)\otimes L(\mu_0)\otimes \big( L(\lambda_1)\otimes L(\mu_1) \big)^{[1]} . \]
	Suppose first that $L(\lambda)\otimes L(\mu)$ is completely reducible. Then $L(\lambda)\otimes L(\mu)$ is completely reducible as a $G_1$-module. As $( L(\lambda_1)\otimes L(\mu_1) )^{[1]}$ is a trivial $G_1$-module, it follows that $L(\lambda_0)\otimes L(\mu_0)$ is completely reducible as a $G_1$-module, hence as a $G$-module by Theorem \ref{thm:introB}.
	
	Now let $M$ be an indecomposable direct summand of $L(\lambda_1)\otimes L(\mu_1)$ and let $L$ be a simple direct summand of $L(\lambda_0)\otimes L(\mu_0)$, so that $L\otimes M^{[1]}$ is a direct summand of $L(\lambda)\otimes L(\mu)$. Then $L$ is $p$-restricted by Theorem \ref{thm:introA} and $L\otimes M^{[1]}$ is indecomposable by Lemma \ref{lem:Donkin}. As $L(\lambda)\otimes L(\mu)$ is completely reducible, $L\otimes M^{[1]}$ is simple and it follows that $M$ is simple. Hence $L(\lambda_1)\otimes L(\mu_1)$ is completely reducible.
	
	Now suppose that $L(\lambda_0)\otimes L(\mu_0)$ and $L(\lambda_1)\otimes L(\mu)$ are completely reducible. By Theorem \ref{thm:introA}, all composition factors of $L(\lambda_0)\otimes L(\mu_0)$ are $p$-restricted, so we have
	\[ L(\lambda_0)\otimes L(\mu_0)\cong L(\delta_1)\oplus\cdots\oplus L(\delta_r) \]
	for certain $p$-restricted weights $\delta_1,\ldots,\delta_r\in X_1$ and
	\[ L(\lambda_1)\otimes L(\mu_1)\cong L(\nu_1)\oplus\cdots\oplus L(\nu_s) \]
	for certain $\nu_1,\ldots,\nu_s\in X^+$. By Steinberg's tensor product theorem, we obtain
	\begin{align*}
	L(\lambda)\otimes L(\mu) & \cong L(\lambda_0)\otimes L(\mu_0)\otimes \big(L(\lambda_1)\otimes L(\mu_1)\big)^{[1]} \\
	& \cong \left( \bigoplus_{i=1}^r L(\delta_i) \right) \otimes \left( \bigoplus_{j=1}^s L(\nu_j) \right)^{[1]} \\
	& \cong \bigoplus_{i=1}^r \bigoplus_{j=1}^s L(\delta_i)\otimes L(\nu_j)^{[1]} \\
	& \cong \bigoplus_{i=1}^r \bigoplus_{j=1}^s L(\delta_i+p\nu_j),
	\end{align*}
	so $L(\lambda)\otimes L(\mu)$ is completely reducible.
\end{proof}

\begin{Corollary}
	Let $\lambda,\mu\in X^+$ and write $\lambda=\lambda_0+\cdots+p^m\lambda_m$ and $\mu=\mu_0+\cdots+p^m\mu_m$ with $\lambda_i,\mu_i\in X_1$ for $i=0,\ldots,m$. Then $L(\lambda)\otimes L(\mu)$ is completely reducible if and only if $L(\lambda_i)\otimes L(\mu_i)$ is completely reducible for $i=0,\ldots,m$.
\end{Corollary}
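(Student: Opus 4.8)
The plan is to obtain this as a straightforward induction on $m$ from Theorem \ref{thm:reduction}. For $m=0$ the statement is a tautology, and for $m=1$ it is precisely Theorem \ref{thm:reduction}. For the inductive step with $m\geq 1$, I would introduce the truncated weights
\[ \lambda' = \lambda_1 + p\lambda_2 + \cdots + p^{m-1}\lambda_m \quad\text{and}\quad \mu' = \mu_1 + p\mu_2 + \cdots + p^{m-1}\mu_m , \]
which are dominant since each $\lambda_i,\mu_i$ lies in $X_1\subseteq X^+$ and a non-negative integer combination of dominant weights is dominant. Then $\lambda = \lambda_0 + p\lambda'$ and $\mu = \mu_0 + p\mu'$ are the $p$-adic decompositions of $\lambda$ and $\mu$ with $\lambda_0,\mu_0\in X_1$ and $\lambda',\mu'\in X^+$, so Theorem \ref{thm:reduction} gives that $L(\lambda)\otimes L(\mu)$ is completely reducible if and only if both $L(\lambda_0)\otimes L(\mu_0)$ and $L(\lambda')\otimes L(\mu')$ are completely reducible.

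Next I would apply the inductive hypothesis to $\lambda'$ and $\mu'$, whose base-$p$ digits are $\lambda_1,\ldots,\lambda_m$ and $\mu_1,\ldots,\mu_m$ respectively (so the relevant parameter is $m-1$): this yields that $L(\lambda')\otimes L(\mu')$ is completely reducible if and only if $L(\lambda_i)\otimes L(\mu_i)$ is completely reducible for every $i\in\{1,\ldots,m\}$. Combining this with the equivalence from the previous step, $L(\lambda)\otimes L(\mu)$ is completely reducible if and only if $L(\lambda_i)\otimes L(\mu_i)$ is completely reducible for all $i\in\{0,1,\ldots,m\}$, as claimed.

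I do not expect any real obstacle: the entire argument is bookkeeping on top of Theorem \ref{thm:reduction}, the only points to verify being that the truncated sums remain dominant and that the exponents line up correctly in the induction. It is perhaps worth remarking that iterating the explicit decomposition at the end of the proof of Theorem \ref{thm:reduction} even produces, when all the digit tensor products $L(\lambda_i)\otimes L(\mu_i)$ are completely reducible, a direct sum decomposition of $L(\lambda)\otimes L(\mu)$ into simple modules of the form $L(\delta_0 + p\delta_1 + \cdots + p^m\delta_m)$ with $L(\delta_i)$ a summand of $L(\lambda_i)\otimes L(\mu_i)$; this is not needed for the statement but makes the structure of a completely reducible $L(\lambda)\otimes L(\mu)$ entirely transparent.
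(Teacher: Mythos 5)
Your proof is correct and is exactly the argument the paper intends: the paper's own proof is the one-line "This follows by induction on $m$ from Theorem \ref{thm:reduction}", and your write-up simply spells out that induction (with the truncated weights $\lambda'$, $\mu'$ playing the role of $\lambda_1$, $\mu_1$ in the theorem). No gaps; the dominance of $\lambda'$ and $\mu'$ and the shift of the parameter to $m-1$ are the only points to check, and you have checked them.
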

\begin{proof}
	This follows by induction on $m$ from Theorem \ref{thm:reduction}.
\end{proof}

\bibliographystyle{alpha}
\bibliography{tensor}

\end{document}